\newcolumntype{C}[1]{>{\centering\let\newline\\\arraybackslash\hspace{0pt}}m{#1}}
\newtheorem{thm}{Theorem}[section]
\newtheorem{cor}[thm]{Corollary}
\newtheorem{lemma}[thm]{Lemma}
\newtheorem{prop}[thm]{Proposition}
\newtheorem{claim}[thm]{Claim}
\newtheorem{conj}[thm]{Conjecture}
\newtheorem{introconstr}[thm]{Construction}
\newtheorem*{thm:structuralstability}{\Cref{thm:structuralstability}}
\newtheorem*{thm:horsuralmostequiv}{\Cref{thm:horsuralmostequiv}}
\theoremstyle{definition}
\newtheorem{defn}[thm]{Definition}
\newtheorem{rmk}[thm]{Remark}
\newtheorem{constr}[thm]{Construction}
\newtheorem{eg}[thm]{Example}
\newtheorem{noneg}[thm]{Non-example}
\newtheorem{quest}[thm]{Question}
\newtheorem{prob}[thm]{Problem}
\numberwithin{equation}{section}
\renewcommand{\epsilon}{\varepsilon}
\newcommand{\cut}{\!\bbslash\!}
\newcommand{\Diffeo}{\mathrm{Diffeo}}
\newcommand{\sing}{\mathrm{sing}}
\newcommand{\slope}{\mathrm{slope}}
\newcommand{\width}{\mathrm{width}}
\begin{document}

\title[Horizontal Goodman surgery and almost equivalence]{Horizontal Goodman surgery and almost equivalence of pseudo-Anosov flows}

\author{Chi Cheuk Tsang}
\address{Département de mathématiques \\
Université du Québec à Montréal \\
201 President Kennedy Avenue \\
Montréal, QC, Canada H2X 3Y7}
\email{tsang.chi\_cheuk@uqam.ca}

\maketitle

\begin{abstract}
We provide an exposition of a `horizontal' generalization of Goodman's surgery operation on (pseudo-)Anosov flows. 
This operation is performed by cutting along a specific kind of annulus that is transverse to the flow and regluing with a Dehn twist of the appropriate sign.
We then show that performing horizontal Goodman surgery on a transitive pseudo-Anosov flow yields an almost equivalent flow, i.e. the original flow and the surgered flow are orbit equivalent after drilling out a finite collection of closed orbits.
We obtain some almost equivalence results by applying this theorem on examples of the surgery operation.
Along the way, we also show a structural stability result for pseudo-Anosov flows.
\end{abstract}


\section{Introduction} \label{sec:intro}

An \textbf{Anosov flow} on a closed oriented $3$-manifold is a smooth flow that contracts along a \textbf{stable} line bundle and expands along an \textbf{unstable} line bundle. These flows were introduced by Anosov in the 1960s. Over the years, it has been shown that they satisfy nice dynamic properties and have connections with various aspects of low-dimensional topology. These days, Anosov flows continues to be an area of active research both in the context of dynamical systems and topology.

In the early days of the subject, not much was known about the prevalence of Anosov flows. Back then, it was a possibility that every Anosov flow is, up to finite cover, a suspension flow of an Anosov map on the torus or the geodesic flow of a negatively curved surface.

This situation changed with the appearance of \cite{Goo83}. In that paper, Goodman showed that given a closed orbit $\gamma$ of an Anosov flow on a closed oriented $3$-manifold $M$, one can construct Anosov flows on $3$-manifolds obtained by performing Dehn surgery on $M$ along $\gamma$. Using this operation, Goodman showed in particular that there exist Anosov flows on many hyperbolic $3$-manifolds.

Together with techniques introduced in later work such as \cite{Fri83}, \cite{BF15}, \cite{BBY17}, we now know of many examples of Anosov flows, as well as many examples of $3$-manifolds that admit Anosov flows.

In this paper, we wish to revisit Goodman's ideas. We quickly summarize Goodman's construction: Given a closed orbit $\gamma$ of an Anosov flow $\phi^t$, one can find an annulus $A$ transverse to the flow and running parallel to $\gamma$. One can then construct a new flow by cutting along $A$ and regluing with a Dehn twist. Here the induced stable and unstable foliations on $A$ play a key role --- the form of these allows one to choose a regluing map that compounds the dynamics of the original flow, so that the reglued flow is also Anosov.

It is an observation of many experts that, on the other hand, the fact that $A$ runs parallel to a closed orbit plays no role in the construction at all. In other words, as long as one can find an annulus $A$ transverse to the flow and interacting with the stable and unstable foliations in the same way as the annuli in Goodman's setup, then one can cut along this $A$ and reglue it to get an Anosov flow as well.

Over time, this became more or less a folklore construction. The first goal of this paper is to fill this gap in the literature by providing a careful exposition.

Fix an Anosov flow on a closed oriented $3$-manifold $M$. We say that a curve $c$ in $M$ is \textbf{positive/negative} if the slope of its tangent field $Tc$ is everywhere positive/negative when projected onto the plane in $TM$ spanned by the stable and unstable line bundles, respectively. Here we consider the stable/unstable line bundle to be the vertical/horizontal direction, respectively, and use the orientation on $M$ to make sense of the sign of the slope.

We say that a positive curve $c$ has a \textbf{steady} tangent field if for every $x, y \in c$ and $t>0$ such that $y=\phi^t(x)$, the slope of $Tc|_y$ is greater than that of $d\phi^t(Tc|_x)$. In words, this means that whenever $c$ passes over itself, then the overpass is of a greater slope than the underpass. See \Cref{fig:steadycurve} left. The definition of a negative curve having steady tangent field is symmetric, see \Cref{fig:steadycurve} right.

We say that a curve $c$ is a \textbf{positive/negative horizontal surgery curve} if it is positive/negative and it has a steady tangent field, respectively.

\begin{introconstr} \label{constr:introhorsur}
Let $\phi^t$ be a (pseudo-)Anosov flow on a closed oriented $3$-manifold $M$. Let $c$ be a positive/negative horizontal surgery curve.

Then for every positive/negative integer $n$, respectively, there exists an annulus $A$ transverse to the flow and containing $c$, and a Dehn twist map $\sigma: A \to A$ with coefficient $n$, such that cutting $M$ along $A$ and regluing with $\sigma$ gives a (pseudo-)Anosov flow.

Moreover, the orbit equivalence class of the reglued (pseudo-)Anosov flow only depends on $c$ and $n$. As such, we denote the reglued flow by $\phi^t_{\frac{1}{n}}(c)$.
\end{introconstr}

The notation $\phi^t_{\frac{1}{n}}(c)$ is inspired from the fact that this flow is defined on the $3$-manifold obtained by performing Dehn surgery on $M$ along $c$ with coefficient $\frac{1}{n}$ (with respect to the natural choice of basis), for which it is common to denote by $M_{\frac{1}{n}}(c)$.

We refer to the operation of obtaining $\phi^t_{\frac{1}{n}}(c)$ from $\phi^t$ as performing \textbf{horizontal Goodman surgery on $\phi^t$ along $c$ with coefficient $\frac{1}{n}$}. We insert the word `horizontal' here for two reasons:
\begin{enumerate}
    \item to emphasize that the annuli we cut along are transverse to the flow (thought of as going vertically upwards) and may or may not run parallel to any closed orbit, and
    \item in anticipation for connections between this operation and the horizontal surgery operation on veering triangulations (defined in \cite{Tsa22a}).
\end{enumerate}
We elaborate on the second point in \Cref{subsec:vt}.

The reader will notice that we have stated \Cref{constr:introhorsur} for \emph{pseudo-}Anosov flows as well. These are essentially Anosov flows with finitely many singular orbits. We remark more on this in \Cref{subsec:intropa}. For now, we simply note that we do not allow horizontal surgery curves to intersect the singular orbits.

The second goal of this paper is an original result that states that applying horizontal Goodman surgery on a transitive (pseudo-)Anosov flow yields an almost equivalent flow. Here we recall the definition of almost equivalence before stating the theorem more formally.

\begin{defn} \label{defn:almostequiv}
Let $\phi^t_i$ be a pseudo-Anosov flow on a closed oriented $3$-manifold $M_i$ for $i=1,2$. Suppose there exists a finite collection of closed orbits $\mathcal{C}_i$ of $\phi^t_i$ and an orbit equivalence between $\phi^t_1$ restricted to $M_1 \backslash \mathcal{C}_1$ and $\phi^t_2$ restricted to $M_2 \backslash \mathcal{C}_2$. Then we say that $\phi^t_1$ and $\phi^t_2$ are \textbf{almost equivalent}. 
\end{defn}

\begin{thm} \label{thm:horsuralmostequiv}
Let $\phi^t$ be a transitive pseudo-Anosov flow on a closed oriented $3$-manifold $M$. Let $c$ be a positive/negative horizontal surgery curve for $\phi^t$. Then for every positive/negative integer $n$, respectively, the flow $\phi^t_{\frac{1}{n}}(c)$ is almost equivalent to $\phi^t$.
\end{thm}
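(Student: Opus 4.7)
The plan is to reduce \Cref{thm:horsuralmostequiv} to the classical case of Goodman's original surgery along a closed orbit, where almost equivalence is essentially tautological. The reduction proceeds by using transitivity of $\phi^t$ to isotope $c$, through a continuous family of horizontal surgery curves, onto a curve $c_1$ that sits in the standard Goodman position relative to some closed orbit $\gamma$ of $\phi^t$, and then appealing to an isotopy-invariance statement proved via structural stability.

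For Step 1, by transitivity together with the Anosov closing lemma, closed orbits of $\phi^t$ are dense in $M$ and shadow any orbit segment arbitrarily well, so one can find a closed orbit $\gamma$ of $\phi^t$ arbitrarily close to any chosen point of $c$. The task is then to produce a continuous family $\{c_s\}_{s \in [0,1]}$ of horizontal surgery curves, each lying on a transverse annulus $A_s$, with $c_0 = c$ and $c_1$ lying on a Goodman-style annulus parallel to $\gamma$. I expect to carry this out using openness of the steady tangent field condition in a suitable $C^1$-topology, together with a closing lemma argument that first pushes $c$ into a neighborhood of an orbit segment which can then be closed off to form $\gamma$.

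Step 2 upgrades this isotopy to an orbit equivalence $\phi^t_{\frac{1}{n}}(c) \simeq \phi^t_{\frac{1}{n}}(c_1)$. By the uniqueness clause of \Cref{constr:introhorsur}, each $\phi^t_{\frac{1}{n}}(c_s)$ is a well-defined pseudo-Anosov flow; the family $\{c_s\}$ induces a continuous family of such surgered flows, and by \Cref{thm:structuralstability} nearby members of this family are orbit equivalent. Compactness of $[0,1]$ then yields the overall orbit equivalence. Step 3 concludes as follows: since $c_1$ lies on a Goodman-style annulus parallel to $\gamma$, the uniqueness clause identifies $\phi^t_{\frac{1}{n}}(c_1)$ with Goodman's classical surgery along $\gamma$ with coefficient $\frac{1}{n}$. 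The almost equivalence between $\phi^t$ and this classical surgery is well-known: drilling $\gamma$ from $\phi^t$ and drilling the core orbit of the re-filled solid torus from $\phi^t_{\frac{1}{n}}(\gamma)$ produces identical open pseudo-Anosov flows on $M \setminus \gamma$, since Dehn filling with different slopes on a common cusp does not affect the open flow. Hence setting $\mathcal{C} = \{\gamma\}$ witnesses the desired almost equivalence.

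The main obstacle I anticipate is Step 1: constructing the isotopy of horizontal surgery curves, through transverse annuli, onto a curve sitting in a Goodman-style position. Density of closed orbits is immediate, but the isotopy must simultaneously preserve transversality of the ambient annulus $A_s$ and the steady tangent field condition on each $c_s$. I expect this will require both an openness argument for the horizontal surgery curve condition in a suitable $C^1$-topology and a carefully chosen closing lemma construction producing $\gamma$ in a position accessible via a steady-tangent isotopy from $c$; the invocation of \Cref{thm:structuralstability} in Step 2 is also delicate, since one must verify that the varying annuli $A_s$ and Dehn twist gluings yield a family of flows that is continuous in the topology to which structural stability applies.
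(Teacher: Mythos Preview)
Your outline has a genuine gap in Step~1, and it is precisely the gap that forces the paper's much longer argument.

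You assume one can find a closed orbit $\gamma$ and then produce an \emph{isotopy} $\{c_s\}$ through horizontal surgery curves from $c$ to a curve in Goodman position near $\gamma$. But there is no reason such an isotopy exists. A horizontal surgery curve is transverse to the flow, so the closing lemma (which closes up recurrent \emph{orbit} segments) does not apply to $c$ itself. What the paper actually constructs is only a \emph{homotopy} $H_s$ from (a curve isotopic to) $c$ to a curve parallel to some closed orbit $\gamma$, built by sliding pieces of $c$ along orbit segments lying in the stable leaves of auxiliary closed orbits $\gamma_i$ (the ``sawtooth'' construction of \S6.1--6.2). Generically this homotopy passes through finitely many self-intersecting curves, so it is not an isotopy, and \Cref{cor:invcurveisotopy} cannot be applied across those times.

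The paper's substance lies in handling those self-intersection times. For each one, the author picks an auxiliary orientation-preserving closed orbit $\beta$ through the self-intersection point, passes to the Goodman--Fried surgered flow $\phi^t_{1/k}(\beta)$ (using Shannon's \Cref{thm:Goodman=Fried} to get an almost equivalence, not an orbit equivalence), and in that surgered manifold the two strands can be pushed past each other through genuine piecewise smooth horizontal surgery curves (\Cref{fig:localmoves1}, \Cref{fig:localmoves2}). This is why the paper needs the piecewise-smooth machinery of \S5 and the ``pushing across $\beta$'' operation of \S5.3: the homotopy unavoidably creates turns and self-crossings, and one must check that the intermediate curves remain (generic piecewise smooth) horizontal surgery curves. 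Your Steps~2 and~3 are essentially correct once Step~1 is in place, but Step~1 as stated is not achievable in general; the conclusion is only an almost equivalence, not an orbit equivalence, precisely because of the Goodman--Fried surgeries introduced at each self-intersection time.
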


Our motivation for \Cref{thm:horsuralmostequiv} comes from the following conjecture of Fried and Ghys.

\begin{conj}[Fried, Ghys] \label{conj:Ghys}
Any two transitive Anosov flows with orientable stable and unstable line bundles are almost equivalent.
\end{conj}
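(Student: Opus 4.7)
The plan---which is necessarily programmatic, since the Fried--Ghys conjecture remains open---is to leverage \Cref{thm:horsuralmostequiv} to navigate the set of transitive Anosov flows on closed oriented $3$-manifolds within a single almost equivalence class. Concretely, one would attempt to establish the following \emph{connectivity} statement: any two transitive Anosov flows with orientable stable and unstable bundles are related by a finite sequence of horizontal Goodman surgeries and their inverses, interspersed with orbit equivalences. Since horizontal Goodman surgery preserves almost equivalence by \Cref{thm:horsuralmostequiv}, and almost equivalence is transitive, such a connectivity statement would immediately imply the conjecture.

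The first concrete step would be to fix a family of \emph{reference} flows within the orientable transitive class (for example, suspensions of Anosov automorphisms of $T^2$ and geodesic flows of closed oriented hyperbolic surfaces), and to show that every other flow in the class can be connected to one of them by a chain of horizontal Goodman surgeries. I would attempt this by passing to the combinatorial side via veering triangulations, as suggested in \Cref{subsec:vt}: under the Agol--Gu\'eritaud correspondence (suitably extended), a horizontal Goodman surgery on the flow side should correspond to a horizontal surgery on the associated veering triangulation in the sense of \cite{Tsa22a}. The conjecture would then reduce to a purely combinatorial navigability statement, namely that any two veering triangulations with the appropriate orientability data can be connected by horizontal surgery moves. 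A secondary, more hands-on approach would be to decompose each flow into hyperbolic building blocks \`a la B\'eguin--Bonatti--Yu, show that each such block can be standardized by horizontal Goodman surgeries, and then reassemble.

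The main obstacle, and the reason the conjecture has remained open for decades, is the connectivity step itself. There is no known mechanism that produces, for an arbitrary pair of orientable transitive Anosov flows, an explicit sequence of horizontal surgery moves relating them; a priori there could be hidden obstructions coming from finer orbit-equivalence invariants. A subsidiary obstacle is that \Cref{thm:horsuralmostequiv} requires the existence of a horizontal surgery curve \emph{disjoint from the singular orbits}, and one must verify that every flow in sight admits enough such curves to enable the required moves (including, for the inverse operation, to undo previously performed surgeries). A third, more technical obstacle is handling flows that carry perfect fits, for which the veering-triangulation dictionary only applies after drilling finitely many closed orbits; because almost equivalence already permits such drilling, this should be surmountable, but the bookkeeping must be checked carefully. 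Given these difficulties, a more modest intermediate goal is to verify connectivity within restricted subclasses---for instance within a fixed hyperbolic commensurability class---which would already yield new almost equivalence results beyond those directly obtainable from \Cref{thm:horsuralmostequiv}.
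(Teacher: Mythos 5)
The statement you were asked to prove is \Cref{conj:Ghys}, which the paper records as an \emph{open conjecture} of Fried and Ghys; the paper gives no proof of it, and only announces (for a planned sequel \cite{Tsa24}) partial progress obtained by applying \Cref{thm:horsuralmostequiv} to show that certain restricted families of transitive Anosov flows are almost equivalent to suspension flows. Your proposal is honest that it is a program rather than a proof, and in spirit it matches what the paper itself envisions: use \Cref{thm:horsuralmostequiv} (plus invariance of the almost equivalence class under Goodman--Fried surgery on closed orbits) to move within an almost equivalence class, ideally by translating the problem into navigability of veering triangulations under horizontal surgery as in \Cref{subsec:vt}.

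As a proof, however, there is a genuine gap, and it is exactly where you locate it: the connectivity statement --- that any two transitive Anosov flows with orientable stable and unstable bundles are joined by a finite chain of horizontal Goodman surgeries --- is not established anywhere, and it carries essentially the entire content of the conjecture. Nothing in the paper (nor in \Cref{conj:vthorsur}, which is itself conjectural) supplies a mechanism for producing such a chain for an arbitrary pair of flows, and the reduction to a ``purely combinatorial navigability statement'' for veering triangulations is also unproven and plausibly just as hard. One small point in your favor that you slightly overcomplicate: since almost equivalence is a symmetric and transitive relation, you do not need the inverse of a horizontal Goodman surgery to itself be such a surgery --- any chain of surgeries performed in either direction preserves the almost equivalence class --- so the ``undoing previous surgeries'' worry is not where the difficulty lies. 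The difficulty is, and remains, the existence of the chain; until that is supplied, the argument proves nothing beyond what \Cref{thm:horsuralmostequiv} already gives.
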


In a planned sequel \cite{Tsa24}, we will use \Cref{thm:horsuralmostequiv} to obtain partial progress towards \Cref{conj:Ghys}, showing that certain families of transitive Anosov flows are almost equivalent to suspension Anosov flows.

We will also provide lots of examples in this paper. In \Cref{subsec:curveeg}, we discuss examples of horizontal surgery curves. In \Cref{subsec:horsureg}, we apply \Cref{constr:introhorsur} to a subset of these examples to construct various (pseudo-)Anosov flows. One can then apply \Cref{thm:horsuralmostequiv} to obtain almost equivalence results concerning these flows. We give a few sample corollaries below. We refer to \Cref{subsec:curveeg} for the definitions of `straight curve', `scalloped transverse torus', and `Reebless non-scalloped transverse torus'.

\begin{cor} \label{cor:intropAmap}
Let $S$ be a closed oriented surface and let $f:S \to S$ be an orientation preserving pseudo-Anosov map. Let $c \subset S$ be a straight curve with positive/negative slope. Then for every positive/negative integer $n$, respectively, $f \tau^n_c$ is a pseudo-Anosov map.

Moreover, the suspension flow of $f \tau^n_c$ is almost equivalent to the suspension flow of $f$.
\end{cor}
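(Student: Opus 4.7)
The plan is to realize the corollary as a direct application of \Cref{constr:introhorsur} and \Cref{thm:horsuralmostequiv} to the suspension flow $\phi^t$ of $f$ on the mapping torus $M_f$. Since $f$ is pseudo-Anosov, $\phi^t$ is a transitive pseudo-Anosov flow on $M_f$; its stable/unstable line bundles, restricted to any fiber $S$, are tangent to the invariant foliations of $f$, and together they span $TS$. Consequently, viewing $c \subset S$ as a curve in $M_f$, its tangent field already lies in the stable/unstable $2$-plane, so the hypothesis that $c$ is a straight curve of positive (resp.\ negative) slope for $f$ translates directly to $c$ being a positive (resp.\ negative) curve with steady tangent field for $\phi^t$. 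Indeed, the only $t>0$ for which $\phi^t$ sends a point of $c$ back into $c$ are integer multiples $kT$ of the period, and $\phi^{kT}|_S = f^k$, so the steadiness condition on $\phi^t$ reduces to a slope inequality involving $df^k$, which one expects to be precisely the definition of straightness given in \Cref{subsec:curveeg}.

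Applying \Cref{constr:introhorsur} then yields, for every $n$ of the appropriate sign, a pseudo-Anosov flow $\phi^t_{\frac{1}{n}}(c)$ on the $\frac{1}{n}$-Dehn surgery of $M_f$ along $c$. Since $c$ lies in the fiber $S$, the preferred longitude used in the construction agrees with the fiber framing. A classical computation shows that $\frac{1}{n}$-surgery on a simple closed curve in a fiber, with respect to the fiber framing, produces the mapping torus of the monodromy post-composed with $\tau_c^n$; so the underlying $3$-manifold is $M_{f\tau_c^n}$. Taking the annulus $A$ in \Cref{constr:introhorsur} to be an annular neighborhood of $c$ inside $S$ (which is automatically transverse to the flow), the cut-and-reglue operation modifies the first return map of $\phi^t$ to $S$ precisely by an $n$-fold Dehn twist along $c$. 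Hence the reglued flow is orbit equivalent to the suspension flow of $f\tau_c^n$; in particular, since a surface map is pseudo-Anosov if and only if its suspension flow is, this proves that $f\tau_c^n$ is pseudo-Anosov.

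The second assertion of the corollary is now immediate: \Cref{thm:horsuralmostequiv} applied to the transitive flow $\phi^t$ gives that $\phi^t_{\frac{1}{n}}(c)$ is almost equivalent to $\phi^t$, and by the identification above this says the suspension flow of $f\tau_c^n$ is almost equivalent to the suspension flow of $f$. The main obstacle I anticipate is the identification of $\phi^t_{\frac{1}{n}}(c)$ with the suspension of $f\tau_c^n$: one must verify that \Cref{constr:introhorsur} can indeed be carried out with an annulus $A$ that is a subsurface of $S$ (or at least $C^0$-close to one), and then carefully compare the regluing Dehn twist prescribed by the construction with the classical Dehn twist on $S$, matching both signs and framing conventions so that the abstract surgery coefficient $\tfrac{1}{n}$ really corresponds to the $n$-th power of $\tau_c$.
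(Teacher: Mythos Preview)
Your proposal is correct and matches the paper's approach: the paper verifies that a straight curve (meaning one of constant slope in the flat metric $d(\mu^s)^2+d(\mu^u)^2$) is a horizontal surgery curve via an explicit instantaneous metric on the mapping torus, identifies $\phi^t_{1/n}(c)$ with the suspension of $f\tau_c^n$ by taking a section $S_0$ just below $c$ disjoint from the surgery annulus so the first return map becomes $f\tau_c^n$, and then invokes \Cref{thm:horsuralmostequiv}. The one place you hedge --- the definition of ``straight'' --- resolves cleanly once you know it means constant slope, which makes the steadiness check at each crossing $(x,f^k(x),k)$ immediate since $df^k$ multiplies slopes by $\lambda^{-2k}$.
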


\begin{cor} \label{cor:introscalloptorus}
Let $\phi^t$ be a pseudo-Anosov flow on a closed oriented $3$-manifold $M$. Suppose $T$ is a scalloped transverse torus. Furthermore, suppose that each closed leaf $\ell^s$ of the induced stable foliation on $T$ intersects each closed leaf $\ell^u$ of the induced unstable foliation on $T$ exactly once. In other words, with respect to fixed bases $(\ell^s, m^s)$ and $(\ell^u, m^u)$, the identity map on $T$ is represented by some matrix 
$\begin{bmatrix}
a & b \\
-1 & d
\end{bmatrix}$.

Then for every matrix of the form
$\begin{bmatrix}
m & n \\
p & q
\end{bmatrix}$
where $p < 0$, one can construct a pseudo-Anosov flow by cutting along $T$ and regluing using a map with the given matrix representative. 
Moreover, if $\phi^t$ is transitive, then this flow is almost equivalent to $\phi^t$.
\end{cor}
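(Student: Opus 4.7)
My plan is to realize the prescribed regluing as a composition of horizontal Goodman surgeries along carefully chosen curves in $T$, and then invoke \Cref{thm:horsuralmostequiv} at each step. First, I would characterize the horizontal surgery curves on $T$: since $T$ is transverse to $\phi^t$ and scalloped, the induced stable and unstable foliations on $T$ have closed leaves $\ell^s, \ell^u$. A simple closed curve on $T$ whose homology class lies in one of the two opposite open cones bounded (in $H_1(T;\RR)$) by the directions of $\ell^s$ and $\ell^u$ is a positive curve; the other pair of cones yields negative curves. The steady tangent field condition is automatic for a curve embedded in a transverse torus, since two points of such a curve cannot lie on a common flow line.

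Next, I would decompose the change in regluing --- that is, the element $F \in SL_2(\ZZ)$ carrying the identity representative $\begin{bmatrix} a & b \\ -1 & d \end{bmatrix}$ to the target $\begin{bmatrix} m & n \\ p & q \end{bmatrix}$ --- into a product of Dehn twists along positive and negative curves, with signs of the coefficients matching the signs of the curves. The hypothesis $p < 0$ is precisely what permits a continued-fraction-style reduction of $F$ that keeps every intermediate matrix in the allowed region, alternating between twists along classes in each cone. I would then apply \Cref{constr:introhorsur} to each successive twist, obtaining a sequence of pseudo-Anosov flows terminating at the flow obtained by cutting $T$ and regluing with the specified matrix; at each stage, the transverse torus and its scalloped structure persist in the new flow (the surgery only modifies an annular neighborhood of one curve), so the next curve in the decomposition remains a horizontal surgery curve.

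For the almost equivalence claim, I would iterate \Cref{thm:horsuralmostequiv} along this sequence. Transitivity is preserved under almost equivalence, since a dense orbit on the complement of a finite collection of closed orbits is dense in the ambient manifold, so the hypothesis of the theorem applies at each stage. Composing the resulting almost equivalences yields an almost equivalence between the final flow and $\phi^t$.

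The main obstacle I anticipate is the combinatorial decomposition step: verifying that $p < 0$ is exactly the constraint that admits a valid product-of-Dehn-twists expression with compatible signs, and tracking the action of each twist on the matrix to ensure the reduction terminates without leaving the admissible region. I expect this to amount to an adapted Euclidean algorithm on $SL_2(\ZZ)$, but the choice of moves at each step is likely delicate, since naively applying the standard algorithm may force one through matrices with the wrong sign of $p$.
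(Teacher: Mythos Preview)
Your overall strategy matches the paper's: realize the target regluing as an iterated sequence of horizontal Goodman surgeries along curves on $T$, check that $T$ remains a scalloped transverse torus after each surgery, and then iterate \Cref{thm:horsuralmostequiv}. The paper packages the combinatorial decomposition you anticipate as \Cref{lemma:scalloptorusgraphstructure}, which shows that a certain directed graph on matrices with bottom-left entry $\leq -1$ is accessible from the $c=-1$ level, and that the $c=-1$ level is strongly connected; this is exactly the ``adapted Euclidean algorithm'' you describe.

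There is, however, a genuine gap in your first step. You assert that steadiness is automatic for an embedded curve on a transverse torus because ``two points of such a curve cannot lie on a common flow line.'' This is false: orbits of $\phi^t$ typically meet $T$ repeatedly (indeed the first return map to $T$ is the whole point), so a curve $c\subset T$ can certainly have crossings $(x,y,t)$ with $x,y\in c$ distinct and $y=\phi^t(x)$. An arbitrary positive curve on $T$ need not be steady. The paper does not claim steadiness is automatic; instead, \Cref{prop:scalloptoruscurve} constructs, for each admissible primitive homology class $\alpha$, a curve on $T$ of \emph{constant slope} with respect to an instantaneous metric, via an intermediate-value argument on first return maps of line fields $L_m$. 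Constant-slope curves are then steady by \Cref{lemma:constantslope}. You would need to supply this (or an equivalent) construction.

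A second, smaller point: your justification that $T$ remains scalloped after surgery (``the surgery only modifies an annular neighborhood'') is not sufficient on its own. The paper argues this in \Cref{eg:scalloptorussur} by tracking half-leaves of $\Lambda^u$ meeting $T_-$ in closed $T^u$-leaves and checking that the bottom-left matrix entry stays negative after each twist; this is what guarantees the closed $T^s$- and $T^u$-leaves remain non-parallel.
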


\begin{cor} \label{cor:introinteriortorus}
Let $\phi^t$ be a pseudo-Anosov flow on a closed oriented $3$-manifold $M$. Suppose $T$ is a Reebless non-scalloped transverse torus. 
Then one can construct a pseudo-Anosov flow by cutting along $T$ and regluing it along some map, so that $T$ becomes a scalloped torus and so that each closed leaf $\ell^s$ of the induced stable foliation on $T$ intersects each closed leaf $\ell^u$ of the induced unstable foliation on $T$ exactly once. 
Moreover, if $\phi^t$ is transitive, then this flow is almost equivalent to $\phi^t$.
\end{cor}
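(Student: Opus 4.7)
The plan is to produce the required surgery as a single application of horizontal Goodman surgery along a positive or negative horizontal surgery curve $c \subset T$, using the hypothesis that $T$ is Reebless and non-scalloped.

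First, I would analyze the combinatorial structure on $T$. Since $T$ is Reebless, the induced stable foliation $\mathcal{F}^s_T$ and the induced unstable foliation $\mathcal{F}^u_T$ each contain an essential closed leaf; since $T$ is non-scalloped, the closed leaves of $\mathcal{F}^s_T$ are isotopic in $T$ to those of $\mathcal{F}^u_T$. Pick a common primitive class $\ell \in H_1(T;\ZZ)$ representing these closed leaves, and complete to a basis $(\ell, m)$ of $H_1(T;\ZZ)$. With respect to this basis, the algebraic intersection number of a closed stable leaf with a closed unstable leaf is zero, which is exactly what we need to rectify.

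Second, I would construct a positive (or negative) simple closed curve $c \subset T$ of homology class $m$ (or more generally any class with $\langle \ell, c \rangle = \pm 1$), and verify that $c$ has steady tangent field. I would take $c$ to be a curve transverse to both $\mathcal{F}^s_T$ and $\mathcal{F}^u_T$ that moves monotonically across $T$ in the $m$ direction, so that its tangent $Tc$ projects with consistently positive (or negative) slope onto the plane $E^s \oplus E^u \subset TM$. Verifying the steady tangent field condition amounts to checking that whenever $c$ passes over itself under $\phi^t$, the overpass has a strictly greater slope than the underpass; this should follow from the Anosov splitting, since $d\phi^t$ rotates vectors in $E^s \oplus E^u$ strictly toward $E^u$ in the appropriate sense.

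Third, apply \Cref{constr:introhorsur} to $c$ with coefficient $\frac{1}{n}$ for $n = \pm 1$ matching the sign of $c$. The resulting pseudo-Anosov flow $\phi^t_{\frac{1}{n}}(c)$ is obtained by cutting along a transverse annulus $A$ containing $c$ and regluing by a Dehn twist; the net effect on $T$ is to modify the identity gluing by a Dehn twist along $c$. After surgery, the closed leaves of the new induced stable foliation on $T$ remain in the class $\ell$, while the closed leaves of the new induced unstable foliation shift by the Dehn twist to the class $\ell \pm m$. These classes are distinct and intersect algebraically once, so $T$ is now scalloped with the required single-intersection property.

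Finally, when $\phi^t$ is transitive, invoke \Cref{thm:horsuralmostequiv} to conclude that $\phi^t_{\frac{1}{n}}(c)$ is almost equivalent to $\phi^t$. The main obstacle I anticipate is the construction of the curve $c$ together with the verification of its steady tangent field condition; this requires a careful analysis of the flow's action on tangent vectors along $T$, leveraging transversality of $T$ and the Anosov splitting, and must be set up uniformly so that the positive/negative casework and sign conventions align correctly.
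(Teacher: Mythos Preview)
Your approach is essentially the same as the paper's: the paper combines \Cref{prop:interiortoruscurve} (existence of a horizontal surgery curve on $T$), \Cref{prop:interiortorussur} (the surgered flow is pseudo-Anosov and $T$ becomes scalloped), and \Cref{thm:horsuralmostequiv}.

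Two points deserve sharpening. First, your steadiness argument is not correct as written: the fact that $d\phi^t$ rotates vectors toward $E^u$ only bounds the pushed-forward slope of the underpass by the original slope at the underpass, not by the slope at the overpass; \Cref{noneg:braid} shows that merely having positive slope is insufficient. The paper resolves this (your anticipated obstacle) by producing $c$ of \emph{constant} slope with respect to an instantaneous metric via an intermediate value theorem argument on trajectories of constant-slope line fields in the universal cover of $T$ (\Cref{prop:interiortoruscurve}), after which steadiness is automatic by \Cref{lemma:constantslope}. Second, your assertion that the closed $T^s$-leaves ``remain in the class $\ell$'' while the closed $T^u$-leaves ``shift'' is not obvious, since surgery alters the flow and hence a priori both induced foliations on $T$; the paper argues this by observing that the relevant half-leaves of $\Lambda^{s/u}$ meeting $T_\pm$ in closed leaves lie on annuli that are unaffected by the surgery, so each persists in its original class on the appropriate side, and then the new gluing matrix $\begin{bmatrix}1&0\\n&1\end{bmatrix}$ separates them.
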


\Cref{cor:intropAmap} generalizes \cite[Theorem B1]{DS19} to pseudo-Anosov maps, yet we lose the control on the number of closed orbits that one has to drill out.
\Cref{cor:introscalloptorus} generalizes \cite[Proposition 2.7]{Bar98}.

For the rest of this introduction, we will go into more details on choosing to work with pseudo-Anosov flows, the sketches of proofs of \Cref{constr:introhorsur} and \Cref{thm:horsuralmostequiv}, and we will address a gap in \cite{Bar98}.

\subsection{Pseudo-Anosov flows} \label{subsec:intropa}

Pseudo-Anosov flows were introduced by Thurston in the 1980s. These generalize Anosov flows by allowing finitely many singular orbits. For the small price of losing smoothness along these singular orbits, one gains much broader applicability, since pseudo-Anosov flows end up being vastly more prevalent among $3$-manifolds than Anosov flows. See, for example, \cite{Mos96}, \cite{Cal00}, and \cite{BF15}. As a result, it is natural to work with pseudo-Anosov flows more generally when coming from the perspective of applications in $3$-manifold topology.

The dynamical system literature, on the other hand, has traditionally focused on Anosov flows. For most purposes, this is not a big issue. Many results for Anosov flows and their proofs generalize immediately to pseudo-Anosov flows. This is the case, for example, for the work in \cite{Goo83} and \cite{Fri83}.

One result about Anosov flows that does not generalize directly to pseudo-Anosov flows, however, is structural stability. More precisely, the following result, proven in, for example, \cite{KM73}, becomes false if one replaces `Anosov flow' by `pseudo-Anosov flow'.

\begin{thm} \label{thm:anosovstructuralstability}
Let $\phi^t$ be an Anosov flow on a closed $3$-manifold $M$. There exists $\epsilon>0$ so that for every flow $\overline{\phi}^t$ on $M$, if the generating vector fields $\dot{\phi}$ and $\dot{\overline{\phi}}$ are $\epsilon$ close in the $C^1$-topology, then $\phi^t$ and $\overline{\phi}^t$ are orbit equivalent through a homeomorphism that is isotopic to identity. 
\end{thm}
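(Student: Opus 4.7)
The plan is to adapt the classical Anosov--Moser argument: use the $C^1$-openness of hyperbolicity to get persistence of the invariant splitting and weak foliations, and then construct the orbit equivalence $h$ as the fixed point of a contraction built out of the local holonomies of those foliations.

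\emph{Step 1 (persistence of the hyperbolic structure).} The Anosov condition is equivalent to the existence of a strictly invariant adapted cone field around $TM = E^s \oplus E^u \oplus \langle \dot\phi\rangle$, which is manifestly a $C^1$-open condition on the generating vector field. Choose $\epsilon$ small enough that this cone field is also strictly invariant under $d\overline\phi^{\pm 1}$; then $\overline\phi^t$ is itself Anosov, with a hyperbolic splitting $TM = \overline E^s \oplus \overline E^u \oplus \langle \dot{\overline\phi}\rangle$ whose summands lie in an arbitrarily small neighborhood (in the Grassmann bundle) of the unperturbed splitting. Standard Hadamard--Perron theory then produces continuous weak-stable and weak-unstable foliations $\overline{\mathcal W}^{cs}, \overline{\mathcal W}^{cu}$ for $\overline\phi^t$, with $C^1$ leaves that are uniformly $C^0$-close to those of $\phi^t$ as $\epsilon \to 0$.

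\emph{Step 2 (the orbit equivalence).} Build $h$ as the fixed point of a contraction on the Banach space $X_\delta$ of continuous maps $M \to M$ at $C^0$-distance at most $\delta$ from $\mathrm{id}_M$. Following Moser, define an operator whose defining equation forces any fixed point to carry each $\overline\phi$-orbit into a $\phi$-orbit, with the time reparametrization determined implicitly. The crucial input is that after a short forward time under $\overline\phi^t$ one can project back to a fixed local transversal using the holonomies of $\mathcal W^s$ and $\mathcal W^u$; the uniform exponential contraction/expansion along these foliations forces the operator to be a strict contraction once $\epsilon$ is small. Running the symmetric argument with the roles of $\phi^t$ and $\overline\phi^t$ reversed yields a two-sided inverse, so $h$ is a homeomorphism. (Equivalently, one may invoke the shadowing lemma: each $\overline\phi$-orbit, viewed as a pseudo-orbit for $\phi^t$, is shadowed by a unique $\phi$-orbit, and the assignment is the desired $h$.)

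\emph{Step 3 (isotopy to identity).} Because $h$ is $C^0$-close to $\mathrm{id}_M$, one can interpolate via the Riemannian exponential map to obtain a continuous family of maps, each still close to the identity and hence a homeomorphism by the inverse function argument applied in local charts. This produces the required isotopy from $\mathrm{id}_M$ to $h$.

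The technical heart is Step 2: one has to set up the function space and operator so that (a) the image lies in $X_\delta$, and (b) the contraction constant is genuinely less than $1$ and uniform as $\epsilon \to 0$. This is where the persistent foliations from Step 1 are used quantitatively, together with the uniform hyperbolicity estimates available by compactness of $M$. The rest --- openness of hyperbolicity, integrability of the invariant distributions, and the final isotopy --- is standard once this step is in hand, and nothing in the argument uses smoothness beyond $C^1$, which is why the statement is false for pseudo-Anosov flows (where smoothness of $\dot\phi$ fails at singular orbits and the cone-field argument collapses there).
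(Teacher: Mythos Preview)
The paper does not actually prove this theorem; it is quoted as a classical result (attributed to \cite{KM73}) and serves only as motivation for the pseudo-Anosov analogue \Cref{thm:structuralstability}, which is what the paper proves in \Cref{sec:structuralstability}. Your Steps~1 and~2 are a reasonable outline of the standard Anosov--Moser/shadowing approach and are broadly in line with how such results are obtained in the literature.

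Step~3, however, has a genuine gap. The map $h$ produced in Step~2 is only a homeomorphism, not $C^1$, so no ``inverse function argument in local charts'' is available for the intermediate maps. Concretely, if $h$ is merely $C^0$-close to the identity, the geodesic interpolation $h_t(x) = \exp_x\bigl(t\,\exp_x^{-1}(h(x))\bigr)$ need not be injective for $0 < t < 1$: two points that $h$ separates can collide midway through the interpolation. The correct route is to invoke the local contractibility of $\Homeo(M)$ (\v{C}ernavski\u{\i}, Edwards--Kirby): any homeomorphism sufficiently $C^0$-close to the identity is isotopic to it through homeomorphisms. For comparison, when the paper proves the pseudo-Anosov version it cannot guarantee $C^0$-smallness of the orbit equivalence (it passes through orbit spaces and \cite{Bar95}), and so instead deduces the isotopy from the much deeper fact that a homeomorphism of a closed orientable 3-manifold acting trivially on $\pi_1$ is isotopic to the identity (Waldhausen, Scott, Boileau--Otal, Gabai--Meyerhoff--Thurston, plus geometrization). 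That machinery would also close your gap, but it is heavy overkill in the Anosov case where $h$ is already $C^0$-small.
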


See \cite[Section 3.5]{Mos96} for a notion of `dynamic blow up', which produces counter-examples in the pseudo-Anosov flow setting.

This is a problem for us because we need some sort of structural stability result in order to show the second statement in \Cref{constr:introhorsur}, see the discussion in the next subsection.
One approach for generalizing \Cref{thm:anosovstructuralstability} correctly to pseudo-Anosov flows might be to restrict to perturbations supported away from the singular orbits, yet whether this indeed works is unclear from the proofs of \Cref{thm:anosovstructuralstability} in the literature. Essentially, this is because those proofs construct the homeomorphism through analytic methods, which interact poorly with the existence of singular orbits.

What we end up doing in this paper is further restricting to perturbations that are supported away from specified neighborhoods of the singular orbits. With this stronger restriction, we are able to use a combination of results from axiom A flows and topological constructions to show the following theorem.

\begin{thm} \label{thm:structuralstability}
Let $\phi^t$ be a pseudo-Anosov flow on a closed $3$-manifold $M$. Let $\nu$ be a fixed neighborhood of $\sing(\phi^t)$. There exists $\epsilon>0$ so that for every pseudo-Anosov flow $\overline{\phi}^t$ on $M$, if the generating vector fields $\dot{\phi}$ and $\dot{\overline{\phi}}$ are $\epsilon$ close in the $C^1$-topology and if $\dot{\phi} = \dot{\overline{\phi}}$ in $\nu$, then $\phi^t$ and $\overline{\phi}^t$ are orbit equivalent through a homeomorphism that is isotopic to identity.
\end{thm}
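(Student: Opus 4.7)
The plan is to reduce to classical structural stability on a compact axiom A region and then glue with the identity on the singular neighborhood. Choose an intermediate open set $\nu'$ with $\sing(\phi^t) \subset \nu' \Subset \nu$, and set $K = \overline{M \setminus \nu'}$, a compact $3$-manifold with torus boundary. The pseudo-Anosov stable/unstable line bundles extend continuously over $K$ (singularities live only along $\sing(\phi^t) \subset \nu'$), and by compactness of $K$ the contraction and expansion rates are uniform. Thus $\phi^t|_K$ is a uniformly hyperbolic flow in the axiom A sense, on a manifold with boundary.

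For $\epsilon$ small enough, a standard cone-field argument gives $\overline{\phi}^t|_K$ a uniform hyperbolic splitting as well, and the hypothesis $\dot{\phi} = \dot{\overline{\phi}}$ on $\nu$ in particular forces the two flows to coincide on the collar $\nu \setminus \overline{\nu'}$ of $\partial K$. I would then apply a Bowen-style shadow lemma, or the Robbin--de~Melo structural stability theorem for axiom A flows suitably localized, to each $\phi^t$-orbit of $M$ after decomposing it into arcs in $K$ and arcs in $\nu'$. Each $K$-arc is shadowed by a unique $\overline{\phi}^t$-arc that is $C^0$-close to it. Crucially, since the two flows agree on the collar, uniqueness of the shadow forces the shadowing arc to coincide with the original arc whenever either enters or exits $K$. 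This produces a well-defined map $h : M \to M$, equal to the identity on $\nu'$ and $C^0$-close to the identity on $K$, carrying $\phi^t$-orbits to $\overline{\phi}^t$-orbits. Expansiveness of pseudo-Anosov flows together with local uniqueness of shadows then gives that $h$ is a homeomorphism, and since $h$ is globally $C^0$-close to the identity, a short straight-line isotopy in a tubular coordinate realizes $h \simeq \mathrm{id}$.

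The principal technical obstacle is adapting axiom A structural stability to our manifold-with-boundary setting in a way that preserves the identity on a collar. The shadow lemma is classically stated for closed invariant sets; here orbits can traverse $K$ infinitely often, and one needs the shadow of each traversal to agree exactly with the $\phi^t$-orbit at the moment of boundary crossing so that successive pieces concatenate consistently. This is precisely why the hypothesis $\dot{\phi} = \dot{\overline{\phi}}$ on $\nu$, rather than merely $C^1$-closeness, is required: it pins down the shadow near $\partial K$. Handling orbits trapped in $K$ (for instance closed orbits of $\phi^t$ disjoint from $\nu$) and verifying continuity of $h$ at points whose forward or backward orbit accumulates on $\partial \nu'$ are the delicate points, and I expect these to occupy most of the technical work.
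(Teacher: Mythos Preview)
Your gluing step contains a genuine gap. The assertion that ``uniqueness of the shadow forces the shadowing arc to coincide with the original arc whenever either enters or exits $K$'' is not correct. If you shadow a \emph{finite} $\phi^t$-arc in $K$ by a $\overline{\phi}^t$-arc, the shadow is not unique: any $\overline{\phi}^t$-orbit starting sufficiently near the initial point works, so there is no mechanism that singles out the one landing exactly on the boundary point. If instead you shadow the \emph{entire} bi-infinite $\phi^t$-orbit (which is where uniqueness actually comes from), then the unique $\overline{\phi}^t$-shadow is a nearby orbit, but there is no reason it should coincide with the original orbit on $\nu'$, even though the two flows agree there. Concretely, the $\overline{\phi}^t$-orbit through a crossing point $p \in \partial \nu'$ will typically exit $K$ at a point different from where the $\phi^t$-orbit through $p$ exits, so it is not the shadow; the true shadow enters and exits at points merely close to, not equal to, those of the original. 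Hence your proposed $h$ --- identity on $\nu'$, shadow on $K$ --- is discontinuous across $\partial \nu'$, and the defect is not just at accumulation points but at generic crossings.

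The paper sidesteps this boundary issue entirely. Rather than restricting to $K$, it performs the double DA operation inside $\nu$ to convert $\phi^t$ into a smooth Axiom A flow $\psi^t$ on the \emph{closed} manifold $M$; since $\dot{\phi} = \dot{\overline{\phi}}$ on $\nu$, the identical modification turns $\overline{\phi}^t$ into an Axiom A flow $\overline{\psi}^t$ still $\epsilon$-close to $\psi^t$. Robinson's global structural stability then yields an orbit equivalence $\psi^t \cong \overline{\psi}^t$ close to the identity. To descend back to the pseudo-Anosov flows the paper does not attempt any pointwise gluing; instead it passes to orbit spaces, collapses the blown-up regions to recover a $\pi_1$-equivariant homeomorphism $\mathcal{O} \to \overline{\mathcal{O}}$, invokes Barbot's theorem to promote this to an orbit equivalence, and finally uses geometrization together with Waldhausen--Scott--Boileau--Otal--Gabai--Meyerhoff--Thurston to conclude that a map inducing the identity on $\pi_1$ is isotopic to the identity. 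The key conceptual difference is that the paper never tries to make the orbit equivalence literally equal to the identity on any region.
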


We prove \Cref{thm:structuralstability} in \Cref{sec:structuralstability}. To our knowledge, \Cref{thm:structuralstability} is the first structural stability statement made for pseudo-Anosov flows in the literature, even though it is likely already known to some experts. We state it here in the hopes of providing a useful reference.

\subsection{Sketch of proof of \Cref{constr:introhorsur}} \label{subsec:introhorsurproof}

There are two parts to \Cref{constr:introhorsur}:
\begin{enumerate}[label=(\Roman*)]
    \item That one can locate a suitable annulus and a suitable regluing map to perform the surgery operation.
    \item That the orbit equivalence class of the surgered flow only depends on $c$ and $n$.
\end{enumerate}

For part (I), we define the notion of a surgery annulus and that of a surgery map in \Cref{subsec:horsurgluing}. A \textbf{surgery annulus} is an annulus transverse to the flow that carries a foliation $\mathcal{H}$ by closed curves and a foliation $\mathcal{K}$ by non-separating arcs, so that $\mathcal{H}$ and $\mathcal{K}$ have steady tangent fields, in the same sense as that for a positive/negative curve defined above. The foliations $\mathcal{H}$ and $\mathcal{K}$ allows us to identify $A \cong S^1 \times I$ in some way, and we define \textbf{surgery maps} to be those maps that shear along the $S^1$ direction in some specific way.

The fact that $\mathcal{H}$ and $\mathcal{K}$ have steady tangent fields plays a large role in ensuring that the surgery map compounds the dynamics of the (pseudo-)Anosov flow. More precisely, we will show that the surgered flow is Anosov away from the singular orbits via a cone field argument in \Cref{subsec:horsurpA}. The cones we use are defined in terms of $T\mathcal{H}$ and $T\mathcal{K}$, and the fact that these are steady implies that the cones are contained in one another in the correct way. Aside from this, the workings of the argument are very similar to that employed in \cite{Goo83}, even though we fill in more details. 

One then has to show that the surgered flow has the correct dynamics at the singular orbits. From the way we define pseudo-Anosov flows, it suffices to find an orbit equivalence between the original and surgered flow near each singular orbit. We do this with the help of a local section in \Cref{subsec:horsurpA}.

For part (II), we have to show that the orbit equivalence class of the surgered flow is independent on the choices of a surgery annulus and a surgery map. The main tool for this part of the proof is \Cref{thm:structuralstability}. We essentially show that the space of the various choices that one can make is path-connected, then apply \Cref{thm:structuralstability} along paths of pseudo-Anosov flows to obtain the desired orbit equivalence.

It is not hard to see that \Cref{thm:structuralstability} also implies that the orbit equivalence class of $\phi^t_{\frac{1}{n}}(c)$ is invariant under isotopies of $c$ through horizontal surgery curves. This is recorded as \Cref{cor:invcurveisotopy} in \Cref{subsec:horsurinvar}.

\subsection{A gap in \cite{Bar98}}

An impetus for us in writing down a careful exposition of \Cref{constr:introhorsur} is a gap that is present in \cite{Bar98}. The statement of \cite[Theorem 2.1]{Bar98} is essentially that the first part of \Cref{constr:introhorsur} holds for positive/negative curves in general, without any assumptions on the steadiness of the tangent field. However, this statement is false. In \Cref{noneg:braid} we demonstrate an example of a positive curve $c$ so that cutting along an annulus $A$ containing $c$ and regluing with a Dehn twist with coefficient $1$ returns a reducible $3$-manifold. Meanwhile, it is a classical fact that reducible $3$-manifolds cannot admit Anosov flows.

We should remark that the particular application of \cite[Theorem 2.1]{Bar98} in that paper turns out to be valid. In fact, \Cref{cor:introscalloptorus} is based on the same ideas as the material in \cite[Section 5]{Bar98}. Also, the results in \cite{Bar98} are subsumed by more recent work of Barbot and Fenley in \cite{BF13} and \cite{BF15}, which are developed using different techniques.

\subsection{Sketch of proof of \Cref{thm:horsuralmostequiv}} \label{subsec:introalmostequivproof}

A brief overview of the proof is as follows.
\begin{itemize}
    \item We first perform surgeries along closed orbits so that the horizontal surgery curve $c$ is homotopic to a closed orbit $\gamma$. It is a well-known fact that performing surgeries along closed orbits does not change the almost equivalence class of the flow.
    \item The homotopy between $c$ and $\gamma$ is in general not an isotopy --- generically there are finitely many times where the homotopy goes through a curve that self-intersects. Nevertheless, we arrange it so that the curves in the homotopy that do not self-intersect are horizontal surgery curves.
    \item It follows from \Cref{cor:invcurveisotopy} that, away from the self-intersection times, the orbit equivalence class of $\phi^t_{\frac{1}{n}}(c)$ does not change along the homotopy.
    \item At the self-intersection times, we perform a local computation (\Cref{fig:localmoves1} and \Cref{fig:localmoves2}) to show that the surgered flows immediately before and after are almost equivalent. 
    \item Combining the previous two points, we conclude that $\phi^t_{\frac{1}{n}}(c)$ is almost equivalent to the flow obtained by performing surgery on $\phi^t$ along $\gamma$. As mentioned in the first point, the latter flow is almost equivalent to $\phi^t$.
\end{itemize}

There are many technical details in practice for carrying out this proof. The main source of these technicalities comes from the fact that a smooth curve passing through an orbit $\gamma$ is no longer smooth after we perform surgery on $\gamma$. Indeed, in general it is not even continuous since it gets sheared at $\gamma$, but even if one reconnects the ends by isotoping along $\gamma$, the rejoined curve is only piecewise smooth. As a result, we have to generalize the machinery to these piecewise smooth curves. We do so in \Cref{sec:piecewisesmooth}.

Actually, we only carry out this task for a restricted class of piecewise smooth curves. This helps simplify the analysis in our proofs, but still, the arguments get rather involved. 
In any case, with this generalization done, the proof itself is contained in \Cref{sec:almostequiv}.

\subsection*{Outline of paper}

In \Cref{sec:prelim}, we recall some basic definitions on pseudo-Anosov flows. In \Cref{sec:horsurcurve}, we define horizontal surgery curves and discuss examples of these. In \Cref{sec:horsur}, we explain the horizontal Goodman surgery operation, proving \Cref{constr:introhorsur}.

In \Cref{sec:piecewisesmooth}, we develop a partial generalization of horizontal Goodman surgery to piecewise smooth curves. In \Cref{sec:almostequiv}, we prove \Cref{thm:horsuralmostequiv}.

In \Cref{sec:questions}, we discuss some future directions. Finally, in \Cref{sec:structuralstability}, we prove \Cref{thm:structuralstability}.

\subsection*{Acknowledgements}

We would like to thank Pierre Dehornoy for discussions during the \textit{Anosov dynamics} program at CIRM which lead to the results in this paper. We would like to thank the program organizers for enabling these discussions to take place. We would also like to thank Sergio Fenley, Jessica Purcell, and Mario Shannon for sharing their unpublished work with us. Finally, we would like to thank Thierry Barbot, Thomas Barthelm\'e, Anna Parlak, and Federico Salmoiraghi for helpful conversations. Part of this project was completed while the author is based at CIRGET. We would like to thank the center for its support.

\subsection*{Notational conventions}

Throughout this paper, 
\begin{itemize}
    \item $M$ will denote a closed oriented $3$-manifold.
    \item Let $c$ be a parametrized smooth embedded $1$-manifold (possibly with boundary) in $M$.
    \begin{itemize}
        \item $Tc$ will denote the tangent bundle of $c$. This is a sub-bundle of $TM$ defined over $c$.
        \item $\dot{c}$ will denote the derivative of $c$. This is a vector field defined over $c$.
    \end{itemize}
    \item Let $\phi^t$ be a smooth nonsingular flow on $M$.
    \begin{itemize}
        \item $T\phi$ will denote the tangent bundle of the flow lines of $\phi^t$. This is a sub-bundle of $TM$.
        \item $\dot{\phi}$ will denote the derivative of the flow lines of $\phi^t$. This is a vector field on $M$.
        \item $d\phi^t$ will denote the derivative of the diffeomorphism $\phi^t:M \to M$. This is a bundle map $TM \to TM$.
    \end{itemize}
\end{itemize}

\section{Preliminaries} \label{sec:prelim}

\subsection{Pseudo-Anosov flows} \label{subsec:pAflow}

We start by recalling the definition of an Anosov flow.

\begin{defn} \label{defn:Aflow}
An \textbf{Anosov flow} on a closed oriented $3$-manifold $M$ is a smooth flow $\phi^t$ for which there is a Riemannian metric $g$ and a continuous splitting of the tangent bundle into three $\phi^t$-invariant line bundles $TM=E^s \oplus T\phi \oplus E^u$ such that 
$$||d\phi^t(v)||_g < C \lambda^{-t} ||v||_g$$
for every $v \in E^s, t>0$, and 
$$||d\phi^t(v)||_g < C \lambda^t ||v||_g$$
for every $v \in E^u, t<0$, for some $C, \lambda>1$.
\end{defn}

A pseudo-Anosov flow is essentially an Anosov flow with finitely many singular orbits. The singular orbits are modelled after the following construction.

\begin{constr} \label{constr:phorbit}
Consider the map $\begin{pmatrix} \lambda & 0\\ 0 & \lambda^{-1} \end{pmatrix}: \mathbb{R}^2 \to \mathbb{R}^2$. This preserves the foliations of $\mathbb{R}^2$ by vertical and horizontal lines. Let $\phi_{n,0,\lambda}:\mathbb{R}^2 \to \mathbb{R}^2$ be the lift of this map over $z \mapsto z^{\frac{n}{2}}$ that preserves the lifts of the quadrants. (When $n$ is odd, one has to choose a branch of $z \mapsto z^{\frac{n}{2}}$ but it is easy to see that the result is independent of the choice.) Let $\phi_{n,k, \lambda}: \mathbb{R}^2 \to \mathbb{R}^2$ be the composition of $\phi_{n,0,\lambda}$ and rotating by $\frac{2\pi k}{n}$ anticlockwise. Also pull back the foliations of $\mathbb{R}^2$ by vertical and horizontal lines. The resulting two singular foliations are preserved by $\phi_{n,k,\lambda}$. We call them $\ell^s_n$ and $\ell^u_n$ respectively.

Let $\Phi_{n,k,\lambda}$ be the mapping torus of $\phi_{n,k, \lambda}$, $\Lambda^{s/u}$ be the suspension of $\ell^{s/u}_n$ respectively, and consider the suspension flow on $\Phi_{n,k, \lambda}$. We call the suspension of the origin the \textbf{pseudo-hyperbolic orbit} of $\Phi_{n,k,\lambda}$. 

We show an example where $n=3$ and $k=0$ in \Cref{fig:phorbit}. On the left we depict the dynamics of $\phi_{3,0, \lambda}$. On the right we illustrate a local picture of $\Phi_{3,0,\lambda}$ near the pseudo-hyperbolic orbit.

\begin{figure}
    \centering
\begingroup%
  \makeatletter%
  \providecommand\color[2][]{%
    \errmessage{(Inkscape) Color is used for the text in Inkscape, but the package 'color.sty' is not loaded}%
    \renewcommand\color[2][]{}%
  }%
  \providecommand\transparent[1]{%
    \errmessage{(Inkscape) Transparency is used (non-zero) for the text in Inkscape, but the package 'transparent.sty' is not loaded}%
    \renewcommand\transparent[1]{}%
  }%
  \providecommand\rotatebox[2]{#2}%
  \newcommand*\fsize{\dimexpr\f@size pt\relax}%
  \newcommand*\lineheight[1]{\fontsize{\fsize}{#1\fsize}\selectfont}%
  \ifx\svgwidth\undefined%
    \setlength{\unitlength}{201.95111613bp}%
    \ifx\svgscale\undefined%
      \relax%
    \else%
      \setlength{\unitlength}{\unitlength * \real{\svgscale}}%
    \fi%
  \else%
    \setlength{\unitlength}{\svgwidth}%
  \fi%
  \global\let\svgwidth\undefined%
  \global\let\svgscale\undefined%
  \makeatother%
  \begin{picture}(1,0.64757601)%
    \lineheight{1}%
    \setlength\tabcolsep{0pt}%
    \put(0,0){\includegraphics[width=\unitlength,page=1]{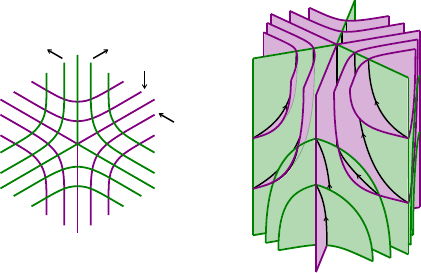}}%
    \put(0.16961672,0.54972479){\color[rgb]{0,0.50196078,0}\makebox(0,0)[lt]{\lineheight{1.25}\smash{\begin{tabular}[t]{l}$\ell^s_3$\end{tabular}}}}%
    \put(0.38601847,0.42264593){\color[rgb]{0.50196078,0,0.50196078}\makebox(0,0)[lt]{\lineheight{1.25}\smash{\begin{tabular}[t]{l}$\ell^u_3$\end{tabular}}}}%
  \end{picture}%
\endgroup%

    \caption{Left: The dynamics of $\phi_{3,0,\lambda}$. Right: A local picture of $\Phi_{3,0,\lambda}$}
    \label{fig:phorbit}
\end{figure}

\end{constr}

\begin{defn} \label{defn:pAflow}
A \textbf{pseudo-Anosov flow} on a closed oriented $3$-manifold $M$ is a continuous flow $\phi^t$ for which there is a path metric $d$ that is induced from a Riemannian metric $g$ away from a finite collection of closed orbits, which we call the \textbf{singular orbits}, such that:
\begin{itemize}
    \item The flow $\phi^t$ is smooth away from the union of singular orbits $\sing(\phi^t)$.
    \item Away from the singular orbits, there is a continuous splitting of the tangent bundle into three $\phi^t$-invariant line bundles $TM=E^s \oplus T\phi \oplus E^u$ such that 
    $$||d\phi^t(v)||_g < C \lambda^{-t} ||v||_g$$
    for every $v \in E^s, t>0$, and 
    $$||d\phi^t(v)||_g < C \lambda^t ||v||_g$$
    for every $v \in E^u, t<0$, for some $C, \lambda>1$.
    \item Each singular orbit $\gamma$ has a neighborhood $N$ and a map $f$ sending $N$ to a neighborhood of the pseudo-hyperbolic orbit in $\Phi_{n, k, \lambda}$, for some $n \geq 3$, such that
    \begin{itemize}
        \item $f$ is bi-Lipschitz on $N$ and smooth away from $\gamma$,
        \item $f$ preserves the orbits, and
        \item $f$ sends $E^s$ and $E^u$ to line bundles tangent to $\Lambda^s$ and $\Lambda^u$ respectively.
    \end{itemize}
\end{itemize}
\end{defn}

Notice that \Cref{defn:pAflow} is independent of the bi-Lipschitz class of the metric, i.e. if the conditions in \Cref{defn:pAflow} hold for one path metric $d$, then up to increasing the value of $C$, they hold for any other path metric $d'$ that is bi-Lipschitz to $d$ as well.

The following well-known lemma, however, states that there are certain metrics one can choose so that the contraction/expansion along the stable/unstable bundles are `instantaneous'. For completeness we provide a proof.

\begin{lemma} \label{lemma:instantmetricexist}
Suppose $\phi^t$ is a pseudo-Anosov flow. Then there exists a path metric $d$ that is induced from a Riemannian metric $g$ away from $\sing(\phi^t)$ such that 
$$||d\phi^t(v)||_g < \lambda^{-t} ||v||_g$$
for every $v \in E^s, t>0$, and 
$$||d\phi^t(v)||_g < \lambda^t ||v||_g$$
for every $v \in E^u, t<0$, for some $\lambda>1$.
\end{lemma}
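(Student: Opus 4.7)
The plan is the standard Lyapunov-type time-averaging construction, applied to the stable and unstable subbundles separately. Let $g_0$ be the Riemannian metric on $M \setminus \sing(\phi^t)$ supplied by \Cref{defn:pAflow}, with constants $C, \lambda_0 > 1$. I would fix $a \in (0, \log \lambda_0)$ and $T > \frac{\log C}{\log \lambda_0 - a}$, and define a new metric $g$ on $M \setminus \sing(\phi^t)$ by declaring $E^s$, $T\phi$, $E^u$ to be $g$-orthogonal, setting $g = g_0$ on $T\phi$, and putting
\[
\|v\|^2_g := \int_0^T e^{2as} \|d\phi^s v\|^2_{g_0} \, ds \text{ for } v \in E^s, \quad \|v\|^2_g := \int_{-T}^0 e^{-2as} \|d\phi^s v\|^2_{g_0} \, ds \text{ for } v \in E^u.
\]

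First I would verify that $g$ is a genuine continuous Riemannian metric on $M \setminus \sing(\phi^t)$ whose induced path metric extends across $\sing(\phi^t)$. Continuity follows from continuity of the splitting $TM = E^s \oplus T\phi \oplus E^u$, of $d\phi^s$ in $s$, and of $g_0$. Positive-definiteness and comparability to $g_0$ both follow from the two-sided pointwise estimate
\[
C^{-1} \lambda_0^{-s} \|v\|_{g_0} \leq \|d\phi^s v\|_{g_0} \leq C \lambda_0^{-s} \|v\|_{g_0} \qquad (v \in E^s,\ s \geq 0),
\]
whose lower half I obtain by applying the given upper bound to $d\phi^{-s}$ on $d\phi^s v$. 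The same bound (and its analogue on $E^u$) makes $g$ bi-Lipschitz to $g_0$, so $g$'s induced path metric is bi-Lipschitz to the path metric of \Cref{defn:pAflow} and therefore extends across $\sing(\phi^t)$.

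The key computation is the instant decay. For $v \in E^s \setminus \{0\}$ and $t \geq 0$, substituting $r = s + t$ gives $\|d\phi^t v\|^2_g = e^{-2at} I(t)$ with $I(t) = \int_t^{T+t} e^{2ar} \|d\phi^r v\|^2_{g_0} \, dr$, and therefore
\[
\frac{d}{dt} \log \|d\phi^t v\|^2_g = -2a + \frac{e^{2a(T+t)} \|d\phi^{T+t} v\|^2_{g_0} - e^{2at} \|d\phi^t v\|^2_{g_0}}{I(t)}.
\]
Applying the original estimate to $d\phi^t v \in E^s$ and using the choice of $T$ yields
\[
\|d\phi^{T+t} v\|_{g_0} < C \lambda_0^{-T} \|d\phi^t v\|_{g_0} < e^{-aT} \|d\phi^t v\|_{g_0},
\]
so the numerator is strictly negative and the log-derivative is strictly less than $-2a$. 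Integrating from $0$ to $t$ yields $\|d\phi^t v\|_g < e^{-at} \|v\|_g$. A symmetric calculation on $E^u$ gives $\|d\phi^t v\|_g > e^{at} \|v\|_g$ for $v \in E^u$ and $t > 0$, which rearranges into the required inequality for $t < 0$. Setting $\lambda := e^a$ completes the plan.

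The main subtlety I expect is handling $\sing(\phi^t)$ correctly. Since $E^s$ and $E^u$ are only defined on $M \setminus \sing(\phi^t)$, the new metric $g$ is only defined there, but this is consistent with how \Cref{defn:pAflow} is phrased, and the bi-Lipschitz comparison to $g_0$ ensures that nothing goes wrong with the induced path metric. A secondary point is regularity: $E^s$ and $E^u$ are typically only Hölder continuous as subbundles, so $g$ is only a continuous Riemannian metric rather than a smooth one, which again matches the regularity implicit in \Cref{defn:pAflow}.
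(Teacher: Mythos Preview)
Your approach is the same time-averaging idea as the paper's, with the pleasant refinement of inserting an exponential weight $e^{2as}$; this makes the log-derivative computation immediate and avoids the paper's two-stage argument (first handling $t\in(0,T]$ via an ad hoc inequality involving a separate claim, then extending to large $t$ by iteration). The paper uses the unweighted average $\|v\|_{\overline g}=\int_0^T\|d\phi^s v\|_{g_0}\,ds$ instead.

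There is one genuine slip, however. Your claimed lower bound $C^{-1}\lambda_0^{-s}\|v\|_{g_0}\le\|d\phi^s v\|_{g_0}$ does \emph{not} follow from ``applying the given upper bound to $d\phi^{-s}$ on $d\phi^s v$'': the hypothesis only bounds $d\phi^t|_{E^s}$ from above for $t>0$ and says nothing about $d\phi^{-s}|_{E^s}$ from above. (What your manipulation actually yields is $\|d\phi^{-s}w\|_{g_0}>C^{-1}\lambda_0^{s}\|w\|_{g_0}$, a lower bound pointing the wrong way.) The uniform lower bound you want---equivalently, $\inf_{s\in[0,T]}\|d\phi^s|_{E^s}\|_{g_0}>0$ on $M\setminus\sing(\phi^t)$---is true but does not come for free: away from $\sing(\phi^t)$ it follows from compactness and smoothness of the flow, while near $\sing(\phi^t)$ it genuinely requires the bi-Lipschitz local model supplied by the third bullet of \Cref{defn:pAflow}. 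The paper makes exactly this argument (for both the sup and the inf of $\|d\phi^s|_{E^s}\|_{g_0}$ over $s\in[0,T]$) inside the proof of its auxiliary claim. Once you patch this step your decay computation is untouched, and the bi-Lipschitz comparison to $g_0$---hence the extension of the induced path metric across $\sing(\phi^t)$---follows.
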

\begin{proof}
Fix some path metric $d$ induced from a Riemannian metric $g$ away from $\sing(\phi^t)$, so that we have
$$||d\phi^t(v)||_g < C \lambda^{-t} ||v||_g$$
for every $v \in E^s, t>0$, and 
$$||d\phi^t(v)||_g < C \lambda^t ||v||_g$$
for every $v \in E^u, t<0$, for some $C, \lambda>1$.
The idea of the proof is to consider the path metric $\overline{d}(x,y)=\int_0^T d(\phi^t(x),\phi^t(y)) dt$, which is induced from the Riemannian metric $\overline{g}=\int_0^T (\phi^t)^*g dt$ away from $\sing(\phi^t)$, for large $T$.

Pick a large enough $T$ so that $C \lambda^{-T} < \frac{1}{2}$. For $v \in E^s$ and $t \in [0,T]$, we have
\begin{align*}
||v||_{\overline{g}} &= \int_0^t ||d\phi^t(v)||_g dt + \int_t^T ||d\phi^t(v)||_g dt
\end{align*}
\begin{align*}
||d\phi^t(v)||_{\overline{g}} &= \int_t^T ||d\phi^t(v)||_g dt + \int_T^{T+t} ||d\phi^t(v)||_g dt \\
&= \int_t^T ||d\phi^t(v)||_g dt + \int_0^{t} ||d\phi^{T}(d\phi^t(v))||_g dt \\
&< \int_t^T ||d\phi^t(v)||_g dt + \int_0^t C \lambda^{-T} ||d\phi^t(v)||_g dt \\
&< \int_t^T ||d\phi^t(v)||_g dt + \frac{1}{2} \int_0^t ||d\phi^t(v)||_g dt
\end{align*}

\begin{claim} \label{claim:instantmetric}
There exists $k > 0$ such that $\displaystyle \frac{\int_0^t ||d\phi^t(v)||_g dt}{\int_t^T ||d\phi^t(v)||_g dt} \geq kt$ for all $t \in [0,T]$.
\end{claim}

Assuming the claim for now, we have 
\begin{align*}
\log(\frac{||d\phi^t(v)||_{\overline{g}}}{||v||_{\overline{g}}}) \leq \log(\frac{\frac{1}{2} kt+1}{kt+1}) < -\frac{1}{2}kt
\end{align*}
for $t \in (0,T]$.
Hence 
$$||d\phi^t(v)||_{\overline{g}} < \overline{\lambda}^{-t} ||v||_{\overline{g}}$$
for every $v \in E^s$ and $t \in (0,T]$.

For larger values of $t$, we can write $t=nT+q$ where $n \in \mathbb{Z}_{\geq 0}$ and $q \in (0,T]$. Then we have
$$||d\phi^t(v)||_{\overline{g}} = ||\underbrace{d\phi^T( \cdots d\phi^T(}_{\text{$n$ times}} d\phi^q(v)) \cdots )||_{\overline{g}} < \underbrace{\overline{\lambda}^{-T} \cdots \overline{\lambda}^{-T}}_{\text{$n$ times}} \overline{\lambda}^{-q} ||v||_{\overline{g}} = \overline{\lambda}^{-t} ||v||_{\overline{g}}.$$

A symmetric argument shows that up to decreasing $\overline{\lambda}$, we can arrange for 
$$||d\phi^t(v)||_{\overline{g}} < \overline{\lambda}^t ||v||_{\overline{g}}$$
for every $v \in E^u, t<0$ as well.

It remains to show \Cref{claim:instantmetric}. We first argue that $B := \sup_{t \in [0,T]} ||d\phi^t|_{E^s}||_g < \infty$. To see this, we pick a neighborhood $N$ of $\sing(\phi^t)$ as in \Cref{defn:pAflow}. Let $\overline{N} \subset N$ be the smaller neighborhood consisting of points staying inside $N$ in time $[0,T]$ and write 
$$||d\phi^t|_{E^s}||_g = \max\{ ||d\phi^t|_{E^s|_{M \backslash \overline{N}}}||_g, ||d\phi^t|_{E^s|_{\overline{N} \backslash \sing(\phi^t)}}||_g\}.$$
$||d\phi^t|_{E^s|_{M \backslash \overline{N}}}||_g$ is uniformly bounded for $t \in [0,T]$ by compactness of $M \backslash \overline{N}$. $||d\phi^t|_{E^s|_{\overline{N} \backslash \sing(\phi^t)}}||_{g_0}$ is also uniformly bounded for $t \in [0,T]$ by uniform boundedness of $||d\phi^t|_{E^s}||$ for $\Phi_{n,k,\lambda}$ and bi-Lipschitzness of the maps $f$.

By a similar argument, we see that $b := \inf_{t \in [0,T]} ||d\phi^t|_{E^s}||_g > 0$. Hence 
$$\frac{\int_0^t ||d\phi^t(v)||_g dt}{\int_t^T ||d\phi^t(v)||_g dt} \geq \frac{bt||v||_g}{BT||v||_g} = \frac{b}{BT} t.$$
\end{proof}

We call a path metric $d$ as in \Cref{lemma:instantmetricexist} an \textbf{instantaneous metric}.
In the sequel, we will often fix an instantaneous metric to work with, and assume a value for $\lambda$ as in \Cref{lemma:instantmetricexist} without additional comment.

We remark that \Cref{defn:Aflow} and \Cref{defn:pAflow} are sometimes referred to as the definition of \emph{smooth} (pseudo-)Anosov flows. In comparison, one can find a definition of \emph{topological} pseudo-Anosov flows in, for example, \cite{Mos96}.
We will, however, choose to define orbit equivalence in the topological category in this paper.

\begin{defn} \label{defn:orbitequiv}
Let $\phi^t_i$ be a continuous flow on a $3$-manifold $N_i$ for $i=1,2$. Suppose there exists a homeomorphism $h:N_1 \to N_2$ that sends the orbits of $\phi^t_1$ to $\phi^t_2$ in an orientation preserving way (but not necessarily preserving their parametrizations), then we say that $\phi^t_1$ and $\phi^t_2$ are \textbf{orbit equivalent}, and that $h$ is an \textbf{orbit equivalence}. As a notational shorthand, we will write $\phi^t_1 \cong \phi^t_2$.
\end{defn}

Finally we recall the definition of the orbit space of a pseudo-Anosov flow. 

\begin{defn} \label{defn:orbitspace}
Let $\phi^t$ be a pseudo-Anosov flow on closed oriented $3$-manifold. It can be deduced from classical stable manifold theory (see, for example, \cite{HPS77}) that the plane field $E^s \oplus T\phi$ integrates uniquely to a 2-dimensional foliation. This foliation agrees with the image of the singular foliation $\Lambda^s$ on $\Phi_{n,k,\lambda}$ near $\sing(\phi^t)$, hence we can take the union of these to get a singular foliation on $M$, which we call the \textbf{stable foliation} and denote by $\Lambda^s$. Similarly, we have the \textbf{unstable foliation} $\Lambda^u$ which is tangent to $T\phi \oplus E^u$ away from the singular orbits. 

Now let $\widetilde{\phi}^t$ be the lifted flow to the universal cover $\widetilde{M}$. It is shown in \cite[Proposition 4.1]{FM01} that the space of orbits $\mathcal{O}$ of $\widetilde{\phi}^t$, with the quotient topology, is homeomorphic to $\mathbb{R}^2$. The lifted stable/unstable foliations $\widetilde{\Lambda^{s/u}}$ induce singular 1-dimensional foliations $\mathcal{O}^{s/u}$ on $\mathcal{O}$. We refer to the space $\mathcal{O}$ with the foliations $\mathcal{O}^{s/u}$ as the \textbf{orbit space} of $\phi^t$.
\end{defn}

\subsection{Positive/negative curves} \label{subsec:posnegcurve}

Let $\phi^t$ be a pseudo-Anosov flow on a closed oriented $3$-manifold $M$. Let $x$ be a point in $M \backslash \sing(\phi^t)$. Consider the $2$-dimensional space $E^s|_x \oplus E^u|_x$. In this paper, we will adopt the convention of considering $E^s|_x$ as the vertical direction and $E^u|_x$ as the horizontal direction. We will also adopt the convention of illustrating $E^s|_x$ in green and $E^u|_x$ in purple.

Fix an instantaneous metric $d$. Let $e^{s/u}$ be unit length vectors in $E^{s/u}|_x$, respectively, such that $(e^s, \dot{\phi}, e^u)$ determines a positive basis of $TM|_x$. Suppose $\ell$ is a line in $TM|_x$ spanned by a vector $ae^s+b\dot{\phi}+ce^u$. If $a \neq 0$ and $c \neq 0$, we define the \textbf{slope} of $\ell$ (with respect to $d$) to be $\frac{a}{c}$. Note that this is consistent with the directional convention specified above.
Note also that since the only indeterminancy for $e^s$ and $e^u$ is a reversal of both of their signs, the slope is well-defined.

We say that $\ell$ is \textbf{positive/negative} if the slope of $\ell$ is positive/negative, respectively. Notice that while the slope of $\ell$ depends on the choice of the instantaneous metric $d$, whether $\ell$ is positive/negative depends only on $\ell$.

Suppose that $L$ is a 1-dimensional sub-bundle of $TM$ defined over a compact set $K \subset M \backslash \sing(\phi^t)$. We say that $L$ is \textbf{positive/negative} if $L|_x$ is positive/negative at every $x \in K$, respectively.

\begin{defn} \label{defn:posnegcurve}
Let $c$ be a smooth embedded $1$-manifold (possibly with boundary) in $M \backslash \sing(\phi^t)$. We say that $c$ is \textbf{positive/negative} if $Tc$ is positive/negative. 
\end{defn}

In this paper, we will illustrate positive curves in red and negative curves in blue. See \Cref{fig:posnegcurve} for an example of a local picture of the orbit space, along with the images of a positive and a negative curve. In this paper, we consider flows to go vertically upwards and we illustrate the orbit space in the top-down view, i.e. our convention is that the orbits of $\widetilde{\phi^t}$ come out of the page.

\begin{figure}
    \centering
    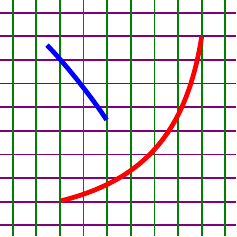
    \caption{The image of a positive curve (in red) and a negative curve (blue) in the orbit space.}
    \label{fig:posnegcurve}
\end{figure}

Our directional and coloring convention is borrowed from the veering triangulation literature. This is in anticipation of the potential connections between the material we present in this paper and the field of veering triangulations. We elaborate on this in \Cref{subsec:vt}.

\section{Horizontal surgery curves} \label{sec:horsurcurve}

In this section, we define and study horizontal surgery curves. These are the curves along which horizontal Goodman surgery can be performed.

\subsection{Definition and properties} \label{subsec:horsurcurvedefn}

Let $\phi^t$ be a pseudo-Anosov flow on a closed oriented $3$-manifold $M$. Fix an instantaneous metric on $M$.

\begin{defn} \label{defn:steady}
Suppose that $L$ is a positive 1-dimensional sub-bundle of $TM$ defined over a set $K \subset M \backslash \sing(\phi^t)$. We say that $L$ is \textbf{steady} if for every $x,y \in K, t>0$ such that $y=\phi^t(x)$, we have $\slope(L|_y) > \slope(d\phi^t(L|_x))$. 

Similarly, suppose that $L$ is a negative 1-dimensional sub-bundle of $TM$ defined over a set $K \subset M \backslash \sing(\phi^t)$. We say that $L$ is \textbf{steady} if for every $x,y \in K, t>0$ such that $y=\phi^t(x)$, we have $\slope(L|_y) < \slope(d\phi^t(L|_x))$. 
\end{defn}

Note that $L$ being steady is actually independent of the choice of the instantaneous metric. Indeed, a metric-free definition would be to say that $L|_y$ is closer to $E^s|_x$ than $d\phi^t(L|_x)$. However, we will choose to work with metrics in this paper for the sake of having quantitative control.

We explain our choice of terminology. Suppose $L$ is a positive 1-dimensional sub-bundle defined over $K$. Suppose $K$ is compact. Fix a point $x \in K$ and consider the function $f(t)= \slope(d\phi^{-t}(L|_{\phi^t(x)}))$ defined over $\{t \mid \phi^t(x) \in K\}$. Since $d\phi^{-t}$ expands along the stable direction and contracts along the unstable direction, $f(t)$ is increasing `in the long run'. 
More precisely, since $K$ is compact, $\slope(L|_{\phi^t(x)})$ is bounded between some $h>0$ and $H<\infty$, hence $f(t+T) > f(t)$ whenever $\lambda^{-2T} < \frac{h}{H}$ (and the values of $f$ are defined).

What the steadiness condition means is that $f(t)$ is an increasing function on the nose. That is, it increases steadily without wiggling around in the short run.

\begin{defn} \label{defn:horsurcurve}
Let $c$ be a positive/negative smooth embedded curve in $M \backslash \sing(\phi^t)$. We say that $c$ is a \textbf{positive/negative horizontal surgery curve}, respectively, if $Tc$ is steady.
\end{defn}

We introduce some terminology to help the reader visualize \Cref{defn:horsurcurve}. Let $c_1$ and $c_2$ be smooth embedded $1$-manifolds (possibly with boundary). We say that a triple $(x,y,t) \in c_1 \times c_2 \times (0,\infty)$ is a \textbf{time $t$ crossing of $c_2$ over $c_1$} if $y=\phi^t(x)$. If $c_1=c_2=c$, we abbreviate this to a \textbf{time $t$ crossing of $c$}. 

The motivation for this terminology comes from considering the situation in terms of the orbit space $\mathcal{O}$ --- (appropriate lifts of) $x$ and $y$ lie on the same orbit hence occupy the same point in $\mathcal{O}$. If, for example, $c$ is a horizontal surgery curve, then the fact that $\slope(Tc|_y) \neq \slope(d\phi^t(Tc|_x))$ implies that the segment of $c$ near $y$ crosses the segment of $c$ near $x$ transversely. Finally, the data that $y$ lies above $x$ gives this transverse intersection point the same data as a crossing in a knot diagram.

From this perspective, a positive/negative horizontal surgery curve is a positive/negative curve $c$ where every crossing is of the form in \Cref{fig:steadycurve} left/right, respectively. 

\begin{figure}
    \centering
    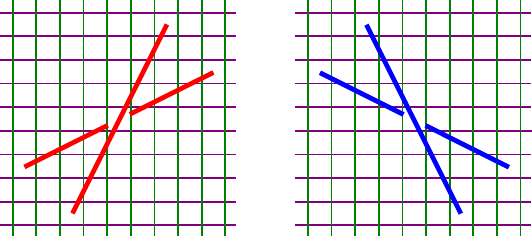
    \caption{Left/right: Local form of a positive/negative surgery curve, respectively, when projected to the orbit space.}
    \label{fig:steadycurve}
\end{figure}

We show the following two propositions concerning isotopies of horizontal surgery curves.

\begin{prop} \label{prop:horsurcurveflowisotopy}
Being a positive/negative horizontal surgery curve is preserved under isotopy along orbits of $\phi^t$. 

That is, if $c:S^1 \to M$ is a parametrized positive/negative horizontal surgery curve, and $c_s:S^1 \to M$ is a family of parametrized smooth embedded curves such that $c_0=c$ and $c_s(z)$ lies along an orbit of $\phi^t$ for every $z \in S^1$, then $c_s$ is a positive/negative horizontal surgery curve for every $s$, respectively.
\end{prop}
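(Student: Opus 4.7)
The plan is to verify the two defining conditions of a horizontal surgery curve---positivity/negativity of $Tc_s$, and steadiness---separately. I discuss the positive case; the negative case is symmetric.

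Write $c_s(z) = \phi^{\tau_s(z)}(c(z))$ for a smooth function $\tau_s : S^1 \to \mathbb{R}$ with $\tau_0 \equiv 0$. Differentiating gives $\dot c_s(z) = d\phi^{\tau_s(z)}(\dot c(z)) + \tau_s'(z)\,\dot\phi(c_s(z))$. The $\dot\phi$-summand does not contribute to the slope in $E^s \oplus E^u$. Because $d\phi^{\tau}$ preserves the splitting $TM = E^s \oplus T\phi \oplus E^u$, preserves the orientation of $M$, and fixes $\dot\phi$, the scalars by which it acts on $E^s$ and $E^u$ have matching signs. Hence $d\phi^{\tau_s(z)}$ rescales the slope by a positive factor, so $\dot c_s(z)$ remains positive.

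For steadiness, consider any time-$t_s$ crossing of $c_s$ with $z_1 \neq z_2$ and $t_s > 0$ (self-crossings with $z_1 = z_2$ arise only on periodic orbits and are handled trivially by the automatic inequality $\slope(Tc_s|_{x}) > \slope(d\phi^{kT}(Tc_s|_{x}))$ for $kT > 0$). The relation $c_s(z_2) = \phi^{t_s}(c_s(z_1))$ translates to $c(z_2) = \phi^{t}(c(z_1))$ with $t := t_s + \tau_s(z_1) - \tau_s(z_2)$. The key claim is $t > 0$. Indeed, the function $s' \mapsto t + \tau_{s'}(z_2) - \tau_{s'}(z_1)$ is continuous, equals $t$ at $s' = 0$, and equals $t_s > 0$ at $s' = s$; if it changed sign then at its zero we would have $c_{s'}(z_1) = c_{s'}(z_2)$ with $z_1 \neq z_2$, violating embeddedness of $c_{s'}$. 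So $t > 0$ and $(c(z_1), c(z_2), t)$ is a genuine crossing of $c$. Steadiness of $c$ then gives $\slope(Tc|_{c(z_2)}) > \slope(d\phi^t(Tc|_{c(z_1)}))$. Applying $d\phi^{\tau_s(z_2)}$ (which rescales slopes of positive lines at a fixed basepoint by a common positive factor), using $\slope(Tc_s|_{c_s(z_i)}) = \slope(d\phi^{\tau_s(z_i)}(Tc|_{c(z_i)}))$, and invoking the identity $t + \tau_s(z_2) = t_s + \tau_s(z_1)$, we conclude $\slope(Tc_s|_{c_s(z_2)}) > \slope(d\phi^{t_s}(Tc_s|_{c_s(z_1)}))$, which is the required steadiness inequality for $c_s$.

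The main conceptual obstacle is the possibility that the sign of the crossing time could flip under the isotopy, swapping over- and underpass and reversing the steadiness inequality in the wrong direction. Embeddedness of the family rules this out cleanly, since a sign change forces two strands of some intermediate $c_{s'}$ to coincide in $M$. Once the sign of $t$ is pinned down, the remaining algebra is just the observation that the flow rescales slopes of positive lines by positive factors, so strict inequalities are transported along orbits for free.
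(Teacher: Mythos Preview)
Your proof is correct and follows essentially the same approach as the paper: compute $\dot c_s$ to see positivity is preserved, set up the bijection between crossings of $c_s$ and crossings of $c$, and transport the steadiness inequality along the flow. Your continuity/embeddedness argument for $t>0$ is more explicit than the paper's direct proof, which instead appeals to the orbit space picture (the image of $c_s$ there is unchanged) to justify that the over/under data at each crossing is preserved.
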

\begin{proof}
This follows immediately from the orbit space perspective mentioned above, since isotopy along orbits of $\phi^t$ does not change the image of the curve. Nevertheless, we will give a direct proof of the proposition. We do this in the case when $c$ is positive. The proof when $c$ is negative is similar.

We can write $c_s(z)=\phi^t(c(z))$, where $t=t(s,z)$ depends on $s$ and $z$. From this one can compute that $\dot{c_s} = d\phi^t(\dot{c}) + \frac{\partial t}{\partial z} \dot{\phi}$, which implies that $\slope(Tc_s) = \slope(d\phi^t(Tc))$.

Meanwhile, the crossings of $c_s$ are in one-to-one correspondence with the crossings of $c$. Precisely, the correspondence is given by 
$$(c_s(z_1), c_s(z_2), t+t(s,z_2)-t(s,z_1)) \longleftrightarrow (c(z_1),c(z_2),t).$$
From the computation in the previous paragraph, we can rephrase the steadiness condition for $c_s$ at the crossing $(c_s(z_1), c_s(z_2), t+t(s,z_2)-t(s,z_1))$ to be that $\slope(d\phi^{t(s,z_2)}(Tc|_{c(z_2)})) > \slope(d\phi^{t+t(s,z_2)}(Tc|_{c(z_1)}))$.
But this follows from the steadiness condition for $c$ at the crossing $(c(z_1),c(z_2),t)$, which states that $\slope(Tc|_{c(z_2)}) > \slope(d\phi^t(Tc|_{c(z_1)}))$.
\end{proof}

\begin{prop} \label{prop:horsurcurvestable}
Being a positive/negative horizontal surgery curve is a $C^1$-open condition. 

That is, if $c$ is a positive/negative horizontal surgery curve and $\overline{c}$ is a smooth embedded curve sufficiently close to $c$ in the $C^1$-topology, then $\overline{c}$ is also a positive/negative horizontal surgery curve, respectively.
\end{prop}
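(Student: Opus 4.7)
The plan is to decompose the steadiness requirement for a perturbation $\bar{c}$ into three crossing-time regimes: vanishingly small $t$, where crossings are ruled out by positivity; bounded $t$, where a transversality and implicit function argument persists the finitely many crossings of $c$; and large $t$, where steadiness becomes automatic from exponential contraction under $d\phi^t$.

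I will work in the positive case; the negative case is symmetric. Fix an instantaneous metric via \Cref{lemma:instantmetricexist}. Since $c$ is compact and positive, $\slope(Tc)$ is bounded between positive constants, and the same holds for any sufficiently $C^1$-close $\bar{c}$ with slightly adjusted constants. Exponential contraction of $d\phi^t$ on positive line fields then yields a uniform $T > 0$ such that $\slope(d\phi^t(T\bar{c}|_x)) < \tfrac{1}{2}\inf_y \slope(T\bar{c}|_y)$ for all $t > T$ and every such $\bar{c}$; so steadiness at crossings with $t > T$ is automatic. On the other end, positivity gives a uniformly positive lower bound on the angle between $Tc$ and $T\phi$, which is a $C^1$-open condition. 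A first-order expansion of the relation $\bar{c}(z_2) = \phi^t(\bar{c}(z_1))$ near the diagonal $\{z_1 = z_2\}$ then produces a uniform $\delta > 0$ so that no such $\bar{c}$ admits any crossing with $t \in (0, \delta]$.

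The main work concerns the compact middle range $[\delta, T]$. Define $F_c \colon S^1 \times S^1 \times [\delta, T] \to M \times M$ by $F_c(z_1, z_2, t) = (c(z_2), \phi^t(c(z_1)))$, so that crossings of $c$ correspond exactly to preimages of the diagonal $\Delta_M$. At such a crossing, with $x = c(z_1)$ and $y = c(z_2)$, the strict slope inequality from steadiness forces the three vectors $Tc|_y$, $d\phi^t(Tc|_x)$ and $\dot\phi|_y$ to span $TM|_y$, since the first two project to linearly independent vectors in $TM|_y / T\phi|_y$. This is exactly the transversality $F_c \pitchfork \Delta_M$. Hence crossings in $[\delta, T]$ are isolated and, by compactness, finite. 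For $\bar{c}$ $C^1$-close to $c$, $F_{\bar c}$ is $C^1$-close to $F_c$ and remains transverse at those points, so by the implicit function theorem each crossing of $c$ persists to a unique nearby crossing of $\bar{c}$, at which steadiness continues to hold by continuity of slope.

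The main obstacle I anticipate is ruling out extraneous crossings of $\bar{c}$ in $[\delta, T]$ away from those of $c$. For this I would let $\alpha$ be the infimum of $\mathrm{dist}(c(z_2), \phi^t(c(z_1)))$ over the complement of small neighborhoods of the known crossings in $S^1 \times S^1 \times [\delta, T]$; compactness and the identification of all zeros of this distance function give $\alpha > 0$. Since $C^1$-closeness implies $C^0$-closeness, the analogous infimum for $\bar{c}$ stays positive, forbidding any crossings outside those neighborhoods. Combining the three regimes then verifies steadiness at every crossing of $\bar{c}$, completing the proof.
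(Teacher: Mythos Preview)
Your proof is correct and follows the same three-regime decomposition as the paper: automatic steadiness for large $t$ from slope bounds and exponential contraction, no crossings for small $t$ from transversality to the flow, and a compactness argument in the bounded-time window.

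The one genuine difference is in the middle regime. You invoke transversality of $F_c$ to the diagonal (which steadiness does imply, as you observe) together with the implicit function theorem to set up a bijection between crossings of $c$ and those of $\bar c$; you then separately rule out extraneous crossings by the distance-infimum compactness argument. The paper skips the transversality/IFT step entirely: it proves only your final compactness step (packaged as \Cref{claim:curveperturbcrossings}), concluding directly that every time-$[T_0,T_1]$ crossing of $\bar c$ lies arbitrarily near some crossing of $c$, and then uses $C^1$-openness of the strict slope inequality at that nearby crossing of $c$. This is leaner, since one never needs to know that crossings of $c$ are isolated or finite, nor to track them individually. Your transversality observation is a nice structural fact but is redundant once the compactness argument is in hand. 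For the small-$t$ regime the paper likewise argues slightly differently, using embeddedness of the flow annulus $\phi^{[0,T_0]}(c)$ rather than a first-order expansion, but the content is the same.
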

\begin{proof}
We prove this in the case when $c$ is positive. The proof when $c$ is negative is similar.

Recall that the \textbf{first return time} of a set $K$ is given by 
$$\inf \{t>0 \mid \text{there exists $x,y \in K$ such that $y=\phi^t(x)$}\}.$$
Since $c$ is compact and $Tc$ lies away from $T\phi$, the first return time of $c$ is positive. We fix some $T_0 > 0$ smaller than this value.

Fix an instantaneous metric. The idea of the proof is that for $\overline{c}$ close to $c$ in the $C^1$-topology,
\begin{enumerate}[label=(\roman*)]
    \item The maximum and minimum slopes of $T\overline{c}$ are bounded, so the steadiness condition for $\overline{c}$ holds for time $\geq T_1$ crossings for sufficiently large $T_1$.
    \item The first return time of $\overline{c}$ is $> T_0$.
    \item Each time $[T_0,T_1]$ crossing of $\overline{c}$ lies close to a crossing of $c$. For each such crossing, the steadiness condition is $C^1$-open, hence holds for $\overline{c}$.
\end{enumerate}

We elaborate more on the details. 
For (i), let $H$ and $h$ be the maximum and minimum slopes of $Tc$ respectively. Pick $T_1$ so that $\lambda^{-2T_1} < \frac{h}{H}$. For $\overline{c}$ close to $c$ in the $C^1$-topology, the maximum and minimum slopes of $T\overline{c}$ are close to $H$ and $h$ respectively, so the steadiness condition for $\overline{c}$ automatically holds for time $\geq T_1$ crossings.

For (ii), consider the annulus $\phi^{[0,T_0]}(c)$. This is embedded since the first return time of $c$ is $>T_0$. For $\overline{c}$ sufficiently close to $c$ in the $C^1$-topology, the corresponding annulus $\phi^{[0,T_0]}(\overline{c})$ is close to $\phi^{[0,T_0]}(c)$ in the $C^1$-topology, hence is embedded as well. This implies that the first return time of $\overline{c}$ is $> T_0$. In other words, every crossing of $\overline{c}$ is of time $>T_0$.

For (iii), we establish the following more general claim.

\begin{claim} \label{claim:curveperturbcrossings}
Let $M$ be a closed $3$-manifold and let $\phi^t$ be a flow on $M$. Let $c_1$ and $c_2$ be compact embedded 1-manifolds (possibly with boundary). Fix $T_0<T_1$. For every $\epsilon>0$, there exists $\delta>0$ such that for any compact embedded $1$-manifolds $\overline{c}_1$ and $\overline{c}_2$ that are $\delta$ close to $c_1$ and $c_2$ in the $C^0$-topology, respectively, every time $[T_0,T_1]$ crossing of $\overline{c}_2$ over $\overline{c}_1$ is $\epsilon$ close to a time $[T_0,T_1]$ crossing of $c_2$ over $c_1$.
\end{claim}
\begin{proof}
Consider the map 
\begin{align*}
F: c_1 \times c_2 \times [T_0,T_1] &\to M \times M \times [T_0,T_1] \\
(x,y,t) &\mapsto (\phi^t(x),y,t)
\end{align*}
The set $C$ can be written as $F^{-1}(\Delta \times [T_0,T_1])$ where $\Delta \subset M \times M$ is the diagonal. 

For a given $\epsilon$, we let $N_\epsilon(C)$ be the $\epsilon$-neighborhood of $C$. $F((c_1 \times c_2 \times [T_0,T_1]) \backslash N_\epsilon(C))$ is a compact set disjoint from $\Delta \times [T_0,T_1]$, hence is of distance $\eta>0$ away from it. 

Now for maps $\overline{F}$ that are $\eta$ close to $F$ in the $C^0$-topology, every point in $\overline{F}^{-1}(\Delta \times [T_0,T_1])$ lies in $N_\epsilon(C)$ hence is $\epsilon$ close to a point in $C$. In particular the claim holds for $\overline{c}_1$ and $\overline{c}_2$ that are sufficiently close to $c_1$ and $c_2$ in the $C^0$-topology, respectively.
\end{proof}

Returning to the proof of \Cref{prop:horsurcurvestable}, we take $c_1=c_2=c$ and $\overline{c}_1=\overline{c}_2=\overline{c}$ in \Cref{claim:curveperturbcrossings} to see that each time $[T_0,T_1]$ crossing $(\overline{x}, \overline{y}, \overline{t})$ of $\overline{c}$ can be made arbitrarily close to a crossing $(x,y,t)$ of $c$. 
In particular, $\slope(d\phi^{\overline{t}}(T\overline{c}|_{\overline{x}}))$ and $\slope(T\overline{c}|_{\overline{y}})$ can be made arbitrarily close to $\slope(d\phi^t(Tc|_x))$ and $\slope(Tc|_y)$ respectively, so that $\slope(Tc|_y)>\slope(d\phi^t(Tc|_x))$ implies $\slope(T\overline{c}|_{\overline{y}})>\slope(d\phi^{\overline{t}}(T\overline{c}|_{\overline{x}}))$.
\end{proof}

\subsection{Examples} \label{subsec:curveeg}

In this subsection, we present some examples of horizontal surgery curves. 

We will use the following condition for the first few families of examples.

\begin{lemma} \label{lemma:constantslope}
Suppose an instantaneous metric is chosen. Let $c$ be a positive/negative smooth embedded curve in $M \backslash \sing(\phi^t)$ such that the slope of $Tc|_x$ is some constant value $m$. Then $c$ is a positive/negative horizontal surgery curve, respectively.
\end{lemma}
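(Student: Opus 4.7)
The plan is to show that with the slope constant along $c$, the steadiness inequality reduces to the statement that $d\phi^t$ strictly decreases the absolute value of slope for $t>0$, which is a direct consequence of the instantaneous metric hypothesis.

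First I would treat the positive case; the negative case will follow by an entirely symmetric argument. Fix a crossing $(x,y,t)$ with $y=\phi^t(x)$ and $t>0$, and write a nonzero vector $v \in Tc|_x$ in the splitting $TM|_x = E^s|_x \oplus T\phi|_x \oplus E^u|_x$ as $v = ae^s + b\dot\phi + ce^u$, so that $\slope(v) = a/c = m$. Because $E^s$, $E^u$, and $T\phi$ are all $\phi^t$-invariant, $d\phi^t(v)$ has a corresponding decomposition $a'e^s|_y + b\dot\phi|_y + c'e^u|_y$ at $y$, where $a'e^s|_y = d\phi^t(ae^s)$ and $c'e^u|_y = d\phi^t(ce^u)$.

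The key input is \Cref{lemma:instantmetricexist}: for the instantaneous metric, $\|d\phi^t(ae^s)\|_g < \lambda^{-t}\|ae^s\|_g$ and $\|d\phi^t(ce^u)\|_g > \lambda^{t}\|ce^u\|_g$ for $t>0$. Hence $|a'| < \lambda^{-t}|a|$ and $|c'| > \lambda^{t}|c|$, which gives
\[
\bigl|\slope(d\phi^t(Tc|_x))\bigr| \;=\; \frac{|a'|}{|c'|} \;<\; \lambda^{-2t}\,\frac{|a|}{|c|} \;=\; \lambda^{-2t} m \;<\; m.
\]
It remains to check that $\slope(d\phi^t(Tc|_x))$ is positive (and not merely of small absolute value), so that the inequality above really shows $\slope(d\phi^t(Tc|_x)) < m$ rather than $\slope(d\phi^t(Tc|_x)) < -m$. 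Being a positive line is a purely topological condition, equivalent to lying in the open quadrant of $E^s \oplus E^u$ cut out by the oriented lines $E^s$ and $E^u$. Since $d\phi^t$ preserves both $E^s$ and $E^u$, continuously in $t$ with $d\phi^0 = \mathrm{id}$, it must preserve each of the four quadrants, and in particular takes the positive line $Tc|_x$ to a positive line $d\phi^t(Tc|_x)$. Combined with the displayed estimate, this yields $\slope(Tc|_y) = m > \slope(d\phi^t(Tc|_x))$, which is the steadiness condition at this crossing.

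The negative case is obtained by exactly the same calculation with $m<0$: the magnitude estimate becomes $|\slope(d\phi^t(Tc|_x))| < |m|$, and the quadrant argument places the image line in the negative quadrant, yielding $\slope(d\phi^t(Tc|_x)) > m = \slope(Tc|_y)$, which is the negative-case steadiness inequality. There is no real obstacle here; the only point that warrants care is the sign/quadrant preservation under $d\phi^t$, which I would flag briefly but not belabor, since it follows from $\phi^t$-invariance of $E^s$ and $E^u$ together with continuity in $t$.
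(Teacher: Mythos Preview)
Your proof is correct and follows essentially the same approach as the paper: both use the instantaneous metric to show $|\slope(d\phi^t(Tc|_x))| < \lambda^{-2t}|m| < |m|$ and compare with the constant slope $m$ at $y$. You supply more detail than the paper does, in particular the quadrant/sign-preservation argument, which the paper leaves implicit.
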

\begin{proof}
We prove this for $m>0$. For every crossing $(x,y,t)$, the slope of $Tc|_y$ is $m$, while the slope of $d\phi^t(L|_x)$ is $< \lambda^{-2t}m < m$. The case when $m<0$ is similar.
\end{proof}

\begin{eg} \label{eg:pAmapstraightcurve}
Let $S$ be a closed oriented surface and let $f:S \to S$ be an orientation preserving pseudo-Anosov map. This means that there exists a pair of singular measured foliations $(\ell^s,\mu^s)$ and $(\ell^u,\mu^u)$ such that $f_* (\ell^s,\mu^s) = (\ell^s,\lambda^{-1} \mu^s)$ and $f_* (\ell^u,\mu^u) = (\ell^u,\lambda \mu^u)$ for some $\lambda>1$. We refer to \cite{FLP79} for more on pseudo-Anosov maps. 

The pair of measured foliations determines a Euclidean metric $ds^2 = d(\mu^s)^2 + d(\mu^u)^2$ in the complement of the set of their singularities $\mathfrak{s}$. We say that a curve $c$ on $S \backslash \mathfrak{s}$ is \textbf{straight} if it is locally a straight line with respect to this Euclidean metric.

Meanwhile, one can construct the mapping torus $T_f$ by taking $S \times [0,1]_t$ and identifying $(x,1)$ with $(f(x),0)$ for every $x \in S$. The vector field $\partial_t$ induces the pseudo-Anosov suspension flow $\phi^t_f$. 

\begin{prop} \label{prop:pAmapstraightcurve}
A straight curve $c$ on $S \times \{0\}$ with positive/negative slope is a positive/negative surgery curve for the suspension flow respectively.
\end{prop}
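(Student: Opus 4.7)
The plan is to apply \Cref{lemma:constantslope} after exhibiting an instantaneous metric on the mapping torus in which every straight curve has constant slope.

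On $S \times [0,1]$, with $(\mu^s, \mu^u)$ as local coordinates on $S \setminus \mathfrak{s}$ provided by the transverse measures, define a Riemannian metric away from $\mathfrak{s} \times [0,1]$ by
$$g = \lambda^{2t}\, d\mu^s \otimes d\mu^s + \lambda^{-2t}\, d\mu^u \otimes d\mu^u + dt \otimes dt.$$
Because $f$ scales $\mu^s$-length by $\lambda$ and $\mu^u$-length by $\lambda^{-1}$ (unpacking $f_*(\ell^s,\mu^s) = (\ell^s, \lambda^{-1}\mu^s)$ and its analogue), the identification $(x,1) \sim (f(x),0)$ assigns a given tangent vector $a\partial_{\mu^s}+b\partial_{\mu^u}+c\partial_t$ the same squared norm $a^2\lambda^2 + b^2\lambda^{-2} + c^2$ read from either side, so $g$ descends to a genuine Riemannian metric on $T_f \setminus \sing(\phi^t_f)$. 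Note also that $c \subset S \setminus \mathfrak{s}$ is automatically disjoint from $\sing(\phi^t_f)$, as required by \Cref{defn:horsurcurve}.

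Next, I verify that $g$ is instantaneous. Since $\mu^s$ measures arcs transverse to $\ell^s$ (hence along $\ell^u$), the coordinate field $\partial_{\mu^s}$ spans $E^u$; similarly $\partial_{\mu^u}$ spans $E^s$. The flow $\phi^t_f$ is translation in $t$ and preserves these coordinate fields, while at height $t$ their $g$-norms are $\lambda^t$ and $\lambda^{-t}$ respectively. Therefore $\|d\phi^t(v)\|_g = \lambda^{-t}\|v\|_g$ for $v \in E^s$ and $\|d\phi^t(v)\|_g = \lambda^t\|v\|_g$ for $v \in E^u$; replacing $\lambda$ by any $\lambda' \in (1,\lambda)$ promotes these equalities to the strict inequalities demanded by \Cref{lemma:instantmetricexist}, so $g$ is an instantaneous metric.

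Finally, for a straight curve $c \subset S \times \{0\}$ the tangent $\dot c$ has no $\partial_t$ component, and in the orthonormal frame $(e^s, e^u) = (\partial_{\mu^u}, \partial_{\mu^s})$ at $t=0$, oriented so that $(e^s, \dot{\phi}, e^u)$ is a positive basis of $TM$, the slope of $Tc|_x$ equals $d\mu^u/d\mu^s$. By straightness this ratio is constant along $c$, with the same sign as the prescribed slope of the straight line, so \Cref{lemma:constantslope} applies and yields the conclusion. I do not foresee a serious obstacle: the only substantive point is that the mapping torus identification is an isometry, which simply reflects the standard fact that the Thurston geometry of a pseudo-Anosov map suspends to a well-defined geometric structure on the mapping torus.
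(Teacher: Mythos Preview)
Your proof is correct and follows essentially the same approach as the paper: exhibit an explicit instantaneous metric on the mapping torus built from the transverse measures, observe that a straight curve on the fiber has constant slope with respect to this metric, and apply \Cref{lemma:constantslope}. You supply more detail than the paper does (checking the metric descends, identifying $E^{s/u}$, handling the strict inequality), and your exponents $\lambda^{\pm 2t}$ are the ones consistent with $\partial_{\mu^s}\in E^u$ expanding; the paper's displayed metric appears to have these swapped.
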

\begin{proof}
One can define an instantaneous metric for the suspension flow by $ds^2 = \lambda^{-2t} d(\mu^s)^2 + dt^2 + \lambda^{2t} d(\mu^u)^2$ away from the set of singular orbits. Under this instantaneous metric, a straight curve $c$ on $S \times \{0\}$ has constant slope $m$, hence is a positive/negative surgery curve if $m$ is positive/negative, respectively, by \Cref{lemma:constantslope}. 
\end{proof}

\end{eg}

\begin{eg} \label{eg:closedorbitcurve}
Let $\phi^t$ be a pseudo-Anosov flow on a closed oriented $3$-manifold and let $\gamma$ be a closed (potentially singular) orbit of $\phi^t$. Let $N$ be a tubular neighborhood of $\gamma$ and let $A$ be an annulus in $N$ with one boundary component $\overline{\gamma}$ lying along $\gamma$ and which is transverse to $\phi^t$ in its interior. We remark that in general $\overline{\gamma}$ is identified with $\gamma$ via a multi-fold covering map.

The stable and unstable foliations $\Lambda^s$ and $\Lambda^u$ induce foliations $A^s$ and $A^u$ on $A$, respectively, which intersect transversely. The leaves of $A^s$ are asymptotic to $\overline{\gamma}$ in the backward direction while the leaves of $A^u$ are asymptotic to $\overline{\gamma}$ in the forward direction. See \Cref{fig:closedorbitannulus}.

\begin{figure}
    \centering
    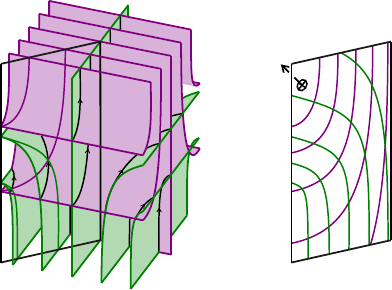
    \caption{An annulus $A$ in a positive quadrant of $N$ with one boundary component lying along $\gamma$. In the perspective of the figure, we are looking at $A$ from its back side.}
    \label{fig:closedorbitannulus}
\end{figure}

Notice that the local leaves of $\Lambda^s$ and $\Lambda^u$ containing $\gamma$ cut $N$ into a number of components, which we call \textbf{quadrants}, and $A$ must lie in one of these. Let us call a quadrant \textbf{positive/negative} if any curve within starting at $\gamma$ is positive/negative near $\gamma$, respectively.

\begin{prop} \label{prop:closedorbitcurve}
If $A$ lies in a positive/negative quadrant, then there is a negative/positive horizontal surgery curve lying on $A$ that is isotopic to $\overline{\gamma}$, respectively.
\end{prop}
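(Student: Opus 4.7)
For the positive quadrant case (the negative is symmetric by exchanging the roles of $E^s$ and $E^u$), my plan is to construct the surgery curve as the intersection of $A$ with an almost-invariant level set inside a linearized neighborhood of $\gamma$, then exploit the non-invariance to rule out positive-time self-crossings outright.

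Since $\gamma$ is a hyperbolic periodic orbit of $\phi^t$, there is a tubular neighborhood $N'$ of $\gamma$ and coordinates $(x,y,z)$ on $N'$ in which $\gamma = \{x=y=0\}$, $E^s = \mathbb{R}\partial_x$, $E^u = \mathbb{R}\partial_y$, and the flow takes the linearized form $\phi^t(x,y,z) = (\mu^t x,\, \mu^{-t} y,\, z+t \bmod T_0)$ for some $0<\mu<1$ (up to higher-order corrections to be handled by an $\varepsilon$-$\delta$ argument). After replacing $A$ by a sub-annulus contained in $N'$ and still containing $\overline{\gamma}$ as one boundary, I may assume $A\subset N'$; a curve on the sub-annulus isotopic to $\overline{\gamma}$ is also isotopic to $\overline{\gamma}$ on the original $A$.

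Fix $\alpha > 1$ close to $1$, and for a small $\delta > 0$, set
\[
c \; := \; A \cap \{F = \delta\}, \qquad F(x,y,z) \; := \; x\, y^{\alpha}.
\]
Since $\{F=\delta\}\cap\{x>0, y>0\}$ collapses onto $\{x=0\}\cup\{y=0\}$ as $\delta \to 0^+$, the curve $c$ converges to $\overline{\gamma}$ and is isotopic to it for small $\delta$. To compute the slope of $Tc$ at $p=(x_0,y_0,z_0)$, decompose a tangent vector $v \in Tc|_p$ (subject to $y_0 v_x + \alpha x_0 v_y = 0$ coming from $Tc\subset \ker dF$) in the basis $(\partial_x,\dot\phi,\partial_y)$. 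A direct computation yields a slope formula that at $\alpha=1$ reduces to $-x_0/y_0 < 0$; by continuity, the slope remains strictly negative along the compact $c$ whenever $\alpha$ is close enough to $1$.

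The decisive observation is that $F$ transforms under the flow as $F\circ\phi^t = \mu^{(1-\alpha)t}\,F$. Consequently, if $(p_1,p_2,t)$ were a positive-time crossing of $c$ with $p_2=\phi^t(p_1)$, then $F(p_1)=F(p_2)=\delta$ would force $\mu^{(1-\alpha)t}=1$, which is impossible for $t>0$ when $\alpha \ne 1$. Hence $c$ admits no positive-time self-crossings within $N'$, and the steadiness condition is vacuously satisfied there. The principal technical obstacle I foresee is extending this no-crossings conclusion past $N'$: a priori, a flow line issuing from $p\in c$ could leave $N'$, traverse $M\setminus N'$, and re-enter to meet $c$ elsewhere. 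By shrinking $A$ so that $c$ lies in an arbitrarily small neighborhood of $\overline{\gamma}$ and applying a compactness argument analogous to the one in the proof of \Cref{prop:horsurcurvestable}—bounding first-return times to this neighborhood away from zero—one rules out any global return that could produce a new crossing, completing the argument.
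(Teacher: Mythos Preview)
Your approach is genuinely different from the paper's, and the local part is clever: using the almost-invariant function $F=xy^{\alpha}$ to force $F\circ\phi^t=\mu^{(1-\alpha)t}F$ so that $c$ has no crossings coming from orbit segments that stay inside the linearized neighborhood $N'$. The paper instead constructs a curve of \emph{constant} slope on $A$ via an intermediate-value argument on a one-parameter family of first-return maps $f_m$ (interpolating between the contracting $f_0$ along $A^u$ and the expanding $f_{-\infty}$ along $A^s$), and then invokes \Cref{lemma:constantslope}, which makes steadiness automatic for \emph{all} crossings regardless of whether the connecting orbit segment stays near $\gamma$.

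The gap in your argument is exactly the global step. Bounding the first-return time away from zero does not ``rule out any global return'': orbits can and generally do leave $N'$ and re-enter along the local stable manifold, producing crossings $(p_1,p_2,t)$ of $c$ with the orbit segment $\phi^{[0,t]}(p_1)$ not contained in $N'$. For such crossings your $F$-identity says nothing, and you still owe a verification of the steadiness inequality. This is not a compactness formality: your curve does not have constant slope (it is roughly $-x_0/y_0$), so the inequality $\slope(Tc|_{p_2})<\slope(d\phi^t(Tc|_{p_1}))$ is not automatic. To repair this you would need (i) uniform two-sided bounds $h\le x_0/y_0\le H$ on $c$ with $H/h$ controlled independently of $\delta$, and (ii) an argument that for $t\le T_1$ (with $\lambda^{-2T_1}<h/H$) the orbit segment actually stays inside $N'$; neither is established, and (i) in particular is delicate given that $A$ is only transverse in its interior. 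A secondary issue is that the transformation law $F\circ\phi^t=\mu^{(1-\alpha)t}F$ requires the flow itself to be linear in $N'$, not just the return map; a $C^1$ linearization of the Poincar\'e map gives a flow-box where $(x,y)$ are constant along orbit segments with a linear gluing, which is not the model you wrote down.
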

\begin{proof}
We prove the proposition in the case when $A$ lies in a positive quadrant. The case when $A$ lies in a negative quadrant is similar. Fix an instantaneous metric. For each $m \in [-\infty,0]$, one can define a line field $L_m$ in the interior of $A$ by requiring that $L_m$ be of slope $m$ at each point. Here we take $L_0 = A^u$ and $L_{-\infty} = A^s$. 

Since $A^s$ and $A^u$ are almost parallel to $\overline{\gamma}$, up to shrinking $A$, we can pick a non-separating positive arc $l$ on $A$. For $m \in [-\infty,0]$, by following the trajectories of $L_m$ in the same direction as the flow on $\gamma$, we get a first return map $f_m$ on $l$. For $m=0$, $f_m$ is a contraction, i.e. $f_m(x)$ is closer to $l \cap \overline{\gamma}$ than $x$ for every $x$, while for $m=-\infty$, $f_m$ is an expansion, i.e. $f_m(x)$ is further away from $l \cap \overline{\gamma}$ than $x$ for every $x$ on which $f_m$ is defined. Moreover, $f_m$ is monotone in $m$ in the sense that for $m_1 < m_2$, $f_{m_1}(x)$ is further away from $l \cap \overline{\gamma}$ than $f_{m_2}(x)$ for every $x$. Hence by the intermediate value theorem, there exists $x \in l$ and $m \in (-\infty,0)$ so that $f_m(x)=x$. This gives a curve of constant slope $m<0$ on $A$. By \Cref{lemma:constantslope}, this is a negative horizontal surgery curve. See \Cref{fig:closedorbitconstantslopearg} for a schematic summary of this argument.
\end{proof}

\begin{figure}
    \centering
    \fontsize{8pt}{8pt}\selectfont
    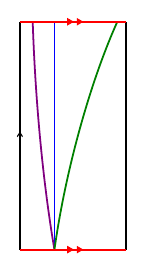
    \caption{We consider the first return maps $f_m$ on a non-separating arc $l$. By the intermediate value theorem, there exists $x \in l$ and $m \in (-\infty,0)$ so that $f_m(x)=x$.}
    \label{fig:closedorbitconstantslopearg}
\end{figure}

\end{eg}

\begin{eg} \label{eg:scalloptoruscurve}
Let $\phi^t$ be a pseudo-Anosov flow on a closed oriented $3$-manifold. Let $T$ be an embedded torus transverse to $\phi^t$. The stable and unstable foliations $\Lambda^s$ and $\Lambda^u$ induce foliations $T^s$ and $T^u$ on $T$, respectively, which intersect transversely. 

Suppose that $T^{s/u}$ are of the following form:
\begin{itemize}
    \item Each of $T^s$ and $T^u$ contains a number of closed leaves.
    \item Each closed leaf of $T^s$ and $T^u$ can be given an orientation, which we refer to as the \textbf{spiraling orientation}, so that
    \begin{itemize}
        \item each non-closed $T^{s/u}$-leaf spirals into a closed $T^{s/u}$-leaf in their spiraling orientations on each of its ends, and
        \item adjacent closed $T^{s/u}$-leaves have opposite spiraling orientations.
    \end{itemize}
    \item The closed $T^s$-leaves are not parallel to the closed $T^u$-leaves.
\end{itemize}
See \Cref{fig:scalloptorusfol} for an illustration of such a pair of foliations $(T^s, T^u)$.

\begin{figure}
    \centering
    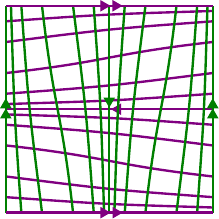
    \caption{The form of the foliations $(T^s, T^u)$ on a scalloped transverse torus.}
    \label{fig:scalloptorusfol}
\end{figure}

We remark that $T^{s/u}$ are of the above form if and only if $T$ projects down to a \textbf{scalloped region} in the orbit space. This follows from the results of \cite{BF13}. We refer to \cite[Section 2]{BFM23} for a nice exposition of the interaction between pseudo-Anosov flows and essential tori. Following the language of \cite{BFM23}, we will refer to $T$ as a \textbf{scalloped} transverse torus if our assumptions above on $T^{s/u}$ hold.

We orient $T$ so that the orbits of $\phi^t$ intersect $T$ positively. This induces an orientation on $H_1(T; \mathbb{R})$. As usual, we consider the homology class of the closed $T^s$-leaves to lie on the vertical axis while that of $T^u$ to lie on the horizontal axis. We say that a homology class lying in the first or third quadrant of $H_1(T; \mathbb{R})$ is \textbf{positive}, while a homology class lying in the second or fourth quadrant is \textbf{negative}. Note that under this terminology, the homology classes of the closed $T^{s/u}$-leaves are both positive and negative.

\begin{prop} \label{prop:scalloptoruscurve}
Let $T$ be a scalloped transverse torus. For every positive/negative primitive integer homology class $\alpha$, there is a positive/negative horizontal surgery curve on $T$, respectively, with homology class $\pm \alpha$.
\end{prop}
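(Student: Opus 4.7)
The plan is to generalize the intermediate-value argument from \Cref{prop:closedorbitcurve}. Since $T$ is transverse to $\phi^t$, the projection $T_x T \to E^s|_x \oplus E^u|_x$ is an isomorphism at every $x \in T$, so for each $m \in [-\infty,\infty]$ there is a well-defined line field $L_m$ on $T$ of pointwise constant slope $m$, with $L_0$ tangent to $T^u$ and $L_{\pm\infty}$ tangent to $T^s$. By \Cref{lemma:constantslope}, any closed curve tangent to $L_m$ with $m>0$ (resp.\ $m<0$) is automatically a positive (resp.\ negative) horizontal surgery curve. So the task reduces to producing, for every positive primitive integer class $\alpha$, a closed leaf of some $L_m$ with $m>0$ realizing $\pm\alpha$; the negative case is symmetric and handled by $m<0$.

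Fix an oriented closed leaf $\ell^s$ of $T^s$, and let $\alpha^s,\alpha^u$ denote the primitive classes of the oriented closed $T^s$- and $T^u$-leaves. Since $T^s$ and $T^u$ are transverse foliations of $T$, $L_m$ is transverse to $\ell^s$ for every finite $m$. I orient $L_m$ continuously in $m$, agreeing at $m=0$ with the orientation of $T^u$ representing $\alpha^u$. Cutting $T$ along $\ell^s$ produces an annulus along whose two boundary components the oriented vector field tangent to $L_m$ enters and exits respectively; by the Poincar\'e--Bendixson theorem every orbit traverses this annulus in finite time, so the first return map $f_m:\ell^s \to \ell^s$ is a well-defined orientation-preserving circle homeomorphism depending continuously on $m$. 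Let $\rho(m) \in \RR$ be the rotation number of $f_m$ chosen as a continuous lift in $m$. At $m=0$, the closed $T^u$-leaves meet $\ell^s$ in fixed points of $f_0$, so $\rho(0)=0$. As $m \to \infty$, $L_m$ becomes nearly tangent to $T^s$, shearing its orbits arbitrarily far along $\ell^s$ before they cross the annulus, so $\rho(m) \to \infty$. By continuity of rotation number and the intermediate value theorem, every $p/q \in (0,\infty)$ in lowest terms with $q>0$ is realized as $\rho(m)$ for some $m>0$. Such an $f_m$ admits a periodic orbit of period $q$, which lifts to a closed leaf $c$ of $L_m$ of homology class $q\alpha^u + p\alpha^s$. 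Every primitive integer class in the open first quadrant of $H_1(T;\RR)$ is of this form, and passing to $\pm c$ covers the open third quadrant as well, realizing $\pm\alpha$ for every non-axial positive primitive $\alpha$; the axial cases $\pm\alpha^{s}$ and $\pm\alpha^u$ are represented by the closed $T^s$- and $T^u$-leaves themselves.

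The main obstacle I anticipate is justifying the limit $\rho(m) \to \infty$ rigorously. The intuition is that as $L_m$ tilts towards $T^s$ the transverse component collapses while the shear along $\ell^s$ diverges, but making this quantitative requires a uniform lower bound across $T$ on the angle between $L_m$ and $T^s$, translated into a lower bound on the $\ell^s$-component of each first-return shift. Continuity of the rotation number in the $C^0$-topology is classical, as is the existence of a period-$q$ orbit for a circle homeomorphism of rational rotation number $p/q$; so the remaining work in the proof is the asymptotic estimate together with the routine verification that \Cref{lemma:constantslope} applies to the closed leaf $c$.
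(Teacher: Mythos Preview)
Your rotation-number approach is a natural alternative to the paper's argument, but it has a genuine gap: you are implicitly assuming that $(\alpha^u, \alpha^s)$ is a $\mathbb{Z}$-basis of $H_1(T;\mathbb{Z})$, equivalently that $|\langle \alpha^u, \alpha^s\rangle| = 1$. Nothing in the definition of a scalloped transverse torus forces this --- the closed $T^s$- and $T^u$-leaves are only required to be non-parallel, and indeed \Cref{cor:introscalloptorus} imposes the intersection-number-one condition as an \emph{extra} hypothesis. When $N := |\langle \alpha^u, \alpha^s\rangle| > 1$, a closed $T^u$-leaf meets $\ell^s$ in $N$ points forming a genuine period-$N$ orbit of $f_0$, so $\rho(0)$ is some $a/N$ with $\gcd(a,N)=1$, not $0$; and a period-$q$ orbit of $f_m$ with rotation number $p/q$ has homology class $q\beta + p\alpha^s$ for $\beta$ completing $\alpha^s$ to a $\mathbb{Z}$-basis, not $q\alpha^u + p\alpha^s$. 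Your claim that every first-quadrant primitive class is of the form $q\alpha^u + p\alpha^s$ then fails. The strategy can be salvaged --- one checks that $q\beta + p\alpha^s$ with $p/q > a/N$ are exactly the primitive classes in the open first quadrant --- but the bookkeeping as written is incorrect. A smaller issue: the closed $T^s$- and $T^u$-leaves have slope $\infty$ and $0$ respectively, so they are not positive curves and cannot themselves serve as the surgery curves for the axial classes.

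The paper sidesteps both difficulties by fixing the target class $\alpha$ from the start and working directly in the universal cover: for a lift $l$ of a closed $T^s$-leaf it considers the map $f_m: l \to \alpha \cdot l$ obtained by following $L_m$-trajectories to that specific translate, and finds a single point $x \in l$ and slope $m>0$ with $f_m(x) = \alpha \cdot x$, so that the trajectory descends to a closed curve of class exactly $\alpha$. No basis choice enters, and the upper bound on $m$ comes from the maximum slope of one concrete path from $x$ to $\alpha \cdot x$ transverse to $\widetilde{T^s}$, rather than from an asymptotic estimate $\rho(m) \to \infty$.
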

\begin{proof}
We prove this for the positive homology classes $\alpha$ that are not equal to the homology class of a closed $T^s$-leaf. The proof uses the same idea as \Cref{prop:closedorbitcurve}. Fix an instantaneous metric. For each $m \in (-\infty,\infty)$, we define a line field $L_m$ on $T$ by requiring that $L_m$ be of slope $m$ at each point. Here we take $L_0 = T^u$.

Consider the universal cover $\widetilde{T}$. Note that the pair of lifted foliations $(\widetilde{T^s}, \widetilde{T^u})$ is homeomorphic to the pair of foliations of $\mathbb{R}^2$ by vertical and horizontal lines respectively. Let $l$ be the lift of a closed $T^s$-leaf. For concreteness, we assume that $\alpha \cdot l$ lies to the right of $\alpha$. Following the trajectories of $L_m$ gives a map $f_m:l \to \alpha \cdot l$. (Here we need $\alpha$ not equal to the homology class of a closed $T^s$-leaf so that $l \neq \alpha \cdot l$.)

The dynamics of $\alpha^{-1} \circ f_0$ is of the following form: 
\begin{itemize}
    \item Let $x_n \in l$, $n \in \mathbb{Z}$, be the intersection points between $l$ and the lifts of the closed $T^u$-leaves, ordered by their positions on $l$. Then for some $k \geq 0$, $\alpha^{-1} \circ f_0$ sends $x_n$ to $x_{n-k}$ for every $n$.
    \item If $k=0$, then $x_n$ are the only fixed points of $\alpha^{-1} \circ f_0$. Furthermore, up to relabelling the indices, $x_{2n}$ are attracting fixed points while $x_{2n+1}$ are repelling fixed points.
\end{itemize}
In particular, there always exists $x$ so that $f_0(x)$ is below $\alpha \cdot x$.

Note that $f_m(x)$ is increasing in $m$. If we can show that $f_m(x)$ lies above $\alpha \cdot x$ for some $m \in (0,\infty)$, then we are done. This is because by the intermediate value theorem, there exists some $m$ so that $f_m(x) = \alpha \cdot x$. The trajectory of $L_m$ between $x$ and $\alpha \cdot x$ projects down to a curve on $T$ of constant slope $m$ and with homology class $\alpha$. By \Cref{lemma:constantslope}, this curve is a positive horizontal surgery curve. See \Cref{fig:scalloptorusconstantslopearg} for a schematic summary of this argument.

\begin{figure}
    \centering
    \fontsize{8pt}{8pt}\selectfont
\begingroup%
  \makeatletter%
  \providecommand\color[2][]{%
    \errmessage{(Inkscape) Color is used for the text in Inkscape, but the package 'color.sty' is not loaded}%
    \renewcommand\color[2][]{}%
  }%
  \providecommand\transparent[1]{%
    \errmessage{(Inkscape) Transparency is used (non-zero) for the text in Inkscape, but the package 'transparent.sty' is not loaded}%
    \renewcommand\transparent[1]{}%
  }%
  \providecommand\rotatebox[2]{#2}%
  \newcommand*\fsize{\dimexpr\f@size pt\relax}%
  \newcommand*\lineheight[1]{\fontsize{\fsize}{#1\fsize}\selectfont}%
  \ifx\svgwidth\undefined%
    \setlength{\unitlength}{141.61050391bp}%
    \ifx\svgscale\undefined%
      \relax%
    \else%
      \setlength{\unitlength}{\unitlength * \real{\svgscale}}%
    \fi%
  \else%
    \setlength{\unitlength}{\svgwidth}%
  \fi%
  \global\let\svgwidth\undefined%
  \global\let\svgscale\undefined%
  \makeatother%
  \begin{picture}(1,0.80762226)%
    \lineheight{1}%
    \setlength\tabcolsep{0pt}%
    \put(0,0){\includegraphics[width=\unitlength,page=1]{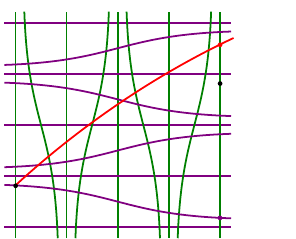}}%
    \put(0.70637237,0.78588572){\color[rgb]{0,0.50196078,0}\makebox(0,0)[lt]{\lineheight{1.25}\smash{\begin{tabular}[t]{l}$\alpha \cdot l$\end{tabular}}}}%
    \put(0.03876536,0.78588236){\color[rgb]{0,0.50196078,0}\makebox(0,0)[lt]{\lineheight{1.25}\smash{\begin{tabular}[t]{l}$l$\end{tabular}}}}%
    \put(-0.00213778,0.14367537){\color[rgb]{0,0,0}\makebox(0,0)[lt]{\lineheight{1.25}\smash{\begin{tabular}[t]{l}$x$\end{tabular}}}}%
    \put(0.76607012,0.09635413){\color[rgb]{0.50196078,0,0.50196078}\makebox(0,0)[lt]{\lineheight{1.25}\smash{\begin{tabular}[t]{l}$f_0(x)$\end{tabular}}}}%
    \put(0.76607012,0.50776177){\color[rgb]{0,0,0}\makebox(0,0)[lt]{\lineheight{1.25}\smash{\begin{tabular}[t]{l}$\alpha \cdot x$\end{tabular}}}}%
    \put(0.7660689,0.62148108){\color[rgb]{1,0,0}\makebox(0,0)[lt]{\lineheight{1.25}\smash{\begin{tabular}[t]{l}$f_m(x)$\end{tabular}}}}%
  \end{picture}%
\endgroup%

    \caption{We consider the first return maps $f_m:l \to \alpha \cdot l$. By the intermediate value theorem, there is a $m \in (0, \infty)$ so that $f_m(x)=\alpha \cdot x$.}
    \label{fig:scalloptorusconstantslopearg}
\end{figure}

To show the statement in the previous paragraph, take a path $c$ on $\widetilde{T}$ from $x$ to $\alpha \cdot x$ that is transverse to $\widetilde{T^s}$. Such a path exists since $\widetilde{T^s}$ is homeomorphic to the foliation of $\mathbb{R}^2$ by vertical lines. Let $m_0$ be the maximum slope of $Tc$. Then the trajectory of $L_{m_0}$ that starts at $x$ must lie above $c$, hence $f_{m_0}(x)$ lies above $\alpha \cdot x$.

To show the proposition when $\alpha$ equals to the homology class of a closed leaf of $T^u$, we can swap the roles of $T^s$ and $T^u$. Alternatively, one can use the argument in \Cref{prop:closedorbitcurve} near a closed leaf of $T^u$.
The proof for negative homology classes is similar.
\end{proof}

\end{eg}

\begin{eg} \label{eg:interiortoruscurve}
We present a class of examples that is similar to \Cref{eg:scalloptoruscurve}. Let $\phi^t$ be a pseudo-Anosov flow on a closed oriented $3$-manifold. Let $T$ be an embedded torus transverse to $\phi^t$. As in \Cref{eg:scalloptoruscurve}, we have induced foliations $T^s$ and $T^u$ on $T$.

In this example, we suppose that $T^{s/u}$ are of the following form:
\begin{itemize}
    \item Each of $T^s$ and $T^u$ contains a number of closed leaves.
    \item Each closed leaf of $T^s$ and $T^u$ can be given an orientation, which we refer to as the \textbf{spiraling orientation}, so that
    \begin{itemize}
        \item each non-closed $T^{s/u}$-leaf spirals into a closed $T^{s/u}$-leaf in their spiraling orientations on each of its ends, and
        \item adjacent closed $T^{s/u}$-leaves have opposite spiraling orientations.
    \end{itemize}
    \item The closed $T^s$-leaves \emph{are} parallel to the closed $T^u$-leaves.
\end{itemize}
See \Cref{fig:interiortorusfol} for an illustration of such a pair of foliations $(T^s, T^u)$.

\begin{figure}
    \centering
    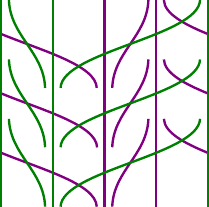
    \caption{The form of the foliations $(T^s, T^u)$ on a non-Reeb non-scalloped transverse torus.}
    \label{fig:interiortorusfol}
\end{figure}

In this case the closed leaves of $T^s$ and $T^u$ must be mutually disjoint, hence be arranged in a cyclic fashion around the torus. We observe that in-between each adjacent pair of closed $T^u$-leaves, there must be an even number of closed $T^s$-leaves, for otherwise those closed $T^u$-leaves would have the same spiraling orientations, and vice versa. 

Let $l^s$ be a closed $T^s$-leaf and $l^u$ be a closed $T^u$-leaf so that $l^s$ and $l^u$ cobound an annulus $A \subset T$ not containing any closed leaves in its interior. If $(T^s, T^u)$ inside $A$ is of the form in \Cref{fig:interiortorussign} left/right, we say that $T$ is a \textbf{positive/negative Reebless non-scalloped transverse torus}, respectively. Note that here we use the orientation on $T$ (induced by the flow) to make a distinction between \Cref{fig:interiortorussign} left/right. Note also that this definition is independent on which pair $(l^s, l^u)$ we choose.

\begin{figure}
    \centering
    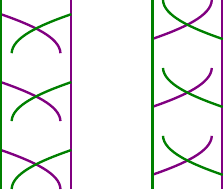
    \caption{Left/right: A portion of the foliation on a positive/negative Reebless non-scalloped transverse torus.}
    \label{fig:interiortorussign}
\end{figure}

In the terminology of \cite{BFM23}, a Reebless non-scalloped transverse torus $T$ is not a `good representative' of its isotopy class; the `good representative' is obtained by homotoping $T$ along flow lines so that it is a weakly embedded union of Birkhoff annuli. Similar to scalloped transverse torus, there is a way to characterize these tori in terms of the chain of lozenges preserved by the action of their $\pi_1$ on the orbit space. Since we will not utilize this perspective, we let the interested reader fill this out for themselves.

What we will prove is the following proposition.
\begin{prop} \label{prop:interiortoruscurve}
Let $T$ be a positive/negative Reebless non-scalloped transverse torus. Let $\alpha$ be a primitive homology class on $T$ that is not parallel to the closed leaves of $T^{s/u}$. Then there is a positive/negative horizontal surgery curve with homology class $\pm \alpha$.
\end{prop}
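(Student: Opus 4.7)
The strategy is to adapt the intermediate value theorem argument of \Cref{prop:scalloptoruscurve}, using the constant-slope line fields $L_m$ together with \Cref{lemma:constantslope}. By that lemma, any closed integral curve of $L_m$ on $T$ with $m>0$ (resp.\ $m<0$) is automatically a positive (resp.\ negative) horizontal surgery curve, so it suffices to produce $m^*$ of the appropriate sign admitting a closed $L_{m^*}$-integral curve of homology $\pm\alpha$. Working in the universal cover $\widetilde{T}\cong \mathbb{R}^2$, I fix coordinates so that all lifts of closed leaves of $\widetilde{T^s}$ and $\widetilde{T^u}$ are vertical lines in the common homology direction $\beta$. Let $\widetilde l$ be a lift of a closed $T^s$-leaf. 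Since $\alpha$ is not parallel to $\beta$, the deck translate $\alpha\cdot\widetilde l$ is a distinct vertical line, shifted horizontally; the plan is to find, for some $m^*>0$, a point $x\in\widetilde l$ and an $L_{m^*}$-trajectory from $x$ to $\alpha\cdot x$, via an intermediate value argument on the first-return map $f_m:\widetilde l\to\alpha\cdot\widetilde l$ obtained by following $L_m$-trajectories.

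The chief deviation from \Cref{prop:scalloptoruscurve} is that the endpoint $m=0$ is unavailable here: since $L_0=T^u$ has closed vertical leaves and non-closed leaves that spiral onto adjacent closed $T^u$-leaves, $L_0$-trajectories from $\widetilde l$ cannot reach $\alpha\cdot\widetilde l$ at all. I therefore restrict to an open interval $(m_-,m_+)\subset(0,\infty)$ on which $f_m$ is well-defined and continuous in $m$. The upper endpoint $m_+$ is produced exactly as in \Cref{prop:scalloptoruscurve}: take a path from $x\in\widetilde l$ to $\alpha\cdot x$ transverse to $\widetilde{T^s}$ and let $m_+$ be its maximum slope, forcing $f_{m_+}(x)$ to lie above $\alpha\cdot x$. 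The lower endpoint $m_-$ comes from the positive Reebless non-scalloped structure: inside each annulus $A$ of $T$ cut out by the closed leaves, the local form of \Cref{fig:interiortorussign} left is identical to a positive quadrant of a closed orbit as in \Cref{eg:closedorbitcurve}, so an analogue of the first-return argument in \Cref{prop:closedorbitcurve} applied within each such $A$ lets me drive $f_m(x)$ arbitrarily far below $\alpha\cdot x$ in the $\beta$-direction by taking $m$ just above the regime where trajectories spiral without horizontal progress. Intermediate value theorem then delivers $m^*\in(m_-,m_+)$ with $f_{m^*}(x)=\alpha\cdot x$, and the negative case is entirely symmetric with $m\in(-\infty,0)$.

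The main obstacle is the lower-endpoint analysis: one must relate, quantitatively for small $m>0$, the $L_m$-trajectories on $T$ to the Reeb-like spiraling of $T^s$ and $T^u$ inside each annulus $A$, and control both the horizontal progress of such a trajectory across $A$ and the vertical displacement it accumulates. This is the place where the sign condition built into ``positive/negative Reebless non-scalloped'' actually enters the argument, and is what rules out, for instance, the opposite sign giving a horizontal surgery curve of the correct type.
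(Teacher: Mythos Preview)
Your overall framework matches the paper's: define the constant-slope line fields $L_m$ for $m>0$, form the map $f_m:l\to\alpha\cdot l$ by following $L_m$-trajectories in the universal cover, bracket $\alpha\cdot x$ between two values of $f_m(x)$, and invoke the intermediate value theorem together with \Cref{lemma:constantslope}. Your upper bound, via a path transverse to $\widetilde{T^s}$ with maximum slope $m_+$, is exactly the paper's. The difference lies entirely in the lower bound, and you are making it much harder than necessary.

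The key observation you are missing is that, unlike the scalloped case, here the closed $T^u$-leaves are \emph{parallel} to the closed $T^s$-leaves, so $\widetilde{T^u}$ is also homeomorphic to the foliation of $\mathbb{R}^2$ by vertical lines. This means the upper-bound argument can be rerun verbatim with the roles of $T^s$ and $T^u$ interchanged: take a path from $x$ to $\alpha\cdot x$ transverse to $\widetilde{T^u}$, let $m_0$ be its minimum slope, and the $L_{m_0}$-trajectory lies below that path, forcing $f_{m_0}(x)$ below $\alpha\cdot x$. The paper records this in one word, ``symmetrically''. No spiraling analysis, no analogy with closed-orbit quadrants, and none of the quantitative control near $m=0$ that you flag as the ``main obstacle'' is needed. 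As written, your lower-bound sketch is not a proof, whereas the symmetry argument is.

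Your restriction to a subinterval $(m_-,m_+)\subset(0,\infty)$ is also unnecessary. For every $m\in(0,\infty)$ the line field $L_m$ is transverse to both $T^s$ and $T^u$ (this is what the ``positive'' sign condition encodes); since each lifted foliation is conjugate to vertical lines, a complete $L_m$-trajectory crosses every leaf and in particular reaches $\alpha\cdot l$. So $f_m$ is well defined on all of $(0,\infty)$.
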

\begin{proof}
We prove this when $T$ is positive. The proof when $T$ is negative is similar. The argument we use is very similar to \Cref{prop:scalloptoruscurve}. Fix an instantaneous metric. For each $m \in (0,\infty)$, we define a line field $L_m$ on $T$ by requiring that $L_m$ be of slope $m$ at each point.

Consider the universal cover $\widetilde{T}$. Note that each of the lifted foliations $\widetilde{T^{s/u}}$ is homeomorphic to foliation of $\mathbb{R}^2$ by vertical lines. Let $l$ be the lift of a closed $T^s$-leaf. For concreteness, we assume that $\alpha \cdot l$ lies to the right of $\alpha$. Following the trajectories of $L_m$ gives a map $f_m:l \to \alpha \cdot l$. Fix a point $x \in l$. Note that $f_m(x)$ is increasing in $m$.

We claim that $f_m(x)$ lies above $\alpha \cdot x$ for some $m \in (0,\infty)$. The proof of this is exactly the same as in \Cref{prop:scalloptoruscurve}. Symmetrically, $f_m(x)$ lies below $\alpha \cdot x$ for some $m \in (0,\infty)$. Thus by the intermediate value theorem, there exists some $m$ so that $f_m(x) = \alpha \cdot x$. The trajectory of $L_m$ between $x$ and $\alpha \cdot x$ projects down to a curve on $T$ of constant slope $m$ and with homology class $\alpha$. By \Cref{lemma:constantslope}, this curve is a positive horizontal surgery curve.
\end{proof}

\end{eg}

\begin{eg} \label{eg:braidcurve}
Let $\phi^t$ be a pseudo-Anosov flow on a closed oriented $3$-manifold $M$. Suppose $c$ is a positive/negative horizontal surgery curve. In this example, we show how to produce more examples of positive/negative horizontal surgery curves by replacing $c$ by a negative/positive closed braid, respectively.

We recall the relevant terminology from braid theory. Consider the oriented $3$-manifold $N = S^1 \times I \times I$. A \textbf{closed braid} is a curve in $N$ whose image when projected to the first factor of $N$ is a monotone curve in $S^1$. See \Cref{fig:braids} for some examples. We consider two closed braids to be equivalent if they are isotopic through closed braids. A closed braid $\beta$ can be represented by the \textbf{braid diagram} obtained by projecting (generic positions of) $\beta$ to the first two factors $S^1 \times I$, where we also remember the over/under crossing data at each transverse double point, much like a knot diagram.

\begin{figure}
    \centering
    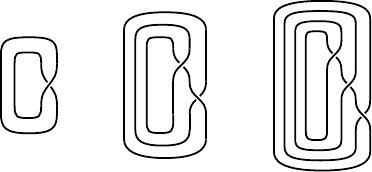
    \caption{Some examples of closed braids.}
    \label{fig:braids}
\end{figure}

A crossing in a braid diagram is said to be \textbf{positive/negative} if it is of the form in \Cref{fig:posnegcrossing} left/right, respectively. A braid is said to be \textbf{positive/negative} if it admits a braid diagram with only positive/negative crossings, respectively.

\begin{figure}
    \centering
\begingroup%
  \makeatletter%
  \providecommand\color[2][]{%
    \errmessage{(Inkscape) Color is used for the text in Inkscape, but the package 'color.sty' is not loaded}%
    \renewcommand\color[2][]{}%
  }%
  \providecommand\transparent[1]{%
    \errmessage{(Inkscape) Transparency is used (non-zero) for the text in Inkscape, but the package 'transparent.sty' is not loaded}%
    \renewcommand\transparent[1]{}%
  }%
  \providecommand\rotatebox[2]{#2}%
  \newcommand*\fsize{\dimexpr\f@size pt\relax}%
  \newcommand*\lineheight[1]{\fontsize{\fsize}{#1\fsize}\selectfont}%
  \ifx\svgwidth\undefined%
    \setlength{\unitlength}{55.2882151bp}%
    \ifx\svgscale\undefined%
      \relax%
    \else%
      \setlength{\unitlength}{\unitlength * \real{\svgscale}}%
    \fi%
  \else%
    \setlength{\unitlength}{\svgwidth}%
  \fi%
  \global\let\svgwidth\undefined%
  \global\let\svgscale\undefined%
  \makeatother%
  \begin{picture}(1,0.96643334)%
    \lineheight{1}%
    \setlength\tabcolsep{0pt}%
    \put(0,0){\includegraphics[width=\unitlength,page=1]{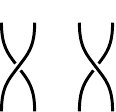}}%
    \put(0.08178085,0.85507963){\color[rgb]{0,0,0}\makebox(0,0)[lt]{\lineheight{1.25}\smash{\begin{tabular}[t]{l}$+$\end{tabular}}}}%
    \put(0.76034698,0.85508511){\color[rgb]{0,0,0}\makebox(0,0)[lt]{\lineheight{1.25}\smash{\begin{tabular}[t]{l}$-$\end{tabular}}}}%
  \end{picture}%
\endgroup%

    \caption{Left/right: A positive/negative crossing, respectively.}
    \label{fig:posnegcrossing}
\end{figure}

\begin{prop} \label{prop:braidcurve}
Let $\phi^t$ be a pseudo-Anosov flow on a closed oriented $3$-manifold $M$. Suppose $c$ is a positive/negative horizontal surgery curve. Fix a neighborhood $N \cong c \times I \times I$ of $c$ where the segments of the form $\{(x_0,y_0)\} \times I$ lie along orbits of $\phi^t$. 

Suppose $\beta$ is a negative/positive closed braid. Then there exists a positive/negative horizontal surgery curve $b$ that is isotopic to the curve obtained by inserting $\beta$ into $N$.
\end{prop}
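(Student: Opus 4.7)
The plan is to construct $b$ inside $N$ as a specific representative of the relevant isotopy class, then verify slope positivity and steadiness. I treat the case when $c$ is positive and $\beta$ is negative; the other case is symmetric.

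First I fix coordinates $(\theta, y_1, y_2)$ on $N \cong S^1 \times I \times I$ with $y_2$ the flow direction and $c$ identified with the core $\{y_1 = y_2 = \tfrac{1}{2}\}$. Shrinking $N$ if necessary, I arrange that the slope of $\partial_\theta$ is positive at every point of $N$ and that the stable and unstable line fields are nearly constant in $TN$. I choose the orientation of the $y_1$-axis so that adding a positive multiple of $\partial_{y_1}$ to $\partial_\theta$ increases the stable/unstable slope; this is possible because $\partial_\theta$ is positive and $\partial_{y_1}$ is transverse to the flow.

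Next I place a representative of $\beta$ in $N$ in the following standard form. Let $n$ be the braid index. Away from small neighborhoods of the crossings of the braid diagram, the representative consists of $n$ parallel strands $\{(\theta, a_i, \tfrac{1}{2})\}_{i=1}^n$ with distinct constants $a_1, \dots, a_n$. Near each crossing of the braid diagram, the two strands involved interchange their $y_1$-positions along graphs with equal and opposite slopes in $(\theta, y_1)$; simultaneously one strand is raised in $y_2$ and the other lowered, with the raised strand being the overpass prescribed by the braid's crossing data. I join these local pieces into the curve $b$ according to the combinatorics of $\beta$, smoothing corners if necessary.

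The self-crossings of $b$ come in two types. The long-time crossings are those inherited from the self-crossings of $c$; each self-crossing of $c$ produces up to $n^2$ crossings of $b$ corresponding to pairs of strands. Since each strand of $b$ is $C^1$-close to a translate of $c$ (made so by taking $N$ thin and the standard-form deviations small), the $C^1$-openness in the argument of \Cref{prop:horsurcurvestable} yields steadiness at all such crossings. The short-time crossings are the new ones introduced by the braid structure inside $N$; at each of them the two strands are close to each other in all of $N$, the flow time $t$ between them is small, and $d\phi^t$ is close to the identity. Steadiness therefore reduces to the overpass having a larger stable/unstable slope than the underpass. By the standard-form construction the overpass has positive $(\theta, y_1)$-slope while the underpass has negative $(\theta, y_1)$-slope, and by my choice of $y_1$-orientation this translates exactly to the desired stable/unstable slope inequality.

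The main obstacle is the convention-matching: verifying that the negative-crossing convention of \Cref{fig:posnegcrossing} aligns with the assignment of the positive-$(\theta, y_1)$-slope strand to the overpass, as required by the crossing shape of \Cref{fig:steadycurve} (left). This reduces to a direct inspection once the coordinates on $N$ are fixed. A secondary technical point is smoothing $b$ at the transitions between its standard-form pieces; this is routine, and preserves the slope and steadiness conditions if performed at a small enough scale, again by \Cref{prop:horsurcurvestable}.
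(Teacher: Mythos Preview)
Your proof is correct and follows essentially the same approach as the paper: both arguments divide the crossings of $b$ into those occurring inside $N$ (your ``short-time'' crossings, the paper's type (1)) and those whose connecting orbit segment leaves $N$ (your ``long-time'' crossings, type (2)), handle the former by the negative-crossing convention and the latter via the $C^1$-openness argument of \Cref{prop:horsurcurvestable}. The paper is terser---it simply refers to \Cref{fig:posnegcrossing} and \Cref{fig:steadycurve} for the convention check you spell out via the $y_1$-orientation---but the substance is the same.
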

\begin{proof}
We prove this in the case when $c$ is positive. The case when $c$ is negative is similar. We use the definition of horizontal surgery curves directly in this proof (as opposed to applying \Cref{lemma:constantslope}). 

Let $\pi:b \to c$ be the restriction to $b$ of projecting $N$ to its first factor. By placing $b$ close to $c$, we can assume that for every $x \in b$, the segment of $b$ near $x$ is close to the segment of $c$ near $\pi(x)$ in the $C^1$-topology. In particular, $b$ is a positive curve. To show that $Tb$ is steady, we analyze the crossings of $b$. Note that there are two types of crossings for $b$:
\begin{enumerate}
    \item Crossings $(x,y,t)$ where the orbit segment $\phi^{[0,t]}(x)$ from $x$ to $y$ stays inside $N$.
    \item Crossings $(x,y,t)$ where the orbit segment $\phi^{[0,t]}(x)$ from $x$ to $y$ does not stay inside $N$.
\end{enumerate}

The steadiness condition is satisfied for crossings of type (1) by definition of negative crossings. Compare \Cref{fig:posnegcrossing} with \Cref{fig:steadycurve}.

For the crossings of type (2), we use a similar argument as in \Cref{prop:horsurcurvestable}. Fix a instantaneous metric. 
Since each segment of $b$ is close to a segment of $c$ in the $C^1$-topology, the maximum and minimum slopes of $Tb$ are bounded, so the steadiness condition for $b$ holds for time $\geq T_1$ crossings, for some sufficiently large $T_1$.

Let $T_0$ be a positive number less than the first return time of $N$. We can assume that there are no time $\leq T_0$ crossing of $b$ of type (2).

By \Cref{claim:curveperturbcrossings}, each time $[T_0,T_1]$ crossing of $b$ lies close to a crossing of $c$. For each such crossing, the steadiness condition is $C^1$-open, hence holds for $b$.
\end{proof}

\end{eg}

\begin{eg} \label{eg:bicontactcurve}
In \cite[Proposition 2]{Mit95}, Mitsumatsu shows that if $\phi^t$ is an Anosov flow on a closed oriented $3$-manifold $M$, then there is a positive contact structure $\xi_+$ and a negative contact structure $\xi_-$ such that $T\phi = \xi_+ \cap \xi_-$. Notice that for fixed $x \in M$, the plane fields $d\phi^{-t}(\xi_{\pm}|_{\phi^t(x)})$ rotate monotonically towards $E^s|_x$ as $t$ increases. We refer to \cite[Section 2.2]{ET98} for a nice presentation of this theory of bi-contact structures.

The dynamics on $d\phi^{-t}(\xi_{\pm}|_{\phi^t(x)})$ implies the following proposition.

\begin{prop} \label{prop:bicontactcurve}
Let $c$ be a curve that is tangent to $\xi_{+/-}$. Then $c$ is a negative/positive horizontal surgery curve, respectively.
\end{prop}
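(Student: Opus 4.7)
The plan is to handle the case $c$ tangent to $\xi_+$ (concluding $c$ is a negative horizontal surgery curve); the $\xi_-$ case is symmetric. First I would observe that $\xi_+$ is transverse to both $E^s$ and $E^u$ throughout $M$: since $\xi_+$ contains $T\phi$ and is constructed as a bisector of the weak stable plane field $T\phi \oplus E^s$ and weak unstable plane field $T\phi \oplus E^u$, it never coincides with either. Consequently the line $\xi_+|_x \cap (E^s \oplus E^u)|_x$ has nonzero finite slope depending continuously on $x$, hence of constant sign; the convention identifying $\xi_+$ as the positive contact member of the bi-contact pair arranges this slope to be negative. Since $Tc|_x \subset \xi_+|_x$ is transverse to $T\phi|_x$, the slopes agree: $\slope(Tc|_x) = \slope(\xi_+|_x) < 0$ at every $x \in c$, so $c$ is a negative curve.

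To establish steadiness, I would apply the monotonic-rotation property at the basepoint $y$. Given a crossing $(x, y, t)$ with $y = \phi^t(x)$ and $t > 0$, consider the one-parameter family of planes $P(s) := d\phi^{-s}(\xi_+|_{\phi^s(y)}) \subset T_y M$. The bi-contact property asserts that $P(s)$ rotates monotonically in $s \in \mathbb{R}$, tending to $E^s|_y$ as $s \to +\infty$ and to $E^u|_y$ as $s \to -\infty$; since $\slope(\xi_+|_y) < 0$, the function $s \mapsto \slope(P(s))$ is monotonically decreasing, from $0^-$ to $-\infty$. Evaluating at $s = 0$ and $s = -t$ gives $P(0) = \xi_+|_y$ and $P(-t) = d\phi^t(\xi_+|_x)$, so monotonicity at $s = -t < 0$ yields $\slope(d\phi^t(\xi_+|_x)) > \slope(\xi_+|_y)$. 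Since $Tc|_y \subset \xi_+|_y$ and $d\phi^t(Tc|_x) \subset d\phi^t(\xi_+|_x)$ are lines transverse to $T\phi|_y$, the same plane-versus-line slope identity as in the first paragraph gives $\slope(Tc|_y) < \slope(d\phi^t(Tc|_x))$, which is the required steadiness condition for a negative curve.

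The point requiring most care is confirming the monotonic rotation on all of $\mathbb{R}$ rather than only $s \geq 0$ as one might first read from the text; this holds because the infinitesimal rotation rate of $d\phi^{-s}(\xi_+|_{\phi^s(y)})$ is controlled by a time-independent transversality condition on $\xi_+$ relative to the weak foliations, so the one-sided monotonicity propagates automatically in both directions of time. The other bookkeeping step is to nail down the sign of $\slope(\xi_+)$, which is determined by the orientation of $M$ and the convention $(e^s, \dot{\phi}, e^u)$; if unsure, one can simply carry the argument through for both possible signs and observe that the negative-slope case produces a negative surgery curve while the positive-slope case produces a positive one, then relabel $\xi_\pm$ accordingly so the proposition as stated holds.
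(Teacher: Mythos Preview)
Your argument is correct and follows exactly the approach the paper intends: the paper's entire proof is the single sentence ``The dynamics on $d\phi^{-t}(\xi_{\pm}|_{\phi^t(x)})$ implies the following proposition,'' and your write-up simply unpacks that line via the monotone rotation of the pulled-back plane fields together with the observation that every line in $\xi_\pm$ transverse to $T\phi$ has the same slope. One small simplification: if you base the family at $x$ rather than $y$ (i.e.\ compare $Q(0)=\xi_+|_x$ with $Q(t)=d\phi^{-t}(\xi_+|_y)$ and then push forward by $d\phi^t$, which multiplies slopes by a positive factor), you only need the monotonicity for $s\geq 0$ as literally stated, so the extension to negative time you flag is not actually needed.
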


\end{eg}

\section{Horizontal Goodman surgery} \label{sec:horsur}

In this section, we explain horizontal Goodman surgery.

\subsection{Surgery operation} \label{subsec:horsurgluing}

For the rest of this section, we fix a pseudo-Anosov flow $\phi^t$ on a closed oriented $3$-manifold $M$. We first define the types of annuli we cut the flow along.

\begin{defn} \label{defn:surann}
A \textbf{positive/negative surgery annulus} is an embedded oriented annulus $A \subset M \backslash \sing(\phi^t)$ that is positively transverse to the flow, along with:
\begin{itemize}
    \item a foliation $\mathcal{H}$ by positive/negative curves such that $T\mathcal{H}$ is steady, and
    \item a foliation $\mathcal{K}$ by negative/positive non-separating arcs such that $T\mathcal{K}$ is steady, respectively.
\end{itemize}
In particular, note that each leaf of $\mathcal{H}$ is a positive/negative horizontal surgery curve, respectively. See \Cref{fig:surann} for an illustration of a positive surgery annulus.

\begin{figure}
    \centering
    \fontsize{8pt}{8pt}\selectfont
\begingroup%
  \makeatletter%
  \providecommand\color[2][]{%
    \errmessage{(Inkscape) Color is used for the text in Inkscape, but the package 'color.sty' is not loaded}%
    \renewcommand\color[2][]{}%
  }%
  \providecommand\transparent[1]{%
    \errmessage{(Inkscape) Transparency is used (non-zero) for the text in Inkscape, but the package 'transparent.sty' is not loaded}%
    \renewcommand\transparent[1]{}%
  }%
  \providecommand\rotatebox[2]{#2}%
  \newcommand*\fsize{\dimexpr\f@size pt\relax}%
  \newcommand*\lineheight[1]{\fontsize{\fsize}{#1\fsize}\selectfont}%
  \ifx\svgwidth\undefined%
    \setlength{\unitlength}{97.73850569bp}%
    \ifx\svgscale\undefined%
      \relax%
    \else%
      \setlength{\unitlength}{\unitlength * \real{\svgscale}}%
    \fi%
  \else%
    \setlength{\unitlength}{\svgwidth}%
  \fi%
  \global\let\svgwidth\undefined%
  \global\let\svgscale\undefined%
  \makeatother%
  \begin{picture}(1,0.72535226)%
    \lineheight{1}%
    \setlength\tabcolsep{0pt}%
    \put(0,0){\includegraphics[width=\unitlength,page=1]{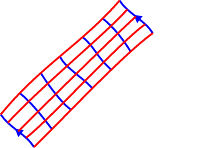}}%
    \put(0.37432506,0.60623593){\color[rgb]{1,0,0}\makebox(0,0)[lt]{\lineheight{1.25}\smash{\begin{tabular}[t]{l}$\mathcal{H}$\end{tabular}}}}%
    \put(0.68836274,0.65606734){\color[rgb]{0,0,1}\makebox(0,0)[lt]{\lineheight{1.25}\smash{\begin{tabular}[t]{l}$\mathcal{K}$\end{tabular}}}}%
  \end{picture}%
\endgroup%

    \caption{A positive surgery annulus.}
    \label{fig:surann}
\end{figure}

A \textbf{parametrized positive/negative surgery annulus} is a positive/negative surgery annulus $(A, \mathcal{H}, \mathcal{K})$ along with an orientation preserving diffeomorphism $\alpha: A \to S^1 \times I$ that sends the leaves of $\mathcal{H}$ to $S^1 \times \{k\}$ and the leaves of $\mathcal{K}$ to $\{h\} \times I$. 
\end{defn}

The following proposition states that surgery annuli can be built around any given horizontal surgery curve.

\begin{prop} \label{prop:horsurcurvetosurann}
Let $c$ be a positive/negative horizontal surgery curve. Then there exists a positive/negative surgery annulus $(A, \mathcal{H}, \mathcal{K})$ where $c$ is one of the leaves of $\mathcal{H}$ in the interior of $A$, respectively.
\end{prop}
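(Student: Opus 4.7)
The plan is to thicken $c$ into an annulus using a line field of constant opposite-sign slope, and then to take the $\mathcal{H}$-foliation to consist of small isotopes of $c$, applying \Cref{prop:horsurcurvestable} to ensure each $\mathcal{H}$-leaf remains a horizontal surgery curve. I treat the case where $c$ is positive; the negative case is symmetric.

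Using the continuous splitting $TM = E^s \oplus T\phi \oplus E^u$ on a neighborhood $U$ of $c$ in $M \setminus \sing(\phi^t)$, I first construct a smooth nowhere-vanishing vector field $v$ on $U$ whose direction at every point has fixed negative slope $m$; concretely, pasting local expressions of the form $m\, e^s + e^u$ together with a partition of unity preserves the slope. Let $\psi_s$ denote the local flow of $v$ and set
\[
\Psi : c \times [-\epsilon, \epsilon] \to M, \qquad \Psi(x, s) = \psi_s(x).
\]
Since $v$ has negative slope and $Tc$ has positive slope, $v$ is transverse to $Tc$, so $\Psi$ is a smooth embedding once $\epsilon > 0$ is small enough. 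Set $A = \Psi(c \times [-\epsilon, \epsilon])$; at any point of $c$ its tangent plane is $Tc \oplus \langle v \rangle$, which projects isomorphically onto $E^s \oplus E^u$, so $A$ is transverse to $\phi^t$ along $c$, and by continuity throughout $A$ after further shrinking $\epsilon$. Orient $A$ so that $\phi^t$ crosses it positively.

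Let $\mathcal{K}$ be the foliation of $A$ by the arcs $\Psi(\{x\} \times [-\epsilon, \epsilon])$. Each such arc joins the two boundary circles of $A$ and is therefore non-separating; moreover $T\mathcal{K} = v|_A$ has constant slope $m$, and the argument from the proof of \Cref{lemma:constantslope}, applied to line fields rather than curve tangents, shows $T\mathcal{K}$ is steady. Define $\mathcal{H}$ to be the foliation of $A$ by the curves $c_s := \psi_s(c)$, $s \in [-\epsilon, \epsilon]$, so that $c_0 = c$ is an interior leaf. Because $\psi_s \to \operatorname{id}$ in the $C^1$-topology as $s \to 0$ and $c$ is compact, the parametrized curves $c_s$ converge to $c$ in $C^1$, so \Cref{prop:horsurcurvestable} yields some $\epsilon_0 > 0$ such that every $c_s$ with $|s| \leq \epsilon_0$ is a positive horizontal surgery curve; after shrinking $\epsilon$ below $\epsilon_0$, every leaf of $\mathcal{H}$ has steady tangent field.

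The only point that needs care is that the steady condition for $T\mathcal{H}$ holds uniformly across the family $\{c_s\}$, and for this one only needs that the threshold in \Cref{prop:horsurcurvestable} depends continuously on the reference curve; since it is extracted from compactness of $c$, a positive lower bound on the first return time of $c$, and the constant coming from \Cref{claim:curveperturbcrossings}, all of which are stable under small $C^1$-perturbations, a single $\epsilon_0$ works for the whole family. With this verified, $(A, \mathcal{H}, \mathcal{K})$ is the desired positive surgery annulus with $c$ as an interior leaf of $\mathcal{H}$.
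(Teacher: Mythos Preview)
Your construction matches the paper's --- thicken $c$ into a transverse annulus, foliate by nearby copies of $c$ for $\mathcal{H}$ and by constant-negative-slope arcs for $\mathcal{K}$ --- but there is a gap in the verification that $T\mathcal{H}$ is steady. Steadiness of $T\mathcal{H}$ is a condition on the line field over \emph{all} of $A$: for every $x,y\in A$ with $y=\phi^t(x)$ one needs $\slope(T\mathcal{H}|_y) > \slope(d\phi^t(T\mathcal{H}|_x))$, including when $x\in c_{s_1}$ and $y\in c_{s_2}$ lie on \emph{different} leaves. Invoking \Cref{prop:horsurcurvestable} for each $c_s$ only handles crossings of a single leaf with itself; your final paragraph about a uniform threshold still concludes only that every $c_s$ is individually a horizontal surgery curve, and says nothing about the inter-leaf comparison.

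The fix, which is exactly what the paper does, is to run the argument of \Cref{prop:horsurcurvestable} directly on the line field $T\mathcal{H}$ rather than leaf by leaf: bounded slopes of $T\mathcal{H}$ handle time $\geq T_1$ automatically, and for time-$[T_0,T_1]$ crossings one applies \Cref{claim:curveperturbcrossings} to see that every triple $(x,y,t)\in A\times A\times[T_0,T_1]$ with $y=\phi^t(x)$ lies near a crossing of $c$, whence the steadiness of $Tc$ transfers by openness. (A side remark: the vector field $v=m\,e^s+e^u$ you build need not be smooth, since $E^{s/u}$ are in general only continuous for pseudo-Anosov flows; the paper's ``trajectories of the line field of slope $-1$'' has the same informality, and one cures it by taking any smooth foliation of $A$ by non-separating arcs whose tangent field stays negative and close to constant slope.)
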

\begin{proof}
We prove this when $c$ is positive. The case when $c$ is negative is similar. Take a thin annulus $A \subset M \backslash \sing(\phi^t)$ containing $c$ and transverse to the flow. Take a smooth foliation $\mathcal{H}$ on $A$ by closed curves containing $c$ as a leaf. Up to shrinking $A$, we can assume that $T\mathcal{H}$ is positive. We claim that up to further shrinking $A$, we can assume that $T\mathcal{H}$ is steady. 

The again follows from the same argument as in \Cref{prop:horsurcurvestable}. Fix an instantaneous metric. The maximum and minimum slopes of $T\mathcal{H}$ are bounded, so the steadiness condition holds for $(x,y,t)$ for $t \geq T_1$, for $T_1$ sufficiently large.

Let $T_0$ be the first return time of $A$. It remains to check the steadiness condition for $(x,y,t)$ for $t \in [T_0,T_1]$. By \Cref{claim:curveperturbcrossings}, each triple $(x,y,t) \in A \times A \times [T_0,T_1]$ for which $y=\phi^t(x)$ lies close to a crossing of $c$. For each such crossing, the steadiness condition is $C^1$-open hence holds for $T\mathcal{H}$.

To construct $\mathcal{K}$ on $A$, we can take trajectories of the line field defined by lines of slope, say, $-1$. The leaves of $\mathcal{K}$ will be transverse to those of $\mathcal{H}$ hence are non-separating arcs on $A$. The argument in \Cref{lemma:constantslope} shows that $T\mathcal{K}$ is steady.
\end{proof}

We then specify the types of maps we reglue the cut annuli along.

\begin{defn} \label{defn:surmap}
Let $(A, \mathcal{H}, \mathcal{K}, \alpha)$ be a parametrized positive/negative surgery annulus. A \textbf{surgery map} for $(A, \mathcal{H}, \mathcal{K}, \alpha)$ is a diffeomorphism $\sigma: A \to A$ such that $\sigma=\mathrm{id}$ near $\partial A$ and $\alpha \sigma \alpha^{-1}(h,k)=(h+\rho(k),k)$ where $\rho$ is of the form:
\begin{itemize}
    \item $\rho$ is non-increasing/non-decreasing, respectively,
    \item $\rho'(k) = R_0$ on a interval $J \subset I$, for some negative/positive number $R_0$, respectively, and
    \item $|\rho'(k)| \leq |R_0|$ on all of $I$.
\end{itemize}
Notice that $\alpha=\mathrm{id}$ near $\partial A$ implies that $\rho$ is integer valued near $\partial I$. We call $\rho(0)-\rho(1) \in \mathbb{Z}$ the \textbf{coefficient} of $\alpha$. Note that a surgery map for a positive/negative surgery annulus has positive/negative coefficient, respectively.

We say that $\rho$ is \textbf{$(\delta, R)$-thin} if the length of each component of $\rho(I \backslash J)$ is $<\delta$ and $|R_0| > R$. See \Cref{fig:horsurmapgraph}.
Intuitively, a surgery map twists around the annulus, and for $\delta$ small and $R$ large, a $(\delta, R)$-thin surgery map performs the twist within a thin sub-annulus.

\begin{figure}
    \centering
    \fontsize{10pt}{10pt}\selectfont
\begingroup%
  \makeatletter%
  \providecommand\color[2][]{%
    \errmessage{(Inkscape) Color is used for the text in Inkscape, but the package 'color.sty' is not loaded}%
    \renewcommand\color[2][]{}%
  }%
  \providecommand\transparent[1]{%
    \errmessage{(Inkscape) Transparency is used (non-zero) for the text in Inkscape, but the package 'transparent.sty' is not loaded}%
    \renewcommand\transparent[1]{}%
  }%
  \providecommand\rotatebox[2]{#2}%
  \newcommand*\fsize{\dimexpr\f@size pt\relax}%
  \newcommand*\lineheight[1]{\fontsize{\fsize}{#1\fsize}\selectfont}%
  \ifx\svgwidth\undefined%
    \setlength{\unitlength}{151.415672bp}%
    \ifx\svgscale\undefined%
      \relax%
    \else%
      \setlength{\unitlength}{\unitlength * \real{\svgscale}}%
    \fi%
  \else%
    \setlength{\unitlength}{\svgwidth}%
  \fi%
  \global\let\svgwidth\undefined%
  \global\let\svgscale\undefined%
  \makeatother%
  \begin{picture}(1,0.65889929)%
    \lineheight{1}%
    \setlength\tabcolsep{0pt}%
    \put(0,0){\includegraphics[width=\unitlength,page=1]{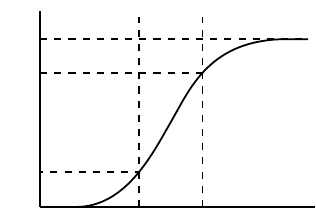}}%
    \put(-0.00199136,0.47285281){\color[rgb]{0,0,0}\makebox(0,0)[lt]{\lineheight{1.25}\smash{\begin{tabular}[t]{l}$< \delta$\end{tabular}}}}%
    \put(0.383417,0.63857036){\color[rgb]{0,0,0}\makebox(0,0)[lt]{\lineheight{1.25}\smash{\begin{tabular}[t]{l}$\rho'=R_0>R$\end{tabular}}}}%
    \put(-0.00199936,0.04887937){\color[rgb]{0,0,0}\makebox(0,0)[lt]{\lineheight{1.25}\smash{\begin{tabular}[t]{l}$< \delta$\end{tabular}}}}%
  \end{picture}%
\endgroup%

    \caption{A choice of $\rho$ for a $(\delta, R)$-thin surgery map on a negative surgery annulus.}
    \label{fig:horsurmapgraph}
\end{figure}

\end{defn}

We can now state the horizontal Goodman surgery operation on pseudo-Anosov flows.

\begin{constr} \label{constr:sur}
Let $\phi^t$ be a pseudo-Anosov flow on a closed oriented $3$-manifold $M$. Let $(A, \mathcal{H}, \mathcal{K}, \alpha)$ be a parametrized positive/negative surgery annulus, and let $\sigma:A \to A$ be a surgery map for $(A, \mathcal{H}, \mathcal{K}, \alpha)$.

Let $M \cut A$ be the space obtained by cutting $M$ along $A$. There are two copies of $A$ on $M \cut A$ --- one on the positive side of $A$ and the other on the negative side of $A$. We denote these by $A_+$ and $A_-$ respectively. Note that $\partial A_+ = \partial A_-$. 

We glue $A_-$ to $A_+$ via $\sigma: A_- \to A_+$. Let $M_\sigma(A, \mathcal{H}, \mathcal{K}, \alpha)$ be the resulting $3$-manifold, and let $\phi^t_\sigma(A, \mathcal{H}, \mathcal{K}, \alpha)$ be the resulting flow. 
\end{constr}

\subsection{Pseudo-Anosovity} \label{subsec:horsurpA}

Our goal in this subsection is to prove that for sufficiently thin surgery maps, the flow defined in \Cref{constr:sur} is pseudo-Anosov.

\begin{thm} \label{thm:horsurpA}
Let $\phi^t$ be a pseudo-Anosov flow on a closed oriented $3$-manifold $M$. Let $(A, \mathcal{H}, \mathcal{K}, \alpha)$ be a positive/negative surgery annulus. Then there exists $\delta$ and $R$ such that for every $(\delta, R)$-thin surgery map $\sigma$ for $(A, \mathcal{H}, \mathcal{K}, \alpha)$, the flow $\phi^t_\sigma(A, \mathcal{H}, \mathcal{K}, \alpha)$ is pseudo-Anosov.
\end{thm}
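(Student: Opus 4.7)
My plan is to follow Goodman's cone-field strategy \cite{Goo83}, adapted to the horizontal setting in which the cones are bounded by $T\mathcal{H}$ and $T\mathcal{K}$ rather than by the stable/unstable foliations at a closed orbit. I will focus on the positive case; the negative case is symmetric. Since $A \subset M \setminus \sing(\phi^t)$, each singular orbit admits a neighborhood disjoint from $A$. On such a neighborhood the flow $\phi^t_\sigma$ literally equals $\phi^t$, so the pseudo-hyperbolic local model in \Cref{defn:pAflow} is inherited verbatim. It remains to verify the hyperbolic estimates on the complement of these neighborhoods, which I would do by constructing a continuous, $\phi^t_\sigma$-invariant cone field.

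Fix an instantaneous metric and work in the $2$-dimensional quotient $TM/T\phi$. Away from a small neighborhood $U$ of $A$, I take $C^u$ to be a tight neighborhood of $E^u$ and $C^s$ a tight neighborhood of $E^s$, narrow enough that $d\phi^t$ strictly contracts each with uniform hyperbolic rates. On $A$ itself, I reshape the cones so that their four boundary directions are exactly $\pm T\mathcal{H}$ and $\pm T\mathcal{K}$; thus $C^u|_A$ is the arc of directions from $T\mathcal{K}$ to $T\mathcal{H}$ containing $E^u$, and $C^s|_A$ is the complementary arc containing $E^s$. The steadiness hypothesis is \emph{precisely} what makes this reshaped cone field $d\phi^t$-invariant at every return to $A$: at a crossing $(x,y,t)$ on $\mathcal{H}$, $d\phi^t(T\mathcal{H}|_x)$ has strictly smaller positive slope than $T\mathcal{H}|_y$, so it lies strictly on the $E^u$-side of the $C^u|_y$-boundary, and symmetrically for $T\mathcal{K}$. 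A compactness argument on returns then yields a uniform contraction rate, much as in \cite{Goo83}.

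A direct computation in the $(h,k)$-chart shows that $d\sigma$ fixes $T\mathcal{H}=\partial_h$ and sends $T\mathcal{K}=\partial_k$ to $\rho'(k)\partial_h + \partial_k$. For a $(\delta,R)$-thin surgery map the shear is supported in a thin sub-annulus with $|\rho'|\geq R$, and as $|\rho'|$ grows from $0$, the image $d\sigma(T\mathcal{K})$ sweeps through the interior of $C^u$ toward $T\mathcal{H}$; hence $d\sigma(C^u|_{A_-}) \subset C^u|_{A_+}$ strictly, and symmetrically $C^s|_{A_+} \subset d\sigma(C^s|_{A_-})$. Taking $R$ large and $\delta$ small enough that the reshaping interpolation near $\partial A$ remains compatible with $\sigma=\mathrm{id}$ there gives a globally $\phi^t_\sigma$-invariant cone field. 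From invariant, strictly contracted cones I would extract invariant line bundles $E^{s/u}_\sigma = \bigcap_{\pm t > 0} d\phi^t_\sigma(C^{s/u})$ and the exponential estimates of \Cref{defn:Aflow} by the standard cone-field-to-hyperbolicity argument, concluding that $\phi^t_\sigma$ is pseudo-Anosov.

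The main technical obstacle is the simultaneous calibration of three constraints: the cone field must agree with the Goodman cones of $\phi^t$ outside $U$, must be bounded by $T\mathcal{H}$ and $T\mathcal{K}$ on $A$, and must be compatible with the shear of $\sigma$ (concentrated in a thin sub-annulus with $|\rho'|$ large enough for strict cone contraction while equaling $\mathrm{id}$ near $\partial A$). The parameters $\delta$ and $R$ in the statement exist precisely to allow this calibration, and the bulk of the work lies in verifying that for $\delta$ small and $R$ large (depending only on $(A,\mathcal{H},\mathcal{K},\alpha)$), all three constraints can be met simultaneously and yield genuinely uniform hyperbolic rates.
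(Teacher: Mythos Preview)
Your cone-field strategy bounded by $T\mathcal{H}$ and $T\mathcal{K}$ is exactly the paper's approach, and your reading of steadiness as the cone-nesting condition at returns to $A$ is correct. There are, however, two genuine gaps.

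The more serious one is your treatment of the singular orbits. You assert that because $\phi^t_\sigma = \phi^t$ on a neighborhood $N$ of each singular orbit, the pseudo-hyperbolic local model is ``inherited verbatim.'' This does not work: \Cref{defn:pAflow} requires a chart sending the \emph{new} bundles $\overline{E}^{s/u}$ to the tangent bundles of the model foliations, and $\overline{E}^{s/u}$ are global objects --- $\overline{E}^u|_x$ depends on the entire backward orbit of $x$, which generically passes through $A$ where $\sigma$ acts. Hence $\overline{E}^{s/u}|_N$ generally differ from $E^{s/u}|_N$ everywhere except on the local singular leaves themselves, and the old chart $f$ does not send them where it should. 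The paper identifies this explicitly as ``the difficulty'' and resolves it by constructing a bi-Lipschitz, orbit-preserving conjugacy $H:\nu \to \overline{\nu}$ near each singular orbit, built from first-return maps on a local section together with a saturation construction that matches up the $(P^s,P^u)$ and $(\overline{P}^s,\overline{P}^u)$ foliation pairs.

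The second gap is in locating where $(\delta,R)$-thinness is actually needed. Cone invariance under $d\sigma$ in $TM/T\phi$ holds for \emph{any} surgery map --- your computation that $d\sigma$ fixes $T\mathcal{H}$ and pushes $T\mathcal{K}$ into the cone is correct and uses no thinness. Accordingly the paper obtains the weak plane fields $\overline{E}^{cs/cu}$ without it (as noted in \Cref{rmk:horsurpAproof}). Thinness enters in a second, separate cone-field step \emph{inside} $\overline{E}^{cu}$: one must show that $d\sigma$ does not contract the $e^u$-component, and the paper isolates this as a claim that the $(e^u,e^u)$-entry of $d\sigma$ satisfies $q>1-\epsilon$. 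This is where $\delta$ small (not just $R$ large) matters, controlling the transition region where $\rho(k)$ is small but nonzero. Your proposal treats thinness as a boundary-compatibility calibration, but its real role is quantitative: without it $d\sigma$ can genuinely contract $e^u$, and the surgered flow can fail to be hyperbolic even though the cones nest.
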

\begin{proof}
We prove the theorem in the case when $(A, \mathcal{H}, \mathcal{K})$ is positive. The case when $(A, \mathcal{H}, \mathcal{K})$ is negative is similar. For convenience, we write $\overline{M}$ for $M_\sigma(A, \mathcal{H}, \mathcal{K})$ and $\overline{\phi}^t$ for $\phi^t_\sigma(A, \mathcal{H}, \mathcal{K})$. 

There are two parts to this proof:
\begin{enumerate}
    \item We show that there is a continuous splitting of the tangent bundle into three $\overline{\phi}^t$-line bundles $T\overline{M} = \overline{E}^s \oplus T \overline{\phi} \oplus \overline{E}^s$ such that $\overline{E}^s$ and $\overline{E}^u$ have the appropriate contraction and expansion properties. This is done in two steps.
    \begin{enumerate}
        \item We construct cone fields $C^{cs}$ and $C^{cu}$ using $T\mathcal{H}$ and $T\mathcal{K}$ and show that they contract exponentially under $\overline{\phi}^t$ to conclude that they limit to `weak stable' and `weak unstable' plane fields $\overline{E}^{cs}$ and $\overline{E}^{cu}$.
        \item We construct cone fields $C^s$ and $C^u$ on $\overline{E}^{cs}$ and $\overline{E}^{cu}$, and show that they contract exponentially under $\overline{\phi}^t$ to conclude that they limit to the line fields $\overline{E}^{cs}$ and $\overline{E}^{cu}$, respectively.
    \end{enumerate}
    \item We then show that each singular orbit $\gamma$ has a neighborhood that is equivalent to a neighborhood of a pseudo-hyperbolic orbit. This is done by finding an appropriate conjugating map between the first return maps on local sections.
\end{enumerate}

The cut manifold $M \cut A$ will play a large role in part (1), so we take the time to set up some notation. We identify points on $M \cut A$ that lie away from $A_{\pm}$ with $M \backslash A$, and we denote points on $A_{\pm}$ by $x_{\pm}$, where $x$ is the corresponding point on $A$. We will also consider the restriction of $\phi^t$ to $M \cut A$. We abuse notation and also write this restricted flow as $\phi^t$, even though $\phi^t(x)$ is only defined from the point where the orbit begins from $A_+$ to the point where the orbit ends at $A_-$.

The flow $\overline{\phi}^t$ on $\overline{M}$ can be thought of as the `composition' of $\phi^t$ and $\sigma$, the latter coming up whenever an orbit ends at $A_-$ and has to start anew on $A_+$. In particular, we can analyze $d\overline{\phi}^t$ by taking the composition of $d\phi^t$ and $d\sigma$. We prove the following claim that gives us a starting point for analyzing the latter.

\begin{claim} \label{claim:thinsurmapsudyn}
Fix an instantaneous metric. Let $e^{s/u}$ be unit length vector fields spanning $E^{s/u}|_A$ respectively. The matrix representative for $d\sigma:TM|_{A_-} \to TM|_{A_+}$ in the bases $(\dot{\phi}, e^s, e^u)$ is of the form
$$\begin{bmatrix}
1 & S & U \\
0 & m & n \\
0 & p & q \\
\end{bmatrix}$$
where $S,U,m,n,p,q$ are smooth functions on $A$. 

Furthermore, for every $\epsilon>0$, there exists $\delta$ and $R$ so that whenever $\sigma$ is $(\delta, R)$-thin, we have $q > 1-\epsilon$ on $A$.
\end{claim}
\begin{proof}
The first statement is clear since $\sigma$ sends $\dot{\phi}$ to $\dot{\phi}$.

To prove the second statement, it suffices to consider $d\sigma$ as a map on $(TM/T\phi)|_A \cong TA$. 
We perform the following computations in $S^1 \times I$ via $\alpha$. 
Up to reversing the signs of $e^s$ or $e^u$, we can write
\begin{align*}
\partial_h &= a(h,k) e^s + b(h,k) e^u \\
\partial_k &= c(h,k) e^s - d(h,k) e^u
\end{align*}
for positive functions $a, b, c, d$.

Hence the matrix representation of $d\sigma$ under the basis $(e^s, e^u)$ is
\begin{scriptsize}
\begin{align*}
& \begin{bmatrix}
a(h+\rho(k),k) & c(h+\rho(k),k) \\
b(h+\rho(k),k) & -d(h+\rho(k),k) 
\end{bmatrix}
\begin{bmatrix}
1 & \rho'(k) \\
0 & 1
\end{bmatrix}
{\begin{bmatrix}
a(h,k) & c(h,k) \\
b(h,k) & -d(h,k) 
\end{bmatrix}}^{-1} \\
=&
\begin{bmatrix}
a(h+\rho(k),k) & a(h+\rho(k),k)\rho'(k)+c(h+\rho(k),k) \\
b(h+\rho(k),k) & b(h+\rho(k),k)\rho'(k)-d(h+\rho(k),k) 
\end{bmatrix}
\frac{1}{a(h,k)d(h,k)+b(h,k)c(h,k)} \begin{bmatrix}
d(h,k) & c(h,k) \\
b(h,k) & -a(h,k) 
\end{bmatrix} \\
=&
\frac{1}{a(h,k)d(h,k)+b(h,k)c(h,k)} \begin{bmatrix}
* & * \\
* & a(h,k)d(h+\rho(k),k)+b(h+\rho(k),k)c(h,k)-a(h,k)b(h+\rho(k),k)\rho'(k)
\end{bmatrix} \\
\end{align*}
\end{scriptsize}

Within $S^1 \times J$, where $\rho'(k) < -R$,
\begin{align*}
& \frac{a(h,k)d(h+\rho(k),k)+b(h+\rho(k),k)c(h,k)-a(h,k)b(h+\rho(k),k)\rho'(k)}{a(h,k)d(h,k)+b(h,k)c(h,k)} \\
>& R \frac{a(h,k)b(h+\rho(k),k)}{a(h,k)d(h,k)+b(h,k)c(h,k)} > 1
\end{align*}
for large $R$.

Within $S^1 \times (I \backslash J)$, where the size of $\rho(k)$ mod $\mathbb{Z}$ is $< \delta$, notice that 
\begin{align*}
& \frac{a(h,k)d(h+\rho(k),k)+b(h+\rho(k),k)c(h,k)}{a(h,k)d(h,k)+b(h,k)c(h,k)} \to 1
\end{align*}
as $\rho(k) \to 0$, hence 
\begin{align*}
& \frac{a(h,k)d(h+\rho(k),k)+b(h+\rho(k),k)c(h,k)-a(h,k)b(h+\rho(k),k)\rho'(k)}{a(h,k)d(h,k)+b(h,k)c(h,k)} \\
>& 1-\epsilon-\frac{a(h,k)b(h+\rho(k),k)\rho'(k)}{a(h,k)d(h,k)+b(h,k)c(h,k)} \geq 1-\epsilon \\
\end{align*}
for small $\delta$.
\end{proof}

We begin part (1) by defining a cone field $C^{cu}$ on $(M \cut A) \backslash \sing(\phi^t)$ as follows:
\begin{itemize}
    \item If $y = x_+ \in A_+$, then $C^{cu}|_y$ is the union of the two opposite (closed) quadrants bounded by $T\mathcal{H}|_x$ and $T\mathcal{K}|_x$ that meet $E^u$, direct sum with $T\phi|_x$. See \Cref{fig:surpAcone}.
    \item If $y = \phi^t(x_+)$ for $x_+ \in A_+$ and $t>0$, then $C^{cu}|_y = d\phi^t(C^{cu}_{x_+})$.
    \item If $y \notin \phi^{[0,\infty)}(A_+)$, then $C^{cu}|_y = E^u|_y \oplus T\phi|_y$.
\end{itemize}

\begin{figure}
    \centering
    \fontsize{10pt}{10pt}\selectfont
\begingroup%
  \makeatletter%
  \providecommand\color[2][]{%
    \errmessage{(Inkscape) Color is used for the text in Inkscape, but the package 'color.sty' is not loaded}%
    \renewcommand\color[2][]{}%
  }%
  \providecommand\transparent[1]{%
    \errmessage{(Inkscape) Transparency is used (non-zero) for the text in Inkscape, but the package 'transparent.sty' is not loaded}%
    \renewcommand\transparent[1]{}%
  }%
  \providecommand\rotatebox[2]{#2}%
  \newcommand*\fsize{\dimexpr\f@size pt\relax}%
  \newcommand*\lineheight[1]{\fontsize{\fsize}{#1\fsize}\selectfont}%
  \ifx\svgwidth\undefined%
    \setlength{\unitlength}{93.40295026bp}%
    \ifx\svgscale\undefined%
      \relax%
    \else%
      \setlength{\unitlength}{\unitlength * \real{\svgscale}}%
    \fi%
  \else%
    \setlength{\unitlength}{\svgwidth}%
  \fi%
  \global\let\svgwidth\undefined%
  \global\let\svgscale\undefined%
  \makeatother%
  \begin{picture}(1,0.91045712)%
    \lineheight{1}%
    \setlength\tabcolsep{0pt}%
    \put(0,0){\includegraphics[width=\unitlength,page=1]{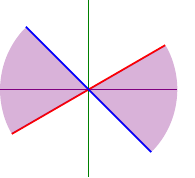}}%
    \put(0.64499879,0.69324246){\color[rgb]{1,0,0}\makebox(0,0)[lt]{\lineheight{1.25}\smash{\begin{tabular}[t]{l}$T\mathcal{H}$\end{tabular}}}}%
    \put(0.52301641,0.12008146){\color[rgb]{0,0,1}\makebox(0,0)[lt]{\lineheight{1.25}\smash{\begin{tabular}[t]{l}$T\mathcal{K}$\end{tabular}}}}%
  \end{picture}%
\endgroup%

    \caption{Defining the cone $C^{cu}_y$ if $y = x_+ \in A_+$.}
    \label{fig:surpAcone}
\end{figure}

By identifying $\overline{M}$ with the complement of the interior of $A_-$ in $M \cut A$, $C^{cu}$ descends to a discontinuous cone field $\overline{C}^{cu}$ on $\overline{M}$. We want to show that
\begin{enumerate}[label=(\roman*)]
    \item If $y=\overline{\phi}^t(x)$ where $t \geq 0$, then $\overline{C}^{cu}|_y \supset d\overline{\phi}^t(\overline{C}^{cu}|_x)$.
    \item For every $x$, $\bigcap_{t \in [0,\infty)} d\overline{\phi}^t(\overline{C}^{cu}|_{\overline{\phi}^{-t}(x)})$ is a plane $\overline{E}^{cu}|_x$ at $x$.
    \item $\overline{E}^{cu}$ is a continuous plane field.
\end{enumerate}

For (i), it suffices to show that
\begin{itemize}
    \item if $y=\phi^t(x)$ where $t \geq 0$ in $M \cut A$, then $C^{cu}|_y = d\phi^t(C^{cu}|_x)$, and
    \item $C^{cu}|_{x_+} \supset d\sigma(C^{cu}|_{x_-})$.
\end{itemize}
The first point follows from the definition of $C^{cu}$. For the second point, the steadiness of $T\mathcal{H}$ and $T\mathcal{K}$ implies that $C^{cu}|_{x_-} \cap TA_-$ lies within the union of quadrants bounded by $T\mathcal{H}|_x$ and $T\mathcal{K}|_x$ that contain $A^u$. These are then sent into themselves under $\sigma$, by definition of gluing maps. See \Cref{fig:surpAarg} for a pictorial summary.

\begin{figure}
    \centering
    \fontsize{10pt}{10pt}\selectfont
\begingroup%
  \makeatletter%
  \providecommand\color[2][]{%
    \errmessage{(Inkscape) Color is used for the text in Inkscape, but the package 'color.sty' is not loaded}%
    \renewcommand\color[2][]{}%
  }%
  \providecommand\transparent[1]{%
    \errmessage{(Inkscape) Transparency is used (non-zero) for the text in Inkscape, but the package 'transparent.sty' is not loaded}%
    \renewcommand\transparent[1]{}%
  }%
  \providecommand\rotatebox[2]{#2}%
  \newcommand*\fsize{\dimexpr\f@size pt\relax}%
  \newcommand*\lineheight[1]{\fontsize{\fsize}{#1\fsize}\selectfont}%
  \ifx\svgwidth\undefined%
    \setlength{\unitlength}{105.5377822bp}%
    \ifx\svgscale\undefined%
      \relax%
    \else%
      \setlength{\unitlength}{\unitlength * \real{\svgscale}}%
    \fi%
  \else%
    \setlength{\unitlength}{\svgwidth}%
  \fi%
  \global\let\svgwidth\undefined%
  \global\let\svgscale\undefined%
  \makeatother%
  \begin{picture}(1,1.2290764)%
    \lineheight{1}%
    \setlength\tabcolsep{0pt}%
    \put(0,0){\includegraphics[width=\unitlength,page=1]{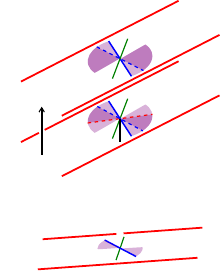}}%
    \put(0.21974641,0.57100777){\color[rgb]{0,0,0}\makebox(0,0)[lt]{\lineheight{1.25}\smash{\begin{tabular}[t]{l}$\sigma$\end{tabular}}}}%
    \put(0.57340539,0.30075651){\color[rgb]{0,0,0}\makebox(0,0)[lt]{\lineheight{1.25}\smash{\begin{tabular}[t]{l}$\phi^t$\end{tabular}}}}%
    \put(-0.0028685,0.06603881){\color[rgb]{1,0,0}\makebox(0,0)[lt]{\lineheight{1.25}\smash{\begin{tabular}[t]{l}$A_+$\end{tabular}}}}%
    \put(-0.0028685,0.69358502){\color[rgb]{1,0,0}\makebox(0,0)[lt]{\lineheight{1.25}\smash{\begin{tabular}[t]{l}$A_+$\end{tabular}}}}%
    \put(-0.0028685,0.43893953){\color[rgb]{1,0,0}\makebox(0,0)[lt]{\lineheight{1.25}\smash{\begin{tabular}[t]{l}$A_-$\end{tabular}}}}%
    \put(0,0){\includegraphics[width=\unitlength,page=2]{surpAarg.pdf}}%
  \end{picture}%
\endgroup%

    \caption{A pictorial summary of the proof that $\overline{C}^{cu}|_y \supset d\overline{\phi}^t(\overline{C}^{cu}|_x)$.}
    \label{fig:surpAarg}
\end{figure}

For (ii), we use slopes in order to make a quantitative argument. For $x \in M \cut A$ and a cone $C$ in $TM|_x$ containing $T\phi|_x$ and disjoint from $E^s|_x$, we let $\width_L(C)$ be the difference between $L$ times the maximum slope and $L$ times the minimum slope of lines contained in $C$. Here $L$ is some large number to be chosen later.

Note that 
\begin{itemize}
    \item If $y=\phi^t(x)$ where $t \geq 0$ in $M \cut A$, then $\width_L(d\phi^t(C^{cu}|_x)) \leq \lambda^{-2t} \width_L(C^{cu}|_x)$. 
    \item In the notation of \Cref{claim:thinsurmapsudyn}, 
    \begin{align*}
        \width_L(d\sigma(C^{cu}|_{x_-})) =& \left| \frac{mL\mathrm{maxslope}(C^{cu}|_{x_-})+n}{pL\mathrm{maxslope}(C^{cu}|_{x_-})+q} - \frac{mL\mathrm{minslope}(C^{cu}|_{x_-})+n}{pL\mathrm{minslope}(C^{cu}|_{x_-})+q} \right| \\
        =& \frac{(mq-np)L|\mathrm{maxslope}(C^{cu}|_{x_-})-\mathrm{minslope}(C^{cu}|_{x_-})|}{|pL\mathrm{maxslope}(C^{cu}|_{x_-})+q||pL\mathrm{minslope}(C^{cu}|_{x_-})+q|} \\
        =& \frac{(mq-np)\width_L(C^{cu}|_{x_-})}{|pL\mathrm{maxslope}(C^{cu}|_{x_-})+q||pL\mathrm{minslope}(C^{cu}|_{x_-})+q|} \\
        <& \width_L(C^{cu}|_{x_-})
    \end{align*}
    for large $L$. Here we used the fact that the slopes of $T\mathcal{H}$ and $T\mathcal{K}$ are uniformly bounded, which is in turn due to compactness of $A$.
\end{itemize}
 
This implies that $\width_L(d\overline{\phi}^t(\overline{C}^{cu}|_{\overline{\phi}^{-t}(x)})) \to 0$ as $t \to \infty$, giving (2).

By pulling back the plane field $\overline{E}^{cu}$ on $\overline{M}$ to $M \cut A$, we have a plane field whose restriction to $A_-$ maps to its restriction to $A_+$ under $\sigma$. Abusing notation, we denote this plane field as $\overline{E}^{cu}$ as well.

To show (3), suppose for the sake of contradiction that there exists $x_n \in \overline{M}$ converging to $x \in \overline{M}$ such that $\overline{E}^{cu}|_{x_n}$ are bounded away from $\overline{E}^{cu}|_x$. We lift this to $M \cut A$. By uniformity of the convergence in (2), for large enough $t$, $d\overline{\phi}^t(\overline{C}^{cu}|_{\overline{\phi}^{-t}(x_n)})$ are bounded away from $d\overline{\phi}^t(\overline{C}^{cu}|_{\overline{\phi}^{-t}(x)})$. This implies that $\overline{C}^{cu}|_{\overline{\phi}^{-t}(x_n)}$ are bounded away from $\overline{C}^{cu}|_{\overline{\phi}^{-t}(x)}$. But this is false since $E^u|_{\overline{\phi}^{-t}(x_n)} \subset \overline{C}^{cu}|_{\overline{\phi}^{-t}(x_n)}$ converges to $E^u|_{\overline{\phi}^{-t}(x)} \subset \overline{C}^{cu}|_{\overline{\phi}^{-t}(x)}$.

This establishes the existence of the continuous plane field $\overline{E}^{cu}$. 
Note that $\overline{E}^{cu} \subset C^{cu}$ in $M \cut A$. In particular the slopes of lines contained in $\overline{E}^{cu}$ are uniformly bounded. 

We will now find $\overline{E}^u \subset \overline{E}^{cu}$. The argument for this is similar to (2) above in that we use slopes to quantify the dynamics on $\overline{E}^{cu}$ (but there are some subtle differences which we point out in \Cref{rmk:horsurpAproof}). Let us first define a notion of slope for lines in $\overline{E}^u$: Let $x \in (M \cut A) \backslash \sing(\phi^t)$. Let $e^{s/u}$ be unit length vectors in $E^{s/u}|_x$ respectively. A nonzero vector in a line $\ell$ in $\overline{E}^{cu}|_x$ can be written as $a \dot{\phi} + b (c e^s + e^u)$. We define the \textbf{$cu$-slope} of $\ell$ to be $\frac{a}{b}$. 
For a cone $C$ in $\overline{E}^u|_x$, we let $\width(C)$ be the difference between the maximum $cu$-slope and the minimum $cu$-slope of lines contained in $C$. Note that while the $cu$-slope is only well-defined up to a sign, the width of a cone is well-defined.

If $y = \phi^t(x)$ where $t \geq 0$ in $M \cut A$, then since $d\phi^t(\dot{\phi}|_x) = \dot{\phi}|_y$ and $||d\phi^t(e^u)|| \leq \lambda^{-t}$, we see that $d\phi^t$ acts on the $cu$-slopes by multiplication by a factor $\leq \lambda^{-t}$.

To analyze the effect of $d\sigma$ on $cu$-slopes, we use \Cref{claim:thinsurmapsudyn}: The $cu$-slope of 
$$d\sigma(a \dot{\phi} + b (c e^s + e^u)) = (a+bcS+bU) \dot{\phi} + bq (c e^s + e^u)$$
is $\frac{1}{q} (\frac{a}{b} + (cS+U))$. For large enough $R$ and small enough $\delta$, $q$ is bounded from below by $1-\epsilon$ with $\epsilon$ arbitrarily close to $0$. Also, $S$ and $U$ are uniformly bounded by compactness of $A$, and $c$ is uniformly bounded by the boundedness of slopes of lines in $\overline{E}^{cu}$. This implies that $d\sigma:\overline{E}^{cu}|_{x_-} \to \overline{E}^{cu}|_{x_+}$ acts on the set of slopes by multiplication by a factor $< \frac{1}{1-\epsilon}$ then translation by $\leq m$.

Now let $T_0$ be the first return time of $A$.
Take $\epsilon$ small enough so that $\displaystyle \frac{\lambda^{-\frac{T_0}{2}}}{1-\epsilon} < 1$. Define a cone field $C^u$ in $\overline{E}^{cu}$ as follows:
\begin{itemize}
    \item If $y = \phi^t(x_+)$ for $x_+ \in A_+$ and $t \geq 0$, then $C^u|_y$ is the union of lines of $cu$-slope $[-\overline{m} \lambda^{-\frac{t}{2}}, \overline{m} \lambda^{-\frac{t}{2}}]$, where $\displaystyle \overline{m} = m \left( 1-\frac{\lambda^{-\frac{T_0}{2}}}{1-\epsilon} \right)^{-1}$.
    \item If $y \notin \phi^{[0,\infty)}(A_+)$, then $C^u|_y = E^u|_y$.
\end{itemize}

By again identifying $\overline{M}$ with the complement of the interior of $A_-$ in $M \cut A$, $C^u$ descends to a discontinuous cone field $\overline{C^u}$ on $\overline{M}$. 

Our analysis on the actions of $d\phi^t$ and $d\sigma$ on the $cu$-slope implies that 
\begin{itemize}
    \item if $y=\phi^t(x)$ where $t \geq 0$ in $M \cut A$, then $C^u|_y \supset d\phi^t(C^u|_x)$ and $\width(d\phi^t(C^u|_x)) \leq \lambda^{-t} \width(C^u|_x)$, and
    \item $C^u|_{x_+} \supset d\sigma(C^u|_{x_-})$ and $\width(d\sigma(C^u|_{x_-})) < \width(C^u|_{x_-})$
\end{itemize}

These imply
\begin{enumerate}[label=(\roman*)]
    \item If $y=\overline{\phi}^t(x)$ where $t \geq 0$, then $\overline{C^u}|_y \supset d\overline{\phi}^t(\overline{C^u}|_x)$.
    \item For every $x$, $\bigcap_{t \in [0,\infty)} d\overline{\phi}^t(\overline{C^u}|_{\overline{\phi}^{-t}(x)})$ is a line $\overline{E^u}|_x$ at $x$.
\end{enumerate}

A similar argument as for $\overline{E}^{cu}$ shows that 
\begin{enumerate}[label=(\roman*), resume]
    \item $\overline{E}^u$ is a continuous line field.
\end{enumerate}

This establishes the existence of the continuous line field $\overline{E}^u$. Similarly as $\overline{E^{cu}}$, we have that $\overline{E^u} \subset C^u$, hence the $cu$-slopes $\overline{E^u}$ are uniformly bounded. In particular the length of a vector in $\overline{E^u}$ differ from the length of its projection to $E^u$ along $E^s \oplus T\phi$ by a bounded factor. Since $d\phi^t$ on $M \cut A$ expands $E^u$ by a factor of at least $\lambda^t$, and by \Cref{claim:thinsurmapsudyn} the $e^u$ coordinate of $d\sigma(e^u)$ is $>1-\epsilon>\lambda^{-\frac{T_0}{2}}$, we conclude that $d\overline{\phi^t}$ expands $\overline{E}^u$ exponentially. 

Symmetrically, one finds $\overline{E}^s$. In particular $\overline{E}^s$ lies in the union of the two opposite (closed) quadrants bounded by $T\mathcal{H}|_x$ and $T\mathcal{K}|_x$ that meet $E^s$, direct sum with $T\phi|_x$. Hence $T\overline{M} = \overline{E}^s \oplus T \overline{\phi}^t \oplus \overline{E}^s$. This concludes part (1) of the proof.

For part (2), notice that $\dot{\phi} = \dot{\overline{\phi}}$ in a neighborhood of $\sing(\phi^t)$. In particular, the contraction/expansion properties of $\overline{E}^{s/u}$ force $\overline{E}^{s/u} = E^{s/u}$ on the local stable/unstable leaves of each singular orbit, respectively. (Alternatively, this also follows from our construction of $\overline{E}^{s/u}$.)

The difficulty however is that $\overline{E}^{s/u}$ and $E^{s/u}$ will generally differ away from these singular leaves. What we will do is to find a homeomorphism $H:\nu \to \overline{\nu}$ between neighborhoods of $\sing(\phi^t)$ such that
\begin{itemize}
    \item $H$ is bi-Lipschitz and smooth away from $\sing(\phi^t)$,
    \item $H$ preserves the orbits,
    \item $H$ sends leaves of the stable/unstable foliation $\Lambda^{s/u}$ of $\phi^t$ to leaves of the stable/unstable foliation $\overline{\Lambda}^{s/u}$ of $\overline{\phi}^t$ respectively, and
    \item $H$ is identity on the local leaves of the singular orbits.
\end{itemize}
Once we do so, the local equivalences between $\sing(\overline{\phi}^t) = \sing(\phi^t)$ and pseudo-hyperbolic orbits are obtained by composing those for $\phi^t$ with $H$.

Consider a local section $P$ to $\phi^t$ in a neighborhood of a singular orbit $\gamma$. We have induced singular foliations $P^{s/u} = P \cap \Lambda^{s/u}$ on $P$. Let $\mathfrak{p}^{s/u}$ be the singular leaf of $P^{s/u}$, respectively. Notice that $\phi^t$ induces a first return map $g:Q_1 \to Q_2$ where $Q_i$ are sub-surfaces of the local section containing $P \cap \gamma$. In fact, if $\gamma$ is $n$-pronged, we can pick $Q_i$ to be a $2n$-gon with sides on $\Lambda^s$ and $\Lambda^u$ alternatingly. See \Cref{fig:surpAsing} left. In this case we say that $Q_i$ is the \textbf{saturation} of $Q_i \cap \mathfrak{p}^s$ and $Q_i \cap \mathfrak{p}^u$, in the sense that every point in $Q_i$ is the intersection between the $P^u$-leaf through a point on $Q_i \cap \mathfrak{p}^s$ and the $P^s$-leaf through a point on $Q_i \cap \mathfrak{p}^u$.

Now we have another set of induced singular foliations $\overline{P}^{s/u} = P \cap \overline{\Lambda}^{s/u}$ on $P$. Note that the singular leaves of these equal $\mathfrak{p}^{s/u}$. Let $\overline{Q}_i$ be the saturation of $Q_i \cap \mathfrak{p}^s$ and $Q_i \cap \mathfrak{p}^u$, i.e. we take every point that is the intersection between the $\overline{P}^u$-leaf through a point on $Q_i \cap \mathfrak{p}^s$ and the $\overline{P}^u$-leaf through a point on $Q_i \cap \mathfrak{p}^u$. See \Cref{fig:surpAsing} right.

\begin{figure}
    \centering
\begingroup%
  \makeatletter%
  \providecommand\color[2][]{%
    \errmessage{(Inkscape) Color is used for the text in Inkscape, but the package 'color.sty' is not loaded}%
    \renewcommand\color[2][]{}%
  }%
  \providecommand\transparent[1]{%
    \errmessage{(Inkscape) Transparency is used (non-zero) for the text in Inkscape, but the package 'transparent.sty' is not loaded}%
    \renewcommand\transparent[1]{}%
  }%
  \providecommand\rotatebox[2]{#2}%
  \newcommand*\fsize{\dimexpr\f@size pt\relax}%
  \newcommand*\lineheight[1]{\fontsize{\fsize}{#1\fsize}\selectfont}%
  \ifx\svgwidth\undefined%
    \setlength{\unitlength}{191.99092835bp}%
    \ifx\svgscale\undefined%
      \relax%
    \else%
      \setlength{\unitlength}{\unitlength * \real{\svgscale}}%
    \fi%
  \else%
    \setlength{\unitlength}{\svgwidth}%
  \fi%
  \global\let\svgwidth\undefined%
  \global\let\svgscale\undefined%
  \makeatother%
  \begin{picture}(1,0.47129608)%
    \lineheight{1}%
    \setlength\tabcolsep{0pt}%
    \put(0,0){\includegraphics[width=\unitlength,page=1]{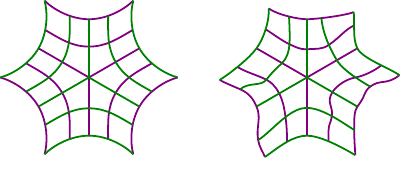}}%
    \put(0.19436453,0.00399929){\color[rgb]{0,0,0}\makebox(0,0)[lt]{\lineheight{1.25}\smash{\begin{tabular}[t]{l}$Q_i$\end{tabular}}}}%
    \put(0.72788345,0.00399799){\color[rgb]{0,0,0}\makebox(0,0)[lt]{\lineheight{1.25}\smash{\begin{tabular}[t]{l}$\overline{Q}_i$\end{tabular}}}}%
  \end{picture}%
\endgroup%

    \caption{The local sections $Q_i$ and $\overline{Q_i}$ that we use to establish the local model for $\sing(\overline{\phi}^t)$.}
    \label{fig:surpAsing}
\end{figure}

We can define homeomorphisms $h_i:Q_i \to \overline{Q}_i$ by starting with the identity on $Q_i \cap \mathfrak{p}^{s/u}$ and extending them by requiring that they send $P^{s/u}$-leaves to $\overline{P}^{s/u}$-leaves. Since the foliations $P^{s/u}$ and $\overline{P}^{s/u}$ are smooth, $h_i$ is smooth away from $\gamma \cap P$. Since the slopes of $\overline{E}^{s/u}$ are bounded, $h_i$ is bi-Lipschitz. Moreover, we have $h_1=h_2$ on $Q_1 \cap Q_2$.

Since $\overline{\phi}^t=\phi^t$ preserves $\overline{\Lambda}^{s/u}$, the first return map preserves $\overline{P}^{s/u}$. This implies that $h_2 g = g h_1$. Taking the suspension, we get the desired homeomorphism $H$.
\end{proof}

\begin{rmk} \label{rmk:horsurpAproof}
We make some observations on the proof of \Cref{thm:horsurpA}.

\begin{itemize}
    \item Part (1) of the proof of \Cref{thm:horsurpA} follows many of the same ideas as \cite{Goo83}, but there are some added complications due to our more general setting. The situation considered in \cite{Goo83} is essentially \Cref{eg:closedorbitcurve}. We point out two important features that are special to this setting.
    \begin{enumerate}[label=(\roman*)]
        \item In the setting of \cite{Goo83}, the foliation $\mathcal{H}$ on the surgery annulus $A$ can be chosen to consist of curves of constant slope. Having such a property greatly simplifies the analysis that we performed in \Cref{claim:thinsurmapsudyn}.
        \item In the setting of \Cref{eg:closedorbitcurve}, up to shrinking the annulus towards the closed orbit, one can take the first return time of the annulus to be arbitrarily large. This allows one to get by with less precise control over the differential $d \sigma$. In some sense, the notion of $(\delta, R)$-thin surgery annuli compensates for the lack of control over this first return time.
    \end{enumerate}
    \item We remark that no assumption on the thinness of the surgery map is needed to construct the weak stable/unstable plane fields $\overline{E}^{cs/cu}$. Thinness is only used to establish the contraction/expansion dynamics and to find the line fields $\overline{E}^{s/u}$.
    
    Moreover, we note that the parameters of thinness, $\delta$ and $R$, depend continuously on the gluing map $\alpha$ in the following sense: 
    \begin{cor} \label{cor:thinnesscontinuous}
    For a value of $(\delta, R)$ suitable for a fixed parametrized surgery annulus $(A, \mathcal{H}, \mathcal{K}, \alpha)$ as in \Cref{thm:horsurpA}, the same value of $(\delta, R)$ is suitable for parametrized surgery annuli $(A, \overline{\mathcal{H}}, \overline{\mathcal{K}}, \overline{\alpha})$ where $\overline{\alpha}$ is sufficiently close to $\alpha$ in the $C^1$-topology.
    \end{cor}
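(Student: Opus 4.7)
The plan is to revisit each quantitative estimate in the proof of \Cref{thm:horsurpA} and verify that every constraint placed on $(\delta, R)$ is a strict inequality involving continuous functions of the parametrization $\alpha$. Since strict inequalities are open conditions, any constraint satisfied with positive slack at $\alpha$ persists for parametrizations sufficiently close to $\alpha$ in the $C^1$-topology, so the same numerical $(\delta, R)$ continues to certify that all $(\delta, R)$-thin surgery maps produce a pseudo-Anosov flow.

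First I would set up the continuous dependence. The annulus $A$, the flow $\phi^t$, the first return time $T_0$ of $A$, and the Lyapunov constant $\lambda$ are all independent of $\alpha$, while a $C^1$-small perturbation $\overline{\alpha}$ produces foliations $\overline{\mathcal{H}}, \overline{\mathcal{K}}$ whose tangent line fields are $C^0$-close to those of $\mathcal{H}, \mathcal{K}$, since $\overline{\alpha}^{-1}$ pushes forward the standard horizontal and vertical line fields on $S^1 \times I$. Consequently the coefficient functions $\overline{a}, \overline{b}, \overline{c}, \overline{d}$ appearing in \Cref{claim:thinsurmapsudyn} are $C^0$-close to $a, b, c, d$. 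By the argument of \Cref{prop:horsurcurvestable} adapted to tangent line fields, steadiness of $T\overline{\mathcal{H}}$ and $T\overline{\mathcal{K}}$ is automatic for $\overline{\alpha}$ close enough, so $(A, \overline{\mathcal{H}}, \overline{\mathcal{K}}, \overline{\alpha})$ is genuinely a parametrized surgery annulus.

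Next I would audit each estimate in order. The bound $q > 1 - \epsilon$ in \Cref{claim:thinsurmapsudyn} was deduced from positive uniform bounds on $a, b, c, d$ together with $(\delta, R)$-thinness of $\sigma$, so it persists under $C^0$-small perturbation of the coefficients. The width contraction of $C^{cu}$ used a value of $L$ chosen large in terms of the uniform slope bounds for $T\mathcal{H}$ and $T\mathcal{K}$, and the same $L$ continues to work with slightly perturbed slope bounds. The $cu$-slope contraction of $C^u$ required $\epsilon$ small enough that $\lambda^{-T_0/2}/(1-\epsilon) < 1$ together with uniform bounds on $S$ and $U$, and both properties survive a small perturbation. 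Part (2) of the proof of \Cref{thm:horsurpA} is essentially $\alpha$-independent: the local section argument uses only $\phi^t$ and the bi-Lipschitz/boundedness properties of $\overline{E}^{cs/cu}$ that are already controlled by the cone field constructions of part (1).

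The hard part, such as it is, will be the bookkeeping: ensuring that a single $C^1$-neighborhood of $\alpha$ simultaneously preserves all the strict inequalities identified above, and confirming that the choice of $(\delta, R)$ furnished by \Cref{thm:horsurpA} was made with genuine slack rather than at the boundary of each constraint. Neither poses a serious obstacle since the original proof already produces every inequality in open form, and uniformity follows from compactness of $A$.
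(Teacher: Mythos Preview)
Your proposal is correct and follows the same idea as the paper: the relevant constraints on $(\delta,R)$ are strict inequalities in the coefficient functions $a,b,c,d$, which vary continuously with $\alpha$ in the $C^1$-topology, hence persist for nearby $\overline{\alpha}$. The paper's argument is more economical in that it observes $(\delta,R)$ enters the proof of \Cref{thm:horsurpA} \emph{only} through \Cref{claim:thinsurmapsudyn} (to secure $q>1-\epsilon$ for a fixed $\epsilon$), so it re-verifies just the two displayed inequalities there; your audit of $L$, the $cu$-slope contraction, and part~(2) is unnecessary because those constants are auxiliary and may be re-chosen for each parametrization, and your steadiness check is redundant since the hypothesis already stipulates that $(A,\overline{\mathcal{H}},\overline{\mathcal{K}},\overline{\alpha})$ is a parametrized surgery annulus.
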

    \begin{proof}
    The values of $(\delta, R)$ are chosen so that the conclusion of \Cref{claim:thinsurmapsudyn} holds for some fixed value $\epsilon$, hence we have to analyze the proof of that lemma.
    
    For parametrizations $\overline{\alpha}$ close to $\alpha$ in the $C^1$-topology, the matrix representative of $d\sigma$ in \Cref{claim:thinsurmapsudyn} varies only by a small amount. In particular we will still have 
    $$R \frac{a(h,k)b(h+\rho(k),k)}{a(h,k)d(h,k)+b(h,k)c(h,k)} > 1$$
    for the same value of $R$ and
    $$\frac{a(h,k)d(h+\rho(k),k)+b(h+\rho(k),k)c(h,k)}{a(h,k)d(h,k)+b(h,k)c(h,k)} > 1 - \epsilon$$
    for $|\rho(k)| \leq \delta$, for the same value of $\delta$.
    \end{proof}   
    \item The notion of slope is used both in the construction of the plane fields $\overline{E}^{cs/cu}$ and in the construction of the line fields $\overline{E}^{s/u}$, but there is a subtle difference in the strategy of the two usages.
    \begin{itemize}
        \item When constructing the plane fields $\overline{E}^{cs/cu}$, we first define the cone fields and show that they are nesting. Then we use a slope function to show that the intersection is a plane field. In particular here we can modify the slope function (by multiplying it by some large number $L$) until it behaves nicely on our cone fields.
        \item When constructing the line fields $\overline{E}^{s/u}$, we do not have natural candidates for the cone fields. Hence we first define a slope function and analyze how it behaves, before defining the cone field in terms of the slope. 
    \end{itemize}
    It is conceivable that by endowing further structures on the surgery annulus $A$, one can pursue the former strategy for constructing the line fields $\overline{E}^{s/u}$. However, this will add unnecessary rigidity to the category of surgery annuli, possibly affecting our arguments in the next subsection.
    \item We say that $\phi^t$ has \textbf{orientable stable/unstable foliation} if $E^{s/u}$ is orientable in $M \cut \sing(\phi^t)$, respectively. Note that since we assume $M$ is oriented, $\phi^t$ having orientable stable foliation implies that it has orientable unstable foliation, and vice versa. In this case, we can fix orientations on $E^s$ and $E^u$ and perform the cone field arguments on, say, the positive side of $E^{s/u}$. This would return line fields $\overline{E}^s$ and $\overline{E}^u$ that are orientable. That is, the surgered flow has orientable stable and unstable foliations as well.
\end{itemize}
\end{rmk}

From now on, in the setting of \Cref{thm:horsurpA}, we refer to the pseudo-Anosov flow $\phi^t_\sigma(A, \mathcal{H}, \mathcal{K}, \alpha)$ as a flow obtained by performing \textbf{horizontal Goodman surgery} on $\phi^t$.

\subsection{Invariance} \label{subsec:horsurinvar}

Our first goal in this subsection is to show that the orbit equivalence class of the flow $\phi^t_\sigma(A, \mathcal{H}, \mathcal{K}, \alpha)$ only depends on any leaf of $\mathcal{H}$ and on the coefficient of $\sigma$. Using this, we then show that the orbit equivalence class of the flow is invariant under isotopies of the surgery curve as well. The key tool for showing these results is \Cref{thm:structuralstability}, which we restate below.

\begin{thm:structuralstability}
Let $\phi^t$ be a pseudo-Anosov flow on a closed $3$-manifold $M$. Let $\nu$ be a fixed neighborhood of $\sing(\phi^t)$. There exists $\epsilon>0$ so that for every pseudo-Anosov flow $\overline{\phi^t}$ on $M$, if $\dot{\phi^t}$ and $\dot{\overline{\phi}^t}$ are $\epsilon$ close in the $C^1$-topology and if $\dot{\phi} = \dot{\overline{\phi}}$ in $\nu$, then $\phi^t$ and $\overline{\phi^t}$ are orbit equivalent through a homeomorphism that is isotopic to identity.
\end{thm:structuralstability}

We show \Cref{thm:structuralstability} in \Cref{sec:structuralstability}.

We will establish our invariance results via a sequence of lemmas.

\begin{lemma} \label{lemma:invsurmap}
Let $(A, \mathcal{H}, \mathcal{K}, \alpha)$ be a parametrized surgery annulus. For fixed $n$, $\delta$, and $R$, the space of $(\delta, R)$-thin surgery maps with coefficient $n$ is path-connected.

In particular, the orbit equivalence class of the flow $\phi^t_\sigma(A, \mathcal{H}, \mathcal{K}, \alpha)$ obtained by horizontal Goodman surgery only depends on $(A, \mathcal{H}, \mathcal{K}, \alpha)$ and the coefficient of $\sigma$.
\end{lemma}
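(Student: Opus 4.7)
The plan is to establish path connectedness of the space $\mathcal{S}_{(n, \delta, R)}$ of $(\delta, R)$-thin surgery maps with coefficient $n$ (for the fixed parametrized surgery annulus), and then invoke \Cref{thm:structuralstability} along such a path to deduce the orbit equivalence of the resulting flows.

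For path connectedness, recall that a surgery map is encoded by its associated function $\rho: I \to \mathbb{R}$, whose data decomposes into: an interval $J = [a, b] \subset I$ on which $\rho'$ equals a constant $R_0$ with $|R_0| > R$; monotone tail profiles $\rho|_{[0,a]}$ and $\rho|_{[b,1]}$ satisfying $|\rho'| \leq |R_0|$ and with images of length $< \delta$; and integer boundary values with $\rho(0) - \rho(1) = n$. I would fix a ``standard'' representative $\rho_* \in \mathcal{S}_{(n, \delta, R)}$, piecewise linear with a central interval $J_*=[a_*,b_*]$ and slope $R_*$ (smoothed at $a_*$ and $b_*$ within width $O(\delta / R)$), and show that every other $\rho \in \mathcal{S}_{(n, \delta, R)}$ is connected to $\rho_*$ by a path inside $\mathcal{S}_{(n, \delta, R)}$. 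The path would proceed in two stages: first, with the triple $(a, b, R_0)$ held fixed, linearly interpolate the tail profiles to the standard ones, using the convexity of the space of smooth monotone functions with prescribed boundary values and derivative bound $|R_0|$; second, continuously deform $(a, b, R_0)$ through the admissible region $\{(a, b, R_0) : 0 \leq a < b \leq 1,\ R < |R_0| \leq n/(b-a)\}$, which is readily seen to be path-connected in $\mathbb{R}^3$, while adjusting the tails in parallel.

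To deduce the second assertion, note that the manifold $M_\sigma(A, \mathcal{H}, \mathcal{K}, \alpha)$ depends on $\sigma$ only through its isotopy class rel $\partial A$, which is in turn determined by the coefficient $n$ alone. Hence along a path $\sigma_s$ in $\mathcal{S}_{(n, \delta, R)}$, I can choose a continuous family of diffeomorphisms $\Psi_s : M_{\sigma_0} \to M_{\sigma_s}$ that are the identity outside a fixed collar neighborhood of $A$ and that spread the isotopy $\sigma_s \circ \sigma_0^{-1}$ across this collar. The pulled-back vector fields $\Psi_s^* \dot{\phi}^t_{\sigma_s}$ then form a continuous one-parameter family on $M_{\sigma_0}$, each generating a pseudo-Anosov flow by \Cref{thm:horsurpA}, and each agreeing with $\dot{\phi}^t_{\sigma_0}$ on a fixed neighborhood of $\sing(\phi^t) = \sing(\phi^t_{\sigma_s})$ chosen disjoint from the collar (possible since the surgery is performed in $M \backslash \sing(\phi^t)$). \Cref{thm:structuralstability} then implies that the orbit equivalence class is locally constant in $s$, and hence constant on $[0,1]$ by compactness, so $\phi^t_{\sigma_0}$ and $\phi^t_{\sigma_1}$ are orbit equivalent.

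The principal technical obstacle lies in the first stage: preserving $(\delta, R)$-thinness along the homotopy. Naive linear interpolation $\rho_s = (1-s)\rho_0 + s\rho_1$ fails --- if $J_0$ and $J_1$ are nearly disjoint, the interpolant $\rho_s$ for intermediate $s$ develops two regions of moderate slope whose magnitudes may each drop below $R$, violating thinness. Routing through the standard form circumvents this by ensuring that $J$ remains a single interval that moves continuously rather than splitting. A secondary subtlety is smoothing the piecewise linear standard form at the junctions $a_*, b_*$; this is handled by performing the linearization on a slightly smaller subinterval and filling in a smooth monotone transition of width compatible with $\delta$ and $1/R$.
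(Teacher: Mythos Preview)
Your approach is correct and shares the same overall architecture as the paper's---establish path-connectedness of the space of $(\delta,R)$-thin surgery maps, then push a path of generating vector fields through \Cref{thm:structuralstability}. The difference lies in how the path is built.

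You route through a fixed standard representative $\rho_*$ via a two-stage deformation (first normalize tails with $(a,b,R_0)$ frozen, then slide $(a,b,R_0)$). The paper instead connects $\rho_0$ to $\rho_1$ directly: it first rescales the non-constant interval of one (or both) of the $\rho_i$ so that the constant slopes $R_0$ and $R_1$ coincide, then translates so that $\rho_0=\rho_1$ on $J_0\cap J_1$, and only then applies the naive linear interpolation $\rho_t=(1-t)\rho_0+t\rho_1$. After this normalization the convex combination automatically has a single interval of constant slope (namely $J_0\cap J_1$), so the obstacle you identified---the interval $J$ splitting in two---simply does not arise. This is a shorter workaround than yours, at the cost of requiring one to check that the preliminary rescaling and translation can themselves be realized as continuous paths within $\mathcal{S}_{(n,\delta,R)}$ (which is straightforward). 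Your route buys a cleaner separation of concerns and avoids having to verify those preliminary moves stay thin, but involves an extra layer of bookkeeping with the standard form and the smoothing at its corners.

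Your treatment of the second statement, in particular the explicit construction of the identifying diffeomorphisms $\Psi_s$ supported in a collar of $A$, is more detailed than the paper's, which simply asserts that the generating vector fields vary continuously in a neighborhood of $A$ and invokes \Cref{thm:structuralstability}.
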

\begin{proof}
Let $\sigma_0$ and $\sigma_1$ be two $(\delta, R)$-thin surgery maps with coefficient $n$. We have $\alpha \sigma_i \alpha^{-1}(h,k)=(h+\rho_i(k),k)$ for functions $\rho_i$ as in \Cref{defn:surmap}. Up to scaling the domain intervals on which $\rho_i$ are non-constant, we can assume that $R_0 = \max \rho'_0 = \max \rho'_1 = R_1$. Then up to translating these intervals, we can assume that $\rho_0=\rho_1$ on $J_0 \cap J_1$. We can now interpolate between $\rho_0$ and $\rho_1$ by $\rho_t(k) = t\rho_1(k)+(1-t)\rho_0(k)$.

For the second statement, we fix $\delta$ and $R$ so that $(\delta, R)$-thin surgery maps on $(A, \mathcal{H}, \mathcal{K}, \alpha)$ give pseudo-Anosov flows. A continuous family of such surgery maps gives a family of pseudo-Anosov flows with continuously varying generating vector field in a neighborhood of $A$. Thus we get the desired orbit equivalences from \Cref{thm:structuralstability}.
\end{proof}

In particular, \Cref{lemma:invsurmap} means that we can always shrink the support of the gluing map to a smaller surgery annulus, thus shrinking $A$ (while staying in the same orbit equivalence class).

\begin{lemma} \label{lemma:invpara}
Let $(A, \mathcal{H}, \mathcal{K})$ be a surgery annulus. The set of orientation preserving diffeomorphism $\alpha: A \times S^1 \times I$ that sends the leaves of $\mathcal{H}$ to $S^1 \times \{k\}$ and the leaves of $\mathcal{K}$ to $\{h\} \times I$ is connected is path-connected.

In particular, the orbit equivalence class of the flow $\phi^t_\sigma(A, \mathcal{H}, \mathcal{K}, \alpha)$ obtained by horizontal Goodman surgery only depends on $(A, \mathcal{H}, \mathcal{K})$ and the coefficient of $\sigma$.
\end{lemma}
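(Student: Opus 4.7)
The plan is to first identify the space of parametrizations as a torsor for an explicitly path-connected group, then combine this with \Cref{lemma:invsurmap}, \Cref{cor:thinnesscontinuous}, and \Cref{thm:structuralstability} along a path to establish the orbit-equivalence statement. For path-connectedness, I would argue that any two parametrizations $\alpha_0, \alpha_1$ differ by $\beta = \alpha_1 \alpha_0^{-1}$, a self-diffeomorphism of $S^1 \times I$ preserving both the foliation by horizontal circles and the foliation by vertical arcs; this forces $\beta(h,k) = (f(h), g(k))$ with $f \in \mathrm{Diff}^+(S^1)$ and $g \in \mathrm{Diff}^+(I)$. Both groups are path-connected (the former deformation retracts onto $SO(2)$, the latter is contractible), so there is a path $(f_t, g_t)$ from $(\mathrm{id}, \mathrm{id})$ to $(f, g)$, and $\alpha_t := \beta_t \alpha_0$ with $\beta_t(h,k) = (f_t(h), g_t(k))$ provides the required path of parametrizations.

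For the orbit-equivalence statement, let $\sigma_i$ be a surgery map of coefficient $n$ for $(A, \mathcal{H}, \mathcal{K}, \alpha_i)$, $i=0,1$, and take a path $\alpha_t$ as above. By compactness of $[0,1]$ and \Cref{cor:thinnesscontinuous}, there exist uniform $(\delta, R)$ valid for all $\alpha_t$. I would then choose a fixed function $\rho$ of coefficient $n$, thin enough that $\alpha_t \tilde{\sigma}_t \alpha_t^{-1}(h,k) = (h + \rho(k), k)$ defines a $(\delta, R)$-thin surgery map $\tilde{\sigma}_t$ for $\alpha_t$ varying continuously in $t$. Applying \Cref{lemma:invsurmap} at each endpoint to connect $\sigma_i$ to $\tilde{\sigma}_i$ through $(\delta, R)$-thin surgery maps of coefficient $n$ for the fixed $\alpha_i$, and concatenating, I obtain a continuous path $\sigma_t$ of $(\delta, R)$-thin surgery maps of coefficient $n$, each being a surgery map for some parametrization $\alpha'_t$ in the concatenated family.

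To apply \Cref{thm:structuralstability} along this path, I must identify the reglued manifolds $M_{\sigma_t}$ with the fixed $M_{\sigma_0}$. Since both $\sigma_0$ and $\sigma_t$ represent $n$ Dehn twists and are identity near $\partial A$, the composition $\tau_t := \sigma_0^{-1} \sigma_t$ is isotopic to the identity rel $\partial A$, and the reparametrization $\tau_{t, s} := \tau_{ts}$ provides a jointly continuous family of such isotopies with $\tau_{t,0} = \mathrm{id}$ and $\tau_{t,1} = \tau_t$. Using a smooth cutoff $\chi : [-\epsilon, 0] \to [0, 1]$ with $\chi(-\epsilon) = 0$ and $\chi(0) = 1$, I would define on a collar $N_-(A) \cong A \times [-\epsilon, 0]$ of $A_-$ in $M \cut A$ the formula $(x, s) \mapsto (\tau_{t, \chi(s)}(x), s)$, extended by identity outside, and verify that this descends to a well-defined diffeomorphism $H_t : M_{\sigma_t} \to M_{\sigma_0}$ varying continuously in $t$ with $H_0 = \mathrm{id}$. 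Pushing $\phi^t_{\sigma_t}$ forward via $H_t$ yields a continuous family of pseudo-Anosov flows on $M_{\sigma_0}$ that agree with $\phi^t_{\sigma_0}$ outside $N_-(A)$, in particular in a fixed neighborhood of the singular set (which lies in $M \setminus A$). Then \Cref{thm:structuralstability} combined with compactness of $[0,1]$ yields that all these flows are orbit equivalent; composing with $H_1$ gives $\phi^t_{\sigma_1} \cong \phi^t_{\sigma_0}$. The main technical hurdle I anticipate is choosing $\epsilon$ and the isotopies $\tau_{t,s}$ so that the pushed-forward vector fields remain within the $\epsilon$-ball demanded by \Cref{thm:structuralstability}, uniformly in $t$.
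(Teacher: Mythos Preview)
Your proposal is correct and follows essentially the same approach as the paper: identify the space of parametrizations with $\Diffeo^+(S^1)\times\Diffeo^+(I)$, use \Cref{cor:thinnesscontinuous} to obtain a uniform $(\delta,R)$ along the path, transport a fixed $\rho$ to define a continuous family of surgery maps, and invoke \Cref{thm:structuralstability}. The paper phrases the second step sequentially (``if $\alpha_n\to\alpha$ then $\phi^t_{\sigma_n}\cong\phi^t_\sigma$ for large $n$'') rather than along an explicit path, but this is only a cosmetic difference.

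Where you go further than the paper is in spelling out the identification $H_t:M_{\sigma_t}\to M_{\sigma_0}$ via the collar isotopy $\tau_{t,s}$; the paper simply asserts that ``the generating vector field for $\phi^t_{\sigma_n}$ converges to that of $\phi^t_\sigma$ in a neighborhood of $A$,'' leaving this identification implicit. Your explicit construction is the standard way to make that sentence precise, and it is correct. Your closing worry about uniformity in $t$ is not a genuine obstacle: you do not need a single $\epsilon$ valid for all $t$, only that the pushed-forward vector fields vary continuously in $t$, so that for each $t_0$ the flows in a small interval around $t_0$ fall within the $\epsilon(t_0)$ furnished by \Cref{thm:structuralstability}; compactness of $[0,1]$ then chains finitely many such orbit equivalences.
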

\begin{proof}
For a fixed surgery annulus $(A, \mathcal{H}, \mathcal{K})$, the set of parametrizations can be identified with $\Diffeo^+(S^1) \times \Diffeo^+(I)$. It is an elementary fact that both $\Diffeo^+(S^1)$ and $\Diffeo^+(I)$ are connected.

For second statement, it suffices to show that if $\alpha_n$ is a sequence of parametrizations converging to $\alpha$, then there exists sufficiently thin surgery maps $\sigma_n$ and $\sigma$ so that $\phi^t_{\sigma_n}(A, \mathcal{H}, \mathcal{K}, \alpha_n)$ is orbit equivalent to $\phi^t_\sigma(A, \mathcal{H}, \mathcal{K}, \alpha)$ for large $n$. 

\Cref{cor:thinnesscontinuous} implies that there exists $(\delta, R)$ so that any $(\delta, R)$-thin surgery map yields a pseudo-Anosov flow for every $(A, \mathcal{H}, \mathcal{K}, \alpha_n)$ and $(A, \mathcal{H}, \mathcal{K}, \alpha)$. We fix such a surgery map $\sigma$ for $(A, \mathcal{H}, \mathcal{K}, \alpha)$. We can write $\alpha \sigma \alpha^{-1}(h,k) = (h+\rho(k),k)$. We define surgery maps $\sigma_n$ by $\alpha_n \sigma_n \alpha_n^{-1}(h,k) = (h+\rho(k),k)$. Then $\sigma_n$ are $(\delta, R)$-thin and the generating vector field for $\phi^t_{\sigma_n}(A, \mathcal{H}, \mathcal{K}, \alpha_n)$ converges to that of $\phi^t_\sigma(A, \mathcal{H}, \mathcal{K}, \alpha)$ in a neighborhood of $A$. Thus we get the desired orbit equivalences from \Cref{thm:structuralstability}.
\end{proof}

\begin{lemma} \label{lemma:invkfol}
Let $A \subset M \backslash \sing(\phi^t)$ be an embedded oriented annulus with a foliation by positive/negative curves $\mathcal{H}$ where $T\mathcal{H}$ is steady. The space of foliations $\mathcal{K}$ such that $(A, \mathcal{H}, \mathcal{K})$ is a surgery annulus is path-connected.

In particular, the orbit equivalence class of the flow $\phi^t_\sigma(A, \mathcal{H}, \mathcal{K}, \alpha)$ obtained by horizontal Goodman surgery only depends on $(A, \mathcal{H})$ and the coefficient of $\sigma$.
\end{lemma}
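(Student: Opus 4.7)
The plan is to interpolate two valid foliations $\mathcal{K}_0, \mathcal{K}_1$ linearly in the slope coordinate, and then follow the template of \Cref{lemma:invsurmap} and \Cref{lemma:invpara} to deduce the invariance of the orbit equivalence class from \Cref{thm:structuralstability}.

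First I will treat the case where $\mathcal{H}$ is positive, so $\mathcal{K}$ is by negative arcs; the other case is symmetric. Fix an instantaneous metric. Let $m_i := \slope(T\mathcal{K}_i) : A \to (-\infty, 0)$ for $i = 0, 1$, and set $m_t := (1-t) m_0 + t m_1$. Since $A$ is transverse to the flow, at each $z \in A$ the projection $TA|_z \to TM|_z / T\phi|_z$ is an isomorphism and the slope uniquely determines a line in the target, so there is a unique smooth line field $L_t$ on $A$ with $\slope(L_t) = m_t$, depending smoothly on $t$. Because $m_t(z) < 0$ everywhere, $L_t$ is negative, hence transverse to $T\mathcal{H}$, and therefore integrates to a smooth foliation $\mathcal{K}_t$ of $A$ by non-separating arcs.

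The crux is that, by the flow-invariance of the splitting $TM = E^s \oplus T\phi \oplus E^u$, the action of $d\phi^s$ on the slope coordinate is linear. Writing $d\phi^s(e^s|_x) = \mu_{x,s}\, e^s|_{\phi^s(x)}$ and $d\phi^s(e^u|_x) = \nu_{x,s}\, e^u|_{\phi^s(x)}$, the instantaneous metric gives $|\mu_{x,s}| < 1 < |\nu_{x,s}|$ and compatibility of orientations forces $\mu_{x,s}\nu_{x,s} > 0$, so that a line of slope $m$ at $x$ is sent to a line of slope $c_{x,s}\, m$ at $\phi^s(x)$ with $c_{x,s} := \mu_{x,s}/\nu_{x,s} \in (0,1)$. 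Thus the steadiness inequality $\slope(L|_y) < c_{x,s}\,\slope(L|_x)$ at a crossing $(x, y, s)$ is \emph{linear} in the slopes. Substituting $L = L_t$, the quantity $\slope(L_t|_y) - c_{x,s}\slope(L_t|_x)$ is the convex combination $(1-t)[\slope(L_0|_y) - c_{x,s}\slope(L_0|_x)] + t[\slope(L_1|_y) - c_{x,s}\slope(L_1|_x)]$ of two negative numbers, hence negative. So every $L_t$ is steady, and in fact this shows that the space of valid $\mathcal{K}$'s is convex when parametrized by the slope function.

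For the ``in particular'' statement I would mimic the proof of \Cref{lemma:invpara}: build a family $\alpha_t$ of parametrizations of $(A, \mathcal{H}, \mathcal{K}_t)$ depending continuously on $t$ (fix a transverse arc independent of $t$ defining the $k$-coordinate from $\mathcal{H}$ alone, then integrate $L_t$ from it to set the $h$-coordinate), choose thinness data $(\delta, R)$ uniformly along the path via \Cref{cor:thinnesscontinuous}, and pick a continuous family of $(\delta, R)$-thin surgery maps $\sigma_t$ of a common coefficient. The resulting generating vector fields of $\phi^t_{\sigma_t}(A, \mathcal{H}, \mathcal{K}_t, \alpha_t)$ vary continuously in the $C^1$-topology while agreeing outside a fixed neighborhood of $A$, so \Cref{thm:structuralstability} supplies the desired orbit equivalences between the endpoints. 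I do not foresee any serious obstacle; the only subtlety worth flagging is the linearity of $d\phi^s$ on the slope coordinate, which hinges on the $\dot{\phi}$-component dropping out of the definition $\slope = a/c$, so that $d\phi^s$ acts genuinely by multiplication by a positive scalar rather than by a nontrivial Möbius transformation.
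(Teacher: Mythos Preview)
Your proposal is correct and essentially identical to the paper's proof: both interpolate linearly in the slope coordinate, use that $d\phi^s$ acts on slopes by multiplication by a positive scalar to preserve the steadiness inequality under convex combination, and then invoke \Cref{thm:structuralstability} via a continuous family of parametrizations. You are somewhat more explicit than the paper about why the action on slopes is linear (the paper just says ``$d\phi^t|_x$ acts on the set of slopes by multiplication by some factor''), but the arguments are the same.
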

\begin{proof}
Let $\mathcal{K}_0$ and $\mathcal{K}_1$ be two such foliations. We define a continuous family of foliations $\mathcal{K}_t$ by specifying the slope of $T\mathcal{K}_t$ at $x \in A$ to be the linear interpolation between that of $T\mathcal{K}_0$ and $T\mathcal{K}_1$.

It is clear that $T\mathcal{K}_t$ is negative. Hence the leaves of $\mathcal{K}_t$ are transverse to those of $\mathcal{H}$, in particular implying that they are non-separating arcs on $A$. We further claim that $T\mathcal{K}_t$ is steady. For every $x, y \in A$, $t > 0$ such that $y=\phi^t(x)$, the slope of $T\mathcal{K}_i|_y$ is smaller than than of $d\phi^t(T\mathcal{K}_i|_x)$, for $i=0,1$. Since $d\phi^t|_x$ acts on the set of slopes by multiplication by some factor (depending on $x$), the same inequality holds under linear interpolation to show that $T\mathcal{K}_t$ is steady.

To prove the second statement, observe that we can pick a continuous family of parametrizations $\alpha_t$ for $(A, \mathcal{H}, \mathcal{K}_t)$, by, for example, fixing the parametrization of a single leaf of $\mathcal{H}$ and fixing a parametrization of the space of leaves of $\mathcal{H}$. Thus we can pick appropriate gluing maps to apply \Cref{thm:structuralstability} as in the proof of \Cref{lemma:invpara}.
\end{proof}

\begin{thm} \label{thm:invhleaf}
Let $(A_i, \mathcal{H}_i, \mathcal{K}_i, \alpha_i)$ be a parametrized surgery annulus for $i=1,2$. Suppose there is a curve $c$ that is a leaf of both $\mathcal{H}_1$ and $\mathcal{H}_2$. Then for fixed $n$, the flows $\phi^t_{\sigma_i}(A_i, \mathcal{H}_i, \mathcal{K}_i, \alpha_i)$ obtained by horizontal Goodman surgery are orbit equivalent whenever $\sigma_1$ and $\sigma_2$ are sufficiently thin surgery maps with coefficient $n$. 

That is, the orbit equivalence class of the flow $\phi^t_\sigma(A, \mathcal{H}, \mathcal{K}, \alpha)$ obtained by horizontal Goodman surgery only depends on any leaf of $\mathcal{H}$ and the coefficient of $\sigma$.
\end{thm}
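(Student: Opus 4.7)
The plan is to reduce the two parametrized surgery annuli to a common underlying annulus $A$ via a flow isotopy, linearly interpolate the $\mathcal{H}$-foliations on $A$, and then chain orbit equivalences along this interpolation using \Cref{thm:structuralstability}, in the same spirit as \Cref{lemma:invsurmap}, \Cref{lemma:invpara}, and \Cref{lemma:invkfol}.

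First, by \Cref{lemma:invsurmap} we may shrink the support of each $\sigma_i$ to an arbitrarily small sub-annulus around $c$ without changing the orbit equivalence class of the surgered flow, so we may replace each $(A_i, \mathcal{H}_i, \mathcal{K}_i, \alpha_i)$ by its restriction to a thin annular neighborhood of $c$. Choose these thin enough that $A_1$ and $A_2$ both lie inside a common flow-box neighborhood $N$ of $c$. In $N$, any two annuli transverse to $\phi^t$ and containing $c$ are graphs over a common local section through $c$; this lets us construct a flow isotopy $\Phi^\tau:M \to M$, supported in $N$ and fixing $c$ pointwise, whose time-one map carries $A_2$ onto $A_1$. \Cref{prop:horsurcurveflowisotopy} together with the analogous statement for $T\mathcal{K}$ (whose proof is identical) shows that the pushed-forward data $(A_1, \Phi^1(\mathcal{H}_2), \Phi^1(\mathcal{K}_2), \alpha_2 \circ (\Phi^1)^{-1})$ is again a parametrized surgery annulus. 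Since $\Phi^1$ preserves orbits of $\phi^t$, it descends to an orbit equivalence between $\phi^t_{\sigma_2}(A_2, \mathcal{H}_2, \mathcal{K}_2, \alpha_2)$ and the flow obtained by surgery along $A_1$ with the pushed-forward data and the conjugated gluing map $\Phi^1 \circ \sigma_2 \circ (\Phi^1)^{-1}$. This reduces us to the case $A_1 = A_2 =: A$.

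With a common underlying $A$, fix an instantaneous metric and define $\mathcal{H}_t$ for $t \in [0,1]$ as the foliation whose tangent field has slope equal to the convex combination $(1-t)\slope(T\mathcal{H}_1) + t\slope(T\mathcal{H}_2)$ at each point of $A$. The sign of the slope is preserved, $c$ remains a leaf since $T\mathcal{H}_1|_c = T\mathcal{H}_2|_c = Tc$, and steadiness persists for the same reason as in \Cref{lemma:invkfol}: at any crossing $(x,y,s)$, the map $d\phi^s$ restricted to $E^s \oplus E^u$ acts on slopes by multiplication by a positive scalar, so the steadiness inequality is preserved under convex combinations. Now \Cref{lemma:invkfol} supplies a continuous family of compatible arc foliations $\mathcal{K}_t$, \Cref{lemma:invpara} supplies continuously varying parametrizations $\alpha_t$, and \Cref{cor:thinnesscontinuous} yields uniform thinness parameters $(\delta, R)$ along the whole path, allowing us to pick a continuous family of $(\delta, R)$-thin surgery maps $\sigma_t$ of coefficient $n$. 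The resulting family $\phi^t_{\sigma_t}(A, \mathcal{H}_t, \mathcal{K}_t, \alpha_t)$ has generating vector fields varying continuously in the $C^1$-topology and agreeing with $\dot\phi$ outside a fixed neighborhood of $A$; applying \Cref{thm:structuralstability} on sufficiently small sub-intervals of $[0,1]$ and concatenating the resulting orbit equivalences yields the theorem.

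The principal obstacle is the flow-isotopy reduction to a common underlying annulus: one must verify that the push-forward of $(\mathcal{H}_2, \mathcal{K}_2)$ under $\Phi^1$ preserves the steadiness conditions and that $\Phi^1$ genuinely descends to an orbit equivalence of the two surgered manifolds once the gluing map is appropriately conjugated. Both facts are straightforward adaptations of the arguments already used in the earlier invariance lemmas, but some careful bookkeeping near the regluing annulus is required to ensure the identifications line up.
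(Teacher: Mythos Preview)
Your interpolation of the $\mathcal{H}$-foliations has a gap. You define a line field $L_t$ on $A$ by taking the convex combination of slopes, and you correctly argue that $L_t$ is positive and steady and that $c$ is an integral curve of $L_t$. But you never verify that the remaining trajectories of $L_t$ close up into circles --- and in general they will not. The argument in \Cref{lemma:invkfol} works for $\mathcal{K}$ precisely because the leaves there are \emph{arcs}: any line field on $A$ transverse to the core direction automatically integrates to a foliation by non-separating arcs. For $\mathcal{H}$ the leaves are \emph{closed curves}, which forces the first-return map on a transversal to $c$ to be the identity. This holds for $L_0 = T\mathcal{H}_1$ and $L_1 = T\mathcal{H}_2$ separately, but the first-return map of $L_t$ is only guaranteed to fix the single point where $c$ meets the transversal; nearby trajectories may spiral. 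Without a genuine foliation by closed curves, $(A, \mathcal{H}_t, \mathcal{K}_t)$ is not a surgery annulus and your path of surgered flows is undefined.

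The paper sidesteps this entirely by a different reduction. Rather than deforming one $\mathcal{H}$ into another on a fixed annulus, it shrinks $A_1$ and $A_2$ to thin strips lying on \emph{opposite} sides of $c$ (inserting an auxiliary $A_3$ on the far side when $A_1$ and $A_2$ start out on the same side), and then builds a single surgery annulus $(A, \mathcal{H}, \mathcal{K})$ containing both $A_i$ as sub-annuli with $\mathcal{H}|_{A_i} = \mathcal{H}_i$. Steadiness of the extended $T\mathcal{H}$ is obtained simply by $C^1$-closeness to $Tc$. Surgery along each $(A_i, \mathcal{H}_i, \mathcal{K}_i)$ then becomes surgery along $(A, \mathcal{H}, \mathcal{K})$ with the twist supported in $A_i$, and \Cref{lemma:invsurmap} alone furnishes the orbit equivalence --- no interpolation of closed-curve foliations is ever needed. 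Incidentally, your flow-isotopy reduction to a common underlying annulus is fine: steadiness under push-forward along orbits follows exactly as in \Cref{prop:horsurcurveflowisotopy}, and the descent to an orbit equivalence of the surgered manifolds is the tautology you describe. It is just not the step that fails.
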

\begin{proof}
Up to shrinking the support of $\sigma_1$ and $\sigma_2$ using \Cref{lemma:invsurmap}, we can assume that $c$ lies on the boundary of both $A_1$ and $A_2$. There are two cases here, depending on whether $A_1$ and $A_2$ lie on the same side of $c$. See \Cref{fig:invhleafarg}. We first tackle the case when $A_1$ and $A_2$ lie on different sides of $c$.

\begin{figure}
    \centering
    \fontsize{8pt}{8pt}\selectfont
\begingroup%
  \makeatletter%
  \providecommand\color[2][]{%
    \errmessage{(Inkscape) Color is used for the text in Inkscape, but the package 'color.sty' is not loaded}%
    \renewcommand\color[2][]{}%
  }%
  \providecommand\transparent[1]{%
    \errmessage{(Inkscape) Transparency is used (non-zero) for the text in Inkscape, but the package 'transparent.sty' is not loaded}%
    \renewcommand\transparent[1]{}%
  }%
  \providecommand\rotatebox[2]{#2}%
  \newcommand*\fsize{\dimexpr\f@size pt\relax}%
  \newcommand*\lineheight[1]{\fontsize{\fsize}{#1\fsize}\selectfont}%
  \ifx\svgwidth\undefined%
    \setlength{\unitlength}{232.77860146bp}%
    \ifx\svgscale\undefined%
      \relax%
    \else%
      \setlength{\unitlength}{\unitlength * \real{\svgscale}}%
    \fi%
  \else%
    \setlength{\unitlength}{\svgwidth}%
  \fi%
  \global\let\svgwidth\undefined%
  \global\let\svgscale\undefined%
  \makeatother%
  \begin{picture}(1,0.34200415)%
    \lineheight{1}%
    \setlength\tabcolsep{0pt}%
    \put(0,0){\includegraphics[width=\unitlength,page=1]{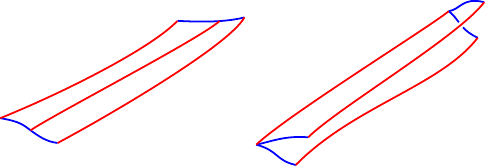}}%
    \put(0.22134868,0.18622239){\color[rgb]{1,0,0}\makebox(0,0)[lt]{\lineheight{1.25}\smash{\begin{tabular}[t]{l}$A_1$\end{tabular}}}}%
    \put(0.25801746,0.16162003){\color[rgb]{1,0,0}\makebox(0,0)[lt]{\lineheight{1.25}\smash{\begin{tabular}[t]{l}$A_2$\end{tabular}}}}%
    \put(0.85129603,0.19435073){\color[rgb]{1,0,0}\makebox(0,0)[lt]{\lineheight{1.25}\smash{\begin{tabular}[t]{l}$A_2$\end{tabular}}}}%
    \put(0.03049508,0.04757231){\color[rgb]{1,0,0}\makebox(0,0)[lt]{\lineheight{1.25}\smash{\begin{tabular}[t]{l}$c$\end{tabular}}}}%
    \put(0.49901861,0.05689231){\color[rgb]{1,0,0}\makebox(0,0)[lt]{\lineheight{1.25}\smash{\begin{tabular}[t]{l}$c$\end{tabular}}}}%
    \put(0.78852816,0.20762952){\color[rgb]{1,0,0}\makebox(0,0)[lt]{\lineheight{1.25}\smash{\begin{tabular}[t]{l}$A_1$\end{tabular}}}}%
  \end{picture}%
\endgroup%

    \caption{Left: $A_1$ and $A_2$ lying on different sides of $c$. Right: $A_1$ and $A_2$ lying on the same side of $c$.}
    \label{fig:invhleafarg}
\end{figure}

In this case we claim that up to shrinking $A_i$ and modifying $\mathcal{K}_i$, we can find a surgery annulus $(A, \mathcal{H}, \mathcal{K})$ such that $A_i \subset A$ with $\mathcal{H}|_{A_i} = \mathcal{H}_i$ and $\mathcal{K}|_{A_i} = \mathcal{K}_i$. This can be done by shrinking each $A_i$ so that it is a thin strip lying close to but not containing $c$, then finding an embedded annulus $A$ containing both $A_i$ with a foliation $\mathcal{H}$ by curves that extends both $\mathcal{H}_i$. If each $A_i$ is sufficiently shrunk, $T\mathcal{H}$ will be close to $Tc$ in the $C^1$-topology, hence is steady. Now we define a foliation $\mathcal{K}$ on $A$ be specifying $T\mathcal{K}$ to be the lines of slope, say, $-1$. As in \Cref{prop:horsurcurvetosurann}, $\mathcal{K}$ will be a foliation by non-separating arcs on $A$ and $T\mathcal{K}$ will be negative and steady. Restricting $\mathcal{K}$ to each $A_i$ gives a foliation $\mathcal{K}_i$ that makes $(A_i, \mathcal{H}_i, \mathcal{K}_i)$ into a surgery annulus.

We then apply \Cref{lemma:invsurmap} to see that the flows obtained by horizontal Goodman surgery on $(A_i, \mathcal{H}_i, \mathcal{K}_i)$ are each orbit equivalent to that obtained by horizontal Goodman surgery on $(A, \mathcal{H}, \mathcal{K})$, hence are orbit equivalent to each other.

In the case when $A_1$ and $A_2$ lie on the same side of $c$, we use \Cref{prop:horsurcurvetosurann} to find a surgery annulus $A_3$ containing $c$ that lies on the other side of $c$ and use the above argument on $(A_1, A_3)$ then $(A_2, A_3)$.
\end{proof}

In view of \Cref{thm:invhleaf}, we will write $\phi^t_\sigma(A, \mathcal{H}, \mathcal{K}, \alpha)$ as $\phi^t_{\frac{1}{n}}(c)$, where $c$ is a leaf of $\mathcal{H}$ and $n$ is the coefficient of $\sigma$, from now on. This is notation is inspired from the fact that $\phi^t_{\frac{1}{n}}(c)$ is defined on the $3$-manifold $M_{\frac{1}{n}}(c)$ obtained by performing Dehn surgery on $M$ along $c$ with coefficient $\frac{1}{n}$. Here this coefficient is taken with respect to the choice of longitude determined by the flow $\phi^t$.

\begin{thm} \label{thm:invcurveperturb}
Let $c$ be a positive/negative horizontal surgery curve. Let $n$ be a positive/negative integer, respectively. Then for $\overline{c}$ close to $c$ in the $C^1$-topology, the flows $\phi^t_{\frac{1}{n}}(c)$ and $\phi^t_{\frac{1}{n}}(\overline{c})$ are orbit equivalent.
\end{thm}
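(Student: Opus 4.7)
The plan follows the template of \Cref{lemma:invsurmap}, \Cref{lemma:invpara}, and \Cref{lemma:invkfol}: I will build a continuous family of horizontal Goodman surgery data interpolating between a realization of $\phi^t_{\frac{1}{n}}(c)$ and one of $\phi^t_{\frac{1}{n}}(\overline{c})$, and apply \Cref{thm:structuralstability} along it. By \Cref{prop:horsurcurvestable}, $\overline{c}$ is itself a horizontal surgery curve, so the right-hand side is well-defined. First I use \Cref{prop:horsurcurvetosurann} to build a parametrized surgery annulus $(A, \mathcal{H}, \mathcal{K}, \alpha)$ with $c$ as an interior leaf of $\mathcal{H}$. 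When $\overline{c}$ is sufficiently $C^1$-close to $c$ it lies in the flow-saturated neighborhood $\phi^{(-\epsilon,\epsilon)}(A)$ of $A$ and projects along orbits to an embedded curve $\overline{c}' \subset A$ which, by \Cref{prop:horsurcurveflowisotopy}, is still a horizontal surgery curve and is $C^1$-close to $c$ on $A$.

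Since $T\overline{c}'$ is close to $Tc \subset T\mathcal{H}$ while $T\mathcal{K}$ is transverse to $T\mathcal{H}$, the curve $\overline{c}'$ meets each leaf of $\mathcal{K}$ transversely in a single point. I thicken $\overline{c}'$ to a foliation $\mathcal{H}'$ on a sub-annulus of $A$ with $T\mathcal{H}'$ $C^1$-close to $T\mathcal{H}$; the argument in \Cref{prop:horsurcurvetosurann} then yields that $T\mathcal{H}'$ is positive (or negative) and steady. Next, as in \Cref{lemma:invkfol}, linear interpolation of slopes gives a continuous family $\mathcal{H}_s$ of foliations with positive steady tangent field, a compatible family $\alpha_s$ of parametrizations (cf. \Cref{lemma:invpara}), and a family $\sigma_s$ of surgery maps of coefficient $n$; uniform $(\delta, R)$-thinness is ensured by \Cref{cor:thinnesscontinuous} and compactness of $[0,1]$. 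The flows $\phi^t_{\sigma_s}(A, \mathcal{H}_s, \mathcal{K}, \alpha_s)$ are pseudo-Anosov, agree with $\phi^t$ away from a neighborhood of $A$, and have continuously varying generating vector field in the $C^1$-topology. A finite application of \Cref{thm:structuralstability} along $[0,1]$ yields orbit equivalence between the endpoints, which by \Cref{thm:invhleaf} are $\phi^t_{\frac{1}{n}}(c)$ and $\phi^t_{\frac{1}{n}}(\overline{c}')$.

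The main obstacle is the remaining identification $\phi^t_{\frac{1}{n}}(\overline{c}') \cong \phi^t_{\frac{1}{n}}(\overline{c})$, since \Cref{thm:invhleaf} only handles surgery annuli sharing a common leaf, not flow-isotopic ones. I plan to close this gap with a parallel structural stability argument: pick a surgery annulus $\widetilde{A}$ around $\overline{c}$, flow-isotope it continuously through a family $\widetilde{A}_s$ so that the distinguished leaf moves from $\overline{c}$ to $\overline{c}'$, and transport the foliations, parametrization, and surgery map of coefficient $n$ along. Identifying the resulting surgered manifolds via the flow-induced homeomorphism produces a continuous family of pseudo-Anosov flows on a common underlying manifold, to which \Cref{thm:structuralstability} again applies. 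Composing the two orbit equivalences proves the theorem.
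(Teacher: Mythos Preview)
Your approach takes a detour that introduces a real gap. The step ``as in \Cref{lemma:invkfol}, linear interpolation of slopes gives a continuous family $\mathcal{H}_s$ of foliations'' does not carry over: in \Cref{lemma:invkfol} the interpolation is between $\mathcal{K}$-foliations by \emph{arcs}, so integral curves of the interpolated line field automatically exit through $\partial A$ and foliate. For $\mathcal{H}$-foliations by \emph{closed curves}, linearly interpolating the slopes of $T\mathcal{H}$ and $T\mathcal{H}'$ produces a line field on $A$ whose holonomy around the $S^1$-direction need not be trivial, so its integral curves need not close up into a foliation by circles. You would instead have to interpolate the curves themselves (e.g.\ in the $k$-coordinate of $\alpha$) and then re-verify steadiness for each intermediate family --- doable, but not what you wrote, and it costs extra work.

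The paper bypasses both this issue and your ``main obstacle'' with a single observation: a \emph{bridge leaf}. Fix a surgery annulus $(A,\mathcal{H},\mathcal{K})$ with $c$ a leaf of $\mathcal{H}$, and let $d$ be another leaf of $\mathcal{H}$. For $\overline{c}$ sufficiently $C^1$-close to $c$ one can build a \emph{new} embedded annulus $\overline{A}$ carrying a foliation $\overline{\mathcal{H}}$ that contains both $\overline{c}$ and $d$ as leaves; choosing $d$ and $\overline{c}$ close to $c$ forces $T\overline{\mathcal{H}}$ close to $Tc$, hence steady. Two applications of the already-established \Cref{thm:invhleaf} then give
\[
\phi^t_{\frac{1}{n}}(c) \;\cong\; \phi^t_{\frac{1}{n}}(d) \;\cong\; \phi^t_{\frac{1}{n}}(\overline{c}).
\]
There is no projection along orbits, no interpolation of $\mathcal{H}$-foliations, and no separate flow-isotopy argument: the auxiliary leaf $d$, shared by both surgery annuli, does all the work.
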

\begin{proof}
Let $(A, \mathcal{H}, \mathcal{K})$ be a surgery annulus such that $c$ is a leaf of $\mathcal{H}$. Let $d$ be another leaf of $\mathcal{H}$. For $\overline{c}$ close to $c$ in the $C^1$-topology, one can find an embedded annulus $\overline{A}$ with a foliation $\overline{\mathcal{H}}$ by closed curves containing $\overline{c}$ and $d$ as leaves. Furthermore, by taking $d$ and $\overline{c}$ close enough to $c$, $T\overline{\mathcal{H}}$ can be taken to be close to $Tc$, hence be steady. The theorem then follows since $\phi^t_\frac{1}{n}(c) \cong \phi^t_\frac{1}{n}(d) \cong \phi^t_\frac{1}{n}(\overline{c})$.
\end{proof}

\Cref{thm:invcurveperturb} implies the following corollary.

\begin{cor} \label{cor:invcurveisotopy}
Let $c_s$, $s \in [0,1]$ be a family of positive/negative surgery curves. Then for every positive/negative integer $n$, respectively, $\phi^t_{\frac{1}{n}}(c_0)$ and $\phi^t_{\frac{1}{n}}(c_1)$ are orbit equivalent.
\end{cor}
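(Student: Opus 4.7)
The plan is to use a compactness argument on the parameter interval $[0,1]$, bootstrapping from the $C^1$-local statement of \Cref{thm:invcurveperturb}.

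First, I would clarify what is meant by a \emph{family} of horizontal surgery curves: the natural convention is that $s \mapsto c_s$ is continuous into the space of smooth embedded curves with the $C^1$-topology (this is also what one gets from any reasonable smooth isotopy). Under this hypothesis, for every $s_0 \in [0,1]$, \Cref{thm:invcurveperturb} gives an open neighborhood $U_{s_0} \subset [0,1]$ of $s_0$ such that for all $s \in U_{s_0}$, the curve $c_s$ is close enough to $c_{s_0}$ in the $C^1$-topology for $\phi^t_{\frac{1}{n}}(c_s)$ and $\phi^t_{\frac{1}{n}}(c_{s_0})$ to be orbit equivalent.

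Next, by compactness of $[0,1]$, finitely many such neighborhoods $U_{s_0}, U_{s_1}, \ldots, U_{s_k}$ cover $[0,1]$; after reindexing, we may assume $0 = s_0 < s_1 < \cdots < s_k = 1$ and that consecutive neighborhoods overlap, so that we can pick $t_i \in U_{s_i} \cap U_{s_{i+1}}$ for each $i$. Applying \Cref{thm:invcurveperturb} along this chain yields
\[
\phi^t_{\frac{1}{n}}(c_0) \cong \phi^t_{\frac{1}{n}}(c_{t_0}) \cong \phi^t_{\frac{1}{n}}(c_{s_1}) \cong \phi^t_{\frac{1}{n}}(c_{t_1}) \cong \cdots \cong \phi^t_{\frac{1}{n}}(c_1),
\]
and since orbit equivalence is transitive (\Cref{defn:orbitequiv}), we conclude $\phi^t_{\frac{1}{n}}(c_0) \cong \phi^t_{\frac{1}{n}}(c_1)$.

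The one place requiring a bit of care — and the only real obstacle — is verifying that the family $s \mapsto c_s$ is indeed continuous in the $C^1$-topology, so that the neighborhoods $U_{s_0}$ above exist. If the notion of ``family'' used here is only a $C^0$ isotopy through horizontal surgery curves, one would first need to observe that horizontal surgery curves form a $C^1$-open subset of smooth embedded curves by \Cref{prop:horsurcurvestable}, and then smoothly approximate the isotopy to lie in this open set; the proof in the excerpt presumably handles this tacitly by interpreting a ``family of surgery curves'' as a smooth isotopy, in which case $C^1$-continuity is automatic and no further argument is needed.
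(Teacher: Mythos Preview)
Your proof is correct and is exactly the intended argument: the paper does not even write out a proof of \Cref{cor:invcurveisotopy}, simply stating it as an immediate corollary of \Cref{thm:invcurveperturb}, and the compactness/chaining argument you supply is the standard way to pass from that local $C^1$-perturbation statement to the global isotopy invariance.
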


\subsection{Examples} \label{subsec:horsureg}

In this subsection, we discuss the effect of performing horizontal Goodman surgery along some of the horizontal surgery curves given in \Cref{subsec:curveeg}.

\begin{eg} \label{eg:pAmapstraightsur}
This example continues from \Cref{eg:pAmapstraightcurve}. Let $S$ be a closed oriented surface and let $f:S \to S$ be an orientation preserving pseudo-Anosov map. Let $c \subset S$ be a straight positive/negative curve. \Cref{prop:pAmapstraightcurve} shows that $c$ is a positive/negative horizontal surgery curve for the suspension flow $\phi^t_f$ on the mapping torus $T_f$, respectively.

We claim that the flow $(\phi^t_f)_{\frac{1}{n}}(c)$ is the suspension flow of the map $f \tau^n_c: S \to S$, where $\tau_c:S \to S$ denotes the positive Dehn twist along $c$. Indeed, we can choose a section $S_0$ for $\phi^t_f$ lying slightly below $c$. We can then choose the surgery annulus to be disjoint from $S_0$. After surgery, $S_0$ is still a section, and one sees that the first return map is $f \tau^n_c$.

For positive/negative integers $n$, the fact that $(\phi^t_f)_{\frac{1}{n}}(c)$ is a pseudo-Anosov flow then implies the following proposition.

\begin{prop} \label{prop:pAmaptwist}
Let $S$ be a closed oriented surface and let $f:S \to S$ be an orientation preserving pseudo-Anosov map. Let $c \subset S$ be a straight positive/negative curve. Then $f \tau^n_c$ is pseudo-Anosov for every positive/negative integer $n$, respectively.
\end{prop}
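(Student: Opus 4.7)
The plan is to realize $f \tau_c^n$ as the first return map of a section for the pseudo-Anosov flow $(\phi^t_f)_{\frac{1}{n}}(c)$, and then apply the standard correspondence between pseudo-Anosov flows admitting cross-sections and pseudo-Anosov surface homeomorphisms.

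First I would set up the geometry. The curve $c$ lies in $S \times \{0\} \subset T_f$, and by \Cref{prop:pAmapstraightcurve} together with the preceding discussion, $c$ is a positive/negative horizontal surgery curve. Pick a section $S_0 = S \times \{-\epsilon\}$ of $\phi^t_f$ for some small $\epsilon > 0$. By \Cref{prop:horsurcurvetosurann}, I can build a surgery annulus $(A, \mathcal{H}, \mathcal{K})$ with $c$ as a leaf of $\mathcal{H}$; by choosing $A$ in a sufficiently thin tubular neighborhood of $c$ within $S \times (-\epsilon/2, \epsilon/2)$, I can arrange that $A$ is disjoint from $S_0$. Then \Cref{thm:horsurpA} furnishes a $(\delta, R)$-thin surgery map $\sigma$ with coefficient $n$ yielding a pseudo-Anosov flow $(\phi^t_f)_{\frac{1}{n}}(c)$ on $M_{\frac{1}{n}}(c)$.

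Next I would check that $S_0$ remains a section of the surgered flow and compute the first return map. Since $\dot{\phi_f} = \dot{(\phi_f)_{\frac{1}{n}}(c)}$ in a neighborhood of $S_0$, the surface $S_0$ is still transverse to the flow, and since $A$ lies strictly in the $t$-positive direction from $S_0$ (before wrapping back around), a flow line from a point $(x, -\epsilon) \in S_0$ travels upward and either misses $A$ entirely---returning to $S_0$ via the identification $(y, 1-\epsilon) \sim (f(y), -\epsilon)$ and giving the value $f(x)$---or crosses $A_-$ at some point $p$, is identified via $\sigma$ with $\sigma(p) \in A_+$, and then continues to the identification surface. Because $\sigma$ shears along the leaves of $\mathcal{H}$ (which run parallel to $c$) with total integer shift equal to the coefficient $n$, and is the identity near $\partial A$, projecting this jump back to $S_0$ along flow lines realizes a Dehn twist $\tau_c^n$ supported in a small annular neighborhood of $c$. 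Composing with the identification map $f$, the first return map of $(\phi^t_f)_{\frac{1}{n}}(c)$ on $S_0 \cong S$ is therefore $f \tau_c^n$.

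Finally, I would conclude as follows: since $(\phi^t_f)_{\frac{1}{n}}(c)$ is a pseudo-Anosov flow admitting $S_0$ as a global cross-section with first-return map $f \tau_c^n$, the invariant stable and unstable line fields $\overline{E}^{s/u}$ restrict on $S_0$ to a pair of $(f \tau_c^n)$-invariant singular line fields with transverse contraction/expansion by the constant factor determined by the first return time, and these integrate to the invariant singular measured foliations of $f \tau_c^n$. Hence $f \tau_c^n$ is a pseudo-Anosov map.

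The main obstacle I anticipate is the bookkeeping in the middle step: verifying that the abstract shear $\sigma$, defined as $(h,k) \mapsto (h+\rho(k),k)$ in the $S^1 \times I$ parametrization, transports to \emph{exactly} $n$ full Dehn twists about $c$ when viewed on $S_0$, with the correct sign matching the Dehn surgery coefficient $\frac{1}{n}$. This is essentially a convention check tying the parametrization $\alpha: A \to S^1 \times I$ and the natural longitude of $c$ in $T_f$ to the chosen sign of $\tau_c$, but it requires care so that the positive/negative dichotomy lines up as stated.
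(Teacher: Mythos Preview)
Your proposal is correct and follows essentially the same approach as the paper: realize $f\tau_c^n$ as the first return map on a section $S_0$ placed just below $c$ and disjoint from the surgery annulus, then invoke that the surgered flow is pseudo-Anosov. One small imprecision: after surgery the first return time on $S_0$ is no longer constant, so you do not literally get expansion/contraction by a single constant factor; the correct conclusion is that the first return map has invariant singular foliations (hence its isotopy class is pseudo-Anosov via Nielsen--Thurston), which is all that is being claimed.
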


\end{eg}

\begin{eg} \label{eg:closedorbitsur}
This example continues from \Cref{eg:closedorbitcurve}. Let $\phi^t$ be a pseudo-Anosov flow on closed oriented $3$-manifold $M$ and let $\gamma$ be a closed orbit of $\phi^t$. Let $N$ be a tubular neighborhood of $\gamma$ and let $A$ be an annulus in a positive/negative quadrant of $N$ that has one boundary component $\overline{\gamma}$ lying along $\gamma$ and which is transverse to $\phi^t$ in its interior. \Cref{prop:closedorbitcurve} shows that there is a negative/positive horizontal surgery curve $c$ lying on $A$ that is isotopic to $\overline{\gamma}$, respectively.

Performing horizontal Goodman surgery along such a $c$ is the classical operation of Goodman surgery. In his thesis, Shannon showed the following theorem.

\begin{thm}[Shannon {\cite{Sha20}}] \label{thm:Goodman=Fried}
Let $\phi^t$ be a transitive pseudo-Anosov flow on a closed orientable $3$-manifold $M$. Let $\gamma$ be a closed orbit of $\phi^t$. Fix a positive/negative integer $n$. There exists a neighborhood $N$ of $\gamma$ such that for every positive/negative horizontal surgery curve $c$ lying on a transverse annulus in $N$, the flows $\phi^t_\frac{1}{n}(c)$ obtained by (horizontal) Goodman surgery on $c$ are orbit equivalent to one another.

Moreover, the restriction of $\phi^t$ to $M \backslash \gamma$ is orbit equivalent to the restriction of $\phi^t_\frac{1}{n}(c)$ to $M_\frac{1}{n}(c) \backslash \gamma$, where the image of every local section $D$ in $M$ near $\gamma$ under the orbit equivalence can be extended into an immersed surface with boundary $\overline{D}$ in $M_\frac{1}{n}(c)$. 
\end{thm}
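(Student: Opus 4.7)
Proof proposal. The proof has two parts.

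For the first statement, the plan is to apply the invariance results of Section 4.2. By Corollary~\ref{cor:invcurveisotopy}, it suffices to show that, for $N$ sufficiently small, any two positive (resp.\ negative) horizontal surgery curves $c_1, c_2$ lying on transverse annuli inside $N$ can be joined by a smooth family of positive (resp.\ negative) horizontal surgery curves. Shrinking $N$, place $c_i$ on an annulus $A_i$ with one boundary on $\gamma$ lying in a quadrant of $N$ as in Example~\ref{eg:closedorbitcurve}. By the intermediate-value argument in the proof of Proposition~\ref{prop:closedorbitcurve}, each $c_i$ is isotopic on $A_i$, through horizontal surgery curves, to a curve of constant slope $m_i \in (-\infty, 0)$. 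Using Proposition~\ref{prop:horsurcurveflowisotopy} (isotopy along flow lines preserves horizontal surgery curves) together with an auxiliary annulus in an adjacent quadrant, one rotates constant-slope curves around $\gamma$ to transition between quadrants. Concatenating these isotopies yields the required family, so by Corollary~\ref{cor:invcurveisotopy} the flows $\phi^t_{\frac{1}{n}}(c_1)$ and $\phi^t_{\frac{1}{n}}(c_2)$ are orbit equivalent.

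For the second statement, the idea is that horizontal Goodman surgery can be realized by an operation supported in a tubular neighborhood of $\gamma$, and is topologically inverse to a corresponding drilling and refilling. By part~(I) we may take $c$ to be a leaf of a surgery annulus $(A,\mathcal{H},\mathcal{K})$ with $\overline{A} \subset N$ and $c$ isotopic on $A$ to $\gamma$. Choose a solid torus neighborhood $V$ of $\gamma$ with $A \subset V \subset N$ and with $\partial V \setminus \gamma$ transverse to $\phi^t$. The surgery map $\sigma$ is compactly supported in $A$, so the surgered vector field agrees with $\dot\phi$ on $M \setminus V$. Thus the identity map on $M \setminus V$ already gives an orbit equivalence away from $V$. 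Inside $V \setminus \gamma$, both $\phi^t$ and $\phi^t_{\frac{1}{n}}(c)$ are pseudo-Anosov flows on a solid torus minus its core, transverse to $\partial V$. The plan is to show that, after filling in $\gamma$, each local model is determined up to orbit equivalence by its asymptotic behavior along $\gamma$ (the data of the induced foliations $A^{s}, A^{u}$ from Example~\ref{eg:closedorbitcurve}), so a matching orbit equivalence on $V \setminus \gamma$ extending the identity on $\partial V$ exists. Gluing this with the identity on $M \setminus V$ produces an orbit equivalence between $\phi^t|_{M \setminus \gamma}$ and $\phi^t_{\frac{1}{n}}(c)|_{M_{\frac{1}{n}}(c) \setminus \gamma}$.

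For the final claim about sections, given a local transverse disk $D$ to $\gamma$ in $M$, the orbit equivalence carries $D \setminus \gamma$ to an embedded transversal in $M_{\frac{1}{n}}(c) \setminus \gamma$. To extend to an immersed surface $\overline{D}$ with boundary in $M_{\frac{1}{n}}(c)$, one uses that the $\tfrac{1}{n}$-surgery turns a meridian of $\gamma$ in $M$ into a curve that spirals $n$ times around $\gamma$ in $M_{\frac{1}{n}}(c)$; concretely $\overline{D}$ is obtained by following the image of $\partial D$ along flow lines into $\gamma$ after lifting to an $n$-fold cover of a tubular neighborhood of $\gamma$. The main obstacle will be the bookkeeping at $\partial V$: one must verify that the orbit equivalence on $V \setminus \gamma$ identifies the correct longitude (the flow-framing used in the notation $\phi^t_{\frac{1}{n}}(c)$) with the surgery longitude, and that the shear introduced by $\sigma$ contributes precisely $n$ Dehn twists so that $\overline{D}$ closes up into a well-defined immersed surface. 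This step requires translating the shearing parameter $\rho$ of the surgery map in Definition~\ref{defn:surmap} into the standard framing data of Dehn surgery along $\gamma$, which is the delicate combinatorial heart of the argument.
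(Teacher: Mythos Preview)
Your approach diverges substantially from the paper's, and both parts contain genuine gaps.

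\textbf{Part (1).} The paper does not prove this via \Cref{cor:invcurveisotopy}. Instead it invokes Shannon's machinery: one takes a Birkhoff section $S$ for $\phi^t$ with $\partial S$ disjoint from $\gamma$ (existence via \cite{Bru95}), then uses \cite[Proposition 3.14]{Sha20} to build a Birkhoff section $\overline{S}$ for $\phi^t_{\frac{1}{n}}(c)$ whose first return map on $\overline{S}\setminus\gamma$ is conjugate to that of $S\setminus(S\cap\gamma)$. The twist parameters of $\overline{S}$ depend only on $n$, not on $c$, so \cite[Theorem A]{Sha20} gives the orbit equivalence. Your isotopy argument has a real obstruction: positive horizontal surgery curves near $\gamma$ live in the \emph{negative} quadrants of $N$ (see \Cref{prop:closedorbitcurve}), and for a generic orbit there are several such quadrants separated by the local stable/unstable leaves. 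Your claim that one can ``rotate constant-slope curves around $\gamma$'' via \Cref{prop:horsurcurveflowisotopy} is unfounded: isotopy along orbits does not move a curve angularly between quadrants, and any isotopy crossing a half-leaf of $\Lambda^{s/u}$ forces $Tc$ through $E^s$ or $E^u$, destroying positivity. So \Cref{cor:invcurveisotopy} alone does not suffice.

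\textbf{Part (2).} The paper again argues via Birkhoff sections: the conjugacy of first return maps on $S\setminus(S\cap\gamma)$ and $\overline{S}\setminus\gamma$ yields an orbit equivalence on the drilled manifolds, and one then applies \cite[Theorem 2.18]{Sha20} at each component of $\partial S$ to extend it. Your proposed local argument has two problems. First, the boundary of a tubular neighborhood $V$ of a closed orbit is \emph{not} transverse to a pseudo-Anosov flow; there are entering and exiting regions separated by tangency curves, so the statement ``both flows are transverse to $\partial V$'' is false. Second, and more seriously, the assertion that ``each local model is determined up to orbit equivalence by its asymptotic behavior along $\gamma$'' is precisely the content of Shannon's local rigidity results (\cite[Theorem 2.18]{Sha20}, \cite[Theorem 3.20]{Sha21}); you are assuming the theorem you need to prove. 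The section-extension claim likewise follows in the paper from how $\overline{S}$ is constructed near $\gamma$, not from the framing computation you sketch.
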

\begin{proof}
The first statement in the case when $\phi^t$ is Anosov is stated as \cite[Corollary 3.2(a)]{Sha20}. We recount the proof here to illustrate how Shannon's proof carries through to pseudo-Anosov flows, as well as how the second statement follows.

By \cite{Bru95} (which follows ideas from \cite{Fri83}), $\phi^t$ admits a \textbf{Birkhoff section} $S$ whose boundary is disjoint from $\gamma$. Recall that this means that $S$ is a embedded surface with boundary where 
\begin{itemize}
    \item the interior of $S$ is transverse to the flow,
    \item the boundary of $S$ lies along closed orbits of $\phi^t$,
    \item and every orbit of $\phi^t$ meets $S$ in finite forward and backward time.
\end{itemize}
Let $f:S \to S$ be the first return map of $S$.

In \cite[Proposition 3.14]{Sha20}, Shannon shows that $\phi^t_\frac{1}{n}(c)$ admits a Birkhoff section $\overline{S}$ so that the first return map on $\overline{S} \backslash \gamma$ is conjugate to the restriction of $f$ to $S \backslash (S \cap \gamma)$. The way Shannon does this is to first isotope the surgery annulus $A$ to lie on the boundary of a normal neighborhood $N$, then think of $\phi^t_\frac{1}{n}(c)$ as $\phi^t$ cut along $\partial N$ and reglued with a Dehn twist. He then shows how to extend the image of $S \cap \partial N$ under this regluing map into a local Birkhoff section inside $N$. The crucial fact here is that the generating vector field $\dot{\phi}$ can be explicitly written down in $N$.

When $\phi^t$ is pseudo-Anosov but $\gamma$ is non-singular, this argument holds word-by-word. If $\gamma$ is singular, one can repeat the argument by using a neighborhood furnished by the last item in \Cref{defn:pAflow}.

From the observation that the twist parameters of this Birkhoff section are determined by the surgery coefficient $n$, one can then apply \cite[Theorem A]{Sha20} (or \cite[Theorem 3.9]{Sha21}) to see that the flows $\phi^t_\frac{1}{n}(c)$ are orbit equivalent to one another. Strictly speaking \cite[Theorem A]{Sha20} is only stated in the case when $\gamma$ is non-singular and orientation-preserving, but the result holds without these additional hypothesis, as remarked below \cite[Theorem 3.9]{Sha21}.

The second statement follows from similar ideas. One can define an orbit equivalence between the restriction of $\phi^t$ to $M \backslash (\gamma \cup \partial S)$ and the restriction of $\phi^t_\frac{1}{n}(c)$ to $\overline{M} \backslash \partial \overline{S}$ by sending the interior of $S \backslash (S \cap \gamma)$ to the interior of $\overline{S}$ via the conjugation above. One then applies \cite[Theorem 2.18]{Sha20} (or \cite[Theorem 3.20]{Sha21}) to each component of $\partial S$ to obtain an orbit equivalence between the restriction of $\phi^t$ to $M \backslash \gamma$ and the restriction of $\phi^t_\frac{1}{n}(c)$ to $M_\frac{1}{n}(c) \backslash \gamma$ agreeing with the previous orbit equivalence near $\gamma$. By definition, the image of neighborhoods of $S$ at $S \cap \gamma$ under the orbit equivalence can be extend to neighborhoods of $\overline{S}$ near $\gamma$.
\end{proof}

When the half-leaves of $\Lambda^{s/u}$ do not rotate along $\gamma$, each surgery curve $c$ as above is isotopic to $\gamma$ and $E^{s/u}|_\gamma$ determines a curve $\lambda$ on $\partial N$. Here $\lambda$ is a longitude, meaning that it intersects the meridian $\mu$ in one point. In particular the $3$-manifold $M_\frac{1}{n}(c)$ is naturally diffeomorphic to the $3$-manifold $M_\frac{1}{n}(\gamma)$ obtained by performing Dehn surgery on $M$ along $\gamma$ with coefficient $\frac{1}{n}$. As such, we will write $\phi^t_\frac{1}{n}(c)$ as $\phi^t_\frac{1}{n}(\gamma)$ and refer to it as a flow obtained by \textbf{Goodman-Fried surgery} on $\gamma$.
\end{eg}

\begin{eg} \label{eg:scalloptorussur}
This example continues from \Cref{eg:scalloptoruscurve}. Let $\phi^t$ be a pseudo-Anosov flow on a closed oriented $3$-manifold $M$. Let $T$ be a scalloped transverse torus. We let $\ell^{s/u}$ be the homology class of one of the closed $T^{s/u}$-leaf, respectively. Note that these are only defined up to a sign. Let us fix some choice so that the intersection number $\langle \ell^u, \ell^s \rangle > 0$. We let $m^{s/u}$ be homology classes so that $(l^s, m^s)$ and $(l^u, m^u)$ each form a positive basis for $H_1(T^2; \mathbb{Z})$.

Now consider the $3$-manifold $M \cut T$ obtained by cutting $M$ along $T$. There are two copies of $T$ in $M \cut T$ --- one on the positive side of $T$ and one on the negative side of $T$. We denote these by $T_+$ and $T_-$ respectively. If we endow $T_+$ with the basis $(\ell^s, m^s)$ and $T_-$ with the basis $(\ell^u, m^u)$, then the gluing map $T_- \to T_+$ that recovers $M$ has matrix representation 
$\begin{bmatrix}
a & b \\
c & d
\end{bmatrix}$
where $c = - \langle \ell^u, \ell^s \rangle < 0$.

\Cref{prop:scalloptoruscurve} shows that for every positive/negative primitive integer homology class $\alpha$, there is a positive/negative horizontal surgery curve $c_\alpha$ on $T$ with homology class $\pm \alpha$. Observe that the $3$-manifold $M_\frac{1}{n}(c_\alpha)$ for which the flow $\phi^t_\frac{1}{n}(c_\alpha)$ is defined on can be described topologically as gluing up $M \cut T$ by some map $T_- \to T_+$. We compute the matrix representative of this map:

Let the coordinates of $\alpha$ in the basis $(\ell^s, m^s)$ be $(x_0, y_0)$.
Suppose that $\alpha$ is positive. Then up to reversing the sign of $\alpha$, we have
$$\begin{cases}
\langle \ell^u, \alpha \rangle = ay_0 - cx_0 &\geq 0 \\
\langle \ell^s, \alpha \rangle = y_0 &\leq 0.
\end{cases}$$
In the basis $(\ell^s, m^s)$, a single positive Dehn twist along $c_\alpha$ has matrix representation
$\begin{bmatrix}
1+x_0y_0 & -x^2_0 \\
y^2_0 & 1-x_0y_0
\end{bmatrix}$.
Hence the matrix representation of the gluing map for $M_1(c_\alpha)$ is
$$\begin{bmatrix}
1+x_0y_0 & -x^2_0 \\
y^2_0 & 1-x_0y_0
\end{bmatrix}
\begin{bmatrix}
a & b \\
c & d
\end{bmatrix}
=
\begin{bmatrix}
a+(ay_0-cx_0)x_0 & b+(by_0-dx_0)x_0 \\
c+(ay_0-cx_0)y_0 & d+(by_0-dx_0)y_0
\end{bmatrix}$$
In particular, notice that $c+(ay_0-cx_0)y_0 \leq c$.

Symmetrically, if $\alpha$ is negative, then up to reversing the sign of $\alpha$, we have
$$\begin{cases}
\langle \ell^u, \alpha \rangle = ay_0 - cx_0 &\leq 0 \\
\langle \ell^s, \alpha \rangle = y_0 &\leq 0.
\end{cases}$$
In the basis $(\ell^s, m^s)$, a single negative Dehn twist along $c_\alpha$ has matrix representation
$\begin{bmatrix}
1-x_0y_0 & x^2_0 \\
-y^2_0 & 1+x_0y_0
\end{bmatrix}$.
Hence the matrix representation of the gluing map for $M_{-1}(c_\alpha)$ is
$$\begin{bmatrix}
1-x_0y_0 & x^2_0 \\
-y^2_0 & 1+x_0y_0
\end{bmatrix}
\begin{bmatrix}
a & b \\
c & d
\end{bmatrix}
=
\begin{bmatrix}
a-(ay_0-cx_0)x_0 & b-(by_0-dx_0)x_0 \\
c-(ay_0-cx_0)y_0 & d-(by_0-dx_0)y_0
\end{bmatrix}$$
In particular, notice that once again $c-(ay_0-cx_0)y_0 \leq c$.

Notice that the torus $T$ is transverse to the surgered flow $\phi_{\frac{1}{n}}(c_\alpha)$ after regluing. We further claim that it is a scalloped transverse torus. 

The key is to observe that there is a closed orbit of $\phi^t$ some multiple of which is homotopic to a closed $T^u$-leaf on $T_-$ in $M \cut T$. Indeed, consider the half-leaves of the unstable foliation $\Lambda^u$ that meet $T$ in closed $T^u$-leaves. Such half-leaves are annuli with one boundary component lying along a multiple of a closed orbit in $M \cut T$. These half-leaves can intersect $T$ multiple times but each of these intersections is a closed $T^u$-leaf, hence we can extract a sub-annulus giving a homotopy between a multiple of a closed orbit to a closed $T^u$-leaf on $T_-$ in $M \cut T$. Horizontal Goodman surgery does not modify the flow on this annulus, hence we still have a leaf of the unstable foliation of the surgered flow intersecting $T_-$ in a closed $T^u$-leaf.

Symmetrically, we have a leaf of the stable foliation of the surgered flow intersecting $T_+$ in a closed $T^s$-leaf. The homology class of the closed leaves of $T^u$ on $T_-$ differs from that of $T^s$ on $T_+$ under the new gluing map since $c \pm (ay_0-cx_0)y_0 \leq c < 0$. Using \cite[Definition 2.12]{BFM23}, this shows that $T$ remains a scalloped torus after regluing. 

This in particular implies that we can repeat the above procedure and perform another horizontal Goodman surgery along a suitable curve on $T$. We now wish to investigate which $3$-manifolds we can obtain using such repeated operations.

To that end, we make the following algebraic definition. Define a directed graph $\mathfrak{G}$ whose vertices are matrices $\begin{bmatrix}
a & b \\
c & d
\end{bmatrix} \in \mathrm{SL}_2 \mathbb{Z}$ where $c \leq -1$.
The directed edges of $\mathfrak{G}$ are defined to be:
\begin{itemize}
    \item $\begin{bmatrix}
    a & b \\
    c & d
    \end{bmatrix} \to 
    \begin{bmatrix}
    a+(ay_0-cx_0)x_0 & b+(by_0-dx_0)x_0 \\
    c+(ay_0-cx_0)y_0 & d+(by_0-dx_0)y_0
    \end{bmatrix}$ for 
    $\begin{cases}
    ay_0 - cx_0 &\geq 0 \\
    y_0 &\leq 0.
    \end{cases}$, and
    \item $\begin{bmatrix}
    a & b \\
    c & d
    \end{bmatrix} \to 
    \begin{bmatrix}
    a-(ay_0-cx_0)x_0 & b-(by_0-dx_0)x_0 \\
    c-(ay_0-cx_0)y_0 & d-(by_0-dx_0)y_0
    \end{bmatrix}$ for 
    $\begin{cases}
    ay_0 - cx_0 &\leq 0 \\
    y_0 &\leq 0.
    \end{cases}$.
\end{itemize}
Then the above computation implies the following proposition
\begin{prop} \label{prop:scalloptorusimplications}
Let $\phi^t$, $T$, $(l^s, m^s)$, $(l^u, m^u)$, and 
$\begin{bmatrix}
a & b \\
c & d
\end{bmatrix}$
be as above. Then for every $\begin{bmatrix}
m & n \\
p & q
\end{bmatrix} \in \mathfrak{G}$ for which there exists a directed path from $\begin{bmatrix}
a & b \\
c & d
\end{bmatrix}$
to $\begin{bmatrix}
m & n \\
p & q
\end{bmatrix}$, the $3$-manifold obtained by gluing up $M \cut T$ by $\begin{bmatrix}
m & n \\
p & q
\end{bmatrix}: T_- \to T_+$ admits a pseudo-Anosov flow.
\end{prop}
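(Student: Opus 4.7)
The plan is to proceed by induction on the length of the directed path in $\mathfrak{G}$ from $\begin{bmatrix} a & b \\ c & d \end{bmatrix}$ to $\begin{bmatrix} m & n \\ p & q \end{bmatrix}$. The base case, where the path has length zero, is trivial: the manifold is $M$ itself, which admits $\phi^t$ by hypothesis, and the torus $T$ is scalloped with respect to $\phi^t$ by assumption.

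For the inductive step, assume we have produced a pseudo-Anosov flow $\psi^t$ on the manifold $M'$ associated to a matrix $\begin{bmatrix} a' & b' \\ c' & d' \end{bmatrix}$, and furthermore that the image of $T$ in $M'$ is still a scalloped transverse torus for $\psi^t$ whose gluing matrix relative to the bases $(\ell^s,m^s)$ and $(\ell^u,m^u)$ of closed leaves of the induced stable and unstable foliations is this very matrix (so that the hypotheses of the example apply verbatim to $(\psi^t, T)$). Now suppose an outgoing edge of $\mathfrak{G}$ at this matrix is taken, determined by some primitive integer vector $(x_0, y_0)$ satisfying the corresponding sign conditions. Apply Proposition~\ref{prop:scalloptoruscurve} to obtain a positive (respectively negative) horizontal surgery curve $c_\alpha$ on $T$ with homology class $\pm\alpha = \pm(x_0\ell^s + y_0 m^s)$, then perform horizontal Goodman surgery with coefficient $+1$ (respectively $-1$). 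By Construction~\ref{constr:introhorsur} (applied via Theorem~\ref{thm:horsurpA}), this yields a pseudo-Anosov flow on the manifold $M''$ obtained by regluing $M' \cut T$ along the composition of the original gluing map with a single positive (respectively negative) Dehn twist along $c_\alpha$. The matrix computation carried out in Example~\ref{eg:scalloptorussur} shows that this composition is exactly the target matrix of the chosen edge in $\mathfrak{G}$.

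To continue the induction, we must verify that $T$ is still a scalloped transverse torus for the surgered flow and that the new gluing matrix (relative to the appropriate bases of closed leaves) matches the vertex of $\mathfrak{G}$ reached. The scalloped property is exactly the content of the paragraph in Example~\ref{eg:scalloptorussur} that identifies a closed leaf of the unstable foliation on $T_-$ and a closed leaf of the stable foliation on $T_+$ coming from the unchanged half-leaves of $\Lambda^u$ and $\Lambda^s$ respectively, together with the verification using the inequality $c' \pm (a' y_0 - c' x_0)y_0 \leq c' \leq -1$ that these leaves are not parallel (and the appeal to \cite[Definition 2.12]{BFM23}). In particular the closed leaves of the induced $T^s$ and $T^u$ foliations for the new flow have the same homology classes on $T_+$ and $T_-$ as before, so the bases $(\ell^s, m^s)$ and $(\ell^u, m^u)$ remain valid and the new gluing matrix in these bases is the one just computed.

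The main obstacle is making sure the inductive hypothesis is phrased strongly enough to be propagated, namely that the torus $T$ remains scalloped with the \emph{same} homology classes of closed leaves after each surgery, so that the matrix description using the fixed bases $(\ell^s,m^s)$ and $(\ell^u,m^u)$ continues to be meaningful; this is precisely what the ``half-leaves of $\Lambda^{s/u}$ are unaffected by horizontal Goodman surgery along $c_\alpha \subset T$'' discussion in Example~\ref{eg:scalloptorussur} is designed to guarantee. Once this invariance is established, the proof is a straightforward concatenation of the single-step computation along the directed path.
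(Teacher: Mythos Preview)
Your proof is correct and takes essentially the same approach as the paper: the paper presents the proposition as an immediate consequence of the preceding computation (the sentence ``Then the above computation implies the following proposition'' is the entire proof), and your induction on path length is exactly the unpacking of that implication. One small wrinkle: the edges of $\mathfrak{G}$ are not restricted to primitive $(x_0,y_0)$, while \Cref{prop:scalloptoruscurve} requires $\alpha$ primitive; but a non-primitive $(x_0,y_0)=k(x'_0,y'_0)$ gives the $k^2$-th power of the primitive Dehn twist, which is realized by horizontal Goodman surgery with coefficient $\pm\frac{1}{k^2}$ along $c_{\alpha'}$, so the inductive step still goes through.
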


We then prove the following lemma that concerns the structure of $\mathfrak{G}$.

\begin{lemma} \label{lemma:scalloptorusgraphstructure}
Let $\mathfrak{G}_n = \left\{ \begin{bmatrix}
a & b \\
c & d
\end{bmatrix} \in \mathrm{SL}_2 \mathbb{Z} \mid c = -n \right\}$. Notice that $\mathfrak{G}_n$ form a partition of the vertices of $\mathfrak{G}$. Then:
\begin{enumerate}
    \item Every edge of $\mathfrak{G}$ goes from a vertex in $\mathfrak{G}_m$ to a vertex in $\mathfrak{G}_n$, where $m \leq n$.
    \item The full subgraph of $\mathfrak{G}$ spanned by $\mathfrak{G}_1$ is strongly connected.
    \item For every vertex $v$ in $\mathfrak{G}_n$ where $n \geq 2$, there exists a directed edge path from a vertex $w$ in $\mathfrak{G}_m$ to $v$, where $m < n$.
\end{enumerate}
\end{lemma}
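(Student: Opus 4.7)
The plan is to verify each claim by plugging in explicit classes $(x_0, y_0)$ and tracking the lower-left entry $c$ of the matrix, with the determinant relation ($ad + bn = 1$ when $c = -n$) as the only algebraic input. For (1), it is purely a sign check on $c' - c$. Under a first-type edge $c' - c = (ay_0 - cx_0) y_0$ is a product of a non-negative quantity and a non-positive quantity; under a second-type edge $c' - c = -(ay_0 - cx_0) y_0$ is the negative of a product of two non-positive quantities. In either case $c' \leq c$, so $m \leq n$.

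For (2), I will exhibit a small set of explicit moves that stay in $\mathfrak{G}_1$. A vertex of $\mathfrak{G}_1$ is determined by the pair $(a, d)$, with $b = 1 - ad$ forced. The pair $(x_0, y_0) = (1, 0)$ will satisfy the first-type conditions and $(x_0, y_0) = (-1, 0)$ the second-type conditions, producing the horizontal moves $(a, d) \to (a \pm 1, d)$. At the special vertices with $a = 0$ (and hence $b = 1$), the class $(x_0, y_0) = (0, -1)$ is admissible under both edge types and will yield the vertical moves $(0, d) \to (0, d \pm 1)$; a quick check shows these moves leave $\mathfrak{G}_1$ when $a \neq 0$, which is why they must be applied at $a = 0$. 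Given any two vertices of $\mathfrak{G}_1$, I will first shift $a$ to $0$, then shift $d$ to the target, then shift $a$ to the target, and all moves stay in $\mathfrak{G}_1$, so strong connectivity follows.

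For (3), given $v = \begin{bmatrix} a & b \\ -n & d \end{bmatrix}$ in $\mathfrak{G}_n$ with $n \geq 2$, I will build an in-edge from strictly below using the first-type edge with $(x_0, y_0) = (k, -1)$ for $k := \lceil a/n \rceil$. Taking $w$ to be the image of $v$ under the inverse of the corresponding positive Dehn twist matrix, a direct computation will give lower-left entry $c_0 = -a + (k-1)n$. The relation $ad + bn = 1$ forces $\gcd(a, n) = 1$, so for $n \geq 2$ we have $n \nmid a$, which means $a/n$ lies strictly between the consecutive integers $k - 1$ and $k$; this places $c_0$ strictly between $-n$ and $0$, so $w \in \mathfrak{G}_{-c_0}$ with $1 \leq -c_0 \leq n - 1$. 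The first-edge compatibility condition at $w$ will reduce exactly to $kn \geq a$, which is the definition of $k$, so the edge $w \to v$ indeed exists.

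The main obstacle will be (3); the other two amount to sign-tracking and pattern-matching on the obvious Dehn twist moves. For (3), the delicate point is locating a family of Dehn twists whose inverses actually reduce the level, and the coprimality $\gcd(a, n) = 1$ coming from $\det v = 1$ is precisely what prevents $k = \lceil a/n \rceil$ from coinciding with $a/n$ and so keeps $c_0$ strictly above $-n$.
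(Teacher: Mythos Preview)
Your proof is correct. The overall structure matches the paper's, but your choices in (2) and (3) diverge in interesting ways.

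For (2), the paper uses $(x_0,y_0)=(a,-1)$ to get the vertical moves $(a,d)\to(a,d\pm 1)$ directly at every vertex of $\mathfrak{G}_1$ (note $a y_0-c x_0=-a+a=0$, so both edge types apply and stay in $\mathfrak{G}_1$). Your route through $a=0$ works too, but the paper's choice avoids the detour.

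For (3), the paper takes a two–step reduction: it first uses the bidirectional level-$n$ loops given by $(x_0,y_0)=(a,c)$ to shift $d$ modulo $n$ until $|d|<n$, and then exhibits a single in-edge from level $n-|d|$ via $(x_0,y_0)=(-b,-d)$ or $(b,d)$ according to the sign of $d$. Your argument instead produces a single in-edge in one stroke: with $(x_0,y_0)=(k,-1)$ and $k=\lceil a/n\rceil$, the source $w=T^{-1}v$ has lower-left entry $c_0=-a+(k-1)n$, and the determinant constraint $\gcd(a,n)=1$ (hence $n\nmid a$ for $n\ge 2$) forces $-n<c_0<0$. This is arguably cleaner—it uses the coprimality directly rather than a preliminary reduction of $d$—and yields a path of length one rather than a genuine edge path. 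Both arguments establish the same statement; yours trades the paper's elementary ``reduce $d$, then step down'' picture for a slightly more computational one-shot.
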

\begin{proof}
(1) was already shown  in our computation above. To prove (2), notice that 
$$\mathfrak{G}_1 = \left\{ \begin{bmatrix}
a & -ad+1 \\
-1 & d
\end{bmatrix} \mid a,d \in \mathbb{Z} \right\}.$$
By taking $(x_0, y_0)=\pm (1,0)$, we get edges 
$\begin{bmatrix}
a & -ad+1 \\
-1 & d
\end{bmatrix} \to 
\begin{bmatrix}
a \pm 1 & -(a \pm 1)d+1 \\
-1 & d
\end{bmatrix}$, and by taking $(x_0, y_0)=(a,-1)$, we get edges
$\begin{bmatrix}
a & -ad+1 \\
-1 & d
\end{bmatrix} \to 
\begin{bmatrix}
a & -a(d \pm 1)+1 \\
-1 & d \pm 1
\end{bmatrix}$.
These edges allow one to go between any two vertices in $\mathfrak{G}_1$.

For (3), notice that by taking $(x_0, y_0)=(a,c)$, we get edges 
$\begin{bmatrix}
a & b \\
c & d
\end{bmatrix} \to 
\begin{bmatrix}
a & b \pm a \\
c & d \pm c
\end{bmatrix}$. Using these edges, we can assume that $v = 
\begin{bmatrix}
a & b \\
c & d
\end{bmatrix} \in \mathfrak{G}_n$, where $|d|<|c|$.
We then observe that for $d \geq 0$, we can take $(x_0, y_0)=(-b,-d)$ to get the edge
$\begin{bmatrix}
a+b & b \\
c+d & d
\end{bmatrix} \to 
\begin{bmatrix}
a & b \\
c & d
\end{bmatrix}$, and for $d \leq 0$, we can take $(x_0, y_0)=(b,d)$ to get the edge
$\begin{bmatrix}
a-b & b \\
c-d & d
\end{bmatrix} \to 
\begin{bmatrix}
a & b \\
c & d
\end{bmatrix}$.
\end{proof}

In particular, \Cref{prop:scalloptorusimplications} and \Cref{lemma:scalloptorusgraphstructure}(2) and (3) implies the following corollary.

\begin{cor} \label{cor:scalloptorusintersection1}
Let $\phi^t$, $T$, $(l^s, m^s)$ and $(l^u, m^u)$ be as above. If $\langle l^s, l^u \rangle = -1$, then for every 
$\begin{bmatrix}
m & n \\
p & q
\end{bmatrix} \in \mathrm{SL}_2 \mathbb{Z}$ with $p<0$, the $3$-manifold obtained by gluing up $M \cut T$ by $\begin{bmatrix}
m & n \\
p & q
\end{bmatrix}: T_- \to T_+$ admits a pseudo-Anosov flow.
\end{cor}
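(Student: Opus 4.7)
The plan is to reduce \Cref{cor:scalloptorusintersection1} to \Cref{prop:scalloptorusimplications} combined with the graph-theoretic statements in \Cref{lemma:scalloptorusgraphstructure}. The hypothesis $\langle \ell^s, \ell^u \rangle = -1$ translates to $c = -\langle \ell^u, \ell^s \rangle = -1$, so the original gluing matrix $\begin{bmatrix} a & b \\ c & d \end{bmatrix}$ already lies in $\mathfrak{G}_1$. It then suffices, in view of \Cref{prop:scalloptorusimplications}, to produce a directed path in $\mathfrak{G}$ from this starting matrix to any prescribed target matrix $\begin{bmatrix} m & n \\ p & q \end{bmatrix} \in \mathrm{SL}_2\mathbb{Z}$ with $p < 0$.

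I would construct such a path in two stages. First, by induction on $|p|$, I would show that every vertex of $\mathfrak{G}$ is the endpoint of some directed path whose initial vertex lies in $\mathfrak{G}_1$. When $|p| = 1$ the target already lies in $\mathfrak{G}_1$ and the claim is trivial. When $|p| \geq 2$, \Cref{lemma:scalloptorusgraphstructure}(3) supplies a directed edge path in $\mathfrak{G}$ ending at the target whose initial vertex lies in $\mathfrak{G}_k$ for some $k < |p|$; applying the inductive hypothesis to that initial vertex and concatenating yields the desired path from $\mathfrak{G}_1$ to the target. Second, I would invoke \Cref{lemma:scalloptorusgraphstructure}(2) to obtain a directed path inside $\mathfrak{G}_1$ from our specific starting matrix $\begin{bmatrix} a & b \\ c & d \end{bmatrix}$ to whichever vertex of $\mathfrak{G}_1$ serves as the initial vertex of the path produced in the first stage. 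Concatenating these two paths completes the construction.

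The corollary then follows by feeding the resulting directed path into \Cref{prop:scalloptorusimplications}. There is essentially no substantive obstacle remaining at this point: the flow-theoretic content has been isolated in \Cref{prop:scalloptorusimplications} and the combinatorial content in \Cref{lemma:scalloptorusgraphstructure}, and only a routine inductive assembly is needed to bridge them.
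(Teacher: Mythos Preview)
Your proposal is correct and is essentially the same as the paper's approach: the paper derives the corollary directly from \Cref{prop:scalloptorusimplications} together with parts (2) and (3) of \Cref{lemma:scalloptorusgraphstructure}, and you have simply spelled out the routine induction and concatenation that makes this derivation work.
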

\end{eg}

\begin{eg} \label{eg:interiortorussur}
This example continues from \Cref{eg:interiortoruscurve}. Let $\phi^t$ be a pseudo-Anosov flow on closed oriented $3$-manifold $M$. Let $T$ be a positive Reebless non-scalloped transverse torus. We let $\ell$ be the homology class of one of the closed leaves of $T^{s/u}$ and let $m$ be a homology class so that $(l,m)$ form a positive basis for $H_1(T; \mathbb{Z})$.

Under this basis, $M$ can be recovered from $M \cut T$ using the gluing map $T_- \to T_+$ with matrix representative
$\begin{bmatrix}
1 & 0 \\
0 & 1
\end{bmatrix}$.
By \Cref{prop:interiortoruscurve}, there is a positive horizontal surgery curve $c$ with homology class $m$ on $T$. The $3$-manifold $M_\frac{1}{n}(c)$ for which the flow $\phi^t_\frac{1}{n}(c)$ is defined on can be described topologically as gluing up $M \cut T$ by some map $T_- \to T_+$ with matrix representative
$\begin{bmatrix}
1 & 0 \\
n & 1
\end{bmatrix}$.

As in \Cref{eg:scalloptorussur}, $T$ is transverse to the surgered flow after regluing. We further claim that it is a scalloped transverse torus. 

The proof is as in \Cref{eg:scalloptorussur}: We have a leaf of the unstable foliation of the surgered flow intersecting $T_-$ in a closed $T^u$-leaf, and a leaf of the stable foliation intersecting $T_+$ in a closed $T^s$-leaf. Since $n > 0$, these have different homology classes hence using \cite[Definition 2.12]{BFM23}, this shows that $T$ becomes a scalloped torus after regluing.

We record this as the following proposition.

\begin{prop} \label{prop:interiortorussur}
Let $\phi^t$ be a pseudo-Anosov flow on a closed oriented $3$-manifold $M$. Let $T$ be a positive/negative Reebless non-scalloped transverse torus. Let $\ell$ be the homology class of one of the closed leaves of $T^{s/u}$ and let $m$ be a homology class so that $(l,m)$ form a positive basis for $H_1(T; \mathbb{Z})$. 

Then for every negative/positive integer $n$, respectively, the $3$-manifold obtained by gluing up $M \cut T$ by $\begin{bmatrix}
1 & 0 \\
n & 1
\end{bmatrix}: T_- \to T_+$ admits a pseudo-Anosov flow, for which $T$ is a scalloped transverse torus.
\end{prop}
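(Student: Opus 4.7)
My plan is to realize the claimed flow as $\phi^t_{\frac{1}{n}}(c)$ for a judiciously chosen horizontal surgery curve $c$ on $T$, and then verify scalloping directly.

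First I would invoke \Cref{prop:interiortoruscurve} to obtain a positive (respectively negative) horizontal surgery curve $c$ on $T$ with homology class $m$; the hypothesis that $m$ is not parallel to $\ell$ is precisely the assumption that $(\ell,m)$ forms a basis. Applying \Cref{constr:introhorsur} with coefficient $\frac{1}{n}$ for the appropriate sign of $n$ then yields a pseudo-Anosov flow $\phi^t_{\frac{1}{n}}(c)$, with pseudo-Anosovity being immediate from \Cref{thm:horsurpA}.

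Next I would identify the underlying $3$-manifold topologically. Starting from the identity gluing $T_- \to T_+$ that recovers $M$ from $M \cut T$ in the basis $(\ell,m)$, the effect of a $\frac{1}{n}$-Dehn surgery along a curve on $T$ of class $m$ is to precompose the identity with $n$ Dehn twists along a class $m$ curve. In the basis $(\ell,m)$ with $\langle \ell,m\rangle = 1$, a single such Dehn twist acts on homology by $v \mapsto v + \langle v, m\rangle m$, giving matrix $\begin{bmatrix}1 & 0 \\ 1 & 1\end{bmatrix}$; composing $n$ times yields $\begin{bmatrix}1 & 0 \\ n & 1\end{bmatrix}$, matching the claimed description.

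The remaining task is to show that $T$ is a scalloped transverse torus for $\phi^t_{\frac{1}{n}}(c)$. Transversality is preserved because the support of the surgery can be taken in a thin neighborhood of $A \subset T$ on which the new generating vector field still points positively through $T$ (the Dehn twist shears within $A$ and not in the flow direction). For scalloping, I would follow the argument from \Cref{eg:scalloptorussur}: a closed $T^u$-leaf $\mu^u$ on $T$ bounds a half-leaf of $\Lambda^u$, which, when cut by $T$ and truncated at its first return, gives an annulus in $M \cut T$ connecting a multiple of a closed orbit of $\phi^t$ to $\mu^u$ on $T_-$. Since this annulus lies in $M \cut T$ it survives regluing and appears as part of the unstable foliation of $\phi^t_{\frac{1}{n}}(c)$, so the new induced foliation on $T_-$ still has a closed leaf of class $\ell$. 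A symmetric argument produces a closed leaf of the new induced stable foliation on $T_+$, also of class $\ell$. Viewed under the new gluing $\begin{bmatrix}1 & 0 \\ n & 1\end{bmatrix}$, the class $\ell$ on $T_-$ pulls back to $\ell + nm \neq \ell$ on $T_+$, so the closed stable and unstable leaves on $T$ are not parallel. By \cite[Definition 2.12]{BFM23} this is the scalloped criterion.

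The step I expect to require the most care is the claim that the half-leaf annuli connecting closed orbits to the closed $T^{s/u}$-leaves on $T_\pm$ can be arranged to be disjoint from the surgery support; a priori such a half-leaf might re-enter a neighborhood of $A$. The fix is to note that the returns of these half-leaves to $T$ lie along closed $T^{s/u}$-leaves, so truncating at the first return produces an annulus whose interior is disjoint from $T$, and in particular from the surgery annulus $A \subset T$. Once this is justified, the rest of the proof is bookkeeping of gluing matrices and a direct invocation of \cite[Definition 2.12]{BFM23}.
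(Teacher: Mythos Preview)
Your proposal is correct and follows essentially the same route as the paper: the paper's argument (contained in \Cref{eg:interiortorussur} preceding the proposition) likewise invokes \Cref{prop:interiortoruscurve} to produce the surgery curve of class $m$, identifies the regluing matrix as $\begin{bmatrix}1 & 0 \\ n & 1\end{bmatrix}$, and verifies scalloping via the half-leaf annulus argument borrowed from \Cref{eg:scalloptorussur} together with \cite[Definition 2.12]{BFM23}. Your explicit treatment of the first-return truncation and of transversality after surgery simply spells out what the paper leaves implicit.
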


\end{eg}

\begin{eg} \label{eg:bicontactsur}
This example continues from \Cref{eg:bicontactcurve}. Let $\phi^t$ be an Anosov flow on a closed oriented $3$-manifold $M$. Recall that there is a positive contact structure $\xi_+$ and a negative contact structure $\xi_-$ such that $T\phi = \xi_+ \cap \xi_-$. 

In \cite{Sal22}, Salmoiraghi introduces a surgery operation performed along \textbf{Legendrian-transverse knots}. These are curves $c$ whose tangent field $Tc$ is tangent to $\xi_-$ and transverse to $\xi_+$. As reasoned in \Cref{eg:bicontactcurve}, these curves are positive horizontal surgery curves. We briefly review the operation in the language of this paper, with the goal of showing \Cref{prop:Sal22} below.

Given a Legendrian-transverse knot $c$ and a positive integer $n$, Salmoiraghi's operation goes as follows:
\begin{itemize}
    \item Find an annulus $A$ containing $c$ and transverse to the flow, such that $A$ admits two foliations:
    \begin{itemize}
        \item a foliation $\mathcal{H}$ by closed curves that are tangent to $\xi_-$, and
        \item a foliation $\mathcal{K}$ by non-separating arcs that are tangent to $\xi_+$.
    \end{itemize}
    Using the same reasoning as in \Cref{eg:bicontactcurve}, we see that $T\mathcal{H}$ is positive and steady, while $T\mathcal{K}$ is negative and steady. That is, $(A, \mathcal{H}, \mathcal{K})$ is a positive surgery annulus.
    \item Choose some parametrization $\alpha$ of $(A, \mathcal{H}, \mathcal{K})$.
    \item Cut the $3$-manifold $M$ along $A$ and reglue it using a map $\sigma:A \to A$ such that $\sigma = \mathrm{id}$ near $\partial A$, $\alpha \sigma \alpha^{-1}(h,k) = (h+\rho(k),k)$, and $\rho(0)-\rho(1)=n$.
\end{itemize}
We denote the resulting flow by $\phi^t_{\sigma}(A, \mathcal{H}, \mathcal{K}, \alpha)$.

A difference between Salmoiraghi's operation and \Cref{constr:sur} is that Salmoiraghi is less restrictive with the function $\rho$. We also point out that Salmoiraghi does not address whether the flows he constructs using different choices of $A$, $\mathcal{H}$, $\mathcal{K}$, $\alpha$, $\sigma$ are orbit equivalent. Hence the best statement we can make from this discussion is the following.

\begin{prop} \label{prop:Sal22}
For some choice of construction data, Salmoiraghi's flow $\phi^t_{\sigma}(A, \mathcal{H}, \mathcal{K}, \alpha)$ is orbit equivalent to the flow $\phi^t_{\frac{1}{n}}(c)$ obtained by horizontal Goodman surgery.

In particular, for such choice of construction data, Salmoiraghi's flow $\phi^t_{\sigma}(A, \mathcal{H}, \mathcal{K}, \alpha)$ is Anosov. 
\end{prop}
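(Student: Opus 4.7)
The strategy will rest on the observation that Salmoiraghi's construction and Construction~\ref{constr:sur} share the same underlying data --- an annulus $A$ with two foliations, a parametrization, and a shearing self-diffeomorphism --- and differ only in the class of shear functions $\rho$ they permit. Salmoiraghi requires only $\rho$ integer-valued near $\partial I$ with $\rho(0) - \rho(1) = n$, whereas Definition~\ref{defn:surmap} combined with Theorem~\ref{thm:horsurpA} additionally demands monotonicity, a dominant constant-slope segment, and $(\delta, R)$-thinness. My plan is to pick a single $\rho$ in the intersection of these two classes, so that the flows produced by the two constructions agree on the nose.

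First, given a Legendrian-transverse knot $c$ and $n \in \mathbb{Z}_{>0}$, I will run Salmoiraghi's recipe to produce the triple $(A, \mathcal{H}, \mathcal{K})$. Proposition~\ref{prop:bicontactcurve} already tells us that leaves of $\mathcal{H}$, tangent to $\xi_-$, are positive horizontal surgery curves; the symmetric statement gives that leaves of $\mathcal{K}$, tangent to $\xi_+$, are negative ones. Steadiness of $T\mathcal{H}$ and $T\mathcal{K}$ is what underlies the whole bi-contact picture recalled in Example~\ref{eg:bicontactcurve}, since $d\phi^{-t}$ rotates $\xi_\pm$ monotonically towards $E^s$. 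Hence $(A, \mathcal{H}, \mathcal{K})$ is automatically a positive surgery annulus in the sense of Definition~\ref{defn:surann}, and I will fix any parametrization $\alpha$ of it.

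Next, I will invoke Theorem~\ref{thm:horsurpA} to extract thinness constants $(\delta, R)$ ensuring pseudo-Anosovity of every $(\delta, R)$-thin surgery map of coefficient $n$ for $(A, \mathcal{H}, \mathcal{K}, \alpha)$, and then write down an explicit $\rho$ lying in both classes. Concretely, for $\epsilon$ small enough that $n/\epsilon > R$, I take $\rho \equiv 0$ on $[0, \tfrac{1}{2}-\tfrac{\epsilon}{2}]$, linear of slope $-n/\epsilon$ on $[\tfrac{1}{2}-\tfrac{\epsilon}{2}, \tfrac{1}{2}+\tfrac{\epsilon}{2}]$, and $\rho \equiv -n$ on $[\tfrac{1}{2}+\tfrac{\epsilon}{2}, 1]$, lightly smoothed at the two corners. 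This $\rho$ is non-increasing with a dominant constant-slope segment of slope $R_0 = -n/\epsilon$, satisfies $|\rho'| \leq |R_0|$ globally, is integer-valued near $\partial I$, has $\rho(0)-\rho(1) = n$, and (by choosing the smoothing regions to shrink the plateau values only slightly) is $(\delta, R)$-thin. So the associated $\sigma$ is simultaneously a legitimate Salmoiraghi regluing and a legitimate surgery map in the sense of Definition~\ref{defn:surmap}.

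With this choice of data the flow $\phi^t_\sigma(A, \mathcal{H}, \mathcal{K}, \alpha)$ produced by Salmoiraghi's recipe is identically equal to $\phi^t_{\frac{1}{n}}(c)$, yielding orbit equivalence via the identity. For the ``in particular'' clause, since $\phi^t$ is Anosov we have $\sing(\phi^t) = \emptyset$; the local-section argument in part (2) of the proof of Theorem~\ref{thm:horsurpA} then forces $\sing(\phi^t_{\frac{1}{n}}(c)) = \emptyset$ as well, so $\phi^t_{\frac{1}{n}}(c)$ is Anosov in the topological sense. I do not anticipate any real obstacle: the whole proof is essentially a bookkeeping check that the two classes of $\rho$ genuinely overlap, and the construction of a concrete $\rho$ in the intersection is explicit.
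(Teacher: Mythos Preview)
Your proposal is correct and is essentially the same argument the paper gives: the discussion preceding the proposition already establishes that Salmoiraghi's $(A,\mathcal{H},\mathcal{K})$ is a positive surgery annulus, and the proposition follows by choosing $\sigma$ to be a $(\delta,R)$-thin surgery map, which lies in the intersection of the two classes of shear functions. Your explicit construction of such a $\rho$ just makes this overlap concrete.
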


We speculate that Salmoiraghi's flow is orbit equivalent to $\phi^t_{\frac{1}{n}}(c)$ whenever the former is Anosov. See \Cref{subsec:Sal} for more discussion on the possible relations between Salmoiraghi's work and our paper.
\end{eg}

\begin{noneg} \label{noneg:braid}
We end this section with a non-example showing that steadiness is necessary in the definition of a horizontal surgery curve. Let $S$ be a closed oriented surface and let $f:S \to S$ be an orientation preserving pseudo-Anosov map. Let $\phi^t$ be the suspension pseudo-Anosov flow on the mapping torus $M=T_f$. Let $c \subset S$ be a straight positive curve. By \Cref{prop:pAmapstraightcurve}, $c$ can be considered as a positive horizontal surgery curve in $M$.

Now fix a neighborhood $N \cong c \times I \times I$ of $c$. Let $b$ be the curve obtained by inserting the closed braid $\beta$ in \Cref{fig:braids} left into $N$, as described in \Cref{eg:braidcurve}. By placing $b$ close to $c$, we can assume that $b$ is also a positive curve. Note, however, that $Tb$ will not be steady. This is because the crossing of $\beta$ is positive, so at the corresponding crossing $(x,y,t)$ of $b$ inside $N$, the slope of $Tb|_y$ is less than $d\phi^t(Tb|_x)$, instead of the other way around.

Consider the $3$-manifold $M_1(b)$ obtained by performing Dehn surgery along $b$ with coefficient $1$. We claim that this $3$-manifold is reducible, i.e. it contains an embedded $2$-sphere that does not bound a $3$-ball. To see this, notice that there is a Mobius band $B$ bounded by $b$ with core $c$. For a thin tubular neighborhood $\nu$ of $b$, $B$ intersects $\partial \nu$ in the slope $m+l$. See \Cref{fig:nonegbraid}.

\begin{figure}
    \centering
\begingroup%
  \makeatletter%
  \providecommand\color[2][]{%
    \errmessage{(Inkscape) Color is used for the text in Inkscape, but the package 'color.sty' is not loaded}%
    \renewcommand\color[2][]{}%
  }%
  \providecommand\transparent[1]{%
    \errmessage{(Inkscape) Transparency is used (non-zero) for the text in Inkscape, but the package 'transparent.sty' is not loaded}%
    \renewcommand\transparent[1]{}%
  }%
  \providecommand\rotatebox[2]{#2}%
  \newcommand*\fsize{\dimexpr\f@size pt\relax}%
  \newcommand*\lineheight[1]{\fontsize{\fsize}{#1\fsize}\selectfont}%
  \ifx\svgwidth\undefined%
    \setlength{\unitlength}{321.26188263bp}%
    \ifx\svgscale\undefined%
      \relax%
    \else%
      \setlength{\unitlength}{\unitlength * \real{\svgscale}}%
    \fi%
  \else%
    \setlength{\unitlength}{\svgwidth}%
  \fi%
  \global\let\svgwidth\undefined%
  \global\let\svgscale\undefined%
  \makeatother%
  \begin{picture}(1,0.27139215)%
    \lineheight{1}%
    \setlength\tabcolsep{0pt}%
    \put(0,0){\includegraphics[width=\unitlength,page=1]{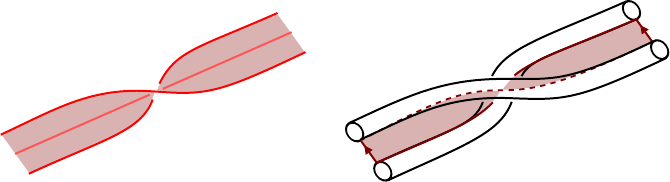}}%
    \put(0.28731321,0.21559104){\color[rgb]{1,0,0}\makebox(0,0)[lt]{\lineheight{1.25}\smash{\begin{tabular}[t]{l}$b$\end{tabular}}}}%
    \put(0.36465828,0.16792785){\color[rgb]{1,0.33333333,0.33333333}\makebox(0,0)[lt]{\lineheight{1.25}\smash{\begin{tabular}[t]{l}$c$\end{tabular}}}}%
    \put(0,0){\includegraphics[width=\unitlength,page=2]{nonegbraid.pdf}}%
  \end{picture}%
\endgroup%

    \caption{A non-example showing that steadiness is necessary in the definition of a horizontal surgery curve.}
    \label{fig:nonegbraid}
\end{figure}

Hence when we perform Dehn surgery by filling in that slope, we obtain an embedded projective plane $P$ in $M_1(b)$, essentially obtained by capping off $B$ with a disc. The boundary of a collar neighborhood of $P$ is then a $2$-sphere that bounds a twisted $I$-bundle over $\mathbb{R}P^2$ on one side and bounds a $3$-manifold containing a non-separating closed surface $S$ on the other side, neither of which can be a $3$-ball. 

It is a classical fact that $3$-manifolds admitting pseudo-Anosov flows must be irreducible. Hence we conclude that $M_1(b)$ cannot admit a pseudo-Anosov flow. In particular the statement of \Cref{thm:horsurpA} fails for $b$.
\end{noneg}

\section{Piecewise smooth curves} \label{sec:piecewisesmooth}

In this section we develop a partial generalization of horizontal Goodman surgery to make sense of performing surgery along certain piecewise smooth curves. Piecewise smooth curves naturally come up in the context of Goodman-Fried surgeries, and the material in this section will help simplify the presentation in the proof of \Cref{thm:horsuralmostequiv}. This discussion also serves as beginning steps towards understanding the relation between horizontal Goodman surgery on pseudo-Anosov flows and horizontal surgery on veering triangulations; see \Cref{subsec:vt}.

\subsection{Piecewise smooth horizontal surgery curves} \label{subsec:piecewisedefn}

Let $M$ be a closed oriented $3$-manifold. A \textbf{piecewise smooth curve} in $M$ is an embedded curve $c$ that is a concatenation of finitely many smooth embedded paths. The points at which the paths are joint together are called the \textbf{turns}. At each turn, the two tangent lines determined by the two paths may or may not agree. 

Let $\phi^t$ be a pseudo-Anosov flow on $M$.
A piecewise smooth curve is \textbf{positive/negative} if it is a concatenation of positive/negative paths respectively.

\begin{defn} \label{defn:piecewisehorsurcurve}
Let $\phi^t$ be a pseudo-Anosov flow on $M$.
Let $c$ be a positive piecewise smooth curve. Fix an instantaneous metric.
We say that $c$ is a \textbf{positive/negative piecewise smooth horizontal surgery curve} if at every crossing $(x,y,t)$ of $c$, the slope of $Tc|_y$ is greater than that of $d\phi^t(Tc|_x)$ on either side. More precisely, fix an orientation on $E^s|_x$ and transfer it to an orientation on $E^s|_y$ using $d\phi^t$. If $p^L_x$ and $p^R_x$, and $p^L_y$ and $p^R_y$ are the paths that constitute $c$ on the left and right of $x$, and $y$ respectively, then the condition is that $\slope(T{p^L_y}|_y) > \slope(d\phi^t(Tp^L_x|_x))$ and $\slope(T{p^R_y}|_y) > \slope(d\phi^t(Tp^R_x|_x))$.
\end{defn}

Note that this definition is independent on the choice of instantaneous metric.

\Cref{prop:horsurcurveflowisotopy} generalizes to this setting; the same proof carries over.

\begin{prop} \label{prop:piecewiseflowisotopy}
Being a piecewise smooth horizontal surgery curve is preserved under isotopy along orbits of $\phi^t$.
\end{prop}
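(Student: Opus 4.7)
The plan is to follow the same direct argument as in the proof of \Cref{prop:horsurcurveflowisotopy}, carried out on each smooth segment and at each turn via one-sided limits. Parametrize the isotopy as $c_s(z)=\phi^{t(s,z)}(c(z))$ for a continuous function $t(s,z)$ that is smooth on each smooth piece of $c$, with $t(0,\cdot)\equiv 0$. Because isotopy along orbits fixes the $z$-parameter, the turns of $c_s$ sit at the same parameter values as those of $c$, each smooth segment of $c$ is carried to a smooth segment of $c_s$, and $c_s$ is again a piecewise smooth curve of the same combinatorial type.

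On each smooth segment, and hence also for the left and right tangent lines at a turn (by taking one-sided limits), the computation $\dot{c}_s = d\phi^t(\dot{c}) + \tfrac{\partial t}{\partial z}\dot{\phi}$ from \Cref{prop:horsurcurveflowisotopy} gives
\[\slope(Tc_s|_{c_s(z)}) = \slope\bigl(d\phi^{t(s,z)}(Tc|_{c(z)})\bigr).\]
In particular $c_s$ remains positive/negative, since $d\phi^{t(s,z)}$ acts on slopes by a positive scalar: in the bases $(e^s,e^u)$ it is diagonal, and the two diagonal entries must have the same sign because $M$ is oriented and $d\phi^t$ preserves the orientation of $T\phi$.

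Finally, the correspondence of crossings
\[(c_s(z_1), c_s(z_2),\, t) \;\longleftrightarrow\; (c(z_1), c(z_2),\, t + t(s,z_1) - t(s,z_2))\]
is set up exactly as in \Cref{prop:horsurcurveflowisotopy}, and preserves the labels \emph{left} and \emph{right} at each endpoint because the isotopy preserves the $z$-parametrization of $c$. Applying the slope identity above on each of the two sides reduces each of the steadiness inequalities for $c_s$ at a crossing to the corresponding inequality for $c$ at its paired crossing, after cancelling the common factor $d\phi^{t(s,z_2)}$ (which, as above, acts on slopes by positive scaling and therefore preserves strict inequalities).

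The only subtle bookkeeping point is that when a crossing of $c_s$ happens to involve a turn, the one-sided tangent lines on either side still correspond to the one-sided tangent lines of $c$ at the analogous turn; this is immediate from the fact that the isotopy is an orientation-preserving homeomorphism of the parametrized curve. With that check in hand, the argument of \Cref{prop:horsurcurveflowisotopy} applies verbatim, separately on the left and right of each crossing, completing the proof.
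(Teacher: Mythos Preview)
Your proposal is correct and takes essentially the same approach as the paper, which simply states that the proof of \Cref{prop:horsurcurveflowisotopy} carries over; you have faithfully spelled out those details, including the one-sided tangent lines at turns and the preservation of the left/right labels. The only cosmetic point is that the left/right labels in \Cref{defn:piecewisehorsurcurve} are determined by an orientation on $E^s$ (hence on $E^u$) rather than by the parametrization of $c$ per se, but since $d\phi^{t(s,z)}$ carries this orientation along consistently, your conclusion stands.
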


\subsection{Approximation by smooth horizontal surgery curves} \label{subsec:piecewiseapprox}

Our next task is to make sense of `performing horizontal Goodman surgery on a piecewise smooth horizontal surgery curve'. The idea is to approximate these curves using smooth horizontal surgery curves.

Fix an instantaneous metric. 
We extend the notation $\slope(Tc|_v)$ to the case when $v$ is a turn to mean the minimal interval containing the slopes of the two paths that constitute $c$ at $v$.
We say that a sequence of piecewise smooth curves $(c_n)$ \textbf{converges towards a piecewise smooth curve $c$ in the $C^1$-topology} if for every $\epsilon>0$, there exists $N$ so that for every $n \geq N$,
\begin{itemize}
    \item the turns of $c_n$ are contained in the $\epsilon$-neighborhoods of the turns of $c$,
    \item $c_n$ is $\epsilon$-close to $c$ outside of the $\epsilon$-neighborhoods of the turns of $c$ in the $C^1$-topology, and
    \item $\slope(Tc_n|_x)$ stays bounded within the $\epsilon$-neighborhood of $\slope(Tc|_v)$ for $x$ in the $\epsilon$-neighborhood of each turn $v$ of $c$.
\end{itemize}

We say that a piecewise smooth horizontal surgery curve $c$ in $M \backslash \sing(\phi^t)$ is \textbf{generic} if:
\begin{itemize}
    \item no two turns of $c$ lie on the same closed orbit of $\phi^t$, and
    \item if two turns $x,y$ of $c$ lie on the same infinite orbit of $\phi^t$, say $y=\phi^t(x)$ for $t>0$, then $\slope(Tc|_y) > \slope(d\phi^t(Tc|_x))$ in the sense that every element in the former is larger than every element in the latter. 
\end{itemize}
In this paper, we will only consider piecewise smooth horizontal surgery curves that are generic. 
This is for the sake of simplifying the proofs of \Cref{prop:piecewiseapproxexist} and \Cref{prop:piecewiseapproxunique} below, and is sufficient for our applications in \Cref{sec:almostequiv}. 

The following proposition states that approximations by smooth horizontal surgery curves exist, with some additional control on where they lie relative to $c$.

\begin{prop} \label{prop:piecewiseapproxexist}
Let $c$ be a positive/negative generic piecewise smooth horizontal surgery curve. Let $A$ be an annulus in $M$ containing $c$ and transverse to the flow. Then there exists a sequence of positive/negative horizontal surgery curves on $A$ converging to $c$ on either side of $c$, respectively.
\end{prop}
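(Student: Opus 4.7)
The plan is to smooth $c$ at each of its turns and then translate the result slightly to a designated side of $c$ in $A$, obtaining a sequence of smooth horizontal surgery curves by letting the smoothing scale and the translation amount tend to zero. The new ingredient, compared to the argument of \Cref{prop:horsurcurvestable}, is the presence of turns, and the main obstacle will be to handle crossings of the smoothed curve whose two endpoints both lie near turns of $c$; genericity of $c$ is precisely what is needed to control these.

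First, using transversality of $A$ to $\phi^t$, pick a smooth foliation $\mathcal{K}_0$ of a neighborhood of $c$ in $A$ by non-separating arcs transverse to $Tc$ on both sides of every turn. At each turn $v$, replace a small neighborhood of $v$ in $c$ with a smooth arc whose tangent slope varies monotonically between the two one-sided slopes at $v$, producing a smooth curve $c'_n$. Translating $c'_n$ along $\mathcal{K}_0$ toward the chosen side of $c$ by a small amount yields $c_n$. Taking the smoothing and translation scales to zero with $n$, the curves $c_n$ converge to $c$ in the $C^1$ sense defined above, and are positive or negative according to $c$ by continuity of slope.

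The heart of the proof is verifying that $Tc_n$ is steady. Split the crossings $(x,y,t)$ of $c_n$ by time $t$ and by proximity of $x,y$ to the turns of $c$. As in the proof of \Cref{prop:horsurcurvestable}, for sufficiently large $t$ steadiness holds automatically from the bounded range of slopes of $Tc_n$, and for intermediate $t$ any crossing of $c_n$ lies close to some crossing $(x_0,y_0,t)$ of $c$ by \Cref{claim:curveperturbcrossings}. When neither $x_0$ nor $y_0$ is a turn, the strict inequality $\slope(Tc|_{y_0}) > \slope(d\phi^t(Tc|_{x_0}))$ is $C^1$-open and survives for $c_n$. When $x_0$ or $y_0$ is a turn $v$ but not both, the slopes of $Tc_n$ near the corresponding endpoint lie in an arbitrarily small neighborhood of the closed interval $\slope(Tc|_v)$; by \Cref{defn:piecewisehorsurcurve}, the piecewise steadiness of $c$ at this crossing gives a strict inequality between the relevant closed slope intervals, which then holds with room to spare for $c_n$.

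The subtle case, and the main obstacle, is when both $x_0$ and $y_0$ are turns of $c$. Genericity of $c$ enters exactly here: its first clause forbids $x_0$ and $y_0$ from lying on a common closed orbit, which would otherwise produce a continuum of crossing times $t$ without a uniform slope gap; its second clause provides a strict inequality between the closed slope intervals at $x_0$ and at $y_0$, again giving room to spare after shrinking the smoothing neighborhoods. Outside small neighborhoods of the finitely many turn-to-turn crossing times, the first return time of $c_n$ stays uniformly positive, so no short-time crossings near a pair of distinct turns appear. Combining the three cases establishes steadiness of $Tc_n$, so $c_n$ is a smooth horizontal surgery curve on $A$ lying on the chosen side of $c$. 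Applying the construction on each of the two local sides of $c$ in $A$ yields the required approximating sequences.
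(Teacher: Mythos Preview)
Your argument has a genuine gap at turns lying on closed orbits. You treat the case ``both $x_0$ and $y_0$ are turns'' by invoking genericity, but you implicitly assume $x_0 \neq y_0$: the first clause of genericity only forbids \emph{two distinct} turns on a common closed orbit, and the second clause concerns infinite orbits. A single turn $v$ lying on a closed orbit $\gamma$ of period $P$ is perfectly allowed by genericity, and it produces crossings $(v,v,kP)$ for every $k \geq 1$. For your smoothed curve $c_n$, these become crossings between $c_n \cap S_v$ and its iterates $f^k(c_n \cap S_v)$ under the first return map $f$, and your monotone-slope interpolation gives no control here: the slope of $c_n$ at the overpass could be near $\min \slope(Tc|_v)$ while the slope at the underpass is near $\max \slope(Tc|_v)$, so the steadiness inequality can fail for small $k$.

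The paper's proof separates the turns into those on infinite orbits (your argument essentially handles these) and those on closed orbits, and for the latter uses a specific geometric construction exploiting the quadrant structure of the hyperbolic fixed point: a positive-slope path in the first or third quadrant of $S_v$ can \emph{never} meet its own $f^k$-image, while a path of \emph{constant} slope in the second or fourth quadrant automatically satisfies the steadiness condition at every such intersection (as in \Cref{lemma:constantslope}). The approximating arc is therefore chosen to have constant slope in the appropriate off-quadrant and to interpolate quickly into the given paths in the on-quadrants; this is what makes the crossings with its own $f^k$-images steady. Your monotone smoothing would need to be replaced by something of this sort near each type~(2) turn.
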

\begin{proof}
We prove this when $c$ is positive. The case when $c$ is negative is similar. 
Fix an instantaneous metric.
Let $H$ and $h$ be the maximum and minimum slopes of the paths that make up $c$, respectively. Pick $T_1$ so that $\lambda^{-2T_1} < \frac{h}{H}$. Also pick $T_0$ to be a positive value less than the first return time of $A$.

Away from fixed neighborhoods $S_v$ of the turns of $c$, we can take a sequence of unions of paths $p_n$ lying on $A$ and converging to $c$ in the $C^1$-topology. As in \Cref{prop:horsurcurvestable}, we see that $Tp_n$ is positive and steady for large $n$. The point of the proof is to show that we can connect these up within $S_v$.

To that end, we divide the turns of $c$ into two types: 
\begin{enumerate}
    \item those that lie along infinite orbits, and
    \item those that lie along closed orbits. 
\end{enumerate}

For the turns of type (1), we can simply connect up the paths $p_n$ using paths $d_{v,n}$ that converge to $c \cap S_v$. Recall that this in particular means that the slope of $d_{v,n}$ is contained in an arbitrarily small neighborhood of $\slope(Tc|_v)$. 

By \Cref{claim:curveperturbcrossings}, every time $[T_0,T_1]$ crossing $(\overline{x},\overline{y},\overline{t})$ of $q_n := p_n \cup \bigcup_v d_{v,n}$ lies close to a crossing $(x,y,t)$ of $c$. If at least one of $x$ and $y$ is not a turn of $c$, then the steadiness condition is $C^1$-open hence holds for $(\overline{x},\overline{y},\overline{t})$. If both $x$ and $y$ are turns of $c$, then they are necessarily of type (1). By genericity, we have $\slope(Tc|_y) > \slope(d\phi^t(Tc|_x))$, which is a $C^1$-open condition hence holds for $(\overline{x},\overline{y},\overline{t})$, in particular implying steadiness. We conclude that $Tq_n$ is steady.

For the turns of type (2), we have to work harder. By the assumption of genericity, each such turn $v$ is the unique turn lying on a closed orbit $\gamma$. 
Up to shrinking $S_v$ we can assume that $N = \phi^{[-T_1,T_1]}(S_v)$ is an embedded cylinder or solid torus.
Let $f$ be the first return map on $S_v$. Let $P$ be the period of $\gamma$ and let $N = \lfloor \frac{T_1}{P} \rfloor$. 
We set $s_v = \bigcap_{k=-N}^N f^k(S_v)$ and suppose that the paths $p_n$ are chosen in the complement of $s_v$.

Our task is to extend the paths $p_n \cap S_v$ into paths $d_{v,n}$ on $S_v$ converging to $c \cap S_v$ such that
at each intersection between $d_{v,n}$ and $f(d_{v,n}),...,f^N(d_{v,n})$, the slope of $d_{v,n}$ is larger than that of $f^k(d_{v,n})$.

We think of $S_v$ as a subset of $\mathbb{R}^2$ with $v$ being the origin, so that we can talk about the quadrants of $S_v$. Notice that for a path $d$ of positive slope in the first or third quadrant of $S_v$, $d$ would never intersect $f^k(d)$. This is because, say if $d$ intersects $f^k(d)$ in $z_1=f^k(z_2)$ in the first quadrant, where $z_i \in d$, then $f^k$ maps $z_2$ to $z_1$, but $z_2$ lies to the top-right or bottom-left of $z_1$, contradicting the dynamics of $f$ in the first quadrant.

Meanwhile, note that a path $d$ of constant slope in the second or fourth quadrant would satisfy the second item above. 
Hence one way to choose the paths $(d_{v,n})$ is to take paths with constant slope lying arbitrarily close to the origin in the second or fourth quadrant (depending on which side of $c$ we want the constructed curves to converge to $c$ on), then quickly interpolate those into the given paths in the first and third quadrants. See \Cref{fig:piecewiseapproxexist} for a graphical summary.

Once we have such paths $d_{v,n}$, we apply \Cref{claim:curveperturbcrossings} again: Every time $[T_0,T_1]$ crossing $(\overline{x},\overline{y},\overline{t})$ of $c_n := q_n \cup \bigcup_v d_{v,n}$ lies close to a crossing $(x,y,t)$ of $c$. If at least one of $x$ and $y$ is not a turn, then the steadiness condition is $C^1$-open hence holds for $(\overline{x},\overline{y},\overline{t})$. If both $x$ and $y$ are turns, then it suffices to assume that they are both of type (2), in which case the steadiness condition holds by the arranged property on the intersections between $d_{v,n}$ and $f^k(d_{v,n})$.
\end{proof}

\begin{figure}
    \centering
    \fontsize{10pt}{10pt}\selectfont
\begingroup%
  \makeatletter%
  \providecommand\color[2][]{%
    \errmessage{(Inkscape) Color is used for the text in Inkscape, but the package 'color.sty' is not loaded}%
    \renewcommand\color[2][]{}%
  }%
  \providecommand\transparent[1]{%
    \errmessage{(Inkscape) Transparency is used (non-zero) for the text in Inkscape, but the package 'transparent.sty' is not loaded}%
    \renewcommand\transparent[1]{}%
  }%
  \providecommand\rotatebox[2]{#2}%
  \newcommand*\fsize{\dimexpr\f@size pt\relax}%
  \newcommand*\lineheight[1]{\fontsize{\fsize}{#1\fsize}\selectfont}%
  \ifx\svgwidth\undefined%
    \setlength{\unitlength}{206.00870448bp}%
    \ifx\svgscale\undefined%
      \relax%
    \else%
      \setlength{\unitlength}{\unitlength * \real{\svgscale}}%
    \fi%
  \else%
    \setlength{\unitlength}{\svgwidth}%
  \fi%
  \global\let\svgwidth\undefined%
  \global\let\svgscale\undefined%
  \makeatother%
  \begin{picture}(1,0.76269335)%
    \lineheight{1}%
    \setlength\tabcolsep{0pt}%
    \put(0,0){\includegraphics[width=\unitlength,page=1]{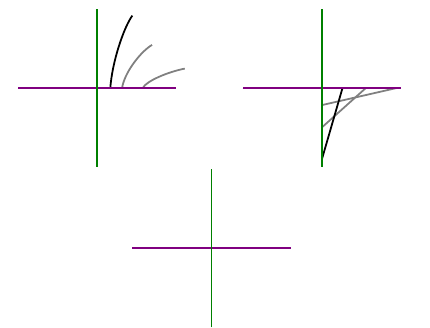}}%
    \put(0.32097448,0.72104326){\color[rgb]{0,0,0}\makebox(0,0)[lt]{\lineheight{1.25}\smash{\begin{tabular}[t]{l}$d$\end{tabular}}}}%
    \put(0.78845224,0.43163934){\color[rgb]{0,0,0}\makebox(0,0)[lt]{\lineheight{1.25}\smash{\begin{tabular}[t]{l}$d$\end{tabular}}}}%
    \put(0.39713595,0.62931388){\color[rgb]{0.50196078,0.50196078,0.50196078}\makebox(0,0)[lt]{\lineheight{1.25}\smash{\begin{tabular}[t]{l}$f^k(d)$\end{tabular}}}}%
    \put(0.84291608,0.49810031){\color[rgb]{0.50196078,0.50196078,0.50196078}\makebox(0,0)[lt]{\lineheight{1.25}\smash{\begin{tabular}[t]{l}$f^k(d)$\end{tabular}}}}%
    \put(0,0){\includegraphics[width=\unitlength,page=2]{piecewiseapproxexist.pdf}}%
  \end{picture}%
\endgroup%

    \caption{Top left: For any path $d$ of positive slope in the first quadrant, $d$ will not intersect $f^k(d)$. Top right: For any path $d$ of constant slope in the fourth quadrant, at each intersection between $d$ and $f^k(d)$ for $k>0$, the slope of $d$ is larger than that of $f^k(d)$. Bottom: Putting these facts together, we can construct a sequence of horizontal surgery curves converging to a given piecewise smooth horizontal surgery curve.}
    \label{fig:piecewiseapproxexist}
\end{figure}

The next proposition addresses the uniqueness of approximations by smooth horizontal surgery curves. 

\begin{prop} \label{prop:piecewiseapproxunique}
Let $c$ be a positive/negative piecewise smooth horizontal surgery curve. Suppose $(c^1_n)$ and $(c^2_n)$ are sequences of positive/negative horizontal surgery curves that converge to $c$. Then for large $n$, $c^1_n$ is isotopic to $c^2_n$ through positive/negative horizontal surgery curves, respectively.
\end{prop}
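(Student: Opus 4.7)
The plan is to construct, for $n$ sufficiently large, an explicit $1$-parameter family of positive/negative horizontal surgery curves on a common annulus connecting $c^1_n$ and $c^2_n$; I treat the positive case and the negative case is symmetric. The first step is to invoke the flow-line isotopy invariance of \Cref{prop:horsurcurveflowisotopy} (and its piecewise-smooth analog \Cref{prop:piecewiseflowisotopy}) to arrange that both $c^1_n$ and $c^2_n$ lie on a single annulus $A$ transverse to $\phi^t$ and containing $c$, built around $c$ as in \Cref{prop:horsurcurvetosurann}. The problem then becomes one of constructing an isotopy of closed curves on $A$ through horizontal surgery curves.

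With $A$ fixed, I would build the isotopy by patching together local constructions. Let $U = \bigcup_v S_v$ be a union of small neighborhoods of the turns of $c$, chosen as in the proof of \Cref{prop:piecewiseapproxexist}. Outside $U$, both $c^1_n$ and $c^2_n$ are uniformly $C^1$-close to $c$ on $A$, and the linear interpolation in a transverse coordinate yields a family still $C^1$-close to $c$; by the $C^1$-openness of the horizontal surgery curve condition (\Cref{prop:horsurcurvestable}), every crossing whose endpoints both lie outside $U$ is automatically steady. Within $S_v$ for a turn of type (1), genericity of $c$ provides a strict slope inequality at the unique crossing associated with $v$, and a straight-line interpolation of arcs whose slopes stay in a small neighborhood of $\slope(Tc|_v)$ inherits this inequality by $C^0$-openness. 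For a turn of type (2) for which $c^1_n \cap S_v$ and $c^2_n \cap S_v$ lie in the same (second or fourth) quadrant, I would run the constant-slope construction of \Cref{prop:piecewiseapproxexist} in a one-parameter family, linearly varying the slope; the monotonicity argument there applies uniformly and guarantees steadiness at every crossing involving $v$.

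The main obstacle is when $c^1_n$ and $c^2_n$ approach $c$ from opposite sides of a type-(2) turn $v$, so that $c^1_n \cap S_v$ and $c^2_n \cap S_v$ lie in different quadrants of $S_v$. Naive interpolation on $A$ is forbidden, since the interpolation would be forced to pass through a non-smooth curve resembling $c$ itself. To bypass this, I would allow the annulus $A$ to move during the isotopy, deforming it through a $1$-parameter family of transverse annuli each carrying a smooth horizontal surgery curve close to $c$, so that the quadrant of $S_v$ occupied by the intermediate curve can be swapped continuously. Concretely, I would use \Cref{prop:piecewiseapproxexist} to produce auxiliary horizontal surgery curves approaching $c$ from each side and chain together short isotopies between these auxiliaries and the two given sequences. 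Verifying that no bad crossings develop during the passage between quadrants at each type-(2) turn is the main technical point, and it would again be checked using the slope monotonicity inherent in the first return dynamics near each closed orbit carrying a turn of $c$.
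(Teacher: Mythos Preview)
Your outline has a genuine gap at the type-(2) turns, and the gap is present already in the ``same quadrant'' case, not only in the ``opposite quadrant'' case you flag as the main obstacle.

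The sequences $(c^1_n)$ and $(c^2_n)$ are \emph{arbitrary} horizontal surgery curves converging to $c$; they need not be produced by the constant-slope construction of \Cref{prop:piecewiseapproxexist}. Inside $S_v$ for a type-(2) turn, each $c^i_n$ can cross its own iterates $f^k(c^i_n)$ under the first return map in a complicated pattern, and a linear (or any naive) interpolation between $c^1_n$ and $c^2_n$ has no reason to preserve the steadiness inequality at those crossings. Your sentence ``run the constant-slope construction of \Cref{prop:piecewiseapproxexist} in a one-parameter family'' presupposes that the given curves are already in that normal form, which is exactly what must be proven. The paper handles this by a substantial \emph{innermost bigon} argument (\Cref{claim:piecewiseapproxconstantslope}): one first isotopes each $c^i_n$ inside the relevant quadrant, through horizontal surgery curves, to a constant-slope path $d^i_n$; the isotopy is built bigon-by-bigon, choosing innermost overlapping bigons between $c^i_n$ and $d^i_n$ and carefully controlling slopes region-by-region using the $f$-equivariant structure.

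Once both curves are in constant-slope normal form near $v$, the paper isotopes each through constant-slope paths to one passing through the origin of $S_v$; this is what resolves the ``opposite quadrant'' issue, and it does so without moving the annulus. Your proposal to deform the transverse annulus $A$ instead is not clearly workable: the quadrant decomposition at $v$ is determined by the local stable/unstable leaves of the closed orbit through $v$, not by the choice of $A$, so varying $A$ does not by itself let a curve pass from one quadrant to another while remaining a smooth horizontal surgery curve.
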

\begin{proof}
As in \Cref{prop:piecewiseapproxexist}, we choose neighborhoods $S_v$, $N_v$, and $s_v$ of the turns $v$.
For large $n$, we can interpolate between $c^1_n$ and $c^2_n$ outside of the neighborhoods $s_v$ for the type (2) turns $v$. The interpolating paths are $C^1$-close to $c$ hence have steady tangent fields, as in the proof of \Cref{prop:piecewiseapproxexist}.

Once again, the hard work is within the neighborhoods $s_v$.
As in \Cref{prop:piecewiseapproxexist}, we can talk about the quadrants of $S_v$ and consider the first return map $f$. We package most of the technical details of the argument in the following claim.

\begin{claim} \label{claim:piecewiseapproxconstantslope}
For large $n$, up to isotopy through surgery curves, we can assume that each $c^i_n$ is a path of constant slope in the second or fourth quadrant of $s_v$.
\end{claim}
\begin{proof}
We drop the superscript $i$ for convenience. If $c_n$ passes through the origin then we are done. Otherwise, without loss of generality suppose that $c_n$ passes through the interior of the fourth quadrant. 
We denote by $H_n$ and $h_n$ the maximum and minimum slopes of $c_n$ in this quadrant respectively.

Let $d_n$ be the path connecting the intersection points between $c_n$ and the coordinate axes with constant slope. An intermediate value theorem argument as in \Cref{prop:closedorbitcurve} shows that such a path of constant slope exists.
Note that the value of this constant slope is bounded between $H_n$ and $h_n$.

We isotope $c_n$ near its intersection points with the coordinate axes so that it agrees with $d_n$ in a small neighborhood of these points. We can do so while preserving the property that $c_n$ is a horizontal surgery curve throughout the isotopy: 
We can arrange for the intermediate curve in the isotopy to stay close to $c$ in the $C^1$-topology,
so that we only have to ensure that at each intersection between $c_n$ and $f(c_n),...,f^N(c_n)$, the slope of $c_n$ is larger than that of $f^k(c_n)$. Such intersection points lie in the fourth quadrant away from the coordinate axes, so we can modify $c_n$ in a small enough neighborhood without affecting these points. We illustrate this as the first step in \Cref{fig:piecewiseapproxbigonsteps}.

\begin{figure}
    \centering
    \fontsize{4pt}{4pt}\selectfont
    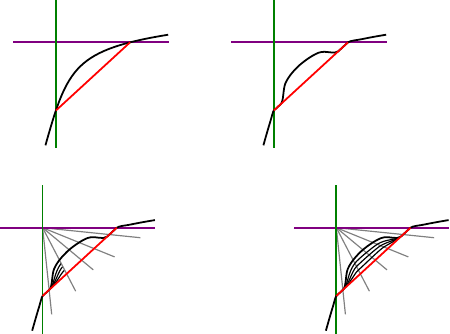
    \caption{A pictorial summary of the proof of \Cref{claim:piecewiseapproxconstantslope}.}
    \label{fig:piecewiseapproxbigonsteps}
\end{figure}

Up to a perturbation of $c_n$, $c_n$ and $d_n$ now bound a union of bigons $\mathfrak{b}_n$ contained within the interior of the fourth quadrant. We wish to isotope $c_n$ across $\mathfrak{b}_n$ to $d_n$, while ensuring that throughout the isotopy, 
\begin{itemize}
    \item the slopes of $c_n$ stay bounded within $[h_n,H_n]$, and
    \item at each intersection between $c_n$ and $f^k(c_n)$, the slope of the former is larger than that of latter,
\end{itemize}
so that $c_n$ remains a horizontal surgery curve throughout the isotopy.

We define an \textbf{overlapping bigon} to be a subset of $f^k(\mathfrak{b}_n) \cap \mathfrak{b}_n$ that is a bigon with one side on $c_n$ and the other side on $f^k(d_n)$. Here, a priori, we allow $k$ to take the value of any integer, but notice that there can only be finitely many values of $k$ that arise. We define an \textbf{innermost bigon} to be an overlapping bigon that does not properly contain the image of any overlapping bigon under some $f^j$. Since each $f^j$ does not have fixed points other than the origin, no overlapping bigon can properly contain the image of itself. This implies that innermost bigons exist.
Notice that each bigon in $\mathfrak{b}_n$ itself is an overlapping bigon and might be innermost. 
Our strategy is to isotope $c_n$ across innermost bigons one at a time.

Let $\mathfrak{b}'_n$ be an innermost bigon. We analyze how the paths in $f^j(c_n \cup d_n)$ can interact with and within $\mathfrak{b}'_n$. 

First, observe that none of the paths in $f^j(c_n \cup d_n)$ can enter then leave $\mathfrak{b}'_n$ through $c_n$, or enter then leave $\mathfrak{b}'_n$ through $d_n$. That is, \Cref{fig:piecewiseapproxbigonarcs} top left cannot occur. Indeed, a path in $f^j(c_n)$ cannot enter then leave $\mathfrak{b}'_n$ through $c_n$ by assumption of steadiness, and such a path cannot enter then leave $\mathfrak{b}'_n$ through $d_n$ for otherwise $\mathfrak{b}'_n$ would not be innermost, and similarly for a path in $f^j(d_n)$.

This implies that each path $p$ obtained by restricting $f^j(c_n \cup d_n)$ to $\mathfrak{b}'_n$ is a path between a point $p_c \in c_n$ and a point $p_d \in d_n$. Moreover, since the slope of $p$ is positive, either
\begin{itemize}
    \item the slope of $p$ at $p_c$ is greater than the slope of $c_n$ at $p_c$ and the slope of $p$ at $p_d$ is greater than the slope of $d_n$ at $p_d$, or
    \item the slope of $p$ at $p_c$ is less than the slope of $c_n$ at $p_c$ and the slope of $p$ at $p_d$ is less than the slope of $d_n$ at $p_d$.
\end{itemize}
That is, \Cref{fig:piecewiseapproxbigonarcs} top middle cannot occur. Since $c_n$ and $d_n$ are steady, the former is exactly the case when $j$ is negative while the latter is the case exactly when $j$ is positive.

We next claim that at each intersection between a path in $f^j(c_n \cup d_n)$ for $j>0$ and a path in $f^j(c_n \cup d_n)$ for $j<0$, the slope of the former is larger than that of the latter. Otherwise by the discussion in the previous paragraph, such paths would form a bigon and contradict the assumption that $\mathfrak{b}'_n$ was innermost. See \Cref{fig:piecewiseapproxbigonarcs} top right.

\begin{figure}
    \centering
    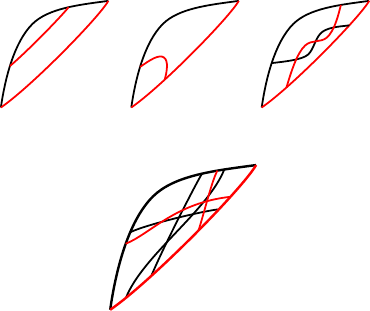
    \caption{Top: Configurations of $f^j(c_n \cup d_n)$ in $\mathfrak{b}'_n$ that are impossible. Bottom: A possible configuration of $f^j(c_n \cup d_n)$ in $\mathfrak{b}'_n$.}
    \label{fig:piecewiseapproxbigonarcs}
\end{figure}

In \Cref{fig:piecewiseapproxbigonarcs} bottom we show a possible configuration of $f^j(c_n \cup d_n)$ in $\mathfrak{b}'_n$. 

With this analysis, we are ready to define the isotopy. Let $r_0$ be a negative path in the fourth quadrant starting at the origin, and set $r_k=f^k(r_0)$. Let $R_k$ be the closed region in the fourth quadrant bounded by $r_k$ and $r_{k+1}$. Notice that only finitely many $R_k$ can intersect $\mathfrak{b}'_n$. We will define the isotopy across $\mathfrak{b}'_n$ in one region at a time. Here we think of an isotopy as a collection of paths, i.e. the trace of the isotopy.

We start with the region $R_k$ intersecting $\mathfrak{b}'_n$ with the minimum value of $k$. By our analysis above, we can define the isotopy within $\mathfrak{b}'_n \cap R_k$ so that at each intersection between an intermediate path in the isotopy and $f^j((c_n \cup d_n) \cap R_{k-j})$, the slope of the former is larger/smaller than the slope of the latter, for $j$ positive/negative respectively. In fact, since the paths in $f^j((c_n \cup d_n) \cap R_{k-j})$ for $j>0$ have slope $<H_n$ while the paths in $f^j((c_n \cup d_n) \cap R_{k-j})$ for $j<0$ have slope $>h_n$, we can ensure that the slope of the paths in the isotopy stay within $[h_n,H_n]$. We illustrate this as the second step in \Cref{fig:piecewiseapproxbigonsteps}.

When this is done, at each intersection between a path in $f^j(\mathfrak{b}'_n)$ for $j>0$ and a path in $f^j(\mathfrak{b}'_n)$ for $j<0$, the slope of the former is larger than that of the latter. Here by `a path in $f^j(\mathfrak{b}'_n)$' we mean the image of a path in the partially defined isotopy.

Inductively, suppose that we have defined the isotopy within $\mathfrak{b}'_n \cap R_k$, ..., $\mathfrak{b}'_n \cap R_{k+l}$ so that at each intersection between a path in $f^j(\mathfrak{b}'_n)$ for $j>0$ and a path in $f^j(\mathfrak{b}'_n)$ for $j<0$, the slope of the former is larger than that of the latter. 
We extend the isotopy within $\mathfrak{b}'_n \cap R_{k+l+1}$ so that at each intersection between the intermediate paths and $f^j(\mathfrak{b}'_n \cap R_{k+l+1-j})$, the slope of the former is larger/smaller than the slope of the latter, for $j$ positive/negative respectively.
As above, we can ensure that the slope of the paths in the isotopy stay within $[h_n,H_n]$.

At the end of the induction, we get an isotopy of $c_n$ across $\mathfrak{b}'_n$. This reduces the number of overlapping bigons by one. Hence repeating this enough times, we get the desired isotopy of $c_n$ to $d_n$.
\end{proof}

With $c^i_n$ arranged to be of the form in \Cref{claim:piecewiseapproxconstantslope}, we can isotope the portion of $c^i_n$ in the second or fourth quadrant through paths of constant slope to a path passing through the origin. 
For $n$ large enough so that $c_n$ is close enough to $c$, we can extend this into an isotopy of $c^i_n$ supported within $s_v$ so that the intermediate curves stay close to $c$ in the $C^1$-topology.
See \Cref{fig:piecewiseapproxorigin}. Moreover, throughout the isotopy, $c^i_n$ restricted to the second or fourth quadrant of $s_v$ is of constant slope, thus remains a horizontal surgery curve.

\begin{figure}
    \centering
    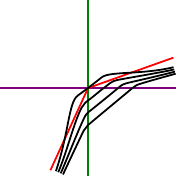
    \caption{With $c^i_n$ arranged to be of the form in \Cref{claim:piecewiseapproxconstantslope}, we can isotope $c^i_n$ so that it passes through the origin in $S_v$.}
    \label{fig:piecewiseapproxorigin}
\end{figure}

This reduces it to the case when each $c^i_n$ passes through the origin in $s_v$. In this case we can simply linearly interpolate between the curves.
\end{proof}

\Cref{prop:piecewiseapproxexist} and \Cref{prop:piecewiseapproxunique} allows for the following definition.

\begin{defn} \label{defn:piecewisehorsur}
Let $c$ be a positive/negative piecewise smooth horizontal surgery curve. Let $n$ be a positive/negative integer, respectively. The flow obtained by \textbf{horizontal Goodman surgery along $c$ with coefficient $\frac{1}{n}$}, which we denote by $\phi^t_{\frac{1}{n}}(c)$, is defined to be the flow $\phi^t_{\frac{1}{n}}(c_k)$ for large $k$, where $(c_k)$ is a sequence of horizontal surgery curves converging to $c$.
\end{defn}

\begin{cor} \label{cor:piecewiseinvisotopy}
Let $c_s$, $s \in [0,1]$ be a family of positive/negative piecewise smooth horizontal surgery curves. Then for every positive/negative integer $n$, respectively, $\phi^t_{\frac{1}{n}}(c_0)$ and $\phi^t_{\frac{1}{n}}(c_1)$ are orbit equivalent.
\end{cor}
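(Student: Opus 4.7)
The plan is to reduce to the smooth case (\Cref{cor:invcurveisotopy}) using the approximation machinery of \Cref{sec:piecewisesmooth}, via a local constancy argument on the parameter interval $[0,1]$. Concretely, I will show that the map sending $s \in [0,1]$ to the orbit equivalence class of $\phi^t_{\frac{1}{n}}(c_s)$ is locally constant; since $[0,1]$ is connected this will yield the conclusion.

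Fix $s_0 \in [0,1]$. Using \Cref{prop:piecewiseapproxexist}, choose a smooth positive/negative horizontal surgery curve $\overline{c}$ that approximates $c_{s_0}$ closely in the piecewise smooth $C^1$-topology of \Cref{subsec:piecewiseapprox}; by \Cref{defn:piecewisehorsur} this gives $\phi^t_{\frac{1}{n}}(c_{s_0}) \cong \phi^t_{\frac{1}{n}}(\overline{c})$. Since the family $s \mapsto c_s$ is continuous in this topology, for $s$ in a sufficiently small neighborhood $U$ of $s_0$ the turns of $c_s$ lie near those of $c_{s_0}$, the smooth pieces of $c_s$ are $C^1$-close to the corresponding pieces of $c_{s_0}$, and the slope intervals $\slope(Tc_s|_v)$ at each turn $v$ vary only slightly with $s$. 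Hence $\overline{c}$ still satisfies the convergence conditions relative to $c_s$ and so qualifies as a smooth horizontal surgery curve approximating $c_s$ in the sense of \Cref{subsec:piecewiseapprox}. Choosing any auxiliary sequence of approximations of $c_s$, \Cref{prop:piecewiseapproxunique} lets us join $\overline{c}$ to a term of this sequence through an isotopy of smooth horizontal surgery curves, whereupon \Cref{cor:invcurveisotopy} combined with \Cref{defn:piecewisehorsur} yields $\phi^t_{\frac{1}{n}}(c_s) \cong \phi^t_{\frac{1}{n}}(\overline{c})$. Stringing the equivalences together, $\phi^t_{\frac{1}{n}}(c_s) \cong \phi^t_{\frac{1}{n}}(c_{s_0})$ for every $s \in U$, establishing local constancy.

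The principal technical subtlety is verifying that the fixed smooth curve $\overline{c}$ truly serves as an approximation of $c_s$ as $s$ ranges over $U$, particularly that the slope-compatibility condition near the drifting turns of $c_s$ persists; this is arranged by tightening $\overline{c}$ initially and then shrinking $U$, relying on the continuity of the family in a sufficiently strong sense (turn positions and slope intervals varying continuously in $s$). Genericity of each $c_s$ is preserved by assumption throughout the family, so the hypotheses of the approximation propositions hold uniformly. A standard compactness argument on the connected interval $[0,1]$ then assembles the local orbit equivalences into the desired global one $\phi^t_{\frac{1}{n}}(c_0) \cong \phi^t_{\frac{1}{n}}(c_1)$.
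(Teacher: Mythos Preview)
Your approach and the paper's share the same skeleton: show local constancy of $s \mapsto [\phi^t_{\frac{1}{n}}(c_s)]$ on the connected interval $[0,1]$ via smooth approximations. The difference is in how local constancy is established. You fix a single smooth approximant $\overline{c}$ of $c_{s_0}$ and argue it simultaneously approximates every nearby $c_s$; the paper instead argues by contradiction, picking a sequence $s_k \to s_0$ with $\phi^t_{\frac{1}{n}}(c_{s_k}) \not\cong \phi^t_{\frac{1}{n}}(c_{s_0})$, choosing smooth approximations $d_{s_k,j}$ of each $c_{s_k}$, and extracting a diagonal subsequence $d_{s_k,j_k}$ that converges to $c_{s_0}$, so that \Cref{defn:piecewisehorsur} applies directly to give $\phi^t_{\frac{1}{n}}(c_{s_k}) \cong \phi^t_{\frac{1}{n}}(d_{s_k,j_k}) \cong \phi^t_{\frac{1}{n}}(c_{s_0})$.

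The diagonal argument is slicker in one respect: it produces a \emph{bona fide} sequence converging to $c_{s_0}$, so both \Cref{defn:piecewisehorsur} and \Cref{prop:piecewiseapproxunique} apply as stated. Your direct route, by contrast, needs a mild strengthening of \Cref{prop:piecewiseapproxunique}: you want any single smooth horizontal surgery curve sufficiently close to $c_s$ to be isotopic (through such curves) to late terms of any approximating sequence, and you need the closeness threshold to be stable as $s$ varies. Both points are extractable from the proof of \Cref{prop:piecewiseapproxunique} (the argument there only uses that the curves are close to $c$, and the relevant constants depend continuously on the data of $c$), so your argument goes through, but it relies on unpacking that proof rather than invoking the proposition verbatim. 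The paper's formulation sidesteps this bookkeeping entirely.
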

\begin{proof}
It suffices to show that for a fixed $s_0$, the set of $s$ such that $\phi^t_{\frac{1}{n}}(c_s)$ is orbit equivalent to $\phi^t_{\frac{1}{n}}(c_{s_0})$ contains a neighborhood of $s_0$. Suppose otherwise, then there exists a sequence of $s_k$ converging to $s_0$ such that each $\phi^t_{\frac{1}{n}}(c_{s_k})$ is not orbit equivalent to $\phi^t_{\frac{1}{n}}(c_{s_0})$.

For each $k$, we can take a sequence $d_{s_k,j}$ of smooth horizontal surgery curves converging to $c_{s_k}$ such that $\phi^t_{\frac{1}{n}}(d_{s_k,j}) \cong \phi^t_{\frac{1}{n}}(c_{s_k})$. Then we can extract a diagonal sequence $d_{s_k,j_k}$ converging to $c_{s_0}$. But then $\phi^t_{\frac{1}{n}}(c_{s_k}) \cong \phi^t_{\frac{1}{n}}(d_{s_k,j_k}) \cong \phi^t_{\frac{1}{n}}(c_{s_0})$ contradicting our hypothesis.
\end{proof}

\subsection{Interaction with Goodman-Fried surgeries} \label{subsec:horsurcurveGF}

We discuss one application of the generalization of horizontal Goodman surgery to piecewise smooth curves, which concerns how horizontal Goodman surgeries interact with Goodman-Fried surgeries.

Let $\phi^t$ be a pseudo-Anosov flow on a closed oriented $3$-manifold $M$.
Let $c$ be a generic piecewise smooth horizontal surgery curve.
Let $\beta$ be an orientation-preserving closed orbit that intersects $c$ at exactly one point $z$. Here recall that a closed orbit $\beta$ is said to be \textbf{orientation-preserving} if $E^s|_\beta$ and $E^u|_\beta$ are orientable line bundles.

Let $k \in \mathbb{Z}$. Take some orbit equivalence between the restriction of $\phi^t$ to $M \backslash \beta$ and the restriction of $\phi^t_\frac{1}{k}(\beta)$ to $M_\frac{1}{k}(\beta) \backslash \beta$. Transfer $c \backslash z$ from the former to the latter. 

The closure of this image of $c \backslash z$ is an interval with endpoints lying on, in general, distinct points on $\beta$. Via isotopy along orbits, pushing one endpoint along $\beta$, we get a closed curve $c^k$ on $M_\frac{1}{k}(\beta)$. Note that in general $c^k$ will have a turn on $\beta$. This is why we required a generalization of horizontal Goodman surgery to piecewise smooth curves. Note also that $c^k$ is not uniquely defined, since there is indeterminacy in which endpoint we push along $\beta$ and how many times we do so. Nevertheless, the following lemma is always true.

\begin{lemma} \label{lemma:horsurcurveGF}
In the setup above, $c^k$ is a generic piecewise smooth horizontal surgery curve.
\end{lemma}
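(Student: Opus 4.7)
The plan is to verify the defining conditions of a generic piecewise smooth horizontal surgery curve directly for $c^k$. Using the orbit equivalence $h: M \setminus \beta \to M_{\frac{1}{k}}(\beta) \setminus \beta$ from \Cref{thm:Goodman=Fried}, $c^k$ consists of the closure of $h(c \setminus z)$ in $M_{\frac{1}{k}}(\beta)$ with the two resulting endpoints on $\beta$ identified by pushing one along $\beta$ through an isotopy along orbits. This gives a closed piecewise smooth curve whose turns are the $h$-images of the turns of $c$ together with at most one new turn $z^* \in \beta$. Genericity is immediate from the hypothesis $c \cap \beta = \{z\}$: no original turn of $c$ lies on $\beta$, so all turns of $c^k$ lie on pairwise distinct orbits of $\phi^t_{\frac{1}{k}}(\beta)$.

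For positivity/negativity at $z^*$, the two one-sided tangents of $c^k$ are (one-sided) limits of tangents of $h(c \setminus z)$ as it approaches $\beta$, with one of them further modified by the differential $d\phi^\tau$ of the push isotopy for some flow-length $\tau$. Because $h$ preserves the stable and unstable foliations, and $d\phi^\tau$ preserves the stable and unstable directions while multiplying slopes by the positive factor $\lambda^{-2\tau}$, both one-sided slopes at $z^*$ carry the same sign as $\slope(Tc|_z)$. The same reasoning applied at each inherited turn gives that $c^k$ is positive (resp. negative) throughout.

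For steadiness, I would split the crossings $(x,y,t)$ of $c^k$ into cases. A crossing with $x = y = z^*$ has $t = nP$ for some $n \geq 1$, where $P$ is the period of $\beta$; the return map on $TM_{\frac{1}{k}}(\beta)|_{z^*}$ scales slopes by $\lambda^{-2nP} < 1$, so the steadiness inequality follows automatically from the one-sided slopes being of definite sign. A crossing with exactly one of $x,y$ equal to $z^*$ is vacuous, since $c^k$ meets $\beta$ only at $z^*$. For the bulk case $x, y \ne z^*$, the intertwining $h \circ \phi^{t'} = \phi^t_{\frac{1}{k}} \circ h$ pulls the crossing back to a crossing $(x_0, y_0, t')$ of $c$ in $M$; steadiness of $c$ there should transfer to $c^k$ at $(x, y, t)$ because $h$ preserves the stable/unstable line fields and hence the ordering of slopes.

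The hard part will be executing this transfer rigorously in the bulk case, because $h$ is only a homeomorphism and its behaviour near $\beta$ encodes the Dehn twist of the surgery. My plan is to realize $h$ through horizontal Goodman surgery along a surgery annulus $A \subset N$ via \Cref{thm:Goodman=Fried}, and to use \Cref{lemma:invsurmap} to shrink the support of the surgery map until it is concentrated arbitrarily close to $\beta$; crossings whose orbit segments avoid this concentrated region are then handled by the identity portion of $h$, while those whose orbits do enter it must be controlled using the cone-field description of the surgered stable/unstable bundles from the proof of \Cref{thm:horsurpA} combined with a $C^1$-openness argument in the spirit of \Cref{prop:horsurcurvestable}.
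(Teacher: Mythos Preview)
Your treatment of the crossing at $z^*$ (via first-return dynamics on $\beta$) and of genericity matches the paper's. The divergence is in the bulk case $x,y\neq z^*$, and there you are working far too hard.

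The paper handles this case in one line by invoking \Cref{prop:piecewiseflowisotopy}: since $c^k$ is obtained from $c\setminus z$ by isotopy along orbits, steadiness at the off-$\beta$ crossings is inherited from steadiness of $c$. The observation you are missing---and which you actually state before talking yourself out of it---is that steadiness is not a metric condition requiring a differential. As explained just before \Cref{defn:horsurcurve} and used in the proof of \Cref{prop:horsurcurveflowisotopy}, it is a \emph{topological} condition on how the two local branches sit relative to the stable and unstable foliations in the orbit space: which pair of quadrants the overstrand occupies. An orbit equivalence sends stable/unstable leaves to stable/unstable leaves (these are characterized dynamically), hence induces a foliation-preserving homeomorphism of orbit spaces, and the crossing picture transfers verbatim. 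Your sentence ``$h$ preserves the stable/unstable line fields and hence the ordering of slopes'' is therefore already the whole argument; no differentiability of $h$ is needed.

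Your proposed workaround also has a real gap. Shrinking the surgery annulus via \Cref{lemma:invsurmap} changes the representative of the surgered flow, but the orbit equivalence $h$ from \Cref{thm:Goodman=Fried} is a global homeomorphism built from Birkhoff sections, not a cut-and-paste map equal to the identity outside $A$. You appear to be conflating $h$ with the inclusion $M\setminus A\hookrightarrow M_{\frac{1}{k}}(\beta)$; the latter is not an orbit equivalence, and even at points far from $A$ the surgered bundles $\overline{E}^{s/u}$ generally differ from $E^{s/u}$ (their definition in the proof of \Cref{thm:horsurpA} involves the entire forward/backward orbit). So ``crossings whose orbit segments avoid this concentrated region are handled by the identity portion of $h$'' does not go through as stated.
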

\begin{proof}
We can divide the crossings $(x,y,t)$ of $c^k$ into two types:
\begin{itemize}
    \item Crossings where $x$ and $y$ do not lie on $\beta$
    \item Crossings where $x$ and $y$ lie on $\beta$
\end{itemize}

Since $c^k$ can be obtained by isotoping $c \backslash z$ along orbits, the fact that $c$ is a horizontal surgery curve implies that the crossings of the first type satisfy \Cref{defn:piecewisehorsurcurve}, similar to \Cref{prop:piecewiseflowisotopy}. For the crossings of the second type, we must have $x=y$, so \Cref{defn:piecewisehorsurcurve} holds from the dynamics of the first return map on $\beta$.

That $c^k$ is generic follows from the fact that $c$ is generic and that none of the turns of $c$, other than possibly $z$, can lie on $\beta$.
\end{proof}

We discuss one setting where the isotopy class of the curve $c^k$ above is uniquely determined. 
Let $c_s$, $s \in [0,1]$ be a family of generic piecewise smooth horizontal surgery curves.
Let $\beta$ be an orientation preserving closed orbit that intersects the union of $c_s$ at exactly one point $z \in c_1$.

Transfer $\bigcup_{s \in [0,1]} c_s \backslash \beta$ from $M \backslash \beta$ to $M_\frac{1}{k}(\beta) \backslash \beta$ using an orbit equivalence as above.
Then up to isotoping $c_s$ along orbits, we can arrange for $c_s$ to limit onto a curve $c^k_1$. See \Cref{fig:horsurcurveGF} second and third frames. 
By \Cref{lemma:horsurcurveGF}, $c^k_1$ is a generic piecewise smooth horizontal surgery curve.

\begin{figure}
    \centering
    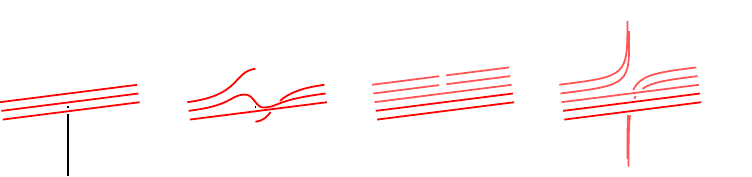
    \caption{Pushing $c_s$ across an orientation preserving closed orbit $\beta$. In this example we take $k=-1$.}
    \label{fig:horsurcurveGF}
\end{figure}

In particular, by \Cref{prop:piecewiseapproxexist}, we can extend $c^k_1$ into a family of horizontal surgery curves $c^k_s$ for $s \in [1,2]$. We transfer $\bigcup_{s \in [1,2]} c^k_s \backslash \beta$ back into $M$. Note that $c_s$ for $s \in [0,1)$ differs from $c^k_s$ for $s \in (1,2]$ homotopically by $|k|$ multiples of $\beta$. 

\begin{prop} \label{prop:horsurcurveGF}
In the setup above, $\phi^t_{\frac{1}{n}}(c_0)$ is almost equivalent to $\phi^t_{\frac{1}{n}}(c^k_2)$.
\end{prop}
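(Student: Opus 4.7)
The plan is to use Shannon's theorem (\Cref{thm:Goodman=Fried}) to transport the entire family of curves from $M$ into $M_\frac{1}{k}(\beta)$, where the combined family $\{c^k_s\}_{s\in[0,2]}$ lives and where \Cref{cor:piecewiseinvisotopy} can be directly applied.

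First I would observe that, since $c_0$ is disjoint from $\beta$ in $M$, the horizontal Goodman surgery along $c_0$ and the Goodman--Fried surgery along $\beta$ have disjoint supports. Choosing a surgery annulus around $c_0$ and a tubular neighborhood of $\beta$ that are disjoint, the two modifications of the flow can be performed in either order and give orbit equivalent flows. This yields
$$(\phi^t_\frac{1}{n}(c_0))_\frac{1}{k}(\beta) \cong (\phi^t_\frac{1}{k}(\beta))_\frac{1}{n}(c^k_0),$$
where $c^k_0$ denotes the transport of $c_0$ through the Shannon orbit equivalence between $\phi^t$ on $M \setminus \beta$ and $\phi^t_\frac{1}{k}(\beta)$ on $M_\frac{1}{k}(\beta) \setminus \beta$. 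Combining this with Shannon's theorem applied to $\phi^t_\frac{1}{n}(c_0)$ with respect to the closed orbit $\beta$ shows that $\phi^t_\frac{1}{n}(c_0)$ is almost equivalent to $(\phi^t_\frac{1}{k}(\beta))_\frac{1}{n}(c^k_0)$. The analogous argument applied to the smooth horizontal surgery curve $c^k_2$ in $M$, which is also disjoint from $\beta$, shows that $\phi^t_\frac{1}{n}(c^k_2)$ is almost equivalent to $(\phi^t_\frac{1}{k}(\beta))_\frac{1}{n}(c^k_2)$.

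Next I would verify that $\{c^k_s\}_{s \in [0,2]}$ forms a continuous family of generic piecewise smooth horizontal surgery curves for $\phi^t_\frac{1}{k}(\beta)$ on $M_\frac{1}{k}(\beta)$. The sub-family $s \in [0,1]$ is continuous by construction: the curves $c_s$ are transported to $M_\frac{1}{k}(\beta)$ by the Shannon orbit equivalence, then isotoped along orbits so as to limit onto $c^k_1$, which by \Cref{lemma:horsurcurveGF} is itself a generic piecewise smooth horizontal surgery curve with a turn on $\beta$. The sub-family $s \in [1,2]$ is a family of horizontal surgery curves supplied by \Cref{prop:piecewiseapproxexist}. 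Continuity at $s = 1$ is immediate from these choices. \Cref{cor:piecewiseinvisotopy} applied inside $M_\frac{1}{k}(\beta)$ then yields
$$(\phi^t_\frac{1}{k}(\beta))_\frac{1}{n}(c^k_0) \cong (\phi^t_\frac{1}{k}(\beta))_\frac{1}{n}(c^k_2),$$
and chaining these equivalences proves the proposition.

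The main obstacle is the commutativity step: rigorously verifying that horizontal Goodman surgery and Goodman--Fried surgery commute when performed in disjoint regions, and that the Shannon orbit equivalence transports horizontal surgery curves to horizontal surgery curves. The first point requires unpacking the explicit constructions of both surgeries and checking that the gluing maps can be supported in disjoint neighborhoods; the second is essentially the content of \Cref{lemma:horsurcurveGF}, although one should also verify it for the smooth curves $c^k_s$ with $s \in [0,1)$ and not only for the piecewise-smooth limit at $s=1$. Once these foundations are in place the remainder of the argument is a routine chaining of equivalences.
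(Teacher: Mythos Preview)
Your proposal is correct and follows essentially the same strategy as the paper: transfer everything to $M_{\frac{1}{k}}(\beta)$ via the Shannon orbit equivalence, apply \Cref{cor:piecewiseinvisotopy} there, and chain the resulting almost equivalences. The paper's argument is slightly more direct than yours: rather than invoking commutativity of the two surgeries and then reapplying Shannon's theorem to $\phi^t_{\frac{1}{n}}(c_0)$, it simply observes that the orbit equivalence $M\setminus\beta \cong M_{\frac{1}{k}}(\beta)\setminus\beta$ survives the cut-and-reglue along a surgery annulus for $c_0$ (which lies away from $\beta$), immediately yielding the almost equivalence $\phi^t_{\frac{1}{n}}(c_0) \sim (\phi^t_{\frac{1}{k}}(\beta))_{\frac{1}{n}}(c_0)$ without a separate commutativity step.
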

\begin{proof}
The orbit equivalence $M \backslash \beta \cong M_\frac{1}{k}(\beta) \backslash \beta$ induces almost equivalences between $\phi^t_{\frac{1}{n}}(c_0)$ and $(\phi^t_{\frac{1}{k}}(\beta))_{\frac{1}{n}}(c_0)$, and between $\phi^t_{\frac{1}{n}}(c^k_2)$ and $(\phi^t_{\frac{1}{k}}(\beta))_{\frac{1}{n}}(c^k_2)$.

Meanwhile we have $(\phi^t_{\frac{1}{k}}(\beta))_{\frac{1}{n}}(c_0) \cong (\phi^t_{\frac{1}{k}}(\beta))_{\frac{1}{n}}(c^k_2)$ by \Cref{cor:piecewiseinvisotopy}.
\end{proof}

We refer to the operation of obtaining $c^k_s$, $s \in (1,2]$, from $c_s$, $s \in [0,1)$ as \textbf{pushing $c_s$ across $\beta$ (with coefficient $\frac{1}{k}$)}. See \Cref{fig:horsurcurveGF} for a schematic summary of the whole procedure. This operation will play a large role in the proof of \Cref{thm:horsuralmostequiv}.

\section{Almost equivalence} \label{sec:almostequiv}

The main goal in this section is to prove \Cref{thm:horsuralmostequiv}, which we restate below for the reader's convenience.

\begin{thm:horsuralmostequiv}
Let $\phi^t$ be a transitive pseudo-Anosov flow on a closed oriented $3$-manifold $M$. Let $c$ be a positive/negative horizontal surgery curve for $\phi^t$. Then for every positive/negative integer $n$, respectively, the flow $\phi^t_{\frac{1}{n}}(c)$ is almost equivalent to $\phi^t$.
\end{thm:horsuralmostequiv}

The proof of \Cref{thm:horsuralmostequiv} will span \Cref{subsec:sawtooth} to \Cref{subsec:slidehomotopy}. We refer to \Cref{subsec:introalmostequivproof} for a sketch of the proof. The remaining short \Cref{subsec:almostequivcor} is devoted to explaining how \Cref{cor:intropAmap} to \Cref{cor:introinteriortorus} can be deduced from \Cref{thm:horsuralmostequiv}.

As a notational shorthand, in this section we will denote two flows $\phi^t_1$ and $\phi^t_2$ being almost equivalent as $\phi^t_1 \sim \phi^t_2$.

\subsection{Building a sawtooth} \label{subsec:sawtooth}

Let $\phi^t$ be a transitive pseudo-Anosov flow on a closed oriented $3$-manifold $M$. Let $c$ be a positive horizontal surgery curve for $\phi^t$. 
Fix an instantaneous metric.

Choose a thin annulus $\mathbb{A}$ containing $c$ that is transverse to the flow and disjoint from $\sing(\phi^t)$. Note that the induced stable/unstable foliation $\mathbb{A}^{s/u}$ is a foliation of $\mathbb{A}$ by non-separating arcs. Fix some identification of $\mathbb{A}$ with a subset of $S^1 \times S^1$ so that the leaves of $\mathbb{A}^{s/u}$ are sent to intervals lying on the vertical/horizontal circles $\{u\} \times S^1$/$S^1 \times \{s\}$ respectively. In particular we can consider $c$ to be a graph of some increasing function $\{u=c(s)\}$. 

\begin{defn} \label{defn:sawtooth}
A \textbf{sawtooth} is a subset $S$ of $\mathbb{A}$ of the form $S = \bigcup_i \{(u,s) \in \mathbb{A} \mid s_i \leq s \leq s_{i+1}, c(s) \leq u \leq u_i\}$, where $\{s_i\}$ is a cyclic sequence of points on $S^1$ and $u_i \geq c(s_{i+1})$ for every $i$. See \Cref{fig:sawtoothdefn}.

\begin{figure}
    \centering
    \fontsize{8pt}{8pt}\selectfont
\begingroup%
  \makeatletter%
  \providecommand\color[2][]{%
    \errmessage{(Inkscape) Color is used for the text in Inkscape, but the package 'color.sty' is not loaded}%
    \renewcommand\color[2][]{}%
  }%
  \providecommand\transparent[1]{%
    \errmessage{(Inkscape) Transparency is used (non-zero) for the text in Inkscape, but the package 'transparent.sty' is not loaded}%
    \renewcommand\transparent[1]{}%
  }%
  \providecommand\rotatebox[2]{#2}%
  \newcommand*\fsize{\dimexpr\f@size pt\relax}%
  \newcommand*\lineheight[1]{\fontsize{\fsize}{#1\fsize}\selectfont}%
  \ifx\svgwidth\undefined%
    \setlength{\unitlength}{139.83617948bp}%
    \ifx\svgscale\undefined%
      \relax%
    \else%
      \setlength{\unitlength}{\unitlength * \real{\svgscale}}%
    \fi%
  \else%
    \setlength{\unitlength}{\svgwidth}%
  \fi%
  \global\let\svgwidth\undefined%
  \global\let\svgscale\undefined%
  \makeatother%
  \begin{picture}(1,0.6461887)%
    \lineheight{1}%
    \setlength\tabcolsep{0pt}%
    \put(0,0){\includegraphics[width=\unitlength,page=1]{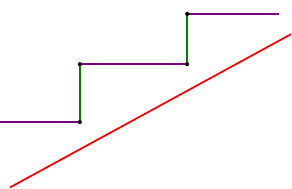}}%
    \put(0.16982608,0.25490351){\color[rgb]{0,0,0}\makebox(0,0)[lt]{\lineheight{1.25}\smash{\begin{tabular}[t]{l}$(u_i,s_i)$\end{tabular}}}}%
    \put(0.16982608,0.45522077){\color[rgb]{0,0,0}\makebox(0,0)[lt]{\lineheight{1.25}\smash{\begin{tabular}[t]{l}$(u_i,s_{i+1})$\end{tabular}}}}%
    \put(0.47881634,0.45521598){\color[rgb]{0,0,0}\makebox(0,0)[lt]{\lineheight{1.25}\smash{\begin{tabular}[t]{l}$(u_{i+1},s_{i+1})$\end{tabular}}}}%
    \put(0.47880969,0.62417632){\color[rgb]{0,0,0}\makebox(0,0)[lt]{\lineheight{1.25}\smash{\begin{tabular}[t]{l}$(u_{i+1},s_{i+2})$\end{tabular}}}}%
  \end{picture}%
\endgroup%

    \caption{Definition of a sawtooth.}
    \label{fig:sawtoothdefn}
\end{figure}

We call the points $(u_i,s_{i+1})$ the \textbf{peaks} of $S$ and the points $(u_i,s_i)$ the \textbf{troughs} of $S$.
\end{defn}

Our first goal is to build a specific sawtooth. Let $\delta$ be a small positive number, the exact value of which is to be chosen later. We will want our sawtooth to lie within the $\delta$-neighborhood of $c$ in $\mathbb{A}$, which we denote as $N_\delta(c)$.

\begin{claim} \label{claim:sawtooth0}
There exists a sawtooth $S_0 \subset N_\delta(c)$ such that:
\begin{itemize}
    \item $S_0$ contains an annulus $A_0$ carrying a foliation $\widehat{\mathcal{H}}_0$ by positive horizontal surgery curves, where $c$ is a leaf of $\widehat{\mathcal{H}}_0$.
    \item The troughs of $S_0$ lie on the other boundary component of $A_0$, which we denote as $c_1$.
    \item The troughs of $S_0$ lie on distinct orientation-preserving closed orbits $\gamma_i$.
    \item Each $\gamma_i$ intersects $c_1$ at exactly one point.
\end{itemize}
See \Cref{fig:sawtooth0} for an illustration of $S_0$.

\begin{figure}
    \centering
    \fontsize{10pt}{10pt}\selectfont
\begingroup%
  \makeatletter%
  \providecommand\color[2][]{%
    \errmessage{(Inkscape) Color is used for the text in Inkscape, but the package 'color.sty' is not loaded}%
    \renewcommand\color[2][]{}%
  }%
  \providecommand\transparent[1]{%
    \errmessage{(Inkscape) Transparency is used (non-zero) for the text in Inkscape, but the package 'transparent.sty' is not loaded}%
    \renewcommand\transparent[1]{}%
  }%
  \providecommand\rotatebox[2]{#2}%
  \newcommand*\fsize{\dimexpr\f@size pt\relax}%
  \newcommand*\lineheight[1]{\fontsize{\fsize}{#1\fsize}\selectfont}%
  \ifx\svgwidth\undefined%
    \setlength{\unitlength}{158.28318678bp}%
    \ifx\svgscale\undefined%
      \relax%
    \else%
      \setlength{\unitlength}{\unitlength * \real{\svgscale}}%
    \fi%
  \else%
    \setlength{\unitlength}{\svgwidth}%
  \fi%
  \global\let\svgwidth\undefined%
  \global\let\svgscale\undefined%
  \makeatother%
  \begin{picture}(1,0.60390131)%
    \lineheight{1}%
    \setlength\tabcolsep{0pt}%
    \put(0,0){\includegraphics[width=\unitlength,page=1]{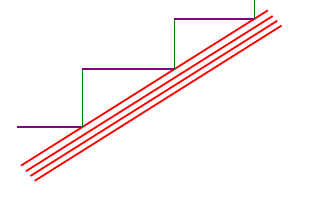}}%
    \put(-0.0038252,0.10216412){\color[rgb]{1,0,0}\makebox(0,0)[lt]{\lineheight{1.25}\smash{\begin{tabular}[t]{l}$c_1$\end{tabular}}}}%
    \put(0.08235617,0.00890914){\color[rgb]{1,0,0}\makebox(0,0)[lt]{\lineheight{1.25}\smash{\begin{tabular}[t]{l}$c$\end{tabular}}}}%
    \put(0.84042657,0.56125211){\color[rgb]{1,0,0}\makebox(0,0)[lt]{\lineheight{1.25}\smash{\begin{tabular}[t]{l}$A_0$\end{tabular}}}}%
  \end{picture}%
\endgroup%

    \caption{Illustration of the sawtooth $S_0$.}
    \label{fig:sawtooth0}
\end{figure}
\end{claim}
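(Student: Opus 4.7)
The plan is to construct the data in three stages: first the annulus $A_0$ and the foliation $\widehat{\mathcal{H}}_0$, next a supply of orientation-preserving closed orbits passing near $c$, and finally the cyclic sequence of troughs.

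For the first stage, I appeal to \Cref{prop:horsurcurvestable} to obtain a $C^1$-neighborhood $\mathcal{U}$ of $c$ in the space of embedded curves on $\mathbb{A}$ winding once in the $s$-direction, every element of which is a positive horizontal surgery curve. Take a smooth $c_1 \in \mathcal{U}$ lying slightly above $c$ in the $u$-direction, so that $c$ and $c_1$ cobound a thin annulus $A_0 \subset N_\delta(c)$, and foliate $A_0$ by the one-parameter family of graphs $s \mapsto (1-t)c(s) + tc_1(s)$, $t \in [0,1]$. For $c_1$ sufficiently $C^1$-close to $c$, every leaf lies in $\mathcal{U}$, so $\widehat{\mathcal{H}}_0$ is a foliation by positive horizontal surgery curves.

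For the second stage, I use that $\phi^t$ is transitive: the Anosov closing lemma (in its pseudo-Anosov version) makes periodic orbits dense in $M$, and a standard density argument in hyperbolic dynamics then shows that orientation-preserving closed orbits are also dense. Let $\Omega \subset \mathbb{A}$ denote the set of points lying on some orientation-preserving closed orbit; then $\Omega$ is dense in $\mathbb{A}$, and hence in $c_1$.

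For the third stage, I pick cyclically ordered points $p_i = (u_i, s_i)$ on $c_1$ to realize the troughs. Choose parameters $s_0 < s_1 < \cdots < s_{k-1}$ in $S^1$ spaced finely enough that $c(s_{i+1}) - c(s_i) < c_1(s_i) - c(s_i)$ for all $i$, which ensures the sawtooth inequality $u_i := c_1(s_i) \geq c(s_{i+1})$. By density of $\Omega$ in $c_1$, slightly perturbing each $s_i$ places $p_i$ in $\Omega$; distinctness of the associated orbits $\gamma_i$ is a finite open condition, satisfied after a further small perturbation. The delicate condition is $\gamma_i \cap c_1 = \{p_i\}$: each $\gamma_i$ meets $\mathbb{A}$ in finitely many points $p_i = p_i^{(1)}, p_i^{(2)}, \dots, p_i^{(m_i)}$, and since each leaf of $\widehat{\mathcal{H}}_0$ is a graph over $s$, any given point of $\mathbb{A}$ lies on at most one leaf. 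Thus only finitely many leaves of $\widehat{\mathcal{H}}_0$ contain some $p_i^{(j)}$ with $j \geq 2$, so I may replace $c_1$ by a nearby leaf that avoids all of them and, using density of $\Omega$, re-anchor the displaced $p_i$ on orientation-preserving closed orbits; iterating finitely many times terminates. The main obstacle is coordinating these competing finite conditions on $c_1$ — passing through prescribed points of $\Omega$ while avoiding the secondary intersections of the $\gamma_i$ with $\mathbb{A}$ — which is what forces the perturbative, one-parameter-family approach rather than a direct construction.
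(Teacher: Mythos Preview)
There is a genuine gap in the second and third stages. You assert that $\Omega$ is ``dense in $\mathbb{A}$, and hence in $c_1$'', and then perturb the parameters $s_i$ along $c_1$ to land in $\Omega$. But density of $\Omega$ in the two-dimensional annulus $\mathbb{A}$ does not give density of $\Omega \cap c_1$ in the one-dimensional curve $c_1$. Indeed, a transitive pseudo-Anosov flow has only countably many closed orbits, so $\Omega$ is a countable subset of $\mathbb{A}$; a fixed smooth curve $c_1$ may meet $\Omega$ in finitely many points, or none at all. Your fallback of replacing $c_1$ by a nearby leaf of the already-fixed foliation $\widehat{\mathcal{H}}_0$ does not help: each point of $\Omega$ lies on exactly one leaf of $\widehat{\mathcal{H}}_0$, so only countably many leaves meet $\Omega$, and a generic leaf misses $\Omega$ entirely. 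The iterative ``re-anchoring'' therefore has no reason to succeed.

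The paper avoids this by reversing the order of construction. One first chooses a curve $c_1$ $C^1$-close to $c$ and then perturbs $c_1$ \emph{freely as a curve} (not within a prescribed one-parameter family) so that it passes through a finite collection of nearby points of $\Omega$, which become the troughs; a further free perturbation of $c_1$, fixing the trough points, then avoids the finitely many secondary intersections of the $\gamma_i$ with $\mathbb{A}$. Only after $c_1$ is finalized does one build the foliation $\widehat{\mathcal{H}}_0$ of the annulus between $c$ and $c_1$, and $C^1$-closeness of $c_1$ to $c$ makes every leaf a positive horizontal surgery curve. Your argument becomes correct once you adopt this order: perturb $c_1$ first, foliate second.
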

\begin{proof}
Let $c_1$ be a curve on $\mathbb{A}$ lying above and close to $c$ in the $C^1$-topology. Recall that since $\phi^t$ is transitive, the set of closed orbits is dense. In fact, the set of orientation-preserving closed orbits is also dense. Hence up to an arbitrarily small perturbation of $c_1$, we can pick points $(u_i,s_i)$ lying on $c_1$ such that
\begin{itemize}
    \item The sawtooth $S_0$ with troughs $(u_i,s_i)$ lies within $N_\delta(c)$.
    \item $(u_i,s_i)$ lie on distinct orientation-preserving closed orbits $\gamma_i$.
\end{itemize}
Up to a further arbitrarily small perturbation of $c_1$, we can assume that each $\gamma_i$ only intersects $c_1$ in $(u_i,s_i)$.

Now we can pick a foliation $\widehat{\mathcal{H}}_0$ of the annulus between $c$ and $c_1$ by closed curves. If $c_1$ is close enough to $c$ in the $C^1$-topology in the first place, then $T\widehat{\mathcal{H}}_0$ is close to $Tc$, hence the leaves of $\widehat{\mathcal{H}}_0$ are horizontal surgery curves (see \Cref{prop:horsurcurvetosurann}).
\end{proof}

We now enlarge $S_0$ by `raising its steps' slightly.

\begin{claim} \label{claim:sawtooth1}
There exists $\eta>0$ such that the set $\bigcup_i \phi^{[0,\infty)}(\{u_i\} \times [s_{i+1}, s_{i+1} + \eta])$ can only intersect the set $\bigcup_i ([u_i,u_{i+1}] \times [s_{i+1}, s_{i+1} + \eta])$ within $\bigcup_i (\{u_i\} \times [s_i, s_{i+1} + \eta])$.
\end{claim}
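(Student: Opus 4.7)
The plan is to exploit the observation that each peak $p_i = (u_i, s_{i+1})$ lies on the $\mathbb{A}^s$-leaf $\{u = u_i\}\cap\mathbb{A}$, which also contains the trough $(u_i,s_i) \in \gamma_i$; hence the vertical segment $V_i := \{u_i\} \times [s_{i+1}, s_{i+1}+\eta]$ is contained in the stable leaf $W^s(\gamma_i)$. The entire forward image $\phi^{[0,\infty)}(V_i)$ therefore stays in $W^s(\gamma_i)$, and its intersections with $\mathbb{A}$ are unions of short vertical arcs, each one a piece of some $\mathbb{A}^s$-leaf. Since stable leaves of distinct closed orbits are disjoint, such a piece can lie on the $\mathbb{A}^s$-leaf $\{u = u_k\}$ only when $k = i$; any part of $\{u=u_i\}$ that meets a strip $H_j^\eta := [u_j, u_{j+1}] \times [s_{j+1}, s_{j+1}+\eta]$ is then automatically contained in the allowed set $\bigcup_k \{u_k\} \times [s_k, s_{k+1}+\eta]$. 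What requires work is handling the return arcs living on $\mathbb{A}^s$-leaves whose $u$-values do not belong to $\{u_1,\dots,u_n\}$.

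Next, by a small perturbation of $S_0$ within the freedom of the construction in \Cref{claim:sawtooth0} (using density of orientation-preserving closed orbits for the trough positions and the freedom in the choice of $c_1$), I will arrange two genericity conditions: (a) no return $\phi^\tau(p_i) \in \mathbb{A}$ of any peak, at any time $\tau>0$, lies on an open top edge $(u_j, u_{j+1}) \times \{s_{j+1}\}$ of the sawtooth; (b) for every $i$ and every $q \in \gamma_i \cap \mathbb{A}$ other than the trough $(u_i,s_i)$, the $s$-coordinate $s_q$ is different from every $s_{k+1}$. Condition (a) is a countable union of codimension-one conditions on the sawtooth parameters and (b) is a finite union of such, so both hold on a residual set.

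With these in hand, I will produce $\eta$ by splitting the returns of each peak into an early regime ($\tau \in (0, T]$) and a late regime ($\tau > T$), with $T$ chosen so large that late returns of $p_i$ lie within a prescribed small neighborhood of $\gamma_i \cap \mathbb{A}$. The early returns form a finite set; by (a) they sit at some positive distance $\epsilon_1$ from the open top edges, and since return arcs have stable length $\eta\lambda^{-\tau}\leq \eta$, for $\eta<\epsilon_1/2$ no early return arc enters a strip through its interior. Late returns clustering near the trough $(u_i,s_i)$ lie on the $\mathbb{A}^s$-leaf $\{u=u_i\}$ and are automatically in the allowed set. Late returns clustering near a non-trough point $q\in \gamma_i\cap\mathbb{A}$ have $s$-coordinate within a small distance of $s_q$; by (b), choosing $T$ large and $\eta$ small (both smaller than one-third of $\epsilon_2 := \min_{q,k}|s_q - s_{k+1}|$) forces the corresponding return arcs to have $s$-ranges disjoint from every $[s_{k+1},s_{k+1}+\eta]$.

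The main obstacle I expect is the infinite-time contribution: each peak returns to $\mathbb{A}$ infinitely many times, so a naive compactness bound on distances to the top edges is unavailable. The resolution is precisely the stable-manifold picture in the first paragraph: contraction along $\mathbb{A}^s$-leaves concentrates the tails of the return sequences on the finite collection of $\mathbb{A}^s$-leaves through $\gamma_i\cap\mathbb{A}$, reducing the infinite-time issue to a finite combinatorial condition handled by (b). Taking $\eta$ smaller than both $\epsilon_1/2$ and $\epsilon_2/3$ will then yield the required $\eta$.
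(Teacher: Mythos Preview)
Your proposal is correct and follows essentially the same route as the paper. Both arguments rest on the observation that the peak $(u_i,s_{i+1})$ lies on the $\mathbb{A}^s$-leaf through the trough $(u_i,s_i)\in\gamma_i$, so its forward returns to $\mathbb{A}$ are points (and, for the thickened segment $V_i$, short vertical arcs) accumulating on the finite set $\gamma_i\cap\mathbb{A}$; one then extracts a uniform positive vertical gap $\tau$ between these returns and the top edges away from $\{u_i\}\times[s_i,s_{i+1}]$, and chooses $\eta$ so that the return arcs have $s$-extent below $\tau$.

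The main difference is one of explicitness. The paper asserts in one line that neither $\phi^{[0,\infty)}(u_i,s_{i+1})$ nor $\gamma_i$ can meet the top edges except at $(u_i,s_{i+1})$ and $(u_i,s_i)$, citing only that the $\gamma_i$ are distinct; you instead isolate this as two genericity conditions (a) and (b) and argue they can be imposed during the construction of $S_0$ in \Cref{claim:sawtooth0}. Your formulation is arguably more careful: distinctness of the $\gamma_i$ alone rules out a point of $\gamma_i\cap\mathbb{A}$ landing at the \emph{right endpoint} of a top edge (which lies on some $\gamma_{j+1}$), but does not obviously prevent it from landing in the interior, which is what your (b) handles. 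Your early/late split is just a convenient way to package the same accumulation-plus-compactness reasoning. One small remark: the return arcs have $s$-length bounded by $C\eta\lambda^{-\tau}$ for some constant $C$ depending on the geometry of $\mathbb{A}$ rather than exactly $\eta\lambda^{-\tau}$, but this does not affect the argument.
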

\begin{proof}
Consider the intersection points between the set $\phi^{[0,\infty)}(u_i,s_{i+1})$ and $\mathbb{A}$. This is some set that limits onto the set of intersection points between $\gamma_i$ and $\mathbb{A}$ from the stable direction. Notice that $\phi^{[0,\infty)}(u_i,s_{i+1})$ can only intersect $\bigcup_i [u_i,u_{i+1}] \times \{s_{i+1}\}$ in $(u_i,s_{i+1})$, since the orbits $\gamma_i$ are assumed to be distinct. Similarly, $\gamma_i$ can only intersect $\bigcup_i [u_i,u_{i+1}] \times \{s_{i+1}\}$ in $(u_i,s_i)$. This implies that there is some definite vertical distance between $\phi^{[0,\infty)}(u_i,s_{i+1})$ and $\bigcup_i [u_i,u_{i+1}] \times \{s_{i+1}\}$ away from $\{u_i\} \times [s_i, s_{i+1}]$, or more precisely, there exists $\tau >0$ such that for every $(u,s) \in \phi^{[0,\infty)}(u_i,s_{i+1}) \cap \mathbb{A}$ and every $(u',s') \in \bigcup_i [u_i,u_{i+1}] \times \{s_{i+1}\}$ with $(u,s),(u',s') \notin \{u_i\} \times [s_i, s_{i+1}]$, we have $|s-s'| > \tau$. See \Cref{fig:sawtooth1}.

\begin{figure}
    \centering
    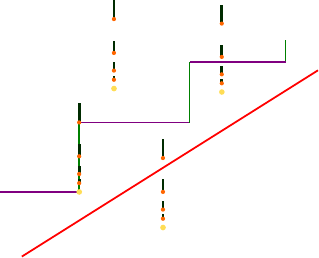
    \caption{An illustration of the proof of \Cref{claim:sawtooth1}. The orange dots are the intersection of $\phi^{[0,\infty)}(u_i,s_{i+1})$ with $\mathbb{A}$. They limit to the yellow dots, which are the intersection of $\gamma_i$ with $\mathbb{A}$. The dark green lines are the intervals of intersection between $\phi^{[0,\infty)}(\{u_i\} \times [s_{i+1}, s_{i+1} + \eta])$ and $\mathbb{A}$.}
    \label{fig:sawtooth1}
\end{figure}

There exists $\eta$ so that the intervals of intersection between $\phi^{[0,\infty)}(\{u_i\} \times [s_{i+1}, s_{i+1} + \eta])$ and $\mathbb{A}$ are of length $< \tau$. This uses the fact that the flow $\phi^t$ is exponentially contracting in the stable direction. This value of $\eta$ satisfies the claim.
\end{proof}

For each $i$, there is a first return map $r_i:\{u_i\} \times [s_i, s_{i+1} + \eta] \to \{u_i\} \times [s_i, s_{i+1} + \eta]$ induced by the flow $\phi^t$. Pick a large enough $N_i$ so that $r_i^{N_i}(\{u_i\} \times [s_i, s_{i+1} + \eta]) \subset \{u_i\} \times [s_i, s_i + \eta]$. Then we can define a map $g_i : \{u_i\} \times [s_i, s_{i+1} + \eta] \to \{u_i\} \times [s_i, s_{i+1} + \eta]$ as follows:
\begin{itemize}
    \item Start by applying $r_i^{N_i}$ to get into $\{u_i\} \times [s_i, s_i + \eta]$.
    \item Slide along the horizontal lines to get into $\{u_{i-1}\} \times [s_i, s_i + \eta] \subset \{u_{i-1}\} \times [s_{i-1}, s_i + \eta]$.
    \item Apply $r_{i-1}^{N_{i-1}}$, and repeat until we cycle back to $\{u_i\} \times [s_i, s_{i+1} + \eta]$.
\end{itemize}
By the Brouwer fixed point theorem, $g_i$ has a fixed point, which we denote by $(u_i, s_i + \eta_i)$.
We let $S$ be the sawtooth given by the union of $S_0$ and $\bigcup_i [u_i,u_{i+1}] \times [s_{i+1}, s_{i+1} + \eta_{i+1}]$.

\subsection{Building the homotopy} \label{subsec:buildhomotopy}

For each $t \in [0,1]$, we define a line field $L_t$ on the closure of $S \backslash A_0$ by $\slope(L_t|_{(u,s)})=t \cdot \slope(Tc_1|_{(u,c_1(u))})$. 
We then define a map $g^t_i : \{u_i\} \times [s_i, s_{i+1} + \eta_{i+1}] \to \{u_i\} \times [s_i, s_{i+1} + \eta_{i+1}]$ as follows:
\begin{itemize}
    \item Start by applying $r_i^{N_i}$ to get into $\{u_i\} \times [s_i, s_i + \eta_i]$.
    \item Slide along the trajectories of $L_t$ to get into $\{u_{i-1}\} \times [s_{i-1}, s_i + \eta_i]$.
    \item Apply $r_{i-1}^{N_{i-1}}$, and repeat until we cycle back to $\{u_i\} \times [s_i, s_{i+1} + \eta_{i+1}]$.
\end{itemize}
Note the following properties of $g^t_i$:
\begin{itemize}
    \item $g^0_i=g_i$, hence in particular $g^0_i(u_i,s_{i+1}+\eta_{i+1})=(u_i,s_{i+1}+\eta_{i+1})$.
    \item $g^1_i(u_i,s_i)=(u_i,s_i)$ since in this case we are simply following along $c_1$.
    \item For each $s \in [s_i, s_{i+1} + \eta_{i+1}]$, the $s$-coordinate of $g^t_i(u_i,s)$ is a strictly decreasing function of $t$.
\end{itemize}
From these properties, it follows that for every $(u_i,s)$, there is a unique value $t_i(s)$ such that $g^{t_i(s)}_i(u_i,s)=(u_i,s)$.

For each $s \in [s_1,s_2+\eta_2]$, we let $\widehat{H_s}$ be the union of $L_{t_1(s)}$ trajectories one goes through for defining $g^{t_1(s)}_1$. 
See \Cref{fig:sawtoothfol} for an illustration. 

\begin{figure}
    \centering
    \fontsize{10pt}{10pt}\selectfont
\begingroup%
  \makeatletter%
  \providecommand\color[2][]{%
    \errmessage{(Inkscape) Color is used for the text in Inkscape, but the package 'color.sty' is not loaded}%
    \renewcommand\color[2][]{}%
  }%
  \providecommand\transparent[1]{%
    \errmessage{(Inkscape) Transparency is used (non-zero) for the text in Inkscape, but the package 'transparent.sty' is not loaded}%
    \renewcommand\transparent[1]{}%
  }%
  \providecommand\rotatebox[2]{#2}%
  \newcommand*\fsize{\dimexpr\f@size pt\relax}%
  \newcommand*\lineheight[1]{\fontsize{\fsize}{#1\fsize}\selectfont}%
  \ifx\svgwidth\undefined%
    \setlength{\unitlength}{144.94271346bp}%
    \ifx\svgscale\undefined%
      \relax%
    \else%
      \setlength{\unitlength}{\unitlength * \real{\svgscale}}%
    \fi%
  \else%
    \setlength{\unitlength}{\svgwidth}%
  \fi%
  \global\let\svgwidth\undefined%
  \global\let\svgscale\undefined%
  \makeatother%
  \begin{picture}(1,0.66068036)%
    \lineheight{1}%
    \setlength\tabcolsep{0pt}%
    \put(0,0){\includegraphics[width=\unitlength,page=1]{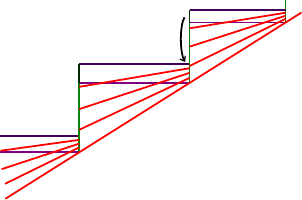}}%
    \put(0.45776108,0.52639107){\color[rgb]{0,0,0}\makebox(0,0)[lt]{\lineheight{1.25}\smash{\begin{tabular}[t]{l}$g_{i+1}$\end{tabular}}}}%
    \put(0,0){\includegraphics[width=\unitlength,page=2]{sawtoothfol.pdf}}%
    \put(0.140161,0.31542153){\color[rgb]{0,0,0}\makebox(0,0)[lt]{\lineheight{1.25}\smash{\begin{tabular}[t]{l}$g_i$\end{tabular}}}}%
  \end{picture}%
\endgroup%

    \caption{The foliation $\widehat{\mathcal{H}}$ on $S$.}
    \label{fig:sawtoothfol}
\end{figure}

An important property of each $\widehat{H}_s$ is the following.

\begin{claim}
For small enough values of $\delta$, $T\widehat{H}_s$ is steady for every $s \in [s_1,s_2+\eta_2)$. 
\end{claim}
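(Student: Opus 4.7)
The plan is to prove something stronger: that, for $\delta$ sufficiently small, the entire line field $L_{t_1(s)}$ on the closure of $S\backslash A_0$ is \emph{steady as a line field}, meaning $\slope(L_{t_1(s)}|_y) > \slope(d\phi^{t'}(L_{t_1(s)}|_x))$ whenever $x,y$ lie in this set and $y = \phi^{t'}(x)$, $t' > 0$. This would imply the claim, since $T\widehat{H}_s$ is literally the restriction of $L_{t_1(s)}$ to $\widehat{H}_s$. The first observation is that the orbit segment from $x$ to $y$ stays in $M \backslash \sing(\phi^t)$ (the annulus $\mathbb{A}$ is disjoint from the singular orbits, and any orbit hitting a singular orbit remains there), so $d\phi^{t'}$ preserves the splitting $E^s \oplus T\phi \oplus E^u$ along the segment, and in particular acts on slopes by multiplication by a scalar $\mu(x, t') \in (0, 1)$ bounded by $\lambda^{-2t'}$. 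Writing $\slope(L_t|_{(u,s)}) = t \cdot f(u)$ with $f(u) := \slope(Tc_1|_{(u, c_1(u))})$, the steady inequality becomes $t\,f(u_y) > \mu(x, t') \cdot t\, f(u_x)$, which for $t > 0$ reduces to
\[ f(u_y) > \mu(x, t')\, f(u_x), \]
independent of $t$. So steadiness of $L_t$ for all $t>0$ reduces to steadiness of $L_1$. The range $s \in [s_1, s_2 + \eta_2)$ is exactly where $t_1(s) > 0$: indeed $g^0_1$ fixes $(u_1, s_2 + \eta_2)$, and strict monotonicity of $g^t_1(u_1, s)$ in $t$ together with $g^1_1(u_1, s_1) = (u_1, s_1)$ gives $t_1(s) \in (0, 1]$ throughout this interval.

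To prove steadiness of $L_1$ I would imitate the three-tier argument of \Cref{prop:horsurcurvetosurann}. Fix an instantaneous metric, and let $h > 0$, $H < \infty$ be the infimum and supremum of $f$ over $S$; both are positive and finite provided $c_1$ is sufficiently $C^1$-close to $c$, which is guaranteed by \Cref{claim:sawtooth0} for $\delta$ small. For long times $t' \geq T_1$ with $\lambda^{-2T_1} < h/H$, we get $\mu(x,t') f(u_x) \leq \lambda^{-2T_1} H < h \leq f(u_y)$, so the inequality holds. For short times $t' < T_0$ with $T_0$ less than the first return time of $\mathbb{A}$, no such pairs exist at all. For intermediate times $t' \in [T_0, T_1]$ I would argue that as $\delta \to 0$, $S$ shrinks onto $c$, so any such pair $(x, y, t')$ must $C^0$-approach a genuine crossing $(x_c, y_c, t_c)$ of $c$; steadiness of $c$ at $(x_c, y_c, t_c)$ holds strictly, and since $f(u) \to \slope(Tc)$ at the corresponding points (because $c_1 \to c$ in $C^1$) and $\mu(x, t') \to \mu(x_c, t_c)$, the inequality $f(u_y) > \mu(x, t') f(u_x)$ will pass to $(x, y, t')$ for $\delta$ small enough.

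The main obstacle is precisely this last point: \Cref{claim:curveperturbcrossings} is stated for pairs of 1-manifolds, whereas here we need its content for the 2-dimensional set $S$. However, no new idea is required. The same compactness argument applied to the map
\[ F \colon S \times S \times [T_0, T_1] \to M \times M \times [T_0, T_1], \quad (x, y, t) \mapsto (\phi^t(x), y, t), \]
combined with a contradiction-and-subsequence extraction using $S \subset N_\delta(c)$, yields that for any $\epsilon > 0$, every element of $F^{-1}(\Delta \times [T_0, T_1])$ is $\epsilon$-close to a crossing of $c$ once $\delta$ is small enough. Once this modest extension is granted, the rest of the argument is routine bookkeeping in terms of uniform continuity of $\mu$ and of $f$ in $\delta$.
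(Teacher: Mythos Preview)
Your proof is correct and follows essentially the same approach as the paper: both split into long times $t' \geq T_1$ (handled by the ratio $h/H$), short times $t' < T_0$ (no crossings), and intermediate times (compared to crossings of $c$ via \Cref{claim:curveperturbcrossings}), and both exploit the key observation that dividing through by the scalar $t_1(s)$ reduces steadiness of $T\widehat{H}_s$ to the steadiness of $c$ itself. The only cosmetic difference is that you prove steadiness of the full line field $L_{t_1(s)}$ on the $2$-dimensional set $\overline{S\backslash A_0}$ and then restrict, which forces you to invoke a $2$-dimensional analogue of \Cref{claim:curveperturbcrossings}; the paper instead applies the $1$-dimensional claim directly to each $\widehat{H}_s$, getting uniformity in $s$ for free since the $\delta$ in that claim depends only on $C^0$-closeness to $c$ and every $\widehat{H}_s$ sits inside $N_\delta(c)$.
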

\begin{proof}
Let $H$ and $h$ be the maximum and minimum slopes of $Tc$ respectively. Then the maximum and minimum slopes of $T\widehat{H_s}$ are $t_1(s)H$ and $t_1(s)h$ respectively. Pick $T_1$ so that $\lambda^{-2T_1} < \frac{h}{H}$. Then the steadiness condition for $T\widehat{H_s}$ automatically holds for triples $(\overline{x},\overline{y},\overline{t})$ where $\overline{t} \geq T_1$.
We also let $T_0$ be the first return time of $\mathbb{A}$.

By \Cref{claim:curveperturbcrossings}, up to shrinking $\delta$, we can assume that any time $[T_0,T_1]$ crossing $(\overline{x},\overline{y},\overline{t})$ of $\widehat{H_s}$ lies arbitrarily close to a crossing $(x,y,t)$ of $c$.
In particular $\frac{1}{t_1(s)} \slope(d\phi^{\overline{t}}(T\widehat{H}_s|_{\overline{x}}))$ is arbitrarily close to $\slope(d\phi^t(Tc|_x))$ and $\frac{1}{t_1(s)} \slope(T\widehat{H}_s|_{\overline{y}})$ is arbitrarily close to $\slope(Tc|_y)$, so the steadiness condition for $c$ at $(x,y,t)$ implies that for $T\widehat{H}_s$ at $(\overline{x},\overline{y},\overline{t})$.
\end{proof}

For each $i$, consider the set $T_i = \phi^{[0,N_i p_i]}(\{u_i\} \times [s_i, s_{i+1} + \eta_{i+1}])$ where $p_i$ is the period of $\gamma_i$. We think of $T_i$ as the image of a rectangle foliated by orbit segments of $\phi^t$. 

\begin{figure}
    \centering
\begingroup%
  \makeatletter%
  \providecommand\color[2][]{%
    \errmessage{(Inkscape) Color is used for the text in Inkscape, but the package 'color.sty' is not loaded}%
    \renewcommand\color[2][]{}%
  }%
  \providecommand\transparent[1]{%
    \errmessage{(Inkscape) Transparency is used (non-zero) for the text in Inkscape, but the package 'transparent.sty' is not loaded}%
    \renewcommand\transparent[1]{}%
  }%
  \providecommand\rotatebox[2]{#2}%
  \newcommand*\fsize{\dimexpr\f@size pt\relax}%
  \newcommand*\lineheight[1]{\fontsize{\fsize}{#1\fsize}\selectfont}%
  \ifx\svgwidth\undefined%
    \setlength{\unitlength}{326.47169459bp}%
    \ifx\svgscale\undefined%
      \relax%
    \else%
      \setlength{\unitlength}{\unitlength * \real{\svgscale}}%
    \fi%
  \else%
    \setlength{\unitlength}{\svgwidth}%
  \fi%
  \global\let\svgwidth\undefined%
  \global\let\svgscale\undefined%
  \makeatother%
  \begin{picture}(1,0.3736677)%
    \lineheight{1}%
    \setlength\tabcolsep{0pt}%
    \put(0,0){\includegraphics[width=\unitlength,page=1]{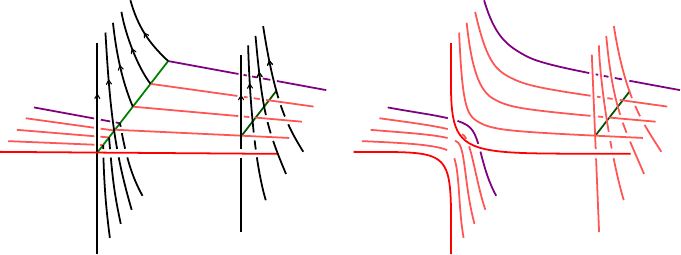}}%
    \put(0.1012354,0.23152658){\color[rgb]{0,0,0}\makebox(0,0)[lt]{\lineheight{1.25}\smash{\begin{tabular}[t]{l}$\gamma_i$\end{tabular}}}}%
    \put(0,0){\includegraphics[width=\unitlength,page=2]{perpsmooth.pdf}}%
    \put(0.62127069,0.23152791){\color[rgb]{0,0,0}\makebox(0,0)[lt]{\lineheight{1.25}\smash{\begin{tabular}[t]{l}$\gamma_i$\end{tabular}}}}%
    \put(0.22116063,0.10668225){\color[rgb]{1,0.33333333,0.33333333}\makebox(0,0)[lt]{\lineheight{1.25}\smash{\begin{tabular}[t]{l}$H^\perp_s$\end{tabular}}}}%
    \put(0.74119581,0.10668354){\color[rgb]{1,0.33333333,0.33333333}\makebox(0,0)[lt]{\lineheight{1.25}\smash{\begin{tabular}[t]{l}$H_s$\end{tabular}}}}%
  \end{picture}%
\endgroup%

    \caption{Left: Constructing the curves $H^\perp_s$. $H^\perp_s$ possibly self-intersects along intervals of intersection between $T_i$ and $S_1$ that lie away from the lines $u=u_i$. Right: Isotoping the curves $H^\perp_s$ along flow lines to obtain the curves $H_s$ that are horizontal with respect to the flow.}
    \label{fig:perpsmooth}
\end{figure}

For each $s \in [s_1,s_2+\eta_2]$, we let $H^\perp_s$ be the closed curve obtained by concatenating the intervals of $\widehat{H}_s$ with the orbit segments on $T_i$. Here we orient the intervals of $\widehat{H}_s$ in the direction of decreasing $u$. That $H^\perp_s$ is a closed curve follows from the construction of $\widehat{H}_s$. 
The $\perp$ in our notation of $H^\perp_s$ is to remind us that $H^\perp_s$ consists of both the \emph{horizontal} intervals $\widehat{H_s}$ and the \emph{vertical} orbits in $T_i$. See \Cref{fig:perpsmooth} left.

Note that each $H^\perp_s$ may not be embedded. The potential self-intersections come from the intervals of intersection between $T_i$ and $S_1$ that lie away from the lines $u=u_i$. See \Cref{fig:perpsmooth} left again.

By \Cref{claim:sawtooth1}, the vertical orbits in $H^\perp_{s_2+\eta_2}$ can only meet $\widehat{H}_{s_2+\eta_2}$ along the lines $u=u_i$, hence $H^\perp_{s_2+\eta_2}$ is embedded. In fact, by construction $H^\perp_{s_2+\eta_2}$ lies on a leaf of the unstable foliation $\Lambda^u$, so that leaf must contain a closed orbit $\gamma$, with $H^\perp_{s_2+\eta_2}$ isotopic to a multiple of $\gamma$.

Similarly, by the last item in \Cref{claim:sawtooth0}, $H^\perp_{s_1}$ is embedded. Thus we conclude that there exists $\epsilon>0$ so that if $H^\perp_s$ self-intersects, then $s \in [s_1+\epsilon,s_2+\eta_2-\epsilon]$.

Furthermore, since there can only be finitely many intervals where the self-intersections of $H^\perp_s$ occurs, we can perturb $\widehat{H}_s$ for $s \in [s_1+\epsilon,s_2+\eta_2-\epsilon]$ near these intervals so that there are only finitely many values of $s$ for which $H^\perp_s$ self-intersects, and such that each $H^\perp_s$ has at most one self-intersection point, while preserving the steadiness of $T\widehat{H}_s$.

With this arranged, we can isotope $H^\perp_s$, for each $s \in [s_1,s_2+\eta_2]$, along orbits to get a curve $H_s$ so that $TH_s \neq T\phi^t$ at every point, i.e. $H_s$ is horizontal everywhere. See \Cref{fig:perpsmooth} right. If $H^\perp_s$ is embedded then $H_s$ will be embedded. Note however, that $H_s$ is only piecewise smooth, and not smooth. This is because the slope of $TH^\perp_s$ at the two endpoints of each vertical orbit segment are equal. After isotopy, one of these endpoints is raised while the other is lowered, so the slopes no longer agree. Nevertheless, we can make sense of the following claim by \Cref{defn:piecewisehorsurcurve}.

\begin{claim} \label{claim:smoothtohorsurcurve}
If $H_s$ is embedded then it is a generic piecewise smooth horizontal surgery curve.
\end{claim}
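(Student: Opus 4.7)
The plan is to verify the three ingredients of a generic piecewise smooth horizontal surgery curve (positivity, the steadiness condition at every crossing, and the two genericity conditions) in turn, working directly with $H_s$ as a piecewise smooth curve but using the arcs of $\widehat{H}_s$ as a computational anchor. The key structural observation is that the isotopy from $H^\perp_s$ to $H_s$ moves each point of a $\widehat{H}_s$-arc along its own orbit by a smoothly varying time, while collapsing each vertical orbit segment to a single turn. Hence every smooth piece of $H_s$ has the form $\{\phi^{a(z)}(z) : z \text{ in some arc of } \widehat{H}_s\}$ for a smooth function $a$, and every turn of $H_s$ lies on an orbit passing through an endpoint of a $\widehat{H}_s$-arc on one of the boundary lines $\{u_i\}\times[s_i,s_{i+1}+\eta_{i+1}]$.

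Positivity is immediate, since the $T\phi$-contribution introduced by $a'(z)$ does not affect the slope in the $E^s\oplus E^u$ projection, and $d\phi^{a(z)}$ preserves positivity, while $T\widehat{H}_s$ is positive by construction. For the steadiness condition at a crossing $(x,y,t)$ of $H_s$ in which neither $x$ nor $y$ is a turn, I would run the same slope-transfer computation as in \Cref{prop:horsurcurveflowisotopy}: writing $x=\phi^{a_x}(x')$ and $y=\phi^{a_y}(y')$ with $x',y'\in\widehat{H}_s$, the flow relation $y=\phi^t(x)$ gives $y'=\phi^{t+a_x-a_y}(x')$, an honest crossing of $\widehat{H}_s$, and the steadiness of $T\widehat{H}_s$ at this crossing transfers directly to the steadiness of $TH_s$ at $(x,y,t)$. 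Crossings with one or both of $x,y$ a turn are handled on each side separately: the one-sided slope required by \Cref{defn:piecewisehorsurcurve} equals the slope of the corresponding adjacent smooth piece, and the inequality reduces to the non-turn case applied to that piece.

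The main obstacle is the genericity conditions, since these are not automatic from the steadiness setup. The finitely many turns of $H_s$ lie on the orbits through the endpoints of the $\widehat{H}_s$-arcs on the boundary lines $\{u_i\}\times[s_i,s_{i+1}+\eta_{i+1}]$. Both genericity conditions (no two turns on a common closed orbit, and a strict slope inequality at turns sharing an infinite orbit) are open dense conditions on the placement of those endpoints. To close the proof I would show that one may further perturb $\widehat{H}_s$ near its endpoints by arbitrarily small amounts along these boundary lines, preserving both steadiness (a $C^1$-open condition by \Cref{prop:horsurcurvestable}) and the earlier arrangements of the construction (embeddedness of $H_s$ for the given $s$, and the finiteness of self-intersection values arranged in the preceding paragraph), so as to achieve the two genericity conditions simultaneously.
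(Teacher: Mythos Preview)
Your treatment of positivity and of the steadiness condition is essentially the paper's argument: crossings of $H_s$ transfer to crossings of $\widehat{H}_s$ via the isotopy-along-orbits computation from \Cref{prop:horsurcurveflowisotopy}, and steadiness of $T\widehat{H}_s$ then gives the required inequalities (including at turns, by taking the relevant one-sided limits). So that part is fine.

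The gap is in your handling of genericity. You propose to \emph{perturb} $\widehat{H}_s$ to achieve the two genericity conditions, but this changes the object and so does not prove the claim about the $H_s$ actually constructed; moreover any such perturbation would have to be made coherently across the whole family $s\in[s_1,s_2+\eta_2]$ in order not to destroy the later arguments, and you give no indication of how to do that. More importantly, no perturbation is needed: genericity is already built into the construction. Each turn of $H_s$ arises from a vertical orbit segment lying on $T_i=\phi^{[0,N_ip_i]}(\{u_i\}\times[s_i,s_{i+1}+\eta_{i+1}])$, hence lies on the stable leaf of $\Lambda^s$ containing the closed orbit $\gamma_i$. Since the $\gamma_i$ were chosen distinct in \Cref{claim:sawtooth0}, these stable leaves are distinct, and distinct stable leaves contain disjoint sets of orbits. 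Thus the turns of $H_s$ lie on pairwise distinct orbits; in particular no two turns share a closed orbit, and the second genericity condition is vacuous. This is the observation you are missing.
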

\begin{proof}
Recall that $T\widehat{H}_s$ is steady. As in \Cref{prop:horsurcurveflowisotopy}, there is a map from the crossings of $H_s$ to those of $\widehat{H}_s$. This map is no longer injective (nor surjective) since the isotopy along orbits identifies endpoints of $\widehat{H}_s$. Nevertheless, the steadiness condition for $H_s$ implies that for $\widehat{H}_s$ as in that proof.

To see that $H_s$ is generic, observe that the turns of $H_s$ lie on the leaves of $\Lambda^s$ containing $\gamma_i$, which are distinct, hence the turns in fact lie on distinct orbits.
\end{proof}

\subsection{Chain of almost equivalences from the homotopy} \label{subsec:slidehomotopy}

The following claim initiates our chain of almost equivalences in proving \Cref{thm:horsuralmostequiv}.

\begin{claim} \label{claim:almostequivstart}
Up to shrinking $\epsilon$, $\phi^t_{\frac{1}{n}}(c)$ is almost equivalent to $\phi^t_{\frac{1}{n}}(H_{s_1+\epsilon})$.
\end{claim}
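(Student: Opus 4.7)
The plan is to chain three orbit equivalences to conclude that $\phi^t_{\frac{1}{n}}(c) \cong \phi^t_{\frac{1}{n}}(H_{s_1+\epsilon})$, which in particular implies almost equivalence.

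First, since $c$ and $c_1$ are both leaves of the foliation $\widehat{\mathcal{H}}_0$ on the surgery annulus $A_0$, \Cref{thm:invhleaf} immediately gives $\phi^t_{\frac{1}{n}}(c) \cong \phi^t_{\frac{1}{n}}(c_1)$. Second, by the paragraph preceding \Cref{claim:smoothtohorsurcurve}, $\epsilon$ was chosen so that $H^\perp_s$ is embedded for every $s \in [s_1, s_1+\epsilon]$, and by \Cref{claim:smoothtohorsurcurve}, $\{H_s\}_{s\in[s_1,s_1+\epsilon]}$ is then a continuous family of embedded generic piecewise smooth horizontal surgery curves; \Cref{cor:piecewiseinvisotopy} gives $\phi^t_{\frac{1}{n}}(H_{s_1}) \cong \phi^t_{\frac{1}{n}}(H_{s_1+\epsilon})$.

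The remaining and main step is to connect $c_1$ to $H_{s_1}$. Since $t_1(s_1)=1$, the line field $L_1$ has the same slope function as $c_1$, so the $L_1$-trajectories emanating from the troughs $(u_i,s_i)\in c_1\cap\gamma_i$ trace out $c_1$ itself; hence $\widehat{H}_{s_1}$ coincides with $c_1$, and $H^\perp_{s_1}$ is obtained from $c_1$ by attaching, at each trough, an orbit segment that wraps $N_i$ times around $\gamma_i$. The flattening isotopy along orbits collapses each of these orbit segments to its trough point, and the remark that ``one endpoint is raised while the other is lowered'' along the orbit means that the flattened curve $H_{s_1}$ acquires at each $(u_i,s_i)$ a turn whose two adjacent tangent lines have different slopes. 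The plan is then to deform $H_{s_1}$ to $c_1$ through a $1$-parameter family of generic piecewise smooth horizontal surgery curves by continuously moving the two tangent lines of each turn back to a common limit, recovering the smooth curve $c_1$; applying \Cref{cor:piecewiseinvisotopy} then yields $\phi^t_{\frac{1}{n}}(c_1) \cong \phi^t_{\frac{1}{n}}(H_{s_1})$. This is where ``shrinking $\epsilon$'' enters: we may need $\epsilon$ small enough that the turns of $H_{s_1+\epsilon}$, and of every curve along this deformation, remain sufficiently close to the tangent of $c_1$.

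The hard part will be verifying that the interpolating family indeed consists of generic piecewise smooth horizontal surgery curves throughout; that is, the steadiness condition of \Cref{defn:piecewisehorsurcurve} must be checked at every crossing of every intermediate curve. I expect this to go through by reapplying the three-range crossing analysis (short, medium, and long times) used in the proofs of \Cref{prop:horsurcurvestable} and \Cref{claim:smoothtohorsurcurve}, combined with \Cref{claim:curveperturbcrossings}: long-time crossings satisfy steadiness automatically from the bounds on the maximum and minimum slopes of the intermediate curves, short-time crossings do not occur by a first-return-time argument, and medium-time crossings reduce to $C^1$-small perturbations of the already steady crossings of $c_1$ and of $H_{s_1}$. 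Generic position of the turns can be arranged by a small initial perturbation so that no two turns of any intermediate curve lie on the same closed orbit.
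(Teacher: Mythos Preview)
Your approach has a genuine topological gap at the third step. The curves $c_1$ and $H_{s_1+\epsilon}$ are \emph{not} isotopic in $M$. By construction, $H^\perp_s$ is obtained by concatenating the $L_{t_1(s)}$-trajectories $\widehat{H}_s$ with the vertical orbit segments on $T_i = \phi^{[0,N_i p_i]}(\{u_i\}\times[s_i,s_{i+1}+\eta_{i+1}])$; each such vertical segment wraps $N_i$ times around the closed orbit $\gamma_i$. Hence $H^\perp_{s_1+\epsilon}$, and therefore $H_{s_1+\epsilon}$, lies in the free homotopy class of $c_1$ with $N_i$ copies of $\gamma_i$ inserted at each trough. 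The isotopy along orbits that produces $H_s$ from $H^\perp_s$ does not ``collapse each of these orbit segments to its trough point'' as you write; it merely redistributes the vertical segments into the adjacent horizontal arcs without changing the homotopy class. Consequently no family of (piecewise smooth) horizontal surgery curves in $M$ can interpolate between $c_1$ and $H_{s_1+\epsilon}$, and your claimed orbit equivalence $\phi^t_{\frac{1}{n}}(c_1)\cong\phi^t_{\frac{1}{n}}(H_{s_1})$ cannot hold in general: the underlying $3$-manifolds $M_{\frac{1}{n}}(c_1)$ and $M_{\frac{1}{n}}(H_{s_1+\epsilon})$ need not even be homeomorphic. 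Relatedly, the limiting object $H_{s_1}$ is not an embedded curve at all, since the vertical segment at the $i$-th trough degenerates to the $N_i$-fold iterate of $\gamma_i$; so your step~2 cannot include the endpoint $s=s_1$ either.

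The paper's proof addresses exactly this obstruction by passing through the Goodman--Fried surgered manifold $\overline{M}=M_{\frac{1}{N_i}}(\gamma_i)$. One first isotopes $c$ only as far as a leaf $c_{1-\epsilon}$ slightly below $c_1$, then observes that the union of the sub-annulus $A_1$ (between $c_{1-\epsilon}$ and $c_1$) with $\bigcup_{s\in(s_1,s_1+\epsilon]}H_s$ meets a tubular neighborhood of each $\gamma_i$ in a curve of class $\mu+N_i\lambda$. After surgery with coefficient $\tfrac{1}{N_i}$ on each $\gamma_i$, this curve becomes a meridian, so the surface caps off to an honest annulus $\overline{A_1}$ in $\overline{M}$ foliated by generic piecewise smooth horizontal surgery curves, with $c_{1-\epsilon}$ and $H_{s_1+\epsilon}$ as boundary leaves. \Cref{prop:horsurcurveGF} then gives the almost equivalence $\phi^t_{\frac{1}{n}}(c_{1-\epsilon})\sim\phi^t_{\frac{1}{n}}(H_{s_1+\epsilon})$. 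The passage through $\overline{M}$ is the essential missing idea, and it is precisely why the claim asserts only an almost equivalence rather than the orbit equivalence you aimed for.
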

\begin{proof}
Recall the annulus $A_0 \subset S_1$. This is foliated by horizontal surgery curves. 
Since $H^\perp_{s_1}$ is embedded, up to shrinking $\epsilon$, there exists a foliated sub-annulus $A_1 \subset A_0$ containing $c_1$ so that $H^\perp_s$ does not intersect $A_1$ for $s \in (s_1,s_1+\epsilon]$. Let $c_{1-\epsilon}$ be the boundary component of $A_1$ other than $c_1$. 

The foliation on $A_0$ determines an isotopy of horizontal surgery curves from $c$ to $c_{1-\epsilon}$. Hence by \Cref{cor:invcurveisotopy}, we have $\phi^t_{\frac{1}{n}}(c) \cong \phi^t_{\frac{1}{n}}(c_{1-\epsilon})$.

Now consider the union of $H_s$ for $s \in [s_1,s_1+\epsilon]$ and the annulus $A_1$ restricted to $M \backslash \bigcup \gamma_i$. This is an embedded surface that intersects tubular neighborhoods of each $\gamma_i$ in a curve of homology class $\mu + N_i \lambda$, in the notation of \Cref{eg:closedorbitsur}.

Consider $\overline{M} = M_{\frac{1}{N_i}}(\gamma_i)$ and $\overline{\phi}^t = \phi^t_{\frac{1}{N_i}}(\gamma_i)$. By identifying $M \backslash \bigcup \gamma_i$ with $\overline{M} \backslash \bigcup \gamma_i$, we get a surface in the latter that intersects tubular neighborhoods of each $\gamma_i$ in the meridian. Thus up to isotoping the surface along orbits, we can take a closure to get an annulus $\overline{A_1}$ in $\overline{M}$. 

The annulus $\overline{A_1}$ admits a foliation $\overline{\mathcal{H}}$ by generic piecewise smooth horizontal surgery curves, induced from $H_s$ for $s \in [s_1,s_1+\epsilon]$ and the foliation on $A_1$. Here, the fact that the leaf of $\overline{\mathcal{H}}$ passing through the $\gamma_i$ is a generic piecewise smooth horizontal surgery curve follows from \Cref{lemma:horsurcurveGF}.

Hence we get an almost equivalence $\phi^t_{\frac{1}{n}}(c_{1-\epsilon}) \sim \phi^t_{\frac{1}{n}}(H_{s_1+\epsilon})$ as in \Cref{prop:horsurcurveGF}.
\end{proof}

For any sub-interval $[s',s''] \subset [s_1+\epsilon,s_2-\epsilon]$ that does not contain any value of $s$ for which $H_s$ self-intersects, we have $\phi^t_{\frac{1}{n}}(H_{s'}) \cong \phi^t_{\frac{1}{n}}(H_{s''})$ by \Cref{cor:piecewiseinvisotopy}.

The following claim shows that we can `jump over' the values of $s$ for which $H_s$ self-intersects.

\begin{claim} \label{claim:almostequivjump}
Suppose $H_{s_0}$ self-intersects, then for small $\epsilon>0$, $\phi^t_{\frac{1}{n}}(H_{s_0-\epsilon})$ is almost equivalent to $\phi^t_{\frac{1}{n}}(H_{s_0+\epsilon})$.
\end{claim}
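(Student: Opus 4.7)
The plan is to realize the topological change $H_{s_0-\epsilon} \leadsto H_{s_0+\epsilon}$ as a single ``push across a closed orbit'' in the sense of \Cref{subsec:horsurcurveGF}, so that \Cref{prop:horsurcurveGF} directly yields the almost equivalence.

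First I would analyze the local picture at the self-intersection. By the perturbation described in \Cref{subsec:buildhomotopy}, $H_{s_0}$ has exactly one transverse self-intersection at a point $p$, arising because a subarc of $\widehat{H}_{s_0}$ meets a flow segment in some rectangle $T_i$ (at a point away from the lines $u=u_j$) under the orbit-isotopy that produces $H_s$ from $H^\perp_s$. As $s$ varies through $s_0$, the two branches through $p$ swap their relative orbit-heights, so $H_{s_0+\epsilon}$ differs from $H_{s_0-\epsilon}$ in a small ball about $p$ by a crossing change.

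Next I would locate a witnessing closed orbit $\beta$. By transitivity of $\phi^t$, orientation-preserving closed orbits are dense in $M$. By a further small perturbation of $\widehat{H}_s$ near $s=s_0$ (keeping the conclusions of \Cref{claim:sawtooth1}, the $C^1$-closeness to $c$, and the steadiness of $T\widehat{H}_s$), I can arrange that there is an orientation-preserving closed orbit $\beta$, distinct from each $\gamma_i$, meeting $\bigcup_{s \in [s_0-\epsilon,\, s_0+\epsilon]} H_s$ transversely in a single point $z \in H_{s_*}$ for some $s_* \in (s_0-\epsilon, s_0+\epsilon)$, and such that $\beta$ links the self-intersection at $p$ exactly once. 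The key point is that the crossing change from $H_{s_0-\epsilon}$ to $H_{s_0+\epsilon}$ is then accounted for by a single meridional twist around $\beta$, with sign determined by the geometry of the swap; concretely, up to isotopy, the homotopy class of $H_{s_0+\epsilon}$ in $M$ equals that of $H_{s_0-\epsilon}$ altered by a single meridional copy of $\beta$.

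Finally I would apply the push-across operation to the two subfamilies obtained by restricting $\{H_s\}$ to $[s_0-\epsilon, s_*]$ and $[s_*, s_0+\epsilon]$ respectively. Each restricted subfamily is an isotopy through generic piecewise smooth horizontal surgery curves. By the setup of \Cref{subsec:horsurcurveGF} and \Cref{lemma:horsurcurveGF}, pushing each family across $\beta$ with the appropriate coefficient $\pm 1$ yields families of piecewise smooth horizontal surgery curves on $M_{\pm 1}(\beta)$, and the two pushed families match up across $s_*$ into a single isotopy, because on $M_{\pm 1}(\beta)$ the meridional $\beta$-twist distinguishing $H_{s_0-\epsilon}$ from $H_{s_0+\epsilon}$ is trivialized. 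Invoking \Cref{prop:horsurcurveGF} on each half then gives $\phi^t_{\frac{1}{n}}(H_{s_0-\epsilon}) \sim \phi^t_{\frac{1}{n}}(H_{s_0+\epsilon})$.

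The main obstacle will be the middle step, namely producing $\beta$ with the correct linking behavior at $p$ and verifying that the push-across operation realizes exactly the crossing change at $s_0$. This is the explicit local-move analysis referred to by \Cref{fig:localmoves1} and \Cref{fig:localmoves2} in the introduction, and it will likely split into a small number of cases according to the sign of the crossing (i.e.\ whether the horizontal subarc of $\widehat{H}_{s_0}$ passes in front of or behind the flow segment in $T_i$) and the orientation of $\beta$ with respect to the flow.
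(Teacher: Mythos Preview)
Your broad strategy --- find an orientation-preserving closed orbit $\beta$ near the self-intersection using transitivity, then use the push-across-$\beta$ machinery of \Cref{subsec:horsurcurveGF} --- is the same as the paper's. But the execution has a genuine gap in how $\beta$ is placed and how many push-across steps are needed.

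First, the homotopy $H_s$ itself witnesses that $H_{s_0-\epsilon}$ and $H_{s_0+\epsilon}$ are freely homotopic in $M$, so your claim that they differ by ``a single meridional copy of $\beta$'' is not correct. The obstruction is not a change of homotopy class; it is that the homotopy passes through a self-intersecting curve, so \Cref{cor:piecewiseinvisotopy} does not apply across $s_0$. Placing $\beta$ at some nearby time $s_* \neq s_0$ and pushing across it there does nothing to resolve the self-intersection at $s_0$: after the push, the family on $M_{\pm1}(\beta)$ still self-intersects at the image of $s_0$.

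What the paper does instead is perturb so that the self-intersection point $z$ itself lies on a closed orbit $\beta$. But then \emph{both} local strands of $H_s$ near $s_0$ meet $\beta$, so the single-arc hypothesis of \Cref{prop:horsurcurveGF} fails. The paper's fix is to freeze $H_s$ outside a small cylinder $N$ and treat the two strands as independent arc families $a_s$ and $b_s$. One then pushes $b_s$ alone across $\beta$ (three steps: pass to $M_{-1}(\beta)$, isotope $b_s$ past $\beta$, return to $M$); with $b$ now displaced to $b^{-1}_{s_0+\epsilon}$, the arc $a_s$ can isotope from $a_{s_0-\epsilon}$ to $a_{s_0+\epsilon}$ through genuine horizontal surgery curves without collision; finally one pushes $b$ back (three more steps). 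Verifying that each intermediate curve is a \emph{generic piecewise smooth horizontal surgery curve} --- in particular the step where one must compare $b^0_{s_0}$ with $b_{s_0}$ via the two complementary arcs of $\beta$ --- is where the real work lies, and this is what your proposal is missing. The two cases you anticipate (corresponding to \Cref{fig:localmoves1} and \Cref{fig:localmoves2}) do arise, but each already requires this multi-step choreography rather than a single application of \Cref{prop:horsurcurveGF}.
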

\begin{proof}
Recall how self-intersection points occur: Some orbit segment in $H^\perp_{s_0}$ intersects a horizontal interval in $H^\perp_{s_0}$ at an interior point $x$. We may assume that in isotoping $H^\perp_{s_0}$ along orbits to get $H_{s_0}$, we only isotope the horizontal intervals near their endpoints, hence that the point $x$ remains fixed. In particular we can use the coordinates on $\mathbb{A}$ to describe the situation around $x$.

Now choose some small neighborhood $S \subset \mathbb{A}$ of $x$ so that $N=\phi^{[-\tau,\tau]}(S)$ is a cylinder that intersects $H_{s_0}$ in a union of two arcs with endpoints on $\phi^{[-\tau,\tau]}(\partial S)$, for some $\tau$.
For example, one can choose $N$ to be a neighborhood of the orbit segment in $H^\perp_s$ giving rise to the self-intersection.

Then up to shrinking $\epsilon$, $N$ intersects $H_s$ in a union of two arcs with endpoints on $\phi^{[-\tau,\tau]}(\partial S)$ for all $s \in [s_0-\epsilon,s_0+\epsilon]$. In particular, we can consider the projection of $H_s \cap N$ onto $S$. Since the self-intersection point $x$ is isolated, there are two possibilities for the form of $H_s \cap N$ depending on whether the horizontal interval in $H^\perp_{s_0}$ moves in front or behind of the vertical orbit segment as $s$ varies. The two cases can be distinguished by whether
\begin{enumerate}
    \item the projection of $H_s \cap N$ onto $S$ bounds a bigon for $s \in [s_0-\epsilon,s_0)$, illustrated in \Cref{fig:selfintersect1}, or
    \item the projection of $H_s \cap N$ onto $S$ bounds a bigon for $s \in (s_0,s_0+\epsilon]$, illustrated in \Cref{fig:selfintersect2}.
\end{enumerate}
We focus on case (1) for now. We will indicate how to modify our arguments to tackle case (2) at the end of the proof.

\begin{figure}
    \centering
    \fontsize{8pt}{8pt}\selectfont
    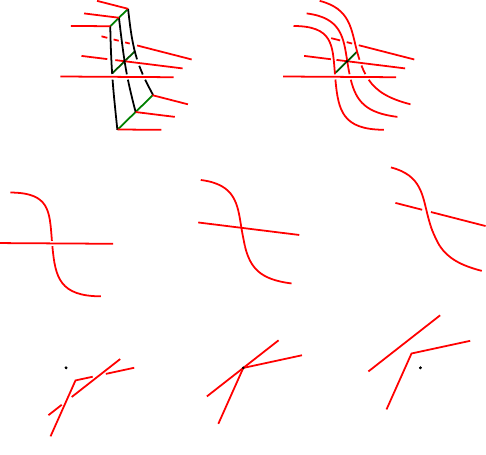
    \caption{Case (1) of the form of $H_s \cap N$. Top left: The homotopy $H^\perp_s$ in $N$, $s \in [s_0-\epsilon, s_0+\epsilon]$. Top right: The homotopy $H_s$ in $N$, $s \in [s_0-\epsilon, s_0+\epsilon]$. Middle and bottom: $H_s \cap N$ and its projection onto $S$, at times $s_0-\epsilon, s_0, s_0+\epsilon$.}
    \label{fig:selfintersect1}
\end{figure}

\begin{figure}
    \centering
    \fontsize{8pt}{8pt}\selectfont
    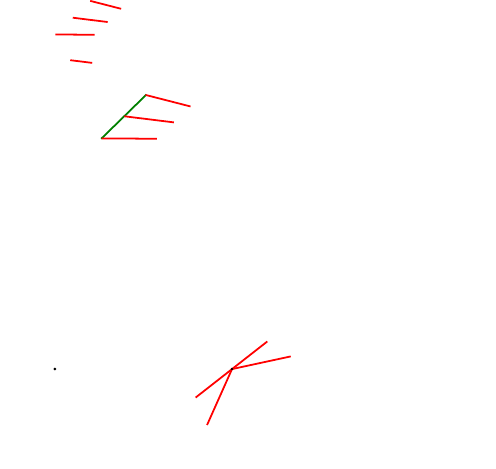
    \caption{Case (2) of the form of $H_s \cap N$. Top left: The homotopy $H^\perp_s$ in $N$, $s \in [s_0-\epsilon, s_0+\epsilon]$. Top right: The homotopy $H_s$ in $N$, $s \in [s_0-\epsilon, s_0+\epsilon]$. Middle and bottom: $H_s \cap N$ and its projection onto $S$, at times $s_0-\epsilon, s_0, s_0+\epsilon$.}
    \label{fig:selfintersect2}
\end{figure}

For $\epsilon$ small enough, we may isotope $H_s$ for $s \in [s_0-\epsilon,s_0+\epsilon]$ so that it agrees with $H_{s_0}$ outside of $N$. This can be achieved by isotoping $\widehat{H}_s$ so that it agrees with $\widehat{H}_{s_0}$ outside of $N$. For $\epsilon$ small enough, $T\widehat{H}_{s_0}$ is close to $T\widehat{H}_s$ so the intervals throughout the isotopy have steady tangent field. By the same argument as in \Cref{claim:smoothtohorsurcurve}, this shows that $H_s$ is a generic piecewise smooth horizontal surgery curve throughout the isotopy.

With this arranged, the homotopy $H_s$ between $H_{s_0 \pm \epsilon}$ can now be thought of as two families of arcs within $N$. We refer to the family of arcs coming from the horizontal intervals in $H^\perp_s$ as $a_s$, and the other family of arcs as $b_s$. 

In fact, up to shrinking $\epsilon$, we can further assume that after isotopy, $\bigcup_s T\widehat{H}_s$ is steady, which implies that the curves obtained by joining the portion of $H_{s_0}$ outside $N$ with $a_s$ and $b_{s'}$ in $N$ are generic horizontal surgery curves, for any $s,s'$ such that the curve is embedded.

Right now the self-intersection point $x$ lies on an infinite orbit. Our next few steps are aimed at modifying $H_s$ so that the self-intersection point lies on a closed orbit.

There exists a neighborhood $\nu \subset \mathbb{A}$ of $x$ such that given any $z \in \nu$, we can homotope $b_s$ so that the turn in $b_s$ traces out a curve in $\nu$ passing through $z$, while maintaining the property that the curves obtained by joining the portion of $H_{s_0}$ outside $N$ with $a_s$ and $b_{s'}$ in $N$ are horizontal surgery curves, for any $s,s'$ such that the curve is embedded. One way to see this is to again consider the intervals $\widehat{H}_s$. For $\nu$ small, a perturbation of $b_s$ as above can be achieved by a small perturbation of $\widehat{H}_s$, which preserves the property that $\bigcup_s T\widehat{H}_s$ is steady.

Pick a number $T_1$ so that $\lambda^{-2T_1} < \frac{\min\slope(H_s)}{\max\slope(H_s)}$, where the maximum and minimum are taken over all $s \in [s_0-\epsilon, s_0+\epsilon]$. 
Since the set of turns of $H_s$ lie on a finite number of intervals lying along the stable leaves of the closed orbits $\gamma_i$, we may assume that $\nu$ is disjoint from $\phi^{[-T_1,T_1]}(v)$ for every turn $v$ of $H_s$ lying outside of $N$.
This implies that for every perturbation of $b_s$ as above, the curves obtained by joining the portion of $H_{s_0}$ outside $N$ with $a_s$ and $b_{s'}$ in $N$ are \emph{generic} piecewise smooth horizontal surgery curves, for any $s,s'$ such that the curve is embedded.

We now use the fact that $\phi^t$ is transitive to pick an orientation preserving closed orbit $\beta$ passing through a point $z \in \nu$. We perturb $H_s$ as above so that $z$ is the turn of some $b_{s_b}$. Meanwhile, we let $s_a$ be the unique value so that $z$ lies on $a_{s_a}$. Now we can reparametrize the two families of arcs $a_s$ and $b_s$, so that the self-intersection point of the curves $H_s$ is at $z$, while preserving the following properties:
\begin{itemize}
    \item Each $H_s$ is a generic piecewise smooth horizontal surgery curve.
    \item The projection of $H_s \cap N$ onto $S$ bounds a bigon for $s \in [s_0-\epsilon,s_0)$
    \item Each $H_s$ agrees with $H_{s_0}$ outside of $N$.
    \item The curves obtained by joining the portion of $H_{s_0}$ outside $N$ with $a_s$ and $b_{s'}$ in $N$ are generic horizontal surgery curves, for any $s,s'$ such that the curve is embedded.
\end{itemize}
See \Cref{fig:selfintersectperturb} for an illustration of our modifications.

\begin{figure}
    \centering
    \fontsize{8pt}{8pt}\selectfont
    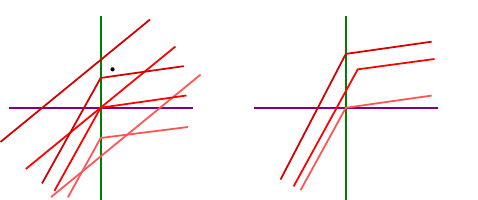
    \caption{Modifying $H_s$ so that the self-intersection point changes from $x$ to $z$, which lies on a closed orbit $\beta$.}
    \label{fig:selfintersectperturb}
\end{figure}

Furthermore, up to an arbitrarily small perturbation of $H_{s_0}$, we can assume that $\beta$ only meets $H_{s_0}$ at $z$. Then up to shrinking $\epsilon$, we can assume that the union of $H_s$, $s \in [s_0-\epsilon,s_0+\epsilon]$, only intersects $\beta$ in $z$.
 
The strategy of the proof at this point is to push the arcs $a_s$ and $b_s$ through $\beta$ with various coefficients, so that we can isotope $a_{s_0-\epsilon}$ to $a_{s_0+\epsilon}$ and $b_{s_0-\epsilon}$ to $b_{s_0+\epsilon}$ without them passing through each other. 
The sequence of operations we will perform is illustrated in \Cref{fig:localmoves1}.
Here in labelling $\beta$ by a number $k$, we mean performing the operation in $M_{\frac{1}{k}}(\beta)$. The moves where we switch this label to a different number is applying an identification $M_{\frac{1}{k}}(\beta) \backslash \beta \cong M_{\frac{1}{k'}}(\beta) \backslash \beta$. We describe and justify our sequence of moves more precisely in the following paragraphs.

\begin{figure}
    \centering
    \fontsize{6pt}{6pt}\selectfont
    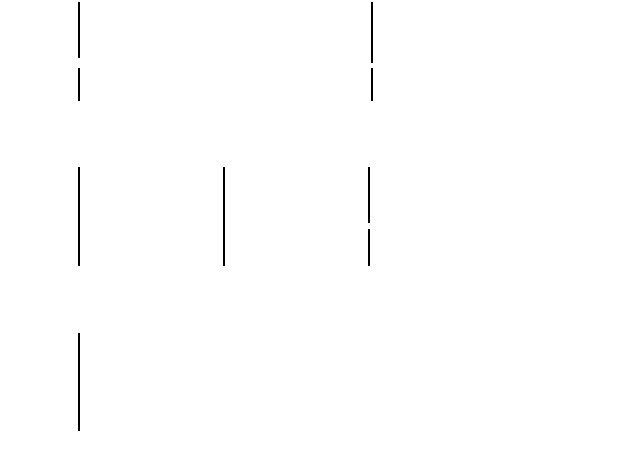
    \caption{Performing a sequence of local moves to show that $\phi^t_{\frac{1}{n}}(H_{s_0-\epsilon}) \sim \phi^t_{\frac{1}{n}}(H_{s_0+\epsilon})$ in case (1).}
    \label{fig:localmoves1}
\end{figure}

We start from $\phi^t_{\frac{1}{n}}(H_{s_0-\epsilon})$. We take an orbit equivalence between $\phi^t$ restricted to $M \backslash \beta$ and $\phi^t_{-1}(\beta)$ restricted to $M_{-1}(\beta) \backslash \beta$. 
This orbit equivalence induces an almost equivalence 
$$\phi^t_{\frac{1}{n}}(H_{s_0-\epsilon}) \sim (\phi^t_{-1}(\beta))_{\frac{1}{n}}(H_{s_0-\epsilon})$$
which is the first step in \Cref{fig:localmoves1}.

We now push $b_s$ across $\beta$ with coefficient $-1$. This is essentially the operation discussed in \Cref{subsec:horsurcurveGF}:
Up to isotopy along orbits, the image of the arcs $b_s$ in $M_{-1}(\beta) \backslash \beta$, for $s \in [s_0-\epsilon,s_0)$, converge towards an arc $b^{-1}_{s_0}$. Let $H^{-1}_{s_0}$ be the curve given by the composition of $H_{s_0}$ outside $N$ with $a_{s_0-\epsilon}$ and $b^{-1}_{s_0}$. This is a generic piecewise smooth horizontal surgery curve by \Cref{lemma:horsurcurveGF}. We apply \Cref{prop:piecewiseapproxexist} to obtain a family of horizontal surgery curves $H^{-1}_s$, $s \in [s_0,s_0+\epsilon]$ on the other side of $\beta$ in $M_{-1}(\beta)$. Up to shrinking $\epsilon$, we can arrange for $H^{-1}_s$ to contain the portion of $H_{s_0}$ outside $N$ and with $a_{s_0-\epsilon}$, similar to how we arrange for $H_s$ to agree with $H_{s_0}$ above. We let $b^{-1}_s$ be the portion of $H^{-1}_s$ that differs from $b_{s_0-\epsilon}$. Each $b^{-1}_s$ lies in the union of $N$ and a neighborhood of $\beta$.

Note that the composition of $H_{s_0}$ outside $N$ with $a_{s_0+\epsilon}$ and $b^{-1}_{s_0}$ is also a generic piecewise smooth horizontal surgery curve, hence up to shrinking $\epsilon$, we can assume that the curves $H^1_s$ given by the composition of $H_{s_0}$ outside $N$ with $a_{s_0+\epsilon}$ and $b^{-1}_s$, for $s \in [s_0, s_0+\epsilon]$, are generic piecewise smooth horizontal surgery curves as well. This will come into play later.

For now, notice that \Cref{cor:piecewiseinvisotopy} implies that 
$$(\phi^t_{-1}(\beta))_{\frac{1}{n}}(H_{s_0-\epsilon}) \cong (\phi^t_{-1}(\beta))_{\frac{1}{n}}(H^{-1}_{s_0+\epsilon}).$$
This is the second step in \Cref{fig:localmoves1}. We caution the reader that this orbit equivalence does \emph{not} send $\beta$ to $\beta$.

We transfer the curves $H^{-1}_s$ and arcs $b^{-1}_s$ back to $M$ using the orbit equivalence $M \backslash \beta \cong M_{-1}(\beta) \backslash \beta$. Once again the orbit equivalence induces an almost equivalence 
$$(\phi^t_{-1}(\beta))_{\frac{1}{n}}(H^{-1}_{s_0+\epsilon}) \sim \phi^t_{\frac{1}{n}}(H^{-1}_{s_0+\epsilon}).$$
This is the third step in \Cref{fig:localmoves1}.

Notice that up to isotoping $b^{-1}_{s_0+\epsilon}$ along orbits, the curves $H^0_s$ obtained by joining the portion of $H_{s_0}$ outside $N$ with $a_s$ and $b^{-1}_{s_0+\epsilon}$, for $s \in [s_0-\epsilon,s_0+\epsilon]$, determines an isotopy of curves. We claim that up to shrinking $\epsilon$, we can assume that these curves are generic piecewise smooth horizontal surgery curves.

To see this, notice that up to isotoping along orbits, as $s \to s_0$, $b^{-1}_s$ limits onto an arc $b^0_{s_0}$ which contains the image of $b_{s_0} \backslash \beta$ isotoped along orbits. Note that $b^0_{s_0}$ is different from $b_{s_0}$. The latter is obtained by connecting up two intervals in $\widehat{H}_{s_0}$ by some vertical orbit segment $\beta^0 \subset \beta$ then isotoping along orbits, while the former is obtained by connecting up the same two intervals using the vertical orbit segment $\beta \backslash \beta^0$, then isotoping along orbits. See \Cref{fig:arcsurgery}. 

\begin{figure}
    \centering
    \fontsize{10pt}{10pt}\selectfont
    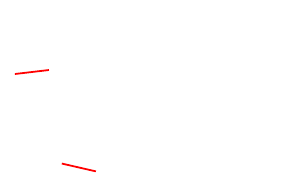
    \caption{Comparing the arcs $b_{s_0}$ and $b^0_{s_0}$.}
    \label{fig:arcsurgery}
\end{figure}

It suffices to show that the curves obtained by joining the portion of $H_{s_0}$ outside $N$ with $a_s$ and $b^0_{s_0}$, for $s \in [s_0-\epsilon,s_0+\epsilon]$, are piecewise smooth horizontal surgery curves. But this follows from the fact that $T\widehat{H}_{s_0}$ is steady, as in the proof of \Cref{claim:smoothtohorsurcurve}.

Thus \Cref{cor:piecewiseinvisotopy} implies that 
$$\phi^t_{\frac{1}{n}}(H^{-1}_{s_0+\epsilon}) = \phi^t_{\frac{1}{n}}(H^0_{s_0-\epsilon}) \cong \phi^t_{\frac{1}{n}}(H^0_{s_0+\epsilon}).$$
This is the fourth step in \Cref{fig:localmoves1}. As in the second step, we caution the reader that this orbit equivalence does \emph{not} send $\beta$ to $\beta$.

As before, the orbit equivalence $M \backslash \beta \cong M_{-1}(\beta) \backslash \beta$ induces the almost equivalence 
$$\phi^t_{\frac{1}{n}}(H^0_{s_0+\epsilon}) \sim (\phi^t_{-1}(\beta))_{\frac{1}{n}}(H^0_{s_0+\epsilon}).$$
This is the fifth step in \Cref{fig:localmoves1}.

Recall the family of piecewise smooth horizontal surgery curves $H^1_s$ given by the composition of $H_{s_0}$ outside $N$ with $a_{s_0+\epsilon}$ and $b^{-1}_s$, for $s \in [s_0, s_0+\epsilon]$. We can extend this family to $s \in [s_0-\epsilon,s_0)$ since by assumption the curves given by the composition of $H_{s_0}$ outside $N$ with $a_{s_0+\epsilon}$ and $b_s$ are piecewise smooth horizontal surgery curves. Hence by \Cref{cor:piecewiseinvisotopy}, we have 
$$(\phi^t_{-1}(\beta))_{\frac{1}{n}}(H^0_{s_0+\epsilon}) = (\phi^t_{-1}(\beta))_{\frac{1}{n}}(H^1_{s_0+\epsilon}) \cong (\phi^t_{-1}(\beta))_{\frac{1}{n}}(H^1_{s_0-\epsilon}).$$
This is the sixth step in \Cref{fig:localmoves1}.

The orbit equivalence $M \backslash \beta \cong M_{-1}(\beta) \backslash \beta$ induces the almost equivalence 
$$(\phi^t_{-1}(\beta))_{\frac{1}{n}}(H^1_{s_0-\epsilon}) \sim \phi^t_{\frac{1}{n}}(H^1_{s_0-\epsilon}).$$
This is the seventh step in \Cref{fig:localmoves1}.

Finally, the curves $H^2_s$ given by the composition of $H_{s_0}$ outside $N$ with $a_{s_0+\epsilon}$ and $b_s$, for $s \in [s_0-\epsilon, s_0+\epsilon]$ are generic horizontal surgery curves, so by \Cref{cor:piecewiseinvisotopy}, we have 
$$\phi^t_{\frac{1}{n}}(H^1_{s_0-\epsilon}) =  \phi^t_{\frac{1}{n}}(H^2_{s_0-\epsilon}) \cong \phi^t_{\frac{1}{n}}(H^2_{s_0+\epsilon}) = \phi^t_{\frac{1}{n}}(H_{s_0+\epsilon}).$$
This is the eighth step in \Cref{fig:localmoves1}.

The argument is very similar when the local form of $H_s$ in $N$ is of case (2), i.e. that indicated in \Cref{fig:selfintersect2}. We first modify $H_s$ so that the self-intersection point lies on a closed orbit $\beta$. This part of the proof carries over word-for-word. With this arranged, we use the sequence of operations indicated in \Cref{fig:localmoves2} to produce the desired almost equivalence. We let the reader fill in the precise description of these operations.

\begin{figure}
    \centering
    \fontsize{6pt}{6pt}\selectfont
    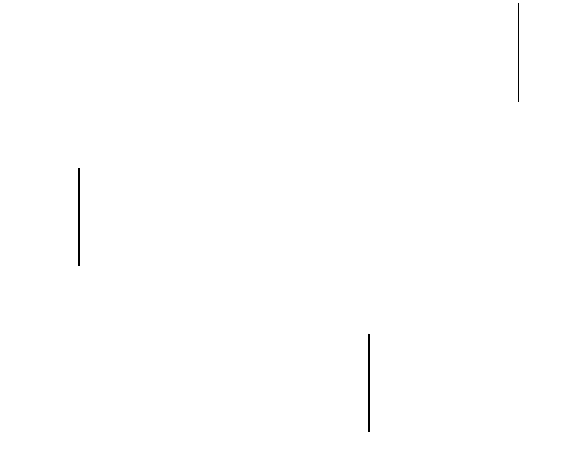
    \caption{Performing a sequence of local moves to show that $\phi^t_{\frac{1}{n}}(H_{s_0-\epsilon}) \sim \phi^t_{\frac{1}{n}}(H_{s_0+\epsilon})$ in case (2).}
    \label{fig:localmoves2}
\end{figure}

\end{proof}

Applying \Cref{claim:almostequivstart} and \Cref{claim:almostequivjump} repeatedly, we conclude that $\phi^t_{\frac{1}{n}}(c) \sim \phi^t_{\frac{1}{n}}(H_{s_2+\eta_2-\epsilon})$. To conclude the proof of \Cref{thm:horsuralmostequiv}, observe that for small $\epsilon$, $H_{s_2+\eta_2-\epsilon}$ lies close to an embedded curve on the unstable leaf of the closed orbit $\gamma$, hence can be isotoped through orbits to lie on an annulus in an arbitrarily small neighborhood of $\gamma$. By \Cref{thm:Goodman=Fried}, we then have $\phi^t_{\frac{1}{n}}(H_{s_2+\eta_2-\epsilon}) \sim \phi^t$.

\subsection{Deriving the corollaries} \label{subsec:almostequivcor}

\begin{proof}[Proof of \Cref{cor:intropAmap}]
This follows from combining \Cref{prop:pAmapstraightcurve}, \Cref{prop:pAmaptwist} and \Cref{thm:horsuralmostequiv}.
\end{proof}

\begin{proof}[Proof of \Cref{cor:introscalloptorus}]
This follows from combining \Cref{prop:scalloptoruscurve}, \Cref{cor:scalloptorusintersection1} and \Cref{thm:horsuralmostequiv}.
\end{proof}

\begin{proof}[Proof of \Cref{cor:introinteriortorus}]
This follows from combining \Cref{prop:interiortoruscurve}, \Cref{prop:interiortorussur} and \Cref{thm:horsuralmostequiv}.
\end{proof}

\section{Discussion and further questions} \label{sec:questions}

We discuss some future directions coming out of this paper.

\subsection{Generalizations of the surgery operation}

The reader will notice that we have been restricting to oriented $3$-manifolds throughout this paper. This assumption is needed in order to make sense of positive/negative curves, as well as other signs in our discussions. The definition of pseudo-Anosov flows, however, does not require orientability. Hence a natural future direction is

\begin{prob}
Generalize horizontal Goodman surgery to (pseudo-)Anosov flows on non-orientable $3$-manifolds.
\end{prob}

The difficulty here is that one could have orientation-reversing crossings. More precisely, given a curve $c$ that whose tangent field never lies on $T\phi \cup E^{s/u}$, it is always possible to choose some orientation of $E^{s/u}$ along $c$. The problematic crossings $(x,y,t)$ are ones where $d\phi^t$ preserves the orientation on $E^s$ yet reverses the orientation on $E^u$, or vice versa. In that case, one would need more care in defining cone fields for them to nest correctly, in order to show that the surgered flow is (pseudo-)Anosov.

Closely related to this problem is the task of generalizing the surgery operation to multiple curves. 

\Cref{constr:introhorsur} generalizes immediately to performing positive/negative surgeries along a collection of positive/negative curves $c_1,...,c_k$ where $T(\bigcup c_i)$ is steady, respectively. However, a collection of positive/negative curves that are individually horizontal surgery curves may not satisfy this condition. In fact, inspecting the proof of \Cref{thm:horsurpA}, it would seem that without additional hypotheses, the cone field argument would fall apart easily in such a setting, that is, without the steadiness condition on the union of curves.

\subsection{Transitivity in \Cref{thm:horsuralmostequiv}}

We note that transitivity plays an essential role in the proof of \Cref{thm:horsuralmostequiv}. This hypothesis is used for (i) constructing the sawtooth in \Cref{claim:sawtooth0}, and (ii) allowing one to perform the local operations in \Cref{fig:localmoves1} and \Cref{fig:localmoves2}.

\begin{conj}
The assumption of transitivity is necessary in \Cref{thm:horsuralmostequiv}. 

That is, there exists a non-transitive pseudo-Anosov flow $\phi^t$ on a closed oriented $3$-manifold, a positive/negative horizontal surgery curve $c$, and a positive/negative integer $n$, respectively, so that $\phi^t_{\frac{1}{n}}(c)$ is not almost equivalent to $\phi^t$.
\end{conj}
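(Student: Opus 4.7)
The plan is to exhibit an explicit non-transitive pseudo-Anosov flow where horizontal Goodman surgery demonstrably changes the ambient $3$-manifold topology in a way that no drilling of finitely many closed orbits can repair. Concretely, I would start from a Franks-Williams style non-transitive Anosov flow, or more generally one of the many non-transitive examples furnished by the techniques of \cite{BBY17}. The flow $\phi^t$ will then live on a graph manifold $M$ whose JSJ tori are transverse to $\phi^t$, with the flow decomposing along these tori into attracting and repelling hyperbolic basic pieces joined by wandering regions.

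First I would locate a horizontal surgery curve in the wandering region. If a JSJ torus $T$ transverse to the flow is scalloped or Reebless non-scalloped, then \Cref{eg:scalloptoruscurve} or \Cref{eg:interiortoruscurve} produces horizontal surgery curves $c$ on $T$. The effect of horizontal Goodman surgery along $c$ with coefficient $\tfrac{1}{n}$ is, topologically, to reglue $M$ across $T$ by a new mapping-class group element; the resulting gluing matrices are computed explicitly in \Cref{eg:scalloptoruscurve} in terms of $(x_0, y_0)$ and $n$. The strategy is to choose $c$ and $n$ so that the new gluing matrix fails to match fibers of adjacent Seifert pieces — i.e.\ sends the fiber slope of one side to a non-fiber slope of the other — while the original gluing did match fibers. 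This is a standard way to force a previously graph-manifold piece to become hyperbolic (a Seifert piece amalgamated along a non-fiber-preserving gluing ceases to be Seifert).

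If this change can be arranged, then $M$ is a graph manifold and $M_{\frac{1}{n}}(c)$ contains a hyperbolic JSJ piece. The final step is to rule out almost equivalence in the sense of \Cref{defn:almostequiv}. The key point is that drilling a finite collection of closed orbits can only add finitely many cusps inside existing JSJ pieces; it cannot create new JSJ tori, merge pieces, or alter the geometric type (Seifert versus hyperbolic) of a piece. Hence any orbit equivalence between the drilled flows would induce a homeomorphism of the drilled manifolds, which in turn would force their JSJ pieces to be pairwise homeomorphic rel cusps, contradicting that one side has a hyperbolic piece and the other does not. The Gromov norm, which vanishes on graph manifolds but is positive once a hyperbolic piece is present and is additive across JSJ decompositions and invariant under cusping, provides a clean numerical witness to this obstruction.

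The main obstacle, and where I expect the real work to lie, is producing an explicit tuple $(\phi^t, T, c, n)$ for which the JSJ change can be certified. The matrix calculation in \Cref{eg:scalloptoruscurve} is straightforward, but one must verify that the candidate non-transitive flow from \cite{BBY17} really has a transverse scalloped (or Reebless non-scalloped) JSJ torus with controlled fiber slopes, and that the surgery does not accidentally re-introduce a fiber match elsewhere. A secondary concern is that Seifert invariants can shift under surgery without the underlying topology actually changing, so one must carefully distinguish genuine JSJ change from a mere relabelling — likely by pinning down the hyperbolic type via Gromov norm or volume rather than by direct combinatorial inspection of the JSJ graph.
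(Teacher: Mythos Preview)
This statement is presented in the paper as a conjecture, not a theorem; the paper only sketches a possible approach. Your strategy differs from that sketch and contains a genuine gap.

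The paper's suggested route is to take a non-transitive flow with a separating transverse torus $T$ cutting $M$ into two \emph{atoroidal} pieces, and then find a horizontal surgery curve $c$ that passes \emph{essentially through} $T$ rather than lying on it. For large $n$ one expects $M_{\frac{1}{n}}(c)$ itself to be atoroidal, forcing $\phi^t_{\frac{1}{n}}(c)$ to be transitive. The obstruction to almost equivalence is then purely dynamical: a dense orbit survives the removal of finitely many closed orbits, whereas the separating transverse torus $T$ also survives on the other side (closed orbits live in the non-wandering set and hence miss $T$), so a transitive flow can never be almost equivalent to one admitting such a $T$.

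Your approach instead places $c$ on $T$ and appeals to a topological obstruction. Two issues arise. First, with $c \subset T$ the surgered flow still has $T$ transverse and separating, so it remains non-transitive and you lose the clean dynamical distinction above. Second, and fatally for the argument as written, the assertion that drilling finitely many closed orbits ``cannot alter the geometric type (Seifert versus hyperbolic) of a piece'' is false. Drilling a typical closed orbit from a sol or Seifert piece produces a hyperbolic cusped manifold; for instance, in a Franks--Williams piece the complement of a suspended periodic point fibers over $S^1$ with pseudo-Anosov monodromy on a punctured torus. Hence the Gromov norm of $M \setminus \mathcal{C}_1$ can be made positive by choosing $\mathcal{C}_1$ appropriately, and the norm comparison gives no obstruction. (As an aside, the Franks--Williams manifold is assembled from sol pieces rather than Seifert pieces, so it is not a graph manifold in the standard sense, though it does have vanishing simplicial volume.)
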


One possible approach to construct such an example is as follows: Start with a non-transitive pseudo-Anosov flow $\phi^t$ on a closed oriented $3$-manifold $M$, for which there is a transverse torus $T$ separating $M$ into two atoroidal $3$-manifolds $M_1$ and $M_2$. Such examples were constructed in \cite{FW80}, and the techniques in \cite{BBY17} allow for construction of more complicated examples.

One then attempts to find a horizontal surgery curve $c$ that passes through $T$ essentially, i.e. cannot be isotoped to lie in $M_1$ or $M_2$. For large enough $n$, one expects $M_{\frac{1}{n}}(c)$ to be atoroidal, which would imply that $\phi^t_{\frac{1}{n}}(c)$ is transitive by \cite[Proposition 2.7]{Mos92a}. In particular, $\phi^t_{\frac{1}{n}}(c)$ cannot be almost equivalent to $\phi^t$.

\subsection{Relation with veering triangulations} \label{subsec:vt}

A veering triangulation is an ideal triangulation on a $3$-manifold satisfying certain combinatorial conditions. We will not go into the full definition here, and will instead refer the reader to \cite{SS19}. For the purposes of this discussion, one only needs to know that one of the combinatorial conditions is that the faces of the tetrahedra are cooriented.

It has been discovered recently that there is a correspondence between veering triangulations and pseudo-Anosov flows. 
Without going into too much details, the key facts are that 
\begin{enumerate}
    \item Given a veering triangulation $\Delta$ on an oriented $3$-manifold $N$, there exists a transitive pseudo-Anosov flow $\phi^t(\Delta, M)$ on a closed oriented $3$-manifold $M$.
    \item Given a transitive pseudo-Anosov flow $\phi^t$ on a closed oriented $3$-manifold $M$, there exists a collection of closed orbits $\mathcal{C}$ such that $M \backslash \mathcal{C}$ admits a veering triangulation $\Delta = \Delta(\phi^t, \mathcal{C})$ whose faces are positively transverse to the orbits of $\phi^t$.
    \item In the notation above, we have
    \begin{enumerate}
        \item $\phi^t(\Delta, M)$ admits a collection of closed orbits $\mathcal{C}$ such that $\Delta = \Delta(\phi^t(\Delta, M), \mathcal{C})$, and
        \item $\phi^t=\phi^t(\Delta(\phi^t, \mathcal{C}), M)$.
    \end{enumerate}
\end{enumerate}
We refer to \cite[Chapter 2]{Tsa23a} for a much more thorough exposition, as well as refer to \cite{SS20}, \cite{SS19}, \cite{SS21}, \cite{SS23}, \cite{SSpart5}, and \cite{LMT21} for proofs.

In \cite{Tsa22a}, the author introduced the \textbf{horizontal surgery operation} on veering triangulations. A \textbf{horizontal surgery curve} for a veering triangulation $\Delta$ is a curve on the stable branched surface of $\Delta$ lying away from the vertices and satisfying various combinatorial conditions. Here the \textbf{stable branched surface} of a veering triangulations is a $2$-complex dual to the triangulation. In particular a curve lying away from the vertices on the stable branched surface can be thought of as a curve lying on the faces of $\Delta$. Performing horizontal surgery along a horizontal surgery curve $c$ for $\Delta$ (with coefficient $\frac{1}{n}$ where $n$ is of the appropriate sign) involves slitting $\Delta$ along the annulus given by the union of faces that $c$ passes through, then inserting a suitably triangulated solid torus within. We denote the resulting veering triangulation as $\Delta_{\frac{1}{n}}(c)$.

We conjecture that horizontal Goodman surgery on pseudo-Anosov flows corresponds exactly to horizontal surgery on veering triangulations. We formulate this more precisely as follows.

\begin{conj} \label{conj:vthorsur}
\leavevmode
\begin{enumerate}
    \item Let $\Delta$ be a veering triangulation on an oriented $3$-manifold $N$.
    \begin{enumerate}
        \item Every horizontal surgery curve $c$ for $\Delta$ is isotopic to a horizontal surgery curve for $\phi^t(\Delta, M)$.
        \item $\phi^t(\Delta_{\frac{1}{n}}(c), M_{\frac{1}{n}}(c))$ is orbit equivalent to $\phi^t(\Delta, M)_{\frac{1}{n}}(c)$.
    \end{enumerate}
    \item Let $\phi^t$ be a transitive pseudo-Anosov flow on a closed oriented $3$-manifold $M$.  
    \begin{enumerate}
        \item Every horizontal surgery curve $c$ for $\phi^t$ is isotopic to a horizontal surgery curve for $\Delta(\phi^t, \mathcal{C})$, for some collection of closed orbits $\mathcal{C}$,
        \item $\Delta(\phi^t_{\frac{1}{n}}(c), \mathcal{C})$ is isomorphic to $\Delta(\phi^t, \mathcal{C})_{\frac{1}{n}}(c)$.
    \end{enumerate}
\end{enumerate}
\end{conj}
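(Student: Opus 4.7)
The plan is to establish the conjecture by building a dictionary between the combinatorial notion of a horizontal surgery curve for a veering triangulation $\Delta$ (a certain curve on the stable branched surface, as introduced in \cite{Tsa22a}) and the dynamical notion used in this paper. The key observation is that in the correspondence recalled in \Cref{subsec:vt}, the $2$-dimensional faces of $\Delta$ are positively transverse to $\phi^t(\Delta, M)$ and carry induced pieces of the stable and unstable foliations coming from the combinatorics of the triangulation. Because a curve on the stable branched surface passes across edges of $\Delta$, when realized in $M$ it is only piecewise smooth, so the technology of \Cref{sec:piecewisesmooth} is essential for making sense of the right-hand sides of the statements in \Cref{conj:vthorsur}.

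For direction (1), the first step is to show that given a horizontal surgery curve $c$ for $\Delta$, after realizing $c$ on the faces of $\Delta$ and smoothing within each face, the resulting curve is a generic piecewise smooth horizontal surgery curve for $\phi^t(\Delta, M)$ in the sense of \Cref{defn:piecewisehorsurcurve}. The combinatorial switching condition at each edge of $\Delta$ should translate exactly into the slope inequality at a turn of the piecewise smooth curve, since the branching of the stable branched surface records precisely which side of the stable foliation the curve passes on. Once (1)(a) is established, (1)(b) is the assertion that the two surgery operations agree up to orbit equivalence. My approach is to locate the annulus produced by horizontal surgery on $\Delta$ (the union of faces that $c$ passes through, slit open with a triangulated solid torus inserted) and show that the flow on $\Delta_{\frac{1}{n}}(c)$ built via the correspondence agrees, inside a neighborhood of this annulus, with the flow obtained from \Cref{constr:sur} applied to $c$. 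Outside this neighborhood both flows agree with $\phi^t(\Delta, M)$, so an application of \Cref{thm:structuralstability} glues them into an orbit equivalence.

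For direction (2), given a horizontal surgery curve $c$ for $\phi^t$, the task is to find a collection of closed orbits $\mathcal{C}$ so that $c$ is isotopic to a horizontal surgery curve realized on the faces of $\Delta(\phi^t, \mathcal{C})$. The plan is to use transitivity of $\phi^t$ and density of closed orbits to choose $\mathcal{C}$ sufficiently rich so that the faces of $\Delta(\phi^t, \mathcal{C})$ form a cell decomposition of $M \backslash \mathcal{C}$ transverse to the flow that is fine enough to contain an isotopic copy of $c$. Concretely, one would use a Birkhoff section approach as in \cite{Sha20} to decompose $M \backslash \mathcal{C}$ into rectangles bounded by pieces of stable and unstable leaves, then arrange for $c$ to meet these rectangles in arcs satisfying the combinatorial condition from \cite{Tsa22a}. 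Once (2)(a) is in place, (2)(b) should follow from (1)(b) together with the uniqueness statement (3) in \Cref{subsec:vt}: both $\Delta(\phi^t_{\frac{1}{n}}(c), \mathcal{C})$ and $\Delta(\phi^t, \mathcal{C})_{\frac{1}{n}}(c)$ are veering triangulations dual to $\phi^t_{\frac{1}{n}}(c)$ with the same drilled-out collection of closed orbits.

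The main obstacle is twofold. First, on the dictionary side, the combinatorial version of the steadiness condition on the stable branched surface has to be shown equivalent to \Cref{defn:piecewisehorsurcurve} at the turns, where one must match the sign conventions and the ``overpass has greater slope than underpass'' data with the branching data of the triangulation; this is technical but should be routine once formulated. Second, and more seriously, realizing a given dynamical horizontal surgery curve on the faces of $\Delta(\phi^t, \mathcal{C})$ requires control over the geometry of a Birkhoff section adapted to $c$. Even though transitivity supplies the necessary closed orbits, showing that the induced face structure is flexible enough to contain an isotopic copy of $c$ is likely to require genuinely new ideas beyond the techniques of \cite{Sha20}. A secondary technical hurdle is that the piecewise smooth surgery machinery of \Cref{sec:piecewisesmooth} was developed only for \emph{generic} piecewise smooth curves; one may need to either relax this restriction or always perturb into a generic position before invoking it, which could interact awkwardly with the combinatorial rigidity of $\Delta$.
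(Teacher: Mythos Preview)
The statement you are attempting to prove is labeled a \emph{Conjecture} in the paper, and the paper does not supply a proof. After stating it, the author offers only a brief heuristic for part (1)(a)---namely, to take a boundary component of the annulus of faces that $c$ passes through, observe that it is a piecewise smooth curve, and try to verify \Cref{defn:piecewisehorsurcurve} assuming the edges of $\Delta$ are placed nicely---and then remarks that a special case of (1) (the layered case) will appear in a sequel. No argument is given for (1)(b) or for part (2).

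Your proposal is therefore not being compared against a proof but against an open problem. As a sketch, your outline is reasonable and in fact overlaps with the paper's one-paragraph strategy for (1)(a): you both propose to realize $c$ on the faces and check the turn condition of \Cref{defn:piecewisehorsurcurve}. However, what you have written is not a proof either---it is a plan with explicitly flagged obstacles. The two places you identify as ``main obstacles'' are exactly where the real content would lie: matching the combinatorial switching condition with the slope inequality at turns, and (more seriously) finding a $\mathcal{C}$ so that $\Delta(\phi^t,\mathcal{C})$ carries an isotopic copy of an arbitrary dynamical horizontal surgery curve. Neither step is carried out, and the paper gives no indication that either is routine; indeed, the author's decision to leave this as a conjecture and defer even the layered special case to future work suggests these steps require substantial new arguments. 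Your invocation of \Cref{thm:structuralstability} for (1)(b) is also premature: that theorem compares two pseudo-Anosov flows on the \emph{same} manifold with $C^1$-close vector fields agreeing near the singular set, whereas here you would first need to identify $M_{\frac{1}{n}}(c)$ built from the triangulation side with the surgered manifold and exhibit the two flows as close perturbations, which is not addressed.
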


A strategy for showing (1a) is to consider a boundary component of the annulus given by the union of faces that $c$ passes through. This is a union of edges isotopic to $c$, but is only a piecewise smooth curve in general. Nevertheless, one might be able to argue that this is a horizontal surgery curve \`a la \Cref{defn:piecewisehorsurcurve} if the edges of the veering triangulation are assumed to be in a nice position beforehand.

We remark that in the planned sequel \cite{Tsa24}, we will show a special case of (1), namely when $\Delta$ is layered.

One motivation for \Cref{conj:vthorsur} is that if it were true, then one would be able to combine it with \Cref{thm:horsuralmostequiv} to derive almost equivalences between many flows using the tool of veering triangulations. In particular, it would be possible to do a systematic probing of \Cref{conj:Ghys} by working through the veering triangulation census \cite{VeeringCensus}.

\subsection{Instantaneous metrics}

We remind the reader that the definition of horizontal surgery curves (\Cref{defn:horsurcurve} and \Cref{defn:piecewisehorsur}) and the operation of horizontal Goodman surgery (\Cref{constr:sur}) do not depend on a choice of an instantaneous metric. In most of our proofs, we only pick an instantaneous metric in order to gain quantitative control.

With that being said, \Cref{lemma:constantslope} is an exception where the choice of an instantaneous metric defines a criterion for identifying horizontal surgery curves. In order to utilize \Cref{lemma:constantslope} better, one must gain a deeper understanding as to how instantaneous metrics can be constructed.

In particular, we are interested in whether one can extend a partially defined metric into an instantaneous metric. We formulate this more precisely as follows.

\begin{quest} \label{quest:relinstantmetric}
Let $\phi^t$ be a pseudo-Anosov flow on a closed oriented $3$-manifold $M$. Let $\mathring{M}$ be an open submanifold of $M$ containing $\sing(\phi^t)$. Suppose we are given a path metric $\mathring{d}$ on $\mathring{M}$ induced from a Riemannian metric $\mathring{g}$ away from $\sing(\phi^t)$, such that:
$$||d\phi^t(v)||_{\mathring{g}} < \lambda^{-t} ||v||_{\mathring{g}}$$
for every $v \in E^s|_x, t>0$ such that $x, \phi^t(x) \in \mathring{M}$, and 
$$||d\phi^t(v)||_{\mathring{g}} < \lambda^t ||v||_{\mathring{g}}$$
for every $v \in E^u|_x, t<0$ such that $x, \phi^t(x) \in \mathring{M}$, for some $\lambda>1$.

Then is it always possible to extend $\mathring{d}$ to an instantaneous metric $d$ on $M$?
\end{quest}

If the answer to \Cref{quest:relinstantmetric} is `yes', then a positive answer to the following question seems likely.

\begin{quest} \label{quest:horsur=constantslope}
Is every horizontal surgery curve of constant slope with respect to some instantaneous metric?
\end{quest}

We are also interested in the following question, in which a positive answer to \Cref{quest:relinstantmetric} might prove useful in tackling.

\begin{quest} \label{quest:horsurcurveisotopic}
Are two positive horizontal surgery curves that are isotopic through positive curves always isotopic through positive horizontal surgery curves?
\end{quest}

\subsection{Relation with Salmoiraghi's surgeries} \label{subsec:Sal}

In \Cref{prop:Sal22}, we showed that at least for some choice of construction data, Salmoiraghi's surgery in \cite{Sal22} is equivalent to the horizontal Goodman surgery operation discussed in this paper. 
As noted there, we expect this equivalence to hold more generally whenever Salmoiraghi's surgery gives an Anosov flow.

We also point out that in \cite{Sal21}, Salmoiraghi introduces another surgery operation along Legendrian-transverse knots. One key feature of this surgery operation is that it is performed along annuli that are \emph{tangent} to the flow.
What Salmoiraghi shows in \cite[Theorem 7.1]{Sal21}, however, is that the surgered flow can also be obtained as cutting along a transverse annulus and regluing by some Dehn twist, even though here the exact conditions for the annulus and the gluing map are less clear.

In \cite{Sal21}, Salmoiraghi gives conditions as to when this surgery operation produces an Anosov flow. We speculate that, as in the surgery operation in \cite{Sal22}, this surgery operation is equivalent to horizontal Goodman surgery in those cases, at least for some choice of construction data.

It is important to note that, in both papers, Salmoiraghi deals with the category of flows determined by bicontact structures, which can be showed to be exactly those flows that are \emph{projectively} Anosov --- a class that is more general than Anosov flows. 

Finally, we are interested in the following question.

\begin{quest} \label{quest:legendriantransverse}
Let $\phi^t$ be an Anosov flow on a closed oriented $3$-manifold. Is every horizontal surgery curve for $\phi^t$ a Legendrian-transverse curve with respect to some bicontact structure $(\xi_+,\xi_-)$ that determines $\phi^t$? 
\end{quest}

\Cref{quest:legendriantransverse} seems intimately related with \Cref{quest:horsur=constantslope}. If the answer to the latter is `yes' then we expect a positive answer to the former as well.

\appendix

\section{Structural stability of pseudo-Anosov flows} \label{sec:structuralstability}

In this appendix, we prove the structural stability result \Cref{thm:structuralstability} that we had to use in \Cref{subsec:horsurinvar}. 

We first recall a few constructions. Let $\phi^t$ be a pseudo-Anosov flow on a closed $3$-manifold $M$. Let $\nu$ be a neighborhood of $\sing(\phi^t)$. We can perform the \textbf{double DA operation} on $\sing(\phi^t)$ by modifying $\dot{\phi}$ in $\nu$. This modifies $\phi^t$ into an Axiom A flow $\psi^t$. The stable/unstable lamination of (the chain recurrent set of) $\psi^t$ is obtained by blowing air into the singular leaves of the stable/unstable foliation of $\phi^t$. In particular $\psi^t$ satisfies the strong transversality condition. We refer to \cite[Section 1]{Mos92a} for more details.

In \Cref{fig:doubleda} top we indicate how the dynamics is changed while in \Cref{fig:doubleda} bottom we indicate how the stable/unstable laminations are changed near $\sing(\phi^t)$, under the double DA operation.

\begin{figure}
    \centering
    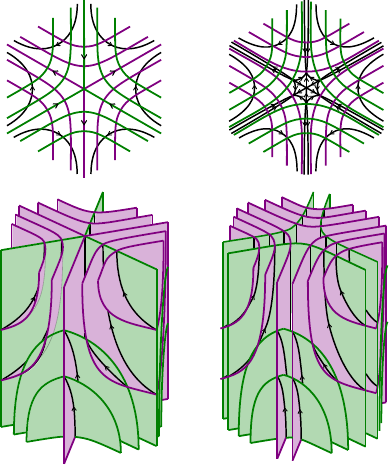
    \caption{A local picture of the double DA operation. Top: The change in dynamics before and after the double DA operation. Bottom: The change in stable/unstable laminations before and after the double DA operation.}
    \label{fig:doubleda}
\end{figure}

Meanwhile, recall the orbit space $\mathcal{O}$ of $\phi^t$ from \Cref{defn:orbitspace}. We can similarly define the orbit space $\mathcal{Q}$ of $\psi^t$ to be the space of orbits of the lifted flow $\widetilde{\psi}^t$ in the universal cover. As for $\mathcal{O}$, $\mathcal{Q}$ inherits two laminations $\mathcal{Q}^s$ and $\mathcal{Q}^u$ from the stable and unstable laminations of $\psi^t$ respectively. Notice that $(\mathcal{Q}, \mathcal{Q}^s, \mathcal{Q}^u)$ can be obtained from $(\mathcal{O}, \mathcal{O}^s, \mathcal{O}^u)$ by blowing air into the singular leaves of $\mathcal{O}^{s/u}$. Conversely, $(\mathcal{O}, \mathcal{O}^s, \mathcal{O}^u)$ can be obtained from $(\mathcal{Q}, \mathcal{Q}^s, \mathcal{Q}^u)$ by collapsing the complementary regions of $\mathcal{Q}^{s/u}$ along leaves of $\mathcal{Q}^{u/s}$ respectively. 

The idea of the proof of \Cref{thm:structuralstability} is that by performing the double DA operation, we switch categories from pseudo-Anosov flows to Axiom A flows satisfying the strong transversality condition, the latter of which is known to satisfy structural stability. We then transfer the orbit equivalence in the category of Axiom A flows back to pseudo-Anosov flows using orbit spaces.

\begin{proof}[Proof of \Cref{thm:structuralstability}]
Suppose $\phi^t$ is a pseudo-Anosov flow on a closed $3$-manifold. We fix a modification of $\phi^t$ in $\nu$ that modifies it into an Axiom A flow $\psi^t$ satisfying the strong transversality condition. For pseudo-Anosov flows $\overline{\phi}^t$ as in the statement of the theorem, we can perform the same modification of the generating vector field on $\nu$ to get a Axiom A flow $\overline{\psi}^t$ satisfying the strong transversality condition. Moreover $\dot{\psi}$ and $\dot{\overline{\psi}}$ are $\epsilon$ close in the $C^1$-topology. Hence by \cite[Theorem 1.1]{Rob75}, $\psi^t$ and $\overline{\psi}^t$ are orbit equivalent. Furthermore, by making $\epsilon$ small, we can ensure that the orbit equivalence is isotopic to identity. See the remarks under \cite[Theorem 1.1]{Rob75}.

Such an orbit equivalence induces a $\pi_1(M)$-equivariant homeomorphism between the orbit spaces $\mathcal{Q}$ and $\overline{\mathcal{Q}}$ of $\psi^t$ and $\overline{\psi}^t$ respectively, preserving the stable/unstable laminations. By collapsing complementary regions, we obtain a $\pi_1(M)$-equivariant homeomorphism between the orbit spaces $\mathcal{O}$ and $\overline{\mathcal{O}}$ of $\phi^t$ and $\overline{\phi}^t$ respectively, preserving the stable/unstable foliations. By \cite[Théorème 3.4]{Bar95}, this implies that $\phi^t$ and $\overline{\phi}^t$ are orbit equivalent through a homeomorphism $f$ whose action on $\pi_1(M)$ is identity. 

It remains to conclude that such a $f$ is isotopic to identity. This follows from work of Waldhausen \cite{Wal68} (when $M$ is Haken), Scott \cite{Sco85} and Boileau-Otal \cite{BO91} (when $M$ is Seifert-fibered and non-Haken), and Gabai-Meyerhoff-Thurston \cite{GMT03} (when $M$ is hyperbolic), as well as the geometrization theorem proved by Perelman.
\end{proof}

We speculate that it should be possible to supply a proof of \Cref{thm:structuralstability} by modifying the classical analytic proofs for Anosov flows, found in, for example \cite{KM73}. The main difficulty is in imposing the correct analytic conditions near the singular orbits. Such a proof should strengthen \Cref{thm:structuralstability} by allowing $\dot{\phi}$ and $\dot{\overline{\phi}}$ to differ on an open set that approaches the singular orbits, provided that the difference goes to $0$ rapidly enough. However, since we do not need such a strong version of structural stability in this paper, we have not attempted to carry out this approach.

\bibliographystyle{alpha}

\bibliography{bib.bib}

\end{document}